\title{Locally tabular products of modal logics}
\keywords{product of modal logics, local tabularity,
finite model property, 
product finite model property,
finite height, pretransitive logic}
\subjclass[2020]{03B45}
\author{Ilya B. Shapirovsky}
\address{Department of Mathematics\\
New Mexico State University\\
1780 E University Ave, Las Cruces, NM 88003, USA}
\email{ilshapir@nmsu.edu}
\thanks{
The work of the first author was supported by NSF Grant DMS - 2231414.
}
\author{Vladislav V. Sliusarev}
\address{Department of Mathematics\\
New Mexico State University\\
1780 E University Ave, Las Cruces, NM 88003, USA}
\email{vnvdvc@nmsu.edu}
\newtheorem{theorem}{Theorem}[section]
\newtheorem{proposition}[theorem]{Proposition}
\newtheorem{lemma}[theorem]{Lemma}
\newtheorem{corollary}[theorem]{Corollary}
\newtheorem{claim}[theorem]{Claim}
\theoremstyle{definition}
\newtheorem{definition}[theorem]{Definition}
\newtheorem*{definition*}{Definition}
\newtheorem{remark}[theorem]{Remark}
\newtheorem{example}[theorem]{Example}
\newcommand\hide[1]{{\empty}}
\newcommand\ISH[1]{{\bf IS}: {\color{teal} #1}}
\renewcommand\ISH[1]{\empty} 
\newcommand\IS[1]{\ISH{#1}}
\newcommand\ISLater[1]{{\bf IS}: {\color{red} #1}}
\renewcommand\ISLater[1]{\empty} 
\newcommand\VS[1]{{\bf VS}: {\color{blue}#1}}
\renewcommand\VS[1]{\empty}
\newcommand\todo[1]{ [~ {\color{BrickRed} #1 }]}
\newcommand\extended[1]{ {\color{BlueViolet} Extended:~ #1 }]}
\renewcommand\extended[1]\empty
\newcommand\improve[1]{ [~ {\color{BlueGreen}\noindent{\bf Improve:} #1 }]}
\renewcommand\improve[1]\empty
\def\Al{\mathrm{A}}
\def\AlA{\Al}
\def\AlB{\mathrm{B}}
\newcommand\framets[1]{#1}
\def\frI{\framets{I}}
\def\frF{\framets{F}}
\def\toto{\twoheadrightarrow}
\newcommand\LSum[2]{{\textstyle \sum_{#1}{#2}}}
\def\tiff{\text{ iff }}
\def\clI{\mathcal{I}}
\def\clX{\mathcal{X}}
\def\clY{\mathcal{Y}}
\def\clH{\mathcal{H}}
\def\clV{\mathcal{V}}
\def\clC{\mathcal{C}}
\def\clF{\mathcal{F}}
\def\clG{\mathcal{G}}
\def\clR{\mathcal{R}}
\def\Log{\myoper{Log}}
\newcommand\myoper[1]{\mathop{\myopts{#1}}}
\newcommand\myopts[1]{\mathrm{#1}}
\def\Di{\lozenge}
\def\DiAl{\Di_\Al}
\def\imp{\rightarrow}
\def\GL{\LogicNamets{GL}}
\newcommand\LogicNamets[1]{\logicts{#1}}
\newcommand\logicts[1]{{\textsc{#1}}}
\newcommand\LS[1]{\LogicNamets{S#1}}
\newcommand\LK[1]{\LogicNamets{K#1}}
\newcommand\Grz{\LogicNamets{Grz}}
\def\vL{L}
\def\clU{\mathcal{U}}
\def\clW{\mathcal{W}}
\def\clV{\mathcal{V}}
\def\clC{\mathcal{C}}
\def\clD{\mathcal{D}}
\def\clK{\mathcal{K}}
\def\clV{\mathcal{V}}
\def\clS{\mathcal{S}}
\def\clT{\mathcal{T}}
\def\clP{\mathcal{P}}
\def\EE{\exists}
\def\AA{\forall}
\def\dom{\myoper{dom}}
\def\restr{{\upharpoonright}}
\def\v{\theta}
\def\vext{\bar{\v}}
\newcommand\tranA[1]{{ [#1 ]}^{m}}
\newcommand\tranAk[1]{{ [#1 ]}^{k}}
\newcommand\tran[1]{{ [#1 ]}^{m,n}}
\newcommand\traone[1]{[#1]^\star}
\def\vf{\varphi}
\def\mo{\vDash}
\def\vd{\vdash}
\def\con{\wedge}
\def\lra{\leftrightarrow}
\def\emp{\varnothing}
\def\SubFrs{\myoper{Sub}}
\newcommand\languagets[1]{\logicts{#1}}
\def\ML{\languagets{ML}}
\def\PV{\languagets{PV}}
\newcommand{\quotfr}[1]{\widetilde{#1}}
\newcommand{\rect}[2]{\boldsymbol#1 \times \boldsymbol#2}
\newcommand\clusters[1]{\framets{Cl}{(#1)}}
\newcommand\Sk{\myoper{Sk}}
\newcommand{\inv}{^{-1}}
\newcommand\un[1]{{#1}_\logicts{u}}
\newcommand\Tsum[1]{{{#1}\,\oplus\, \circ}}
\def\TL{\LogicNamets{Tack}}
\def\TF{\textsc{T}}
\def\LSaw{\LogicNamets{Saw}}
\newcommand\fact[2]{{#1}{/}{#2}}
\newcommand{\bomega}{\boldsymbol\omega}
\def\FORP{\mathrm{RP}}
\def\RP{\mathrm{rp}}
\newcommand\tra[1]{\mathrm{tra}{(#1)}}
\newcommand{\fusion}[2]{#1*#2}
\newcommand{\LCom}[2]{\left[#1,#2\right]}
\def\com{\mathrm{com}}
\def\chr{\mathrm{chr}}
\begin{document}
\begin{abstract}
In the product $L_1\times L_2$ of two Kripke complete consistent logics, local tabularity of $L_1$ and $L_2$ is necessary  for local tabularity of $L_1\times L_2$. 
However, it is not sufficient:  the product of two locally tabular logics may not be locally tabular.  We provide extra semantic and axiomatic conditions that give criteria of local tabularity of the product of two locally tabular logics, and apply them to identify new families of locally tabular products. We show that the product of two locally tabular logics may lack the product finite model property.  We give an  axiomatic criterion of local tabularity  for all extensions of 
$\LS{4}.1 [ 2 ]\times \LS{5}$.  Finally, we describe  a new prelocally tabular extension of $\LS{4}\times\LS{5}$.  
\end{abstract}

\maketitle

\section{Introduction} ~

It is well-known that the operation of product of modal logics does not 
preserve the finite model property of the factors, see, e.g.,  \cite{ReynoldsZakh_Linear2001},\cite{GabelaiaAtAlUndec2005}, or the monography 
\cite{ManyDim}. In this paper we describe new families of modal products  which have the finite model property and, in fact, satisfy a stronger property of  local tabularity.

\ISLater{Products: explain what are their. Explains the FMP issue

The product of modal logics defined semantically...
(Pick definition of product from GSS - it is good; and motivation from book) 

}

A logic is locally tabular, if each of its finite-variable fragments contains only a finite number of pairwise nonequivalent
formulas.  In particular, every locally tabular logic has the finite model property. 
It is well known that for unimodal logics above $\LK{4}$, local tabularity is equivalent to finite height \cite{Seg_Essay},\cite{Maks1975LT}. 
\extended{
This criterion was extended for weaker systems in \cite{LocalTab16AiML}: it holds for logics containing $\Di^{k+1} p\imp \Di p\vee p$. }
In the non-transitive unimodal, and in the polymodal case, no axiomatic criterion of local tabularity is known.

It follows from \cite{MLTensor} that
the product of a locally tabular logic with a tabular  one is locally tabular. 
However, these cases are not exhaustive: 
other families of locally tabular modal products were identified
in \cite{Shehtman2012}, see also \cite{Shehtman2018}. For close systems, {\em intuitionistic modal logics} and {\em  expanding products}, 
locally tabular families were identified in \cite{Guram98MIPCI}, \cite{Guram-Revaz}, \cite{Bezhanishvili2001}, and a recent manuscript \cite{Bezhanishvili2023local}.

In the product $L_1\times L_2$ of two Kripke complete consistent logics, local tabularity of $L_1$ and $L_2$ is necessary  for local tabularity of $L_1\times L_2$.
However, it is not sufficient: 
the product of two locally tabular logics can be not locally tabular. 
The simplest example is the logic $\LS{5}\times \LS{5}$  \cite{CylindricalAlgebras1}. 
We provide extra semantic (Theorem \ref{thm:criterion-frames-general}) and axiomatic (Corollary \ref{cor:criterion-logics-general}) conditions which give criteria of local tabularity of the product of two locally tabular logics: 
bounded cluster property of one of the factors; 
a condition we call {\em product reducible path property}; finiteness of the one-variable fragment of the product.

In Section \ref{sec:examples}, we apply the criteria to identify new families of locally tabular products. In particular, we 
generalize some results from \cite{Shehtman2012,Shehtman2018}.

In Section \ref{sec:pfmp}, we discuss the product finite model property in the locally tabular case. 
A modal logic \(L\) has the \emph{product fmp}, if \(L\) is the logic of a class of finite product frames.
 The product fmp is stronger than the fmp: for example, \(\LK4\times \LS5\) has the fmp~\cite{GabbayShehtman-ProductsPartI}, but lacks the product fmp~\cite{ManyDim}.  
It is perhaps surprising that the local tabularity of a product logic does not imply the product fmp even in the case of height 3, as we discovered  in  Theorem \ref{prop:saw_logic_no_pfmp}.

In Section \ref{sec:ProdaboveS5}, we discuss local tabularity 
and prelocal tabularity in extensions of $\LS{4}\times\LS{5}$. 
The logic $\LS{5}$
is known to be one of the five pretabular logics above $\LS{4}$ \cite{EsakiaMeskhi1977}, \cite{MaksimovaPretab75}. 
We observe that for another pretabular logic $\TL$, the logic of the ordered sum of a countable cluster and a singleton, the product $\TL\times \LS{5}$ is not prelocally tabular. 
Then we consider a  weaker than $\TL$ logic $\LS{4}.1 [ 2 ]$, the extension of $\LS{4.1}$ with the axiom of height 2, 
and  give an axiomatic criterion of local tabularity for all normal extensions of $\LS{4}.1 [ 2 ]\times \LS{5}$. 
Finally, we discuss prelocal tabularity. 
It is known that $\LS{5}\times \LS5$ is prelocally tabular \cite{NickS5}. 
We construct a frame of height two, which we call {\em two-dimensional tack}, and show that its logic 
is another example of a 
 prelocally tabular logic above $\LS{4}\times \LS{5}$.

 \hide{
 In particular, in Section \IS{Remove}, 
we discuss prelocal tabularity of $\LS{5}\times \LS{5}$ \cite{NickS5}.  
We show that for a Kripke complete proper extension  of $\LS{5}\times \LS{5}$, 
local tabularity is a simple corollary of Theorem \ref{thm:criterion-frames-general}. 
\improve{\ISH{Do we give a shorter proof?}}

...
}

\section{Preliminaries}\label{sec:prel}

For basic notions in modal logic, see, e.g., \cite{CZ} or \cite{BDV}.

\subsection{Modal syntax and semantics.} Let $\Al$ be a finite set, an {\em alphabet of modalities}.
{\em Modal formulas over $\AlA$}, $\ML(\AlA)$ in symbols, are constructed from
a countable set of {\em variables} $\PV=\{p_0,p_1,\ldots\}$ using Boolean connectives $\bot,\imp$ and unary connectives $\Di\in \AlA$.
Other logical connectives are defined as abbreviations in the standard way, in particular $\Box\vf$
denotes $\neg \Di \neg \vf$. 


We define the following abbreviations: $\Di^0 \vf=\vf$,
$\Di^{i+1}\vf=\Di^i\Di\vf$,
$\Di^{\leq m} \vf =
\bigvee_{i\leq m} \Di^i \vf$, $\Box^{\leq m} \vf =\neg \Di^{\leq m} \neg \vf$.
We write $\DiAl\vf$ for $\bigvee_{\Di\in\Al}\Di \vf$. 

By an {\em $\AlA$-logic} we mean a normal modal logic whose alphabet of modalities is $\AlA$, see, e.g. \cite{BDV}. The terms {\em unimodal} and {\em bimodal} refers to the cases $\AlA=\{\Di\}$ and \(\AlA = \{\Di_1,\Di_2\}\), respectively.

An {\em \(\AlA\)-frame} is a pair \(F = (X,\,(R_\Di)_{\Di\in \AlA})\), where \(X\) is a non-empty
set and \(R_\Di \subseteq X\times X\) for any \(\Di\in \AlA\). We write \(\dom F\)  for  \(X\). 
By the cardinality $|F|$ of $F$ we mean the cardinality of $X$.
We put \(R_F = \bigcup_{\Di\in \AlA} R_\Di.\) 
For $a\in X$, $Y\subseteq X$, we put $R_\Di(a)=\{b\mid aR_\Di b\}$,
$R_\Di[Y]=\bigcup_{a\in Y} R_\Di(a)$.

A {\em model on} $F$ is a pair \((F,\,\theta)\), where \(\theta:\:PV \to \clP(X)\), and $\clP(X)$ is the powerset of \(X\); $\theta$ is called a {\em valuation in \(F\)}.  
The truth-relation  $(\frF,\theta),a\mo \vf$ is defined in the standard way; in particular, for $\Di\in \AlA$,  
$(F,\,\theta),\,a\models \Di \varphi$ means that $(F,\,\theta),\,b\models \varphi\text{ for some }b\in R_\Di(a)$. We put
$$\vext(\vf)=\{a\mid (F,\v),a\mo\vf\}.$$
A formula $\vf\in \ML(\AlA)$ is {\em valid in an  $\Al$-frame $\frF$}, in symbols $\frF\mo\vf$,
if $X=\vext(\vf)$ for every model $(F,\theta)$ on $F$.
For a set of formulas $\Gamma\subseteq \ML(\Al)$, $F\mo\Gamma$ means that $F\mo\vf$ for all $\vf\in\Gamma$; 
in this case $F$ is said to be a {\em $\Gamma$-frame}.
For a class $\clF$ of $\Al$-frames $\clF$, $\clF\mo\Gamma$
means that $F\mo \Gamma$ for all $F\in\clF$.

The set of \(\AlA\)-formulas that are valid in \(F\) is denoted by
\(\Log F\). 
For a class $\clF$ of $\AlA$-frames,  \(\Log \clF=\bigcap\{\Log F \mid F\in \clF \}\);
this set is a logic \cite{BDV}\improve{\ISH{Specify?} } and is called the {\em logic of $\clF$}.  Such logics are said to be {\em Kripke complete}. 
\extended{ For a set of formulas \(\Gamma \subseteq \ML(\AlA),\) \(\Frames(\Gamma)\) is the class of \(\AlA\)-frames where \(\Gamma\) is valid.
}

Let us recall that \(\LK{4}\) is the smallest unimodal logic that contains $\Di \Di p \to \Di p$,
$\LS{4}$ extends \(\LK{4}\) with $p\imp \Di p$, and $\LS{5}$ extends $\LS{4}$ with $p\imp\Box\Di p$. 
These logics are Kripke complete; $\Di \Di p \to \Di p$ expresses the transitivity, 
$p\imp \Di p$ reflexivity, and $p\imp\Box\Di p$ the symmetry of a binary relation, see, e.g., \cite{BDV} or \cite{CZ}.\improve{specify}

 \hide{
It is a standard fact that \(\LS{4}\) is Kripke complete, and every frame that validates~\(\LS{4}\) is a preorder. The unimodal logic \(\LS{5}\) is the extension of \(\LS{4}\) obtained by adding \(p \to \Box\Di p,\) which defines the class of equivalence relations \cite{CZ}.
}

The notions of {\em generated subframe}\hide{, {\em rooted frame},} and {\em p-morphism} (or {\em bounded morphism})  are defined in the standard way, see, e.g., 
\cite[Section 3.3]{BDV}. In particular, 
  for frames \(F = (X,\,(R_\Di)_{\Di\in \AlA})\) and \(G = (Y,\,(S_\Di)_{\Di\in \AlA})\), 
 \(f:\:X\to Y\) is a {\em p-morphism from \(F\) to \(G\)}, if   
  \begin{enumerate}
    \item $f$ is a homomorphism, that is  \hide{monotone:} for any \(a,\,b\in X\) and \(\Di\in \AlA,\) \(a R_\Di b\) implies \(f(a) S_\Di f(b)\), and  
    \item $f$ satisfies the {\em back condition}, that is
     for any \(a\in X,\,u\in Y\), and \(\Di\in \AlA,\) \(f(a) S_\Di u\) implies that there exists \(b\in X\) such that \(a R_\Di b\) and \(f(b) = u.\) 
  \end{enumerate}
  If $f$ is surjective,  we write \(f:\:F\toto G.\) If there exists a p-morphism from \(F\) onto~\(G,\) we write \(F \toto G;\) in this case, \(\Log{F} \subseteq \Log{G}\)
  \cite[Section 3.3]{BDV}.

For a frame \(F = (X,\,(R_\Di)_{\Di\in A})\) and an equivalence $\approx$ on $F$, 
the {\em quotient frame} $\fact{F}{\approx}$ is the frame  
$(\fact{X}{\approx},\,(R^\approx_\Di)_{\Di\in\Al})$, where for $[a],[b]\in  \fact{X}{\approx}$ and $\Di\in \AlA$,
  \[
    [a] R^\approx_\Di [b] \tiff \exists a'\in[a]\ \exists b'\in [b]\ \left(a' R_{\Di} b' \right).
  \]

\improve{Give ref to \cite{ManyDim} below}
\subsection{Products.}
Let $\AlA$ and $\AlB$ be two disjoint finite sets. 
For an \(\AlA\)-frame \(F=\left(X,\,(R_\Di)_{\Di\in \AlA}\right)\) and \(\AlB\)-frame \(G = \left(Y,\,(R_\Di)_{\Di\in \AlB}\right)\), the {\em product frame} \( F\times G\) is the frame $(X\times Y,\,(R^h_\Di)_{\Di\in \AlA}, (R^v_\Di)_{\Di\in \AlB})$,  where 
\begin{align*}
    R^h_\Di&=\{((a,\,b),\,(a',\,b))\mid a,\,a'\in X,\,b\in Y,\,a R_\Di a'\}  &\text{ for }\Di\in \AlA;\\
    R^v_\Di &= \{((a,\,b),\,(a,\,b'))\mid a\in X,\,b,\,b'\in Y,\,b R_\Di b'\}  &\text{ for }\Di\in \AlB.
\end{align*} 
We say that relations $R^h_\Di$ are {\em horizontal},  and $R^v_\Di$ are {\em  vertical}. 
\hide{
\ISH{What is more convenient in proofs: horizontal/vertical or induced?}
so $F\times G$  has $|\AlA|$ horizontal and $|\AlB|$ vertical relations.

\hide{
The product frame \(F\times G\) is an \((\AlA\sqcup\AlB)\)-frame.
\ISH{Earlier, $a,b$ were used for elements of frames. Now we have different notation. 
I think it is OK (if only we do not use first-order formulas extensively), but should be consitent through the text.} 
}

We use the notation \(R^h\) for \(\bigcup_{\Di\in \AlA} R_\Di^h\) and \(R^v\) for \(\bigcup_{\Di\in \AlB} R_\Di^v\)
\ISH{When possible, let us minimize the conventions. Do we need $R^h, R^v?$}

}
For a class \(\clF\) of \(\AlA\)-frames and a class \(\clG\) of \(\AlB\)-frames, we put
$
  \clF \times \clG = \{F\times G \mid F\in \clF,\,G\in\clG\}.
  $
\improve{Is is a good style when the set builder 
$\{x \mid \phi(x) \and \phi(x)\}$ is abbreviated as
$\{x \mid \phi(x), \phi(x)\}$? It is quite common, but is it common for  good manuscripts?}

For logics $L_1,L_2$, the {\em product logic \(L_1\times L_2\)} is defined as
\[
  L_1\times L_2 = \Log\{F\times G \mid F\models L_1,\,G\models L_2\}.
\]
If $L_1$ and $L_2$ are unimodal logics, we rename their modalities and follow the convention that
the alphabet of $L_1\times L_2$ is $\{\Di_1,\Di_2\}$.  
We write $L^2$ for  $L\times L$.

\subsection{Local tabularity and local finiteness.}
For finite $k$, a logic $L$ is {\em $k$-finite}, if
$L$ contains only a finite number of pairwise nonequivalent
formulas in variables $p_i$, $i<k$. \extended{ Equivalently, $L$ is $k$-finite, if
the $k$-generated Lindenbaum algebra of $L$ is finite. }
$L$ is said to be {\em locally tabular}, if it is $k$-finite for all finite $k$.
The following fact is well known, see, e.g., \cite[Chapter 12]{CZ}.\improve{Check the ref}
\begin{proposition}\label{prop:LT_implies_fmp+extensions}~ Let $L$ be locally tabular. Then:
\begin{enumerate}[1.] 
\item $L$ has the {\em finite model property}, that is, $L$ is the logic of a class of finite frames. In particular, $L$ is Kripke complete. 
    \item  Every extension of $L$ (in the same modal alphabet) is locally tabular.
\end{enumerate} 
\end{proposition}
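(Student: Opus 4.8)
The plan is to treat the two items separately: item~(2) is a one-line quotient argument, while item~(1) is obtained from the Lindenbaum algebras of $L$ together with the finite Jónsson--Tarski representation.

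Start with item~(2). Let $L'$ be an extension of $L$ in the alphabet $\Al$, so $L\subseteq L'$, and fix a finite $k$. If two formulas in the variables $p_i$, $i<k$, are $L$-equivalent, they are a fortiori $L'$-equivalent; hence the set of $L'$-equivalence classes of formulas in these variables is a quotient of the corresponding set of $L$-equivalence classes, which is finite since $L$ is $k$-finite. So $L'$ is $k$-finite for every finite $k$, i.e.\ locally tabular.

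For item~(1), suppose $\varphi\notin L$ and pick $k$ so that every variable of $\varphi$ is among $p_i$, $i<k$. Consider the $k$-generated Lindenbaum algebra $\Lind{L}{k}$, whose elements are the $L$-equivalence classes of formulas in $p_i$, $i<k$, with the Boolean and modal operations induced by the connectives. By local tabularity it is finite; it validates $L$, because $L$ is closed under substitution, so every substitution instance of an $L$-theorem evaluates to the top element; and under the assignment $p_i\mapsto[p_i]$ the formula $\varphi$ evaluates to $[\varphi]\neq\top$, so $\Lind{L}{k}$ refutes $\varphi$. Being finite, $\Lind{L}{k}$ is a complete, atomic modal algebra, hence by the Jónsson--Tarski representation it is isomorphic to the full complex algebra of a finite $\Al$-frame $F$, namely its atom structure. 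Since validity of a formula in a full complex algebra coincides with its validity in the underlying frame, $F\models L$ while $F\not\models\varphi$. Letting $\clF$ be the class of all finite $\Al$-frames validating $L$, we obtain $L\subseteq\Log\clF$ at once, and $\Log\clF\subseteq L$ because every non-theorem of $L$ is refuted on some member of $\clF$; hence $L=\Log\clF$, so $L$ has the finite model property and, in particular, is Kripke complete.

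The only non-routine ingredient, and the step I would expect to be the main obstacle, is the last one: the finite Jónsson--Tarski duality together with the coincidence of algebraic and frame validity for full complex algebras. A direct Kripke-style argument via filtration is awkward here, since a filtration of an $L$-model need not be based on a frame validating $L$ --- this failure is precisely what makes local tabularity a nontrivial property --- so the algebraic detour is the natural route.
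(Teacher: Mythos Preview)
Your argument is correct and is precisely the standard textbook route. The paper does not supply its own proof of this proposition; it merely cites \cite[Chapter~12]{CZ}, where the same Lindenbaum-algebra argument (finite $k$-generated Lindenbaum algebra, then Jónsson--Tarski duality for finite BAOs) is used for item~(1), and the same quotient observation for item~(2).
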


In algebraic terms, local tabularity of a logic  means that its 
Lindenbaum algebra is locally finite, which, in turn, means  that the  variety of its algebras is locally finite (see, e.g., \cite[Chapter 2]{BurrisSankappanavar2012}, \cite[Chapter 12]{CZ} for corresponding  notions).  

\smallskip 
A class  $\clC$ of algebras of a finite signature is said to be
{\em uniformly locally finite}, if
there exists a function $f:\omega\to \omega$ such that the cardinality of a subalgebra of any $B\in \clC$ generated by
$k<\omega$ elements does not exceed $f(k)$.
\begin{theorem}\cite[Section 14, Theorem 3]{Malcev73}\label{Malcev73}.  
Local finiteness of the variety 
generated by a class $\clC$
is equivalent to uniform local finiteness of $\clC$. 
\end{theorem}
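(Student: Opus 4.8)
The plan is to derive both implications from the basic theory of free algebras in a variety. Write $V$ for the variety generated by $\clC$; by Birkhoff's theorem $V$ consists exactly of the homomorphic images of subalgebras of products of members of $\clC$, and for each finite $k$ the variety $V$ has a free algebra $F_V(k)$ on $k$ generators with the universal property that every $k$-generated member of $V$ is a homomorphic image of $F_V(k)$. Hence $V$ is locally finite if and only if $F_V(k)$ is finite for every finite $k$, and it suffices to control the sizes of the $F_V(k)$.

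For the direction from uniform local finiteness of $\clC$ to local finiteness of $V$, I would fix a bounding function $f\colon\omega\to\omega$ for $\clC$ and a finite $k$. Using the term algebra $T$ on $k$ variables together with the standard construction of the free algebra as a subdirect product, $F_V(k)$ embeds into a product $\prod_{i\in I}B_i$ in which each $B_i$ is a homomorphic image of $T$, hence a $k$-generated subalgebra of some member of $\clC$, so that $|B_i|\le f(k)$. Since the signature is finite, there are only finitely many algebras of cardinality at most $f(k)$ up to isomorphism; thus every $B_i$ is isomorphic to one of finitely many fixed finite algebras $S_1,\dots,S_m$.

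The crux is then the following lemma: a $k$-generated subalgebra of a product $\prod_{i\in I}B_i$, where each $B_i$ is isomorphic to one of finitely many finite algebras, is itself finite. I would prove it by partitioning $I$ according to the isomorphism type of the configuration of the chosen generators: declare $i$ and $i'$ equivalent when there is an isomorphism $B_i\to B_{i'}$ carrying the $i$-th coordinates of the $k$ generators to their $i'$-th coordinates. There are at most $\sum_{j\le m}|S_j|^{k}$ such classes, and the projection of the $k$-generated subalgebra onto the indices of one class is isomorphic to a single $k$-generated subalgebra of the corresponding $S_j$. Hence the subalgebra embeds into a finite product of finite algebras and is finite. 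Applying this to $F_V(k)\hookrightarrow\prod_{i}B_i$ shows $F_V(k)$ is finite, so $V$ is locally finite.

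The converse is routine bookkeeping: if $V$ is locally finite, put $f(k)=|F_V(k)|<\omega$; then any $k$-generated subalgebra $B$ of any $A\in\clC\subseteq V$ is $k$-generated and in $V$, hence a homomorphic image of $F_V(k)$, so $|B|\le f(k)$, and $\clC$ is uniformly locally finite with bound $f$. I expect the only genuine obstacle to be the finiteness lemma for $k$-generated subalgebras of products of boundedly many finite algebras of a finite signature; the remaining steps are the standard facts about $\mathrm{HSP}$, free algebras, and subdirect representations.
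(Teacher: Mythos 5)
The paper offers no proof of this statement: it is imported verbatim from Mal'cev's book (the citation \cite{Malcev73} is the "proof"), so there is nothing internal to compare you against; what follows is an assessment of your argument on its own. Your proof is correct, and it is essentially the classical argument. The easy direction (local finiteness of the variety $V$ gives the bound $f(k)=|F_V(k)|$ for $k$-generated subalgebras of members of $\clC$) is exactly right. For the substantive direction, your realization of $F_V(k)$ as a subalgebra of a product $\prod_{i\in I}B_i$ of $k$-generated subalgebras of members of $\clC$, the observation that finiteness of the signature leaves only finitely many isomorphism types $S_1,\dots,S_m$ among algebras of size at most $f(k)$, and your finiteness lemma all hold: within one block of your partition of $I$ by the pointed isomorphism type $(B_i,g_1(i),\dots,g_k(i))$, the $i$-coordinate of any element $t(\bar g)$ is the image of its value at a fixed reference coordinate under the witnessing isomorphism, so the projection of the generated subalgebra onto that block has at most $\max_j|S_j|$ elements, and there are at most $\sum_{j\le m}|S_j|^k$ blocks. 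The textbook route (e.g.\ Mal'cev, or Burris--Sankappanavar) packages the same idea slightly differently: the $k$-variable identities of $\clC$ coincide with those of $S_1,\dots,S_m$, so $F_V(k)$ is the $k$-generated free algebra of the variety generated by the single finite algebra $S_1\times\cdots\times S_m$ and hence embeds in a finite power of it; your lemma buys exactly the same conclusion with a bit more hands-on bookkeeping. The only point deserving a sentence in a polished write-up is set-theoretic hygiene: $\clC$ may be a proper class, so when presenting $F_V(k)$ as a subdirect product of images of the term algebra one should index by the (set-many) kernels of homomorphisms rather than by all homomorphisms into all members of $\clC$; this is routine and does not affect your argument.
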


\subsection{Pretransitivity and finite height.}

For a binary relation $R$ on $X$ and a number \(m < \omega\), put
$R^{\leq m}=  \bigcup_{i \leq m} R^i$, where
$R^{i+1}=R\circ R^i$, $\circ$ is the composition of relations, $R^0$ is the diagonal relation on $X$. The transitive reflexive closure \(\bigcup_{i < \omega} R^i\) of $R$ is denoted by $R^*$.

A frame $F$ is said to be {\em $m$-transitive},  if $R_F^{\leq m}=R_F^*$.\footnote{
To avoid any ambiguity, we remark that we write $R_F^{\leq m}$ for $(R_F)^{\leq m}$, and $R_F^*$ for   $(R_F)^*$.
}
$F$ is 
{\em pretransitive}, if it is $m$-transitive for some finite $m$.
\hide{
It is straightforward that
\begin{equation}
\text{$R$ is $m$-transitive iff
$R^{m+1}\subseteq R^{\leq m}$.  }
\end{equation} 
\extended{
A frame \(F = (X,\,(R_\Di)_{\Di\in \AlA})\) is {\em rooted} if there exists a point \(r\in X,\) called {\em the root of \(F\)}, such that \(X = R_\Di^*(r)\) for any \(\Di\in \AlA.\)
\ISH{It seems that something is wrong here}
}
}
\ISLater{Do we need it?}
The following fact is standard.
\begin{proposition}[Jankov-Fine theorem for pretransitive frames]\label{prop:Jankov-Fine}
If $\Log G\subseteq \Log F$, $G$ is pretransitive, and $F$ is finite and rooted,
then there exists a point-generated subframe $H$ of $G$ such that $H\toto F$.
\end{proposition}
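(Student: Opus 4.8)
The plan is to run the standard Jankov--Fine argument, with the transitive ``master modality'' replaced by a bounded iteration of $\bigvee_{\Di\in\Al}\Di$ that is available precisely because $G$ is pretransitive and $F$ is finite. Fix an enumeration $\dom F=\{a_0,\dots,a_n\}$ with $a_0$ a root, so that $R_F^*(a_0)=\dom F$; pick $m$ with $R_G^{\leq m}=R_G^*$, and set $M=\max(m,n)$. For a formula $\psi$ write $\Di^{\leq M}_\Al\psi$ for $\bigvee_{i\leq M}(\bigvee_{\Di\in\Al}\Di)^i\psi$ and $\Box^{\leq M}_\Al\psi$ for $\neg\Di^{\leq M}_\Al\neg\psi$. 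Since $M\geq m$, in every model on $G$ the modality $\Box^{\leq M}_\Al$ expresses truth at all $R_G^*$-successors; since $M\geq n$ and $F$ is rooted, $\Box^{\leq M}_\Al\psi$ evaluated at $a_0$ in a model on $F$ says ``$\psi$ holds at every point of $\dom F$''.

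Introduce fresh variables $p_0,\dots,p_n$ and form the Jankov--Fine formula
\[
  \chi_F \;=\; \neg\Bigl(\, p_0 \,\wedge\, \Box^{\leq M}_\Al\alpha \,\wedge\, \Box^{\leq M}_\Al\!\!\bigwedge_{i\leq n}(p_i\to\beta_i)\,\Bigr),
\]
where $\alpha \;=\; \bigvee_{i\leq n}p_i \,\wedge\, \bigwedge_{0\leq i<j\leq n}\neg(p_i\wedge p_j)$ says ``exactly one of $p_0,\dots,p_n$ holds'', and $\beta_i \;=\; \bigwedge_{\Di\in\Al}\bigl( \bigwedge_{j:\,a_iR_\Di a_j}\Di p_j \;\wedge\; \Box\bigvee_{j:\,a_iR_\Di a_j}p_j \bigr)$, with the convention that an empty disjunction is $\bot$ (so the right-hand conjunct becomes $\Box\bot$ exactly when $a_i$ has no $R_\Di$-successor). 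The first step is to check $\chi_F\notin\Log F$: under the valuation $\theta$ with $\theta(p_i)=\{a_i\}$ one verifies straight from the definitions that $(F,\theta),a_0\mo p_0\wedge\Box^{\leq M}_\Al\alpha\wedge\Box^{\leq M}_\Al\bigwedge_i(p_i\to\beta_i)$. Because $\Log G\sub\Log F$, this yields $\chi_F\notin\Log G$, so there is a model $(G,\vartheta)$ and a point $x_0$ at which the conjunction inside the negation holds.

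Let $H$ be the subframe of $G$ generated by $x_0$, so that $\dom H=R_G^*(x_0)=R_G^{\leq M}(x_0)$. Then $\Box^{\leq M}_\Al\alpha$ forces exactly one $p_i$ to be true at each point of $\dom H$, which lets us define $f\colon\dom H\to\dom F$ by $f(y)=a_i \iff (G,\vartheta),y\mo p_i$; in particular $f(x_0)=a_0$, and $\Box^{\leq M}_\Al\bigwedge_i(p_i\to\beta_i)$ gives $(G,\vartheta),y\mo\beta_{f(y)}$ for every $y\in\dom H$. It then remains to verify that $f\colon H\toto F$, and this is the routine part: surjectivity follows by induction on the length of a path from $a_0$ to an arbitrary point of $\dom F$ (such a path exists since $F$ is rooted), lifting each step by means of the conjuncts $\Di p_j$ of $\beta_{f(y)}$ together with the closure of $\dom H$ under $R_G$-successors; the forth condition uses the conjuncts $\Box\bigvee_{j}p_j$ of $\beta_i$ and $\alpha$; the back condition uses the conjuncts $\Di p_j$ of $\beta_i$ and, again, closure under successors; and throughout one invokes that a generated subframe preserves the truth of all subformulas of $\chi_F$ at points of $\dom H$.

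I do not expect a serious obstacle: this is the classical Jankov--Fine construction with $\Di^{\leq M}_\Al$ substituted for the transitive diamond. The one point to get right is the setup of the bounded master modality --- choosing a single $M$ that simultaneously reaches all of $\dom F$ from the root and captures $R_G^*$-reachability in $G$ --- and this is exactly where the hypotheses enter: without pretransitivity of $G$ no finite iteration of $\bigvee_{\Di\in\Al}\Di$ defines $R_G^*$, and the statement genuinely fails.
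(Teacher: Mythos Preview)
The paper does not supply a proof of this proposition; it simply labels the result as standard. Your argument is exactly the classical Jankov--Fine construction carried out with the bounded master modality $\Di_\Al^{\leq M}$ in place of the transitive diamond, and it is correct. One tiny notational point: in the definition of $\beta_i$ the inner $\Box$ is the dual of the particular $\Di$ currently bound by the outer conjunction $\bigwedge_{\Di\in\Al}$; this is clear from context but would be worth writing explicitly as $\neg\Di\neg$ to avoid ambiguity.
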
 


For any $\Al$-frame $\frF$, we have the following equivalence \cite[Section 3.4]{KrachtBook}:
\begin{equation}\label{pretr:sem}
\text{
$R_\frF$ is $m$-transitive iff
$\frF\mo \DiAl^{m+1} p \imp \DiAl^{\leq m} p$. }
\end{equation} 
A logic $\vL$ is said to be {\em $m$-transitive}, if
$\vL$ contains $\Di_\AlA^{m+1} p \imp \Di_\AlA^{\leq m} p$, and {\em pretransitive}, if it is $m$-transitive for some $m\geq 0$.

For a frame $F$, let $F^*$ be the preorder $(\dom{F},R_F^*)$. For a class $\clF$ of frames, $\clF^*=\{F^*\mid F\in F\}$.

The following fact is straightforward: for an $m$-transitive frame $F=(X,(R_\Di)_{\Di\in \Al})$, 
and any unimodal formula $\vf$, we have
\begin{equation}\label{eq:pretrans}
F\mo \tranA{\vf} \text{ iff } F^*\mo \vf. 
\end{equation}
where $\tranA{\vf}$ is the translation that is compatible with Boolean connectives (that is, 
$\tranA{\bot}=\bot$, $\tranA{p}=p$ for variables, $\tranA{\psi_1\imp \psi_2}=\tranA{\psi_1}\imp \tranA{\psi_2}$),
given by $\tranA{\Di \psi}= \Di_\AlA^{\leq m}  \tranA{\psi}$.

For a frame \(F = (X,\,(R_\Di)_{\Di\in \AlA}),\) let $\sim_F$ be the equivalence $R_F^*\cap (R_F^*)\inv$ on $X$, and   $\Sk F=\fact{F^*}{\sim_F}$.
The quotient frame $\Sk F$ is a poset; it is called the {\em skeleton of } $F$. 
Equivalence classes modulo  $\sim_F$  
are called {\em clusters in $F$.} 

We say that $F$ {\em has a finite height $h$}, in symbols $h(F)=h$, 
if $\Sk F$ contains a chain of \(h\) elements
and no chains of more than \(h\) elements.
Consider unimodal formulas 
\[
  B_0 = \bot,\quad B_{i+1} = p_{i+1}\imp \Box \left (\Di p_{i+1} \lor B_i\right).
\] 
For a unimodal frame $G=(X,R)$ with a transitive $R$, we have \cite{Seg_Essay}: 
$$G\mo B_h \tiff h(G)\leq h.$$
Consequently, if \(F\) is an \(m\)-transitive $\Al$-frame,
then  \(F\models \tranA{B_h}\) iff \(h(F)\le h\).
Formulas $B_h$ and their polymodal pretransitive generalizations $\tranA{B_h}$ are called {\em formulas of finite height}.
For an \(m\)-transitive logic~\(L\), we define the \emph{height of \(L\)}, denoted by \(h(L),\) as the smallest \(h < \omega\) such that \(L \vdash [B_h]^m.\)

\begin{theorem}\cite{LocalTab16AiML}  \label{thm:1-finite-to-m-h}
Let $L$ be an $\AlA$-logic. 
If $L$ is 1-finite, then
for some $h,m<\omega$, $L$ contains $\Di_\AlA^{m+1} p \imp \Di_\AlA^{\leq m} p$ and
$\tranA{B_h}.$
\end{theorem}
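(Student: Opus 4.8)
Let $L$ be an $\AlA$-logic. If $L$ is 1-finite, then for some $h,m<\omega$, $L$ contains $\Di_\AlA^{m+1} p \imp \Di_\AlA^{\leq m} p$ and $\tranA{B_h}$.

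Let me think about how to prove this.
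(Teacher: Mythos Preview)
Your proposal contains no proof: you have merely restated the theorem and written ``Let me think about how to prove this.'' There is nothing here to evaluate.

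For context, the paper itself does not prove this theorem either; it is quoted from \cite{LocalTab16AiML} (where it appears in the unimodal case) with the remark that the polymodal generalization is straightforward and is written out in \cite{LTViaSums2022}. If you wish to supply an argument, the standard route is as follows. First, show that if $L$ is $1$-finite then the sequence of formulas $\Di_\AlA^{\leq 0} p,\, \Di_\AlA^{\leq 1} p,\, \Di_\AlA^{\leq 2} p,\ldots$ in the single variable $p$ must eventually repeat up to $L$-equivalence; since $\Di_\AlA^{\leq m} p \to \Di_\AlA^{\leq m+1} p$ is always provable, a repetition gives $\Di_\AlA^{\leq m+1} p \leftrightarrow \Di_\AlA^{\leq m} p$ in $L$, hence $\Di_\AlA^{m+1} p \to \Di_\AlA^{\leq m} p \in L$, establishing $m$-transitivity. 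Second, once $L$ is $m$-transitive, pull the argument back to the preorder $R_F^*$ via the translation $\tranA{\cdot}$: the unimodal logic $\{\varphi \mid \tranA{\varphi}\in L\}$ contains $\LS4$ and is $1$-finite, and the Segerberg--Maksimova result then forces some $B_h$ into it, whence $\tranA{B_h}\in L$.
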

In \cite{LocalTab16AiML}, this fact was stated for the unimodal case; its polymodal generalization  is straightforward, details can be found in \cite{LTViaSums2022}.

\subsection{Local tabularity and tuned partitions.} 

A {\em partition $\clV$ of a set $X$} is a family
of non-empty pairwise disjoint sets such that $X=\bigcup \clV$.
For $x\in X$, $[x]_\clV$ denotes $V\in\clV$ that contains $x$. 
A  {\em partition of $X$ induced by $h :X\to Y$} is the set of non-empty sets $h^{-1}(y)$, $y\in Y$.
A partition $\clU$ {\em refines} $\clV$, if each element of $\clV$ is the union of some elements of $\clU$.

\begin{definition}
\label{def:tune}
Let $R$ be a binary relation on $X$.
A partition $\clU$ of $X$ is said to be {\em $R$-tuned}, if for every $U,V\in \clU$,
\begin{equation}\label{eq:part}
\EE a\in U  \EE b\in V \, (aR b)  \,\text{ implies }\, \AA a\in U  \EE b\in V \, (aR b).
\end{equation} 
Let $\frF=(X,(R_\Di)_{\Di\in \Al})$ be an $\Al$-frame.
A partition $\clU$ of $X$ is said to be {\em tuned in $\frF$}, if it is $R_\Di$-tuned for every $\Di\in \Al$.
In other terms, the condition $\eqref{eq:part}$  means that the map $x\mapsto [x]_\clU$ is a p-morphism  $F\toto \fact{F}{\sim}$, where $\clU=\fact{X}{\sim}$, see, e.g., \cite[Proposition 3.2]{Blok1980} for details.

\extended{
The frame $\frF$ is said to be {\em tunable}, if for
 every finite partition $\clV$ of $\frF$ there exists a finite tuned refinement $\clU$ of $\clV$.}
A class $\clF$ of $\Al$-frames is said to be {\em $f$-tunable} for a function $f:\omega\to\omega$, if
 for every $\frF\in\clF$, for every finite partition $\clV$ of $F$ there exists a refinement
$\clU$ of $\clV$ such that $|\clU|\leq f(|\clV|)$ and $\clU$ is tuned in $\frF$.
A class $\clF$ is {\em uniformly tunable}, if it is $f$-tunable for some $f:\omega\to\omega$.
\end{definition}

\improve{\ISH{History: Frantzen; Nick; ShSh}}

The following theorem is a Kripke-style version of Malcev criterion: 
\begin{theorem}\cite{LocalTab16AiML}\label{thm:LFviaTuned}
\begin{enumerate}
    \item The logic of a class  $\clF$ of $\Al$-frames is locally tabular iff $\clF$  is uniformly tunable.
    \item Every locally tabular logic is the logic of  a uniformly tunable class. 
\end{enumerate}
\end{theorem}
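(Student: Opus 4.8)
The plan is to obtain part~(2) as an immediate consequence of part~(1): by Proposition~\ref{prop:LT_implies_fmp+extensions}, a locally tabular logic $L$ has the finite model property, hence $L=\Log\clF$ for some class $\clF$ of finite frames, and then part~(1) yields that $\clF$ is uniformly tunable. So the content lies in part~(1), which I would prove as two implications, both routed through the algebraic reading of local tabularity (local finiteness of the variety of the logic's algebras, as noted above) and Malcev's Theorem~\ref{Malcev73}. The bridge between frames and algebras is the complex algebra $F^{+}=(\clP(\dom F),\cup,\cap,\setminus,(\langle R_\Di\rangle)_{\Di\in\Al})$, with $\langle R_\Di\rangle S=\{x\mid \exists y\in S\ (x R_\Di y)\}$, together with the standard fact that $F\models\vf$ iff the equation $\vf\approx\top$ holds in $F^{+}$, so that $\Log\clF$ is the equational theory of the class $\{F^{+}\mid F\in\clF\}$.

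For ``$\Log\clF$ locally tabular $\Rightarrow$ $\clF$ uniformly tunable'': each $F^{+}$ with $F\in\clF$ validates $\Log\clF$, hence lies in the variety $V$ of $\Log\clF$-algebras, which is locally finite since $\Log\clF$ is locally tabular; by Theorem~\ref{Malcev73} (applied to $V$, which it generates) $V$ is \emph{uniformly} locally finite, so there is $f\colon\omega\to\omega$ such that every subalgebra of every $F^{+}$, $F\in\clF$, generated by $n$ elements has at most $f(n)$ elements. Given a finite partition $\clV$ of $\dom F$ with $|\clV|=n$, let $B$ be the subalgebra of $F^{+}$ generated by the blocks of $\clV$; then $B$ is a finite Boolean subalgebra of $\clP(\dom F)$, so its atoms form a partition $\clU$ of $\dom F$ with $|\clU|\le f(n)$, and $\clU$ refines $\clV$ because each block of $\clV$ lies in $B$ and is therefore a union of atoms. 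Finally $\clU$ is tuned in $F$: if $U,W\in\clU$ and $aR_\Di b$ with $a\in U$, $b\in W$, then $a\in\langle R_\Di\rangle W\in B$, so $U\cap\langle R_\Di\rangle W\ne\emptyset$; since an atom of $B$ meets an element of $B$ either trivially or fully, $U\subseteq\langle R_\Di\rangle W$, i.e.\ every point of $U$ has an $R_\Di$-successor in $W$, which is exactly~\eqref{eq:part}. As $f$ does not depend on $F$, $\clF$ is $f$-tunable.

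For the converse, suppose $\clF$ is $f$-tunable and fix $k<\omega$; put $N=f(2^{k})$. Given $F\in\clF$ and a valuation $\theta$ of $p_0,\dots,p_{k-1}$, the partition of $\dom F$ by ``which of these variables hold'' has at most $2^{k}$ blocks, hence admits a tuned refinement $\clU$ with $|\clU|\le N$; writing $\clU=\fact{\dom F}{\sim}$, the map $x\mapsto[x]_\clU$ is a surjective p-morphism $F\toto\fact F{\sim}$ (the remark after Definition~\ref{def:tune}), and since $\clU$ refines the type partition, $\theta$ descends to a valuation $\bar\theta$ on $\fact F{\sim}$ with $(F,\theta),x\models\psi\iff(\fact F{\sim},\bar\theta),[x]_\clU\models\psi$ for every $\psi\in\ML(\Al)$ in $p_0,\dots,p_{k-1}$ (induction on $\psi$). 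Let $\clK$ be the class of all p-morphic images of frames in $\clF$ with at most $N$ points. Then $\Log\clF\subseteq\Log\clK$ (p-morphisms preserve validity), while conversely a $k$-variable formula failing in some model on some $F\in\clF$ already fails on the member $\fact F{\sim}$ of $\clK$ just constructed; hence $\Log\clF$ and $\Log\clK$ have the same $k$-variable fragment. Since, up to isomorphism, there are only finitely many $\Al$-frames with at most $N$ points, $\Log\clK$ is the logic of finitely many finite frames, so it is tabular and in particular $k$-finite; therefore so is the $k$-variable fragment of $\Log\clF$. As $k$ was arbitrary, $\Log\clF$ is locally tabular.

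The main obstacle, and the real point of the theorem, is the forward direction: identifying the tuned finite partitions of $F$ with the atom sets of the finite subalgebras of the complex algebra $F^{+}$, so that the uniform bound furnished by Malcev's theorem turns into a single tuning function $f$ working uniformly across the whole class $\clF$. The converse is essentially bookkeeping --- the only care needed is that the ``variable-type'' partition has at most $2^{k}$ blocks, and that p-morphisms onto the bounded quotients transport the $k$-variable fragment faithfully.
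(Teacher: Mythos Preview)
The paper does not supply its own proof of this theorem: it is quoted from \cite{LocalTab16AiML} (with the polymodal extension deferred to \cite{LTViaSums2022}), so there is no in-paper argument to compare against. Your proof is correct and is, in fact, the standard route: the forward implication of part~(1) via Malcev's uniform bound on subalgebras of complex algebras, identifying tuned partitions with atom sets of finite subalgebras of $F^{+}$; the backward implication via a filtration-style quotient by a tuned refinement of the $k$-variable type partition, reducing to the logic of a finite class of finite frames; and part~(2) as a corollary of part~(1) through the finite model property. One minor remark: in the forward direction you generate $B$ from the $n$ blocks of $\clV$, but since these blocks partition $\dom F$ the last one is redundant, so $B$ is already generated by $n-1$ elements and you could take $|\clU|\le f(n-1)$; this does not affect correctness, only the sharpness of the bound.
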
 
 
This characterization makes many properties of locally tabular logics visible. 

\begin{example}\cite{Tacl2017}
Let $L$ be a locally tabular logic, and let $\un{L}$ 
be the expansion of $L$ with the universal modality \cite{GorankoPassy1992}. 
Then $\un{L}$ is locally tabular.  
Indeed, adding the universal modality to a pretransitive logic preserves Kripke-completeness  \cite{SpaanUniv}. Hence, $\un{L}$ is characterized by its point-generated frames, which are $L$-frames  with additional universal relation $X\times X$. Clearly, $X\times X$ is tuned with respect to any partition,  
and so this class inherits  uniform tunability from $L$-frames. 
\end{example}
\ISLater{Check the link to Spaan: this may be Wolter}
\ISLater{
Similar argument can be applied for enrichment of the language with the difference modality. However, 
}

\ISLater{expanded languages, admits filtration}

For a frame \(F = (X,\,(R_\Di)_{\Di\in \AlA})\)  and \(Y \subseteq X\), put \(F\restr Y = (Y,\,(R_\Di\restr Y)_{\Di\in\AlA})\), where $R_\Di\restr Y= R_\Di\cap (Y\times Y)$. 
For a class $\clF$ of  frames, put
  $$\SubFrs{\clF}=\{\frF\restr Y\mid \frF\in \clF \text{ and } \emp\neq Y\subseteq \dom{\frF}\}.$$

\begin{proposition}\cite{LTViaSums2022}\label{prop:LF-for-subframess}
If  $\Log\clF$ is $k$-finite for some positive $k<\omega$, then
the logic of $\SubFrs{\clF}$ is ${(k{-}1)}$-finite.
\end{proposition}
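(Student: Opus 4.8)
The plan is to use the \emph{relativization} translation, which encodes a non-empty subset of a frame by a fresh propositional variable and thereby turns validity in $\SubFrs{\clF}$ into validity in $\clF$ with one extra variable. Assume $\Log\clF$ is $k$-finite with $k\geq 1$, and write $p$ for the variable $p_{k-1}$, so that a formula built from $p_0,\ldots,p_{k-2}$ together with $p$ still uses only variables $p_i$ with $i<k$. For a formula $\vf\in\ML(\Al)$ in the variables $p_0,\ldots,p_{k-2}$, define $\vf^{p}$ by letting the translation fix $\bot$ and the variables, commute with $\imp$, and put $(\Di\psi)^{p}=\Di(p\con\psi^{p})$ for each $\Di\in\Al$.

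The first step is the relativization lemma: let $F=(X,(R_\Di)_{\Di\in\Al})$ be a frame, $\emp\neq Y\se X$, let $\theta$ be a valuation on $F$ with $\theta(p)=Y$, and let $\theta_Y$ be the valuation on $F\restr Y$ given by $\theta_Y(p_i)=\theta(p_i)\cap Y$; then $(F,\theta),a\mo\vf^{p}$ iff $(F\restr Y,\theta_Y),a\mo\vf$ for all $a\in Y$ and all formulas $\vf$ in $p_0,\ldots,p_{k-2}$. This is proved by a routine induction on $\vf$: the atomic case uses $a\in Y$, and the modal case uses $(R_\Di\restr Y)(a)=R_\Di(a)\cap Y$ for $a\in Y$ together with the induction hypothesis. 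Note also that every valuation on $F\restr Y$ arises as $\theta_Y$ for some $\theta$ on $F$ with $\theta(p)=Y$.

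Next, using the lemma together with the fact that $(p\con A)\lra(p\con B)$ is propositionally equivalent to $p\imp(A\lra B)$, I would show that for all formulas $\vf,\psi$ in $p_0,\ldots,p_{k-2}$,
\[
  \vf\lra\psi\in\Log\SubFrs{\clF}\iff (p\con\vf^{p})\lra(p\con\psi^{p})\in\Log\clF .
\]
Unwinding the left side gives: for every $F\in\clF$, every $\emp\neq Y\se\dom F$, every valuation on $F\restr Y$, and every $a\in Y$, the formula $\vf\lra\psi$ is true; unwinding the right side gives: for every $F\in\clF$, every valuation $\theta$ on $F$, and every $a\in\theta(p)$, the formula $\vf^{p}\lra\psi^{p}$ is true (the valuations with $\theta(p)=\emp$ contributing nothing). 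These two statements coincide by the relativization lemma and by the surjectivity of $\theta\mapsto\theta_Y$ onto the valuations of $F\restr Y$. Hence $\vf\mapsto p\con\vf^{p}$ induces a well-defined injection from the $\Log\SubFrs{\clF}$-equivalence classes of formulas in $p_0,\ldots,p_{k-2}$ into the $\Log\clF$-equivalence classes of formulas in $p_0,\ldots,p_{k-1}$; the latter set is finite because $\Log\clF$ is $k$-finite, so the former is finite too, i.e., $\Log\SubFrs{\clF}$ is $(k{-}1)$-finite.

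The one step that requires genuine care is the relativization lemma, and specifically phrasing it so that it simultaneously covers arbitrary valuations on $F$ (restricted to $\theta(p)$) and arbitrary valuations on the subframes $F\restr Y$, while keeping track of the degenerate case $\theta(p)=\emp$ when passing between the two notions of validity. Everything else is bookkeeping with the definitions of $k$-finiteness and of $\SubFrs{\clF}$.
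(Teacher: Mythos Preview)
Your argument is correct. The relativization translation $\vf\mapsto\vf^{p}$ is exactly the right tool, the relativization lemma is stated and used correctly, and the key equivalence
\[
  \vf\lra\psi\in\Log\SubFrs{\clF}\iff (p\con\vf^{p})\lra(p\con\psi^{p})\in\Log\clF
\]
is properly justified, including the handling of the degenerate case $\theta(p)=\emp$. The resulting injection of equivalence classes gives the bound immediately.

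The paper does not supply a proof of this proposition at all; it is quoted as a result from \cite{LTViaSums2022}. So there is no in-paper argument to compare against. That said, relativization by a fresh variable is the standard and essentially the only natural route to this statement, and one should expect the cited source to proceed along the same lines you do.
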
 
 
A frame $F$ is a {\em cluster}, if $a R_F^* b$ for all $a,b$ in $F$. 
For a class $\clF$ of frames, $\clusters{\clF}$ is the class of cluster-frames which are restrictions on clusters (as sets) occurring in frames in $\clF$:
$$
\clusters{\clF}=\{F\restr C\mid C \text{ is a cluster in } F\in\clF\}.
$$
A class $\clF$ is of {\em uniformly finite height}, if for some finite $h$, for every $F\in \clF$, we have $h(F)\leq h$.
\improve{
\ISH{Do we need it? - 
For a logic $L$, let $\clusters{L}$ be the class of clusters occurring in its frames.
}}

\smallskip
The following characterization is one of the main technical tools for our study.
\begin{theorem}\cite{LocalTab16AiML}\label{thm:supple-clusters-crit}
A logic $\Log \clF$ is locally tabular iff
$\clF$ is of uniformly finite height and $\Log\clusters{\clF}$ is locally tabular.
\end{theorem}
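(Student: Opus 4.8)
The plan is to prove both directions using the characterization of local tabularity via uniform tunability (Theorem \ref{thm:LFviaTuned}) together with the reduction of tuning problems to clusters. For the forward direction, assume $\Log\clF$ is locally tabular. Uniform finite height of $\clF$ follows from Theorem \ref{thm:1-finite-to-m-h}: since $\Log\clF$ is $1$-finite, it is $m$-transitive for some $m$ and contains a height formula $\tranA{B_h}$; by the semantic characterizations in \eqref{pretr:sem} and the discussion of finite-height formulas, every $F\in\clF$ is then $m$-transitive with $h(F)\le h$. Local tabularity of $\Log\clusters{\clF}$ follows because each cluster-frame $F\restr C$ is, up to the bounded-relation structure, a restriction of a frame in $\clF$; more precisely, $\clusters{\clF}\sub\SubFrs{\clF^{\le m}}$-style arguments, or directly: a uniformly tunable class remains uniformly tunable under taking point-generated subframes and restrictions to clusters (refining a partition of a cluster can be carried out inside a tuned refinement of the ambient frame). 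So $\clusters{\clF}$ is uniformly tunable, hence $\Log\clusters{\clF}$ is locally tabular by Theorem \ref{thm:LFviaTuned}.

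For the converse, assume $\clF$ has uniformly finite height $h$ and $\Log\clusters{\clF}$ is locally tabular. By Theorem \ref{thm:LFviaTuned} the class $\clusters{\clF}$ is uniformly tunable, say $g$-tunable. I want to show $\clF$ is uniformly tunable. Fix $F\in\clF$ and a finite partition $\clV$ of $\dom F$ with $|\clV|=n$. The idea is to build a tuned refinement of $\clV$ in $F$ by induction on the height, processing the skeleton $\Sk F$ layer by layer from the top. On each cluster $C$, first refine $\clV\restr C$ so that it also records, for each already-constructed block $W$ above $C$ (there are boundedly many, since height is bounded and at each previous layer the number of blocks was controlled), the set $\{x\in C\mid x R_\Di y \text{ for some } y\in W\}$ for each $\Di\in\Al$; this is a finite partition of $C$ whose size is bounded by a function of $n$ and the cumulative bound so far. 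Then apply $g$-tunability inside $C$ to get a tuned refinement of that partition of size at most $g(\text{bound})$. Collecting these cluster-wise refinements over all clusters yields a partition $\clU$ of $\dom F$: it refines $\clV$, and it is tuned in $F$ because horizontal "within-cluster" witnesses are handled by the cluster-internal tuning, while "across-cluster" $R_\Di$-edges (which, by $m$-transitivity and the fact that skeleton edges go strictly upward in the finite-height poset) are captured by the extra bookkeeping blocks we inserted. The size $|\clU|$ is bounded by a function of $n$ depending only on $h$ and $g$, which is what uniform tunability requires; then Theorem \ref{thm:LFviaTuned} gives local tabularity of $\Log\clF$.

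The main obstacle is the bookkeeping in the converse direction: making precise that the "layer-by-layer" construction produces a partition that is genuinely $R_\Di$-tuned for every $\Di\in\Al$, and that its size stays bounded by a single function of $|\clV|$ uniformly over $F\in\clF$. The subtlety is that tuning is a global condition on $F$, not just on its clusters: one must verify that an edge from a point $a$ in a lower cluster to a point $b$ in a strictly higher cluster is "reflected" by every point in $a$'s block, which is exactly what the auxiliary blocks encoding "can reach block $W$ above" are designed to guarantee, but one has to check the induction hypothesis is strong enough (the refinement on higher layers must already be tuned and finite) and that finitely many layers keep the bound from blowing up — here uniform finite height is essential, since it caps the number of iterations, and at each iteration the blow-up is by a fixed function $g$ composed with a polynomial in the current block count.
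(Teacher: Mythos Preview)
Your proposal is correct and follows essentially the same strategy as the paper: the forward direction via Theorem~\ref{thm:1-finite-to-m-h} together with the subframe result (Proposition~\ref{prop:LF-for-subframess}), and the converse by induction on height, peeling off the top layer of (maximal) clusters as a generated subframe and recording, for each lower point and each $\Di\in\Al$, which already-built upper blocks it can reach---this bookkeeping is exactly Lemma~\ref{lem:generated-subframe-tunable}, after which the induction hypothesis handles the remaining frame of height $\le h-1$. One caveat on the forward direction: your ``direct'' alternative (restrict a tuned refinement of the ambient $F$ to a cluster $C$) does not obviously produce a partition tuned in $F\restr C$, since for $a'\in U\cap C$ the witness $b'\in V$ with $a' R_\Di b'$ need not lie in $C$; so stick with the $\SubFrs$ route you also mention, which is precisely what the paper does.
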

\ISLater{
\begin{example}
  This fact can easili  
\end{example}
}
\noindent 
{\bf Remark.}
In \cite{LocalTab16AiML}, Theorems \ref{thm:LFviaTuned} and \ref{thm:supple-clusters-crit} were stated for the unimodal case. Their polymodal generalizations are straightforward. For 
 Theorem \ref{thm:LFviaTuned},  details are given in \cite{LTViaSums2022}. 
The polymodal version of Theorem \ref{thm:supple-clusters-crit} was not considered before; for the 
 sake of rigor, we prove it in Appendix.

\ISLater{

\subsection{Reducible path property.} 

\IS{Move it here and give history}

}

\section{Criteria}\label{sec:criteria}

Throughout this section, 
$\AlA$ and $\AlB$ are finite disjoint alphabets, 
$L_1$ is an $\AlA$-logic, $L_2$ is a $\AlB$-logic, 
$\clF$ and $\clG$ are classes of $\AlA$- and $\AlB$-frames, respectively. 

\subsection{Products of locally tabluar logics} 

In the product $L_1\times L_2$ of two Kripke complete consistent logic, local tabularity of the factors is a necessary condition for local tabularity of $L_1\times L_2$: the product is conservative over the factors in this case. 
However, it is not sufficient: 
it is known that the product of two locally tabular logics can be not locally tabular. 
The simplest example is the logic $\LS{5}^2$: that $\LS{5}$ is locally tabular is well known (for example, one can observe that in the class of frames of this Kripke complete logic every partition is tuned); at the same time $\LS{5}^2$ is not locally tabular \cite{CylindricalAlgebras1}.
We provide extra conditions which give criteria of local tabularity of the product of locally tabular logics.

The following classical fact about $\LS{5}^2$ is important for our goals.  
For a set $X$, let ${\boldsymbol X}$ be the frame $(X,X\times X)$.
For  sets  \(X,\,Y\),  the product frame \(\rect{X}{Y}\) is called a {\em rectangle}.
\begin{theorem}\label{thm:Segerberg2DimML}\cite{Segerberg2DimML}  If $\clX$ and $\clY$ are families of non-empty sets and 
    $\sup\{|X| \mid X\in\clX\}$ and $\sup\{|Y| \mid X\in\clY\}$ are infinite,  
    then $\Log\{\rect{X}{Y}\mid X\in \clX \text{ and } Y\in\clY\}$ is $\LS{5}^2$. 
\end{theorem}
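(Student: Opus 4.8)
The plan is to show that $\LS{5}^2$ is both a lower bound and an upper bound for the logic $L=\Log\{\rect{X}{Y}\mid X\in\clX,\ Y\in\clY\}$. For the lower bound $\LS{5}^2\subseteq L$, it suffices to observe that each $\boldsymbol X$ validates $\LS{5}$ (its relation is an equivalence relation, namely the full relation $X\times X$), hence each product $\rect{X}{Y}$ validates $\LS{5}\times\LS{5}=\LS{5}^2$ by the definition of the product logic; therefore $\LS{5}^2\subseteq\Log\{\rect{X}{Y}\}=L$.

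For the converse $L\subseteq\LS{5}^2$, I would argue by contraposition using the finite model property of $\LS{5}^2$ together with the Jankov--Fine style transfer (Proposition \ref{prop:Jankov-Fine}). Suppose $\vf\notin\LS{5}^2$. Since $\LS{5}^2$ is $2$-transitive and Kripke complete with the product fmp (it is the logic of finite rectangles $\rect{n}{k}$), there is a finite rooted frame $H$ validating $\LS{5}^2$ with $H\not\mo\vf$; moreover one can take $H$ to be a finite rectangle $\rect{m}{n}$, or at least a p-morphic image of one. Now since $\sup\{|X|\mid X\in\clX\}$ and $\sup\{|Y|\mid Y\in\clY\}$ are infinite, we may pick $X\in\clX$ with $|X|\ge m$ and $Y\in\clY$ with $|Y|\ge n$. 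The key point is then that $\rect{m}{n}$ is a p-morphic image of $\rect{X}{Y}$: indeed, any surjection $X\toto\{1,\dots,m\}$ is automatically a p-morphism $\boldsymbol X\toto\boldsymbol{m}$ (the full relation satisfies both the forth and back conditions under any surjection), and similarly for $Y$; p-morphisms are preserved under products, so $\rect{X}{Y}\toto\rect{m}{n}$. Hence $\Log(\rect{X}{Y})\subseteq\Log(\rect{m}{n})$, so $\rect{X}{Y}\not\mo\vf$, and thus $\vf\notin L$.

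I would package the last step so as to avoid invoking the product fmp of $\LS{5}^2$ as a black box if that has not been established at this point in the paper: alternatively, take any finite rooted $\LS{5}^2$-frame $H$ refuting $\vf$ (fmp of $\LS{5}^2$), apply Proposition \ref{prop:Jankov-Fine} with $G=\rect{X}{Y}$ for suitably large $X,Y$ — noting $\rect{X}{Y}$ is pretransitive ($2$-transitive) and $H$ finite rooted — to get a point-generated subframe of $\rect{X}{Y}$ p-morphing onto $H$, which again gives $\rect{X}{Y}\not\mo\vf$. But one must first know that every finite rooted $\LS{5}^2$-frame is itself a p-morphic image of some finite rectangle (this is the standard structure theorem for finite $\LS{5}^2$-frames), so that the sizes needed stay bounded by the dimensions of $H$; this is the substantive input.

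The main obstacle is precisely this structural fact: that finite rooted frames of $\LS{5}^2$ are (up to p-morphism) finite rectangles $\rect{m}{n}$, and controlling $m,n$ in terms of the refuting frame $H$. Everything else — that full relations give $\LS{5}$, that surjections between full-relation frames are p-morphisms, that products preserve p-morphisms, and that $\sup|\clX|=\sup|\clY|=\infty$ lets us find large enough factors — is routine. If the paper prefers, one can cite \cite{Segerberg2DimML} or \cite{ManyDim} for the structure of finite $\LS{5}^2$-frames and for the fmp, reducing the proof to the two short observations about lower and upper bounds sketched above.
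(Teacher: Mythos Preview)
The paper does not give its own proof of this theorem: it is stated with a citation to \cite{Segerberg2DimML}, followed only by a remark that Segerberg's original system is a conservative extension of $\LS{5}^2$ and a pointer to \cite[p.~243]{ManyDim}. So there is nothing to compare your argument against directly.

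Your first approach is correct and standard: the lower bound is immediate, and the upper bound follows once one knows that $\LS{5}^2$ is the logic of all finite rectangles (product fmp), since any surjection of sets is a p-morphism between the corresponding full-relation frames, and p-morphisms lift to products. This is exactly the reduction one wants; note, however, that the product fmp of $\LS{5}^2$ \emph{is} essentially the content of Segerberg's theorem, so you are not giving an independent proof so much as deriving the stated formulation from the more commonly cited one. That is entirely appropriate here, given that the paper treats the result as a citation.

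Your alternative route via Proposition~\ref{prop:Jankov-Fine} has a gap as written: that proposition requires the hypothesis $\Log G \subseteq \Log F$, where $G$ is your rectangle and $F$ is the finite rooted refuting frame. You do not know this inclusion in advance; knowing $F\models\LS{5}^2$ only gives $\LS{5}^2\subseteq\Log F$, not $\Log(\rect{X}{Y})\subseteq\Log F$. You correctly note that one needs the structure theorem for finite rooted $\LS{5}^2$-frames (they are p-morphic images of finite rectangles; cf.\ Lemma~\ref{lem:s5-square-quotient} in the appendix for the relevant idea), at which point you are back to the first approach anyway.
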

\begin{remark}
The system considered in \cite{Segerberg2DimML} is a conservative extension of $\LS{5}^2$. See \cite[Page 243]{ManyDim} for other references. 
\end{remark}

\improve{
\ISH{  general fact:  
 $(X,R_1,\,R_2)$ is an \(\LS{5}^2\)-frame iff \(R_1\) and \(R_2\) are equivalence relations such that  \(R_1\circ R_2 = R_2 \circ R_1.\)  
}
}

\hide{
\((X\times Y,\,R_H,\,R_V),\) where
\begin{gather*}
  (a,\,b)R_H(a',\,b') \iff a = a';\\
  (a,\,b)R_V(a',\,b')\iff b = b'.
\end{gather*}
}

\subsection{Products of pretransitive logics}

\hide{
Fix finite disjoint $\AlA$ and $\AlB$ for alphabets. 
Let $\clF$ denote a class of $\AlA$-frames, and $\clG$ a class of $\AlB$-frames.
\ISH{Make convention and use it through the text}
}

\begin{proposition}\label{prop:corner}
Let \(R_h\) and~\(R_v\) be the unions of all horizontal and all vertical relations of \(F\times G\), respectively. Then \(R_{F\times G}^* = R_h^* \circ R_v^*=R_v^* \circ R_h^*\).
\hide{
    For any frames \(F\) and \(G,\) \(R_{F\times G}^* = R_h^* \circ R_v^*,\) where \(R_h\) and~\(R_v\) denote the unions of all horizontal and all vertical relations of \(F\times G\), respectively.
    }
\end{proposition}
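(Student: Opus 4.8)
The plan is to prove $R_{F\times G}^* = R_h^* \circ R_v^* = R_v^* \circ R_h^*$ by establishing the two nontrivial inclusions and then noting symmetry. Recall $R_{F\times G} = R_h \cup R_v$. The easy direction is $R_h^* \circ R_v^* \subseteq R_{F\times G}^*$: since $R_h, R_v \subseteq R_{F\times G}$, we get $R_h^* \subseteq R_{F\times G}^*$ and $R_v^* \subseteq R_{F\times G}^*$, and $R_{F\times G}^*$ is transitive (and reflexive), so it is closed under composition; hence $R_h^* \circ R_v^* \subseteq R_{F\times G}^*$, and likewise $R_v^* \circ R_h^* \subseteq R_{F\times G}^*$.

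The substantive direction is $R_{F\times G}^* \subseteq R_h^* \circ R_v^*$. The key algebraic fact is a \emph{commutation} property: $R_v \circ R_h \subseteq R_h \circ R_v$ and, symmetrically, $R_h \circ R_v \subseteq R_v \circ R_h$, so in fact $R_h \circ R_v = R_v \circ R_h$. This is just the defining ``grid'' property of product frames: if $(a,b) R_v (a,b') R_h (a',b')$ then $(a,b) R_h (a',b) R_v (a',b')$, using that a vertical step changes only the second coordinate and a horizontal step only the first, so the two-step paths through the two ``corners'' of the rectangle connect the same endpoints. From $R_v \circ R_h \subseteq R_h \circ R_v$ one derives by an easy induction that $R_v^* \circ R_h^* \subseteq R_h^* \circ R_v^*$ and vice versa, hence $R_h^* \circ R_v^* = R_v^* \circ R_h^*$; and one checks that this common relation $T := R_h^* \circ R_v^*$ is reflexive (clear, as both factors are) and transitive: $T \circ T = R_h^* \circ R_v^* \circ R_h^* \circ R_v^* \subseteq R_h^* \circ R_h^* \circ R_v^* \circ R_v^* = R_h^* \circ R_v^* = T$, using the commutation to swap the middle $R_v^* \circ R_h^*$. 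Since $T$ is a reflexive transitive relation containing $R_h \cup R_v = R_{F\times G}$, it contains $R_{F\times G}^*$. Combined with the easy inclusion, this gives $R_{F\times G}^* = R_h^* \circ R_v^* = R_v^* \circ R_h^*$.

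I expect the only real care needed is in the commutation lemma and the induction lifting it from single relations to their reflexive transitive closures; both are elementary but should be spelled out, since the whole argument hinges on the fact that horizontal and vertical moves in a product frame act on disjoint coordinates and therefore commute as relations. Everything else is a formal manipulation with composition, reflexivity, and transitivity of relations, and the symmetry between $h$ and $v$ means one only needs to write one side and remark that the other is identical.
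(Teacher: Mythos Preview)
Your proof is correct and follows essentially the same approach as the paper: both rely on the grid commutation $R_h \circ R_v = R_v \circ R_h$, lift it to the closures, and use it to obtain the nontrivial inclusion. The only cosmetic difference is that the paper shows $R_{F\times G}^{\leq m} \subseteq R_h^* \circ R_v^*$ by an explicit induction on $m$, whereas you argue that $T = R_h^* \circ R_v^*$ is a preorder containing $R_{F\times G}$ and invoke the minimality of $R_{F\times G}^*$; these are interchangeable ways of saying the same thing.
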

\begin{proof}
If $R$ is a vertical and $S$ is a horizontal relation in $F\times G$, then 
they commute: 
$R\circ S=S\circ R$. By a routine argument, $R_v^i$ and $R_h^j$ commute for all $i,j$. It follows that $R_h^* \circ R_v^*=R_v^* \circ R_h^*$.

Since $R_h^*$ and $R_v^*$ are contained in the transitive relation    
$R_{F\times G}^*$, we have $R_h^* \circ R_v^*\subseteq R_{F\times G}^*$. 
By induction on $m$, one can easily check that $R_{F\times G}^{\leq m} \subseteq R_h^* \circ R_v^*$ for all $m$. Hence, $R_{F\times G}^* \subseteq R_h^* \circ R_v^*$. 
\end{proof}

\begin{proposition}\label{prop:product-preserve-pretrans}~
\begin{enumerate}[1.]
  \item If $F$ is $m$-transitive and $G$ is $n$-transitive, then $F\times G$ is $(m+n)$-transitive.
  \hide{
  \item If $\clF$ and $\clG$ are pretransitive, then $\clF\times \clG$ is pretransitive.
  }
  \item The product of two pretransitive logics is pretransitive.
\end{enumerate}
\end{proposition}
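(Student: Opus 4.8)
The plan is to reduce part~2 to part~1 via the semantic characterization~\eqref{pretr:sem}, so the real work is in part~1, which I would prove using Proposition~\ref{prop:corner}. For part~1, assume $F$ is $m$-transitive and $G$ is $n$-transitive, and let $R_h$ and $R_v$ be the unions of all horizontal, resp.\ all vertical, relations of $F\times G$, so that $R_{F\times G}=R_h\cup R_v$. First I would observe that $R_h$, regarded as a single binary relation on $\dom F\times\dom G$, is itself $m$-transitive: since $(a,b)\,R_h^i\,(a',b')$ holds iff $b=b'$ and $a\,R_F^i\,a'$, we have $R_h^{\leq m}=\{((a,b),(a',b))\mid a\,R_F^{\leq m}\,a'\}$ and $R_h^*=\{((a,b),(a',b))\mid a\,R_F^*\,a'\}$, and these coincide because $R_F^{\leq m}=R_F^*$; symmetrically $R_v^{\leq n}=R_v^*$. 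Then, by Proposition~\ref{prop:corner}, $R_{F\times G}^*=R_h^*\circ R_v^*=R_h^{\leq m}\circ R_v^{\leq n}$, and since $R_h,R_v\subseteq R_{F\times G}$ this is contained in $R_{F\times G}^{\leq m}\circ R_{F\times G}^{\leq n}\subseteq R_{F\times G}^{\leq m+n}$; the reverse inclusion is immediate, so $R_{F\times G}^{\leq m+n}=R_{F\times G}^*$, i.e.\ $F\times G$ is $(m+n)$-transitive.

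For part~2, suppose $L_1$ is $m$-transitive and $L_2$ is $n$-transitive. Any $\AlA$-frame $F$ with $F\models L_1$ validates $\Di_\AlA^{m+1}p\imp\Di_\AlA^{\leq m}p$ and hence is $m$-transitive by~\eqref{pretr:sem}; likewise every $\AlB$-frame $G$ with $G\models L_2$ is $n$-transitive. By part~1, every product $F\times G$ with $F\models L_1$ and $G\models L_2$ is $(m+n)$-transitive, so by~\eqref{pretr:sem} for the alphabet $\AlA\sqcup\AlB$ it validates $\Di_{\AlA\sqcup\AlB}^{m+n+1}p\imp\Di_{\AlA\sqcup\AlB}^{\leq m+n}p$. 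This formula is thus valid throughout the defining class of $L_1\times L_2$, hence belongs to $L_1\times L_2$, which is therefore $(m+n)$-transitive and in particular pretransitive.

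I do not expect a genuine obstacle here; the one point needing attention is the first observation in part~1, namely that bundling the factor relations into $R_h$ (resp.\ $R_v$) preserves $m$-transitivity (resp.\ $n$-transitivity) exactly, and not merely up to transitive reflexive closure. After that, everything reduces to bookkeeping with Proposition~\ref{prop:corner} and the equivalence~\eqref{pretr:sem}.
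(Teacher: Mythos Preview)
Your proof is correct and follows essentially the same approach as the paper: for part~1 you pass to $R_h$ and $R_v$, observe they inherit $m$- and $n$-transitivity from $F$ and $G$, and then apply Proposition~\ref{prop:corner} exactly as the paper does; for part~2 the paper merely says ``immediate from the first,'' and your argument via~\eqref{pretr:sem} is precisely the intended unpacking.
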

\begin{proof}    
  The second statement is immediate from the first. To check the first statement, let \(R_h\) and~\(R_v\) be the unions of all horizontal and all vertical relations of \(F\times G\), respectively. Since $F$ is $m$-transitive, $R_h$ is $m$-transitive; similarly, $R_v$ is $n$-transitive.
  By Proposition \ref{prop:corner}, we have  $R^*_{F\times G}=R_h^* \circ R_v^*=R_h^{\leq m} \circ R_v^{\leq n}\subseteq R^{\leq m}_{F\times G}\circ R^{\leq n}_{F\times G} = R^{\leq m+n}_{F\times G}$. 
  \improve{Improve notation $R_h$.} 
\hide{
  The second statement is immediate from the first. To check the first statement, let
    \(F = \left(X,\,(R_\Di)_{\Di\in\AlA}\right),\,G = \left(Y,\, (S_\Di)_{\Di\in \AlB}\right).\) By hypothesis we have 
    \begin{equation}\label{eq:product-preserve-pretrans}
       R_F^* \subseteq R_F^{\le m}\text{ and }  S_G^*  \subseteq S_G^{\le n}. 
    \end{equation}
    Let \((a,b),\,(c,d)\in X\times Y\) and \((a,b)R_{F\times G}^*(c,d).\) \improve{Details.} 
    It is not difficult\ISH{New proposition?} to check that we have 
     \(a R_{F}^* c\) and \(b R_G^* d\).
    By \eqref{eq:product-preserve-pretrans}, \(a R_F^{\le m} c\) and  \(b S_G^{\le n} d.\) Then
    \[
      (a,b) R_{F\times G}^{\le m} (c,b) R_{F\times G}^{\le n} (c,d),
    \]
    so we have \((a,b) R_{F\times G}^{\le m+n} (c,d).\)  
    Therefore \(R_{F\times G}^* \subseteq R_{F\times G}^{\le m+n}.\)
    \improve{``It is not difficult to check'' should be explained in more details later. See other proof of this kind. We need to extract some general observations we regularly reuse.}
\hide{
    \item Let \(F\times G \in \clF \times \clG.\) Then \(F\) and \(G\) are pretransitive; find \(m,\,n < \omega\) such that \(F\) is \(m\)-transitive and \(G\) is \(n\)-transitive. By (i) \(F\times G\) is \((m+n)\)-transitive, hence pretransitive. Since \(F\times G \in \clF \times \clG\) was arbitrary, \(\clF \times \clG\) is pretransitive.
    \item Let \(L_1,\,L_2\) be pretransitive. Find \(m,\,n\) such that \(L_1\) is \(m\)-transitive and \(L_2\) is \(n\)-transitive. Then \(\Frames(L_1)\) is \(m\)-transitive and \(\Frames(L_2)\) is \(n\)-transitive, so \(\Frames(L_1)\times\Frames(L_2)\) is pretransitive by (ii). Find \(k\) \ISH{$k=m+n$?}such that \(\Frames(L_1)\times\Frames(L_2)\) is \(k\)-transitive. Then
    \[
      L_1\times L_2 \vd \Di_\AlA^{k+1} p \imp \Di_\AlA^{\leq k} p,
    \]
    so \(L_1\times L_2\) is \(k\)-transitive, hence pretransitive by the definition.
}
}
\end{proof}

\begin{example}
$\LS{4}^2$, $\LS{4}\times \LS{5}$, $\LS{5}^2$ are 2-transitive logics. 
\end{example}

\begin{proposition}\label{prop:product-preserve-height}
If $h(F)=h_1$ and $h(G)=h_2$ for some finite $h_1, h_2$, 
then $h(F\times G)=h_1+h_2-1$.
\end{proposition}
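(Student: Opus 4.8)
The plan is to reduce the statement to a standard order-theoretic fact about products of posets. The first step is to describe the preorder $R_{F\times G}^*$ explicitly. Each horizontal relation $R^h_\Di$ changes only the first coordinate and moves it along $R_\Di$, and symmetrically for the vertical relations; hence an easy induction gives $R_h^* = \{((a,b),(a',b)) \mid a\,R_F^*\,a'\}$ and $R_v^* = \{((a,b),(a,b')) \mid b\,R_G^*\,b'\}$, where $R_h$ and $R_v$ are as in Proposition \ref{prop:corner}. Composing these and using Proposition \ref{prop:corner} yields
\[
(a,b)\,R_{F\times G}^*\,(a',b') \tiff a\,R_F^*\,a' \text{ and } b\,R_G^*\,b'.
\]
Intersecting with the converse, $\sim_{F\times G}$ is the coordinatewise equivalence; so the clusters of $F\times G$ are exactly the products $C\times D$ of a cluster $C$ of $F$ with a cluster $D$ of $G$, and $[(a,b)]\mapsto([a],[b])$ is an isomorphism of $\Sk(F\times G)$ onto the coordinatewise product of the posets $\Sk F$ and $\Sk G$.

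Next I would prove the elementary lemma: if $P$ and $Q$ are posets whose longest chains have $h_1$ and $h_2$ elements respectively (both finite), then the longest chain of the coordinatewise product $P\times Q$ has exactly $h_1+h_2-1$ elements. For the lower bound, pick maximal chains $a_1 < \dots < a_{h_1}$ in $P$ and $b_1 < \dots < b_{h_2}$ in $Q$ and concatenate them into
\[
(a_1,b_1) < \dots < (a_{h_1},b_1) < (a_{h_1},b_2) < \dots < (a_{h_1},b_{h_2}),
\]
a chain with $h_1 + h_2 - 1$ distinct elements. For the upper bound, note that along any chain $(a_1,b_1) < \dots < (a_k,b_k)$ each step strictly increases at least one coordinate; thus the quantity ``number of distinct first coordinates seen so far $+$ number of distinct second coordinates seen so far'' equals $2$ at the first point, increases by at least $1$ at each of the remaining $k-1$ points, and never exceeds $h_1+h_2$. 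Hence $k \le h_1 + h_2 - 1$.

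Combining the two steps: $h(F)=h_1$ and $h(G)=h_2$ say exactly that the longest chains of $\Sk F$ and $\Sk G$ have $h_1$ and $h_2$ elements, so by the lemma and the isomorphism $\Sk(F\times G)\cong\Sk F\times\Sk G$ the longest chain of $\Sk(F\times G)$ has $h_1+h_2-1$ elements, that is, $h(F\times G)=h_1+h_2-1$. I expect the only point needing a little care to be the first step---checking that the cluster structure of a product frame is coordinatewise---but this is routine given Proposition \ref{prop:corner}, and the remainder is the textbook computation of the height of a product of two posets.
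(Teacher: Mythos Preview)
Your proof is correct and follows essentially the same route as the paper: both obtain the lower bound by concatenating a maximal chain of $\Sk F$ with one of $\Sk G$, and both obtain the upper bound from the observation that each strict step in a chain of the product skeleton must strictly advance in at least one coordinate. The only difference is packaging: you first isolate the isomorphism $\Sk(F\times G)\cong \Sk F\times \Sk G$ (via Proposition~\ref{prop:corner}) and then invoke the standard height computation for a product of posets, whereas the paper argues directly inside $\Sk(F\times G)$ and phrases the upper bound as an induction on the chain length that extracts chains $\Sigma_1,\Sigma_2$ in $\Sk F,\Sk G$ with $|\Sigma_1|+|\Sigma_2|=l+1$; your counting argument and the paper's induction are the same combinatorics.
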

\begin{proof} Consider  a chain \([a_0] < [a_1] < \ldots < [a_{h_1}]\) in \(\Sk F\) and a chain \([b_0]< [b_1] < \ldots < [b_{h_2}]\) in \(\Sk G.\) Then there is a 
 chain of \(h_1 + h_2 - 1\) elements in \(\Sk (F\times G)\): 
  \[
    [(a_0,\,b_0)] < [(a_1,\,b_0)] < \ldots < [(a_{h_1},\,b_0)] < [(a_{h_1},\,b_1)] < \ldots < [(a_{h_1},\,b_{h_2})]
  \]
   Hence, \(h(F\times G) \geq h_1 + h_2 -1\).

If $\Sigma$ is a chain of size $l$ in  \(\Sk (F\times G)\) with the least $[(a,b)]$ 
and the largest $[(c,d)]$, then there is a chain $\Sigma_1$ in $\Sk F$ with the least $[a]$ 
and the largest $[c]$,  and a chain $\Sigma_2$ in $\Sk G$ with the least $[b]$ and the largest $[d]$ such that $|\Sigma_1|+|\Sigma_2|=l+1$. This follows by induction on $l$.  
Hence, every chain in $\Sk (F\times G)$ has at most $h_1+h_2-1$ elements.  
\improve{induction step}
%
%
\end{proof}

\begin{example}
$\LS{5}^2$ is a logic of height 1. 
\end{example}
   
For a formula \(\vf\) in the alphabet
$\{\Di_1,\Di_2\}$, we define a the translation \(\tran{\vf}\in \ML(\AlA\cup\AlB)\). Let \(\tran{\vf}\) be compatible with Boolean connectives, and let $\tran{\Di_1 \psi}= \Di_\AlA^{\leq m}  \tran{\psi}$, $\tran{\Di_2 \psi}= \Di_\AlB^{\leq n}  \tran{\psi}$. The following is straightforward:  
\begin{proposition}\label{prop:express}
 If $F$ is an $m$-transitive $\Al$-frame and  $G$ is an $n$-transitive $\AlB$-frame,
 then $F\times G\mo \tran{\vf}$ iff 
 $F^*\times G^*\mo \vf$.
\end{proposition}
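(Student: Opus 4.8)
The plan is to prove Proposition~\ref{prop:express} by induction on the structure of $\vf$, mirroring the proof of the single-modality equivalence \eqref{eq:pretrans} and combining it with the commutation facts about horizontal and vertical relations established in Proposition~\ref{prop:corner}. First I would make explicit the semantic content of the claim: fix an $m$-transitive $\AlA$-frame $F=(X,(R_\Di)_{\Di\in\AlA})$ and an $n$-transitive $\AlB$-frame $G=(Y,(S_\Di)_{\Di\in\AlB})$, and recall that $F^*\times G^*$ is the frame on $X\times Y$ whose two accessibility relations are the horizontal relation $R_F^*$ (acting in the first coordinate) and the vertical relation $S_G^*$ (acting in the second). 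The goal is to show that a formula $\vf$ over $\{\Di_1,\Di_2\}$ is valid in $F^*\times G^*$ iff its translation $\tran{\vf}$ is valid in $F\times G$; it suffices to show that for every valuation, the two models satisfy exactly the same points, i.e.\ $(F\times G,\v),w\mo\tran{\vf}$ iff $(F^*\times G^*,\v),w\mo\vf$ for all $w\in X\times Y$ and all $\v$ (note $F\times G$ and $F^*\times G^*$ share the same underlying set, so a valuation for one is a valuation for the other).

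The key step is the induction on $\vf$. The Boolean cases ($\bot$, variables, $\imp$) are immediate since $\tran{-}$ is compatible with Boolean connectives. For the modal case $\vf=\Di_1\psi$, I would use that the horizontal relation of $F\times G$ that corresponds to $\Di_1$ is $R^h=\bigcup_{\Di\in\AlA}R^h_\Di$, which, viewed coordinatewise, acts in the first coordinate as $R_F=\bigcup_{\Di\in\AlA}R_\Di$. Since $F$ is $m$-transitive, $R_F^{\leq m}=R_F^*$, so $(R^h)^{\leq m}$ is exactly the relation ``$R_F^*$ in the first coordinate, identity in the second'', which is precisely the $\Di_1$-relation of $F^*\times G^*$. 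Hence $(F\times G,\v),w\mo\Di_\AlA^{\leq m}\tran{\psi}$ iff there is $w'$ with $w\,(R^h)^{\leq m}\,w'$ and $(F\times G,\v),w'\mo\tran{\psi}$, which by the induction hypothesis is iff there is such a $w'$ with $(F^*\times G^*,\v),w'\mo\psi$, i.e.\ iff $(F^*\times G^*,\v),w\mo\Di_1\psi$. The case $\vf=\Di_2\psi$ is symmetric, using $n$-transitivity of $G$ so that $(R^v)^{\leq n}$ is the $\Di_2$-relation of $F^*\times G^*$. This establishes the pointwise equivalence, and validity follows by quantifying over all valuations.

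There is essentially no hard part here — the statement is flagged as ``straightforward'' in the paper and the proof is a routine induction — but the one place to be careful is the identification of $(R^h)^{\leq m}$ with the first-coordinate $R_F^*$ relation and of $(R^v)^{\leq n}$ with the second-coordinate $S_G^*$ relation. This uses two things: that the horizontal relations never change the second coordinate (so powers of $R^h$ stay horizontal) together with $R_F^{\leq m}=R_F^*$ from $m$-transitivity of $F$; and, symmetrically, for the vertical side. One should note that $R_{F\times G}^{\leq m}$ is \emph{not} the same as $(R^h)^{\leq m}$ in general — the former mixes horizontal and vertical steps — so the argument must work with $R^h$ and $R^v$ separately rather than with $R_{F\times G}$; the commutation statement of Proposition~\ref{prop:corner} is not actually needed for this proposition (it is the tool for Propositions~\ref{prop:product-preserve-pretrans} and~\ref{prop:product-preserve-height}), but it is the same circle of elementary observations. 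With the relational identities in hand, the induction goes through cleanly.
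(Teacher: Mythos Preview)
Your proposal is correct and is exactly the routine induction the paper has in mind when it calls the statement ``straightforward'' (the paper gives no proof at all). Your care in identifying $(R^h)^{\leq m}$ with the horizontal $R_F^*$-relation via $m$-transitivity, and likewise for the vertical side, is precisely the content of the claim; as you note, Proposition~\ref{prop:corner} is not needed here.
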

\hide{
\begin{proof} Straightforward.
\improve{
Immediate from \eqref{eq:pretrans}.
-- no, it does not follow from \eqref{eq:pretrans}: it should be stated for models.
}
\end{proof}
}

\begin{proposition}\label{prop:pretrans-exprS5xS5}
Let $\clF$ and $\clG$ be classes of $m$- and $n$-transitive clusters, respectively.
If $\sup\{|F| \mid F\in\clF\}$ and $\sup\{|G | \mid G\in\clF\}$ are  infinite, then $\{\vf \in \ML(\Di_1,\Di_2) \mid \clF\times \clG\mo\tran{\vf}\}
=\LS{5}^2$.
\end{proposition}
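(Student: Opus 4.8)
The plan is to reduce the statement to Theorem \ref{thm:Segerberg2DimML} via the translation machinery of Proposition \ref{prop:express}. Write $\clF^* = \{F^* \mid F \in \clF\}$ and $\clG^* = \{G^* \mid G \in \clG\}$. Since each $F \in \clF$ is a cluster, $F^*$ is the frame $(\dom F, \dom F \times \dom F) = \boldsymbol{\dom F}$, i.e.\ a single $\LS5$-cluster on the same underlying set; likewise for $\clG^*$. By Proposition \ref{prop:express}, for every $\vf \in \ML(\Di_1,\Di_2)$ we have $\clF \times \clG \mo \tran\vf$ iff $\clF^* \times \clG^* \mo \vf$, so
\[
\{\vf \mid \clF\times\clG \mo \tran\vf\} = \Log(\clF^* \times \clG^*) = \Log\{\boldsymbol X \times \boldsymbol Y \mid X \in \clX,\ Y \in \clY\},
\]
where $\clX = \{\dom F \mid F \in \clF\}$ and $\clY = \{\dom G \mid G \in \clG\}$ are families of non-empty sets.

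The hypothesis that $\sup\{|F| \mid F \in \clF\}$ and $\sup\{|G| \mid G \in \clG\}$ are infinite translates directly into $\sup\{|X| \mid X \in \clX\}$ and $\sup\{|Y| \mid Y \in \clY\}$ being infinite. Hence Theorem \ref{thm:Segerberg2DimML} applies and gives $\Log\{\boldsymbol X \times \boldsymbol Y \mid X \in \clX,\ Y \in \clY\} = \LS5^2$. Combining this with the displayed equality yields $\{\vf \in \ML(\Di_1,\Di_2) \mid \clF\times\clG \mo \tran\vf\} = \LS5^2$, as required.

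The only point that needs a little care, rather than being purely routine, is checking that Proposition \ref{prop:express} indeed lifts from a single product frame to the class level, i.e.\ that $\clF \times \clG \mo \tran\vf$ iff for all $F \in \clF$, $G \in \clG$ we have $F^* \times G^* \mo \vf$. This is immediate once one observes that validity in a class is the conjunction of validity in its members and that Proposition \ref{prop:express} is an equivalence for each pair $(F,G)$; the $m$- and $n$-transitivity hypotheses on members of $\clF$ and $\clG$ are exactly what is needed to invoke it. The remaining verification, that $F^*$ is literally the full relation $\dom F \times \dom F$ when $F$ is a cluster, is just the definition of a cluster together with the definition of $F^*$, so I expect no real obstacle; the whole argument is essentially a bookkeeping reduction to the classical Segerberg theorem.
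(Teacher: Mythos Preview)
Your proof is correct and follows exactly the approach the paper takes: the paper's proof is the single line ``Follows from Proposition~\ref{prop:express} and Theorem~\ref{thm:Segerberg2DimML},'' and you have simply unpacked that line, making explicit that $F^*$ for a cluster $F$ is the full-relation frame $\boldsymbol{\dom F}$ and that the frame-level equivalence of Proposition~\ref{prop:express} lifts to classes.
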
  
\begin{proof}
Follows  from Proposition
\ref{prop:express} and Theorem \ref{thm:Segerberg2DimML}.
\end{proof}

\hide{
 
If \(V\) is a set and \(f:\:X \to V\) is a function then the \emph{partition induced by \(f\)} is the quotient set \(\faktor{X}{\sim},\) where
  \[
    x\sim y \iff f(x) = f(y),\quad x,\,y\in X.
  \]

\begin{proposition}
If \(\clU\) is induced by \(f:\:X\to U\) then \(|\clU| \le |V|.\)
\end{proposition}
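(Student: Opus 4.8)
The plan is to exhibit an explicit injection from the partition \(\clU\) into the codomain \(V\), from which the cardinality bound \(|\clU|\le|V|\) is immediate. By definition, \(\clU\) is the partition of \(X\) induced by \(f\colon X\to V\), so its blocks are exactly the nonempty fibers \(f\inv(v)=\{x\in X\mid f(x)=v\}\) with \(v\) ranging over the image \(f[X]\); equivalently, each block of \(\clU\) has the form \([x]_\clU=f\inv(f(x))\).

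First I would define a map \(g\colon\clU\to V\) by setting \(g([x]_\clU)=f(x)\). The only point to verify is that \(g\) is well defined, i.e.\ independent of the representative chosen from the block. This is immediate from the defining equivalence: if \([x]_\clU=[y]_\clU\), then \(x\sim y\), which by definition means \(f(x)=f(y)\), so the value assigned by \(g\) does not depend on the chosen representative.

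Next I would check that \(g\) is injective. Suppose \(g([x]_\clU)=g([y]_\clU)\); then \(f(x)=f(y)\), so \(x\sim y\), and hence \([x]_\clU=[y]_\clU\). Thus \(g\) is an injection of \(\clU\) into \(V\), which yields \(|\clU|\le|V|\). There is no genuine obstacle here: the whole content is the observation that distinct blocks of an induced partition must carry distinct \(f\)-values, so the blocks are in bijection with the image \(f[X]\se V\). If one prefers a sharper phrasing, the same argument gives \(|\clU|=|f[X]|\le|V|\).
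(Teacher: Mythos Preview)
Your proof is correct and is exactly the standard argument; the paper itself gives no proof of this proposition (it is merely stated, and in fact the statement sits inside a hidden block that does not appear in the compiled text). There is nothing to compare against.
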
 

  }

\smallskip

\subsection{Bounded cluster property}

\begin{proposition}\label{prop:product-with-fin}
Assume that a frame $F$ is $f$-tunable, and a frame $G$ is finite.
Then $F\times G$ is $g$-tunable for $g(n)=f\left(n^{|G|}\right)|G|$.
\end{proposition}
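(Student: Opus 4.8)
The idea is that a partition of $F \times G$ can be encoded as a partition of $F$ valued in (partitions of $G$)-labelled data, i.e. as a colouring of $F$ by functions $\dom G \to \clV$-blocks. More precisely, suppose $\clV$ is a finite partition of $\dom(F\times G) = \dom F \times \dom G$ with $|\clV| = n$. For each $a \in \dom F$ define the function $c_a \colon \dom G \to \clV$ by $c_a(b) = [(a,b)]_\clV$. There are at most $n^{|G|}$ possible such functions, so the map $a \mapsto c_a$ induces a partition $\clV_F$ of $\dom F$ with $|\clV_F| \le n^{|G|}$. Since $F$ is $f$-tunable, pick a refinement $\clU_F$ of $\clV_F$ with $|\clU_F| \le f(n^{|G|})$ that is tuned in $F$. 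Now define a partition $\clU$ of $\dom F \times \dom G$ by declaring $(a,b)$ and $(a',b')$ equivalent iff $[a]_{\clU_F} = [a']_{\clU_F}$ and $b = b'$. Then $|\clU| \le |\clU_F|\cdot|G| \le f(n^{|G|})|G| = g(n)$, which gives the required bound.

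\textbf{Refinement.} I must check $\clU$ refines $\clV$. Take $(a,b)$, $(a',b')$ in the same $\clU$-block; then $b = b'$ and $[a]_{\clU_F} = [a']_{\clU_F}$, hence $c_a = c_{a'}$ (because $\clU_F$ refines $\clV_F$ and $a \mapsto c_a$ is constant on $\clV_F$-blocks), so $[(a,b)]_\clV = c_a(b) = c_{a'}(b') = [(a',b')]_\clV$. Thus any $\clU$-block is contained in a single $\clV$-block, as needed.

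\textbf{Tunedness.} The remaining task is to show $\clU$ is tuned in $F\times G$, i.e. $R$-tuned for every horizontal relation $R^h_\Di$ ($\Di \in \AlA$) and every vertical relation $R^v_\Di$ ($\Di \in \AlB$). For a vertical relation $R^v_\Di$, moving along it keeps the first coordinate fixed and changes the second; since $\clU$ distinguishes points with different second coordinates completely (within a fixed $\clU_F$-block the $\clU$-class is determined by $b$), and since moving vertically cannot leave a $\clU_F$-block, tunedness for $R^v_\Di$ follows directly from the definition of $R^v_\Di$: if $(a,b)\,R^v_\Di\,(a,b')$ then for \emph{every} $(a'',b)$ in the same $\clU$-block we have $[a'']_{\clU_F} = [a]_{\clU_F}$ and $(a'',b)\,R^v_\Di\,(a'',b')$ with $(a'',b')$ in the same $\clU$-block as $(a,b')$. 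For a horizontal relation $R^h_\Di$, moving along it keeps the second coordinate fixed and changes the first via $R_\Di$ in $F$; here we use that $\clU_F$ is $R_\Di$-tuned: given $(a,b)\,R^h_\Di\,(a',b)$ and an arbitrary $(a'',b)$ with $[a'']_{\clU_F}=[a]_{\clU_F}$, the tunedness of $\clU_F$ yields some $a'''$ with $a''\,R_\Di\,a'''$ and $[a''']_{\clU_F}=[a']_{\clU_F}$, hence $(a'',b)\,R^h_\Di\,(a''',b)$ and $(a''',b)$ lies in the same $\clU$-block as $(a',b)$.

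\textbf{Main obstacle.} The only subtle point is the horizontal case, where one must genuinely invoke the $R_\Di$-tunedness of $\clU_F$ and verify that the second coordinate (fixed under horizontal moves) does not interfere — which it does not, precisely because $\clU$-classes within a fixed $\clU_F$-block are indexed by the second coordinate alone. Everything else is bookkeeping with the bound $n^{|G|}$ and the product construction; I expect no essential difficulty there.
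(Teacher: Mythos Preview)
Your proof is correct and follows essentially the same approach as the paper: encode the partition of $F\times G$ as a colouring of $\dom F$ by functions $\dom G\to\clV$, tune the resulting partition of $F$, then cross with the identity on $\dom G$. Your notation $c_a$, $\clV_F$, $\clU_F$, $\clU$ corresponds exactly to the paper's $\alpha$, the partition induced by $\alpha$, $\clU$, and $\clS$, and you are slightly more explicit than the paper in verifying that the final partition refines~$\clV$.
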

\begin{proof}
  Let \(\clV\) be a finite partition of $F\times G$ , $n=|\clV|$. 

  Define  \(\alpha:\:\dom F \to \clV^{\dom G}\) by
  \[
    \alpha(a) = ([(a,\,b)]_\clV)_{b\in\dom G},\quad a\in \dom F.
  \]
  Then \(\alpha\) induces a partition of \(F\) of cardinality at most \(n^{|G|}.\) Since \(F\) is \(f\)-tunable, there is a refinement \(\clU\) of this partition such that \(|\clU|\le f\left(n^{|G|}\right)\) and $\clU$ is tuned in $F$.

 For $(a,b)$ in $F\times G$, put \(\beta(a,b) = ([a]_{\clU},\,b)\). 
 Then $\beta$ induces a refinement $\clS$ of \(\clU\) of cardinality at most \(f\left(n^{|G|}\right)| G|.\) 
  We show that $\clS$ is tuned in $F\times G$.

  Let \(R_\Di\) be a relation of \(F,\) and suppose \((a_1,b) R_\Di^h (a_2,b)\) in \(F\times G\). Then \(a_1 R_\Di a_2.\) 
  Let \((a_1',b')\in [(a_1,b)]_\clS.\) 
  Then  $b'=b$ and 
  \([a_1']_{\clU} = [a_1]_{\clU}\).  
  We have \(a_1' R_\Di a_2'\) for some $a_2'\in [a_2]_\clU$, 
  since \(\clU\) is tuned in $F$. 
  Then \((a_1',b') R_\Di^h (a_2',b)\) and  $(a_2',b)\in [(a_2,b)]_\clS$. 

  Now let \((a,b_1) R_\Di^v (a,b_2)\), where \(R_\Di\) is a relation of \(G.\) Let \((a',b_1') \in[(a,b_1)]_\clS.\) Then \(b_1' = b_1 \), so \(b_1' R_\Di b_2.\) Moreover, $[a']_\clU$ = $[a]_\clU$, so \(\beta(a',b_2) = \beta(a,b_2)\). Hence, we have 
  $(a',b_1')  R_\Di^v (a',b_2)$ and  $(a',b_2)\in[(a,b_2)]_\clS$.
  \hide{
  \[
    (a',b_1')  = (a',b_1)\, R_\Di^v\, (a',b_2) \in[(a,b_2)]_\clS.
  \] 
  Then \((a_1',b_1')\) is related by \(R_\Di^v\) to a point from the \(\sim_\beta\)-class of \((a,b_2),\) as desired.}
\end{proof}

 \improve{What is tabular? Was it defined already?}
\begin{corollary}
If $L_1$ is locally tabular and $L_2$ is tabular, then $L_1\times L_2$ is locally tabular.    
\end{corollary}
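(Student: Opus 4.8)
The corollary says: if $L_1$ is locally tabular and $L_2$ is tabular, then $L_1 \times L_2$ is locally tabular. The natural route is to reduce to the frame-theoretic statement already proven in Proposition~\ref{prop:product-with-fin}, using Theorem~\ref{thm:LFviaTuned} as the bridge between local tabularity and uniform tunability. So first I would invoke Theorem~\ref{thm:LFviaTuned}(2): since $L_1$ is locally tabular, $L_1 = \Log \clF$ for some uniformly tunable class $\clF$ of $\AlA$-frames, say $f$-tunable for some $f:\omega\to\omega$. Since $L_2$ is tabular, it is the logic of a single finite frame $G$ (or a finite class of finite frames $\clG$); without loss of generality take $\clG$ finite and let $N = \max\{|G| \mid G \in \clG\}$.

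\textbf{Building a uniformly tunable class for the product.} The class $\clF \times \clG$ consists of frames $F \times G$ with $F \in \clF$, $G \in \clG$. By Proposition~\ref{prop:product-with-fin}, each such $F \times G$ is $g_G$-tunable with $g_G(n) = f(n^{|G|})\,|G|$. Since $|G| \le N$ for all $G \in \clG$, a single function $g(n) = f(n^N)\cdot N$ works as an upper bound for all of them — here I should check that $f$ may be taken nondecreasing (if not, replace $f$ by $n \mapsto \max_{k\le n} f(k)$, which is still finite-valued and still witnesses $f$-tunability), so that $f(n^{|G|}) \le f(n^N)$. Thus $\clF \times \clG$ is uniformly ($g$-)tunable. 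By Theorem~\ref{thm:LFviaTuned}(1), $\Log(\clF \times \clG)$ is locally tabular.

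\textbf{Identifying the logic.} It remains to observe that $\Log(\clF\times\clG)$ is exactly $L_1 \times L_2$. By definition $L_1 \times L_2 = \Log\{F \times G \mid F \models L_1,\, G \models L_2\}$. Since $L_1 = \Log\clF$, every $F \in \clF$ validates $L_1$; since $L_2 = \Log\clG$, every $G \in \clG$ validates $L_2$; hence $\clF \times \clG$ is contained in the defining class, giving $L_1 \times L_2 \subseteq \Log(\clF\times\clG)$. For the reverse inclusion I would use local tabularity together with Proposition~\ref{prop:LT_implies_fmp+extensions}: $\Log(\clF\times\clG)$ is locally tabular, hence has the finite model property, and is an extension of $L_1 \times L_2$ once we know $\clF\times\clG$ consists of $(L_1\times L_2)$-frames — which follows because $F\times G \models L_1\times L_2$ whenever $F\models L_1$ and $G\models L_2$. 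Then any extension of a locally tabular logic is itself handled, but what we actually need is just $\Log(\clF\times\clG) = L_1\times L_2$; the cleanest argument is that $\Log\clF = L_1$ already means $\clF$ is, in the relevant sense, a complete class for $L_1$, and tabular $L_2$ is complete for $\clG$, and products of complete classes give the product logic — this is standard (see \cite{ManyDim}) but if one wants to stay self-contained, the FMP of $L_1$ inherited from local tabularity lets one run the usual argument that $\Log\{F\times G : F\models L_1, G\models L_2\} = \Log\{F\times G : F\in\clF_{\mathrm{fin}}, G\in\clG\}$ with $\clF_{\mathrm{fin}}$ the finite $L_1$-frames.

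\textbf{Expected obstacle.} The tunability transfer and the $g(n)=f(n^N)N$ bound are routine given Proposition~\ref{prop:product-with-fin}. The only genuinely delicate point is the identification $\Log(\clF\times\clG) = L_1\times L_2$: one must be careful that replacing the defining class $\{F\times G : F\models L_1, G\models L_2\}$ by the smaller class $\clF\times\clG$ does not change the logic. This is where completeness of $\clF$ for $L_1$ is essential, and it is cleanest to cite the standard fact (e.g.\ from \cite{ManyDim}) that if $\clF$ and $\clG$ are classes of frames with $\Log\clF = L_1$ and $\Log\clG = L_2$, then $\Log(\clF\times\clG) = L_1\times L_2$ — so that the whole corollary reduces to Proposition~\ref{prop:product-with-fin} plus Theorem~\ref{thm:LFviaTuned}.
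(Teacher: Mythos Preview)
Your approach is essentially the paper's, just unpacked: both routes use Proposition~\ref{prop:product-with-fin} as the engine and outsource the remaining step to an external reference. The paper's entire proof is the one-line citation ``Follows from \cite[Corollary~5.8]{MLTensor} and Proposition~\ref{prop:product-with-fin}''; the result is already attributed to \cite{MLTensor} in the introduction, so the corollary here is a restatement with a frame-theoretic gloss. Your write-up makes the tunability transfer explicit via Theorem~\ref{thm:LFviaTuned}, which the paper leaves implicit, and then cites \cite{ManyDim} rather than \cite{MLTensor} for the identification $\Log(\clF\times\clG)=L_1\times L_2$.

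The one point worth flagging: the ``standard fact'' that $\Log(\clF\times\clG)=L_1\times L_2$ whenever $\Log\clF=L_1$ and $\Log\clG=L_2$ is exactly where the paper leans on \cite{MLTensor}, and you are right to call it the only delicate step. Your instinct to take $\clF$ as large as possible is good --- in fact you can take $\clF$ to be the class of \emph{all} $L_1$-frames (uniformly tunable by Theorem~\ref{thm:LFviaTuned}(1), since $L_1$ is Kripke complete), so that only the $\clG$-side needs the reduction-to-a-finite-class argument. That halves the work in the identification step. But either way, some external input (\cite{MLTensor} or the relevant proposition in \cite{ManyDim}) is needed, and both you and the paper accept this.
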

\begin{proof}
Follows from \cite[Corllary 5.8]{MLTensor} and Proposition \ref{prop:product-with-fin}.
\end{proof}

\extended{ 
\begin{proposition}\label{prop:product-with-fin-cor}
Let $\clF$ be a class of $\AlA$-frames, $\clG$ a class of $\AlB$-frames.
Assume that the logic of $\clF$ is locally tabular,
and for some finite $n$,  every $G\in \clG$ has at most $n$ elements.
Then the logic of $\clF\times \clG$ is locally tabular.
\end{proposition}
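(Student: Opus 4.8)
The plan is to deduce the statement from its single-frame version, Proposition \ref{prop:product-with-fin}, through the tunability characterization of local tabularity. Since $\Log\clF$ is locally tabular, Theorem \ref{thm:LFviaTuned}(1) tells us that $\clF$ is uniformly tunable; fix $f:\omega\to\omega$ such that $\clF$ is $f$-tunable. It then suffices to exhibit a single function $g:\omega\to\omega$ for which $\clF\times\clG$ is $g$-tunable, since Theorem \ref{thm:LFviaTuned}(1), applied in the converse direction, gives that $\Log(\clF\times\clG)$ is locally tabular.

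To construct $g$, take any $F\in\clF$ and $G\in\clG$. As $F$ is $f$-tunable and $G$ is finite with $|G|\le n$, Proposition \ref{prop:product-with-fin} shows that $F\times G$ is $g_G$-tunable, where $g_G(k)=f\bigl(k^{|G|}\bigr)\,|G|$. The only point to address is that $g_G$ depends on $G$ and that $f$ need not be monotone. Put $\hat f(k)=\max\{f(j)\mid j\le k\}$, so $\hat f$ is nondecreasing with $\hat f\ge f$, and set $g(k)=n\cdot\hat f\bigl(k^{n}\bigr)$. Any finite partition of $F\times G$ has $k\ge 1$ blocks, so $k^{|G|}\le k^{n}$, and hence $g_G(k)=f\bigl(k^{|G|}\bigr)\,|G|\le \hat f\bigl(k^{|G|}\bigr)\cdot n\le \hat f\bigl(k^{n}\bigr)\cdot n=g(k)$. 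Consequently, a refinement witnessing $g_G$-tunability of $F\times G$ also witnesses $g$-tunability; as $F\in\clF$ and $G\in\clG$ were arbitrary, $\clF\times\clG$ is $g$-tunable, i.e. uniformly tunable.

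Applying Theorem \ref{thm:LFviaTuned}(1) once more, now to the uniformly tunable class $\clF\times\clG$, yields that $\Log(\clF\times\clG)$ is locally tabular, as required. I do not expect any genuine difficulty here beyond the bookkeeping with the monotone envelope $\hat f$ and the uniform bound $|G|\le n$ on the exponent; the substantive content is entirely carried by Proposition \ref{prop:product-with-fin}.
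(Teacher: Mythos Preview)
Your proof is correct and follows essentially the same route as the paper: invoke Theorem~\ref{thm:LFviaTuned} to obtain $f$-tunability of $\clF$, apply Proposition~\ref{prop:product-with-fin} to each $F\times G$, and bound uniformly using $|G|\le n$. Your use of the monotone envelope $\hat f$ is in fact slightly more careful than the paper's own argument, which tacitly passes from $f(k^{|G|})$ to $f(k^n)$ without commenting on monotonicity of $f$.
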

\ISH{Do we use this proposition?}
\begin{proof}
Since \(\clF\) is locally tabular, by Theorem~\ref{thm:LFviaTuned} \(\clF\) is \(f\)-tunable for some function \(f:\:\omega\to \omega.\)

Let \(F\in \clF,\,G\in\clG,\,\clU\) a finite partition of \(F\times G.\) By Proposition \ref{prop:product-with-fin}, there exists a tuned refinement \(\clU'\) of \(\clU\) with
\[
  |\clU'| \le f\left(|\clU|^{| G|}\right)\cdot| G| \le f(|\clU|^n)\cdot n.
\]
Since \(F,\,G\) were arbitrary, \(\clF\times\clG\) is \(g\)-tunable for \(g(k) = f(k^n)\cdot n.\) Therefore the logic of \(\clF\times \clG\) is locally tabular by Theorem~\ref{thm:LFviaTuned}.
\end{proof}
}

Recall that for a class $\clF$, $\clusters{\clF}$ denotes the class of clusters occurring in frames in $\clF$. 
We say that $\clF$ has the {\em bounded cluster property}, 
if for some $m<\omega$, $|C|<m$  for all $C\in \clusters{\clF}$.
\hide{
\ISH{seems redundant: 
A logic is said to have the {\em bounded cluster property}, if
the class of its frames has. }}

\begin{theorem}\label{thm:criterion-bcp} 
Let $\clF$ and $\clG$ be non-empty classes of $\AlA$- and $\AlB$-frames, respectively. 
The logic $\Log(\clF\times \clG)$  is locally tabular iff 
the logics $\Log\clF$ and $\Log\clG$ are locally tabular, 
 and at least one of the classes $\clF$, $\clG$ has 
the bounded cluster property. 
\end{theorem}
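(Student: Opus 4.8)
The plan is to prove each direction separately, with the forward direction being routine and the backward direction carrying the real content. For the forward direction: assume $\Log(\clF\times\clG)$ is locally tabular. Local tabularity of $\Log\clF$ and $\Log\clG$ follows because $\Log(\clF\times\clG)$ is conservative over the factor logics (using that the factors are Kripke complete, as is the convention here, and $\clF,\clG$ are non-empty). To see that one factor must have the bounded cluster property, I argue by contraposition: if \emph{neither} $\clF$ nor $\clG$ has the bounded cluster property, then $\clusters{\clF}$ contains $m$-transitive clusters of unbounded size (for every frame in $\clF$ is $m$-transitive for a fixed $m$ by Theorem~\ref{thm:1-finite-to-m-h} applied via local tabularity of $\Log\clF$, and likewise $\clusters{\clG}$ contains $n$-transitive clusters of unbounded size). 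Then by Proposition~\ref{prop:pretrans-exprS5xS5}, $\{\vf\mid \clusters{\clF}\times\clusters{\clG}\mo\tran{\vf}\}=\LS5^2$, which is not locally tabular \cite{CylindricalAlgebras1}; since $\tran{\cdot}$ is a translation into finitely many variables, the non-local-tabularity of $\LS5^2$ transfers to $\Log(\clusters{\clF}\times\clusters{\clG})$, and since $\clusters{\clF}\times\clusters{\clG}\subseteq\SubFrs{(\clF\times\clG)}$ (restrictions to products of clusters are subframes of products), Proposition~\ref{prop:LF-for-subframess} forces $\Log(\clF\times\clG)$ to fail $k$-finiteness for suitable $k$ — contradiction. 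I should double-check that the one-variable translation bookkeeping is compatible with Proposition~\ref{prop:LF-for-subframess}, i.e. that a failure of $k$-finiteness downstairs propagates appropriately; this is the one fiddly point of the forward direction.

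For the backward direction, assume $\Log\clF$ and $\Log\clG$ are locally tabular and, WLOG, $\clG$ has the bounded cluster property: $|C|<N$ for all $C\in\clusters{\clG}$. By Theorem~\ref{thm:supple-clusters-crit}, it suffices to show (i) $\clF\times\clG$ has uniformly finite height, and (ii) $\Log\clusters{(\clF\times\clG)}$ is locally tabular. For (i): local tabularity of $\Log\clF$ and $\Log\clG$ gives, by Theorem~\ref{thm:supple-clusters-crit} applied to each factor, uniformly finite heights $h_1,h_2$, and then Proposition~\ref{prop:product-preserve-height} gives $h(F\times G)=h_1+h_2-1$ uniformly — wait, this needs finite height of each individual frame, which the uniform bound supplies. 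So (i) holds with bound $h_1+h_2-1$.

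For (ii): a cluster $D$ in $F\times G$ is, by Proposition~\ref{prop:corner}, contained in $R_h^*\circ R_v^*$-relatedness both ways, and one checks that the projection of $D$ to the first coordinate lies inside a single cluster $C_F$ of $F$ and the projection to the second coordinate inside a single cluster $C_G$ of $G$; hence $F\restr D$ embeds into $(F\restr C_F)\times(G\restr C_G)$, a product of a cluster from $\clusters{\clF}$ with a cluster from $\clusters{\clG}$, the latter of size $<N$. Now apply Proposition~\ref{prop:product-with-fin}: $\Log\clusters{\clF}$ is locally tabular (it is a sublogic-class situation — clusters occurring in $\clF$-frames; local tabularity of $\Log\clF$ gives uniform tunability of $\clF$ by Theorem~\ref{thm:LFviaTuned}, and uniform tunability passes to restrictions, in particular to $\clusters{\clF}$), so each $(F\restr C_F)$ is $f$-tunable for a fixed $f$, and $(G\restr C_G)$ is finite of size $<N$, so by Proposition~\ref{prop:product-with-fin} the product is $g$-tunable with $g(n)=f(n^{N})\cdot N$ — a \emph{single} function independent of the choice of clusters. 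Thus $\clusters{\clF}\times\clusters{\clG}$ is uniformly tunable; since every member of $\clusters{(\clF\times\clG)}$ is (isomorphic to) a subframe of a member of it, and uniform tunability is inherited by subframes, $\clusters{(\clF\times\clG)}$ is uniformly tunable, hence $\Log\clusters{(\clF\times\clG)}$ is locally tabular by Theorem~\ref{thm:LFviaTuned}. Combining (i) and (ii) with Theorem~\ref{thm:supple-clusters-crit} yields local tabularity of $\Log(\clF\times\clG)$.

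The main obstacle I anticipate is the bookkeeping in (ii): precisely verifying that a cluster of $F\times G$ projects into a single cluster of each factor (this uses the commutation $R_h^*\circ R_v^*=R_v^*\circ R_h^*$ from Proposition~\ref{prop:corner} together with the symmetry built into $R^*\cap(R^*)^{-1}$), and then that "subframe of a uniformly tunable class is uniformly tunable" and "$\clusters{}$ of a class with locally tabular logic again has a locally tabular logic" are applied with uniform — not merely pointwise — bounds, since Theorem~\ref{thm:supple-clusters-crit} and Theorem~\ref{thm:LFviaTuned} are sensitive to uniformity. I would isolate the projection-of-clusters fact as a small lemma before the main argument. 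The forward direction's only subtlety, as noted, is routing the $\LS5^2$ obstruction through the one-variable translation and Proposition~\ref{prop:LF-for-subframess} with the correct variable count.
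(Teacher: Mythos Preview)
Your proof is correct and follows the same strategy as the paper: both directions rest on Theorem~\ref{thm:supple-clusters-crit}, with Proposition~\ref{prop:product-with-fin} giving uniform tunability of product clusters for the backward direction and Proposition~\ref{prop:pretrans-exprS5xS5} giving the $\LS5^2$ obstruction for the forward direction. The one simplification you are missing is that Proposition~\ref{prop:corner} yields the \emph{equality} $\clusters{(\clF\times\clG)}=\clusters{\clF}\times\clusters{\clG}$ (the $\sim_{F\times G}$-class of $(a,b)$ is exactly $[a]_{\sim_F}\times[b]_{\sim_G}$, and the restricted frame is the product of the restrictions), not merely an embedding; with this in hand your detours through $\SubFrs$ and Proposition~\ref{prop:LF-for-subframess} in the forward direction, and through ``subframes inherit uniform tunability'' in the backward direction, become unnecessary---Theorem~\ref{thm:supple-clusters-crit} applied to $\clF\times\clG$ already gives that $\Log(\clusters{\clF}\times\clusters{\clG})$ is locally tabular, and Proposition~\ref{prop:product-with-fin} applies to this class directly.
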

\begin{proof}
Assume that $L=\Log(\clF\times \clG)$ is locally tabular. 
Then $L_1=\Log\clF$ and $L_2=\Log \clG $ are locally tabular, since
$L$  is a conservative extension of each of them. 
By Theorem \ref{thm:1-finite-to-m-h}, $L_1$ and $L_2$ are pretransitive.  
Let $\clC_1=\clusters{\clF}$,  $\clC_2=\clusters{\clG}$, and  $\clC=\clusters{\clF \times \clG}$. 
It follows from Proposition \ref{prop:corner} that $\clC_1\times \clC_2=\clC$. 
By Theorem  \ref{thm:supple-clusters-crit}, the logic of $\clC$ is locally tabular.  
On the other hand, $\LS{5}^2$ is not locally tabular. By 
Proposition \ref{prop:pretrans-exprS5xS5}, for some finite $n$ we 
have $|C|<n$ for all $C$ in $\clC_1$ or for all $C$ in $\clC_2$.  
\improve{It follows from Proposition \ref{prop:corner} - \ISH{details}}

The other direction follows from the facts that locally tabular logics have finite height (Theorem \ref{thm:1-finite-to-m-h}), which is preserved under the product (Proposition \ref{prop:product-preserve-height}). By Theorem \ref{thm:supple-clusters-crit}, it is enough to show that clusters occurring in the underlying products form a uniformly tunable class. The required partitions exist according to the bounded cluster property, in view of Proposition \ref{prop:product-with-fin}.
\end{proof}

\subsection{Reducible path property} 

For $m<\omega$, consider the following first-order property $\FORP_m$:
\begin{equation*}
\AA x_0,\ldots, x_{m+1} \; (x_0Rx_1R\ldots R x_{m+1}\imp \bigvee\limits_{i<j\leq m+1} x_i = x_j
\vee \bigvee\limits_{i< j\leq m}  x_i R x_{j+1}).
\end{equation*}
We say that a class of $\clF$ of frames has the {\em reducible path property} (for short, {\em rpp}), 
if for some fixed $m$, for every $F\in \clF$, 
$\FORP_m$ holds in $(X,R_F)$.  

Notice that $\FORP_m$ is stronger than $m$-transitivity. This property is another nesessary condition 
for local tabularity:
\begin{theorem}\cite[Theorem 7.3]{LocalTab16AiML} \label{thm:ShSh:RP}
If a class of unimodal frames is uniformly tunable (equivalently, has a locally tabular logic), then it has the rpp.          
\end{theorem}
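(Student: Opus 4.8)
The plan is to prove the contrapositive in a uniform form: if $\clF$ is uniformly tunable, say $f$-tunable, then $\FORP_{g-1}$ holds in $(\dom F,R_F)$ for every $F\in\clF$, where $g:=\max\{f(2),f(3)\}$; this is exactly the rpp. So fix $F=(X,R)\in\clF$ (abbreviating $R_F$ to $R$) and suppose $a_0\mathrel{R}a_1\mathrel{R}\cdots\mathrel{R}a_N$, $N\ge 1$, are pairwise distinct points with no forward shortcut, that is, $a_i\mathrel{R}a_k$ fails whenever $i+2\le k\le N$. Such a configuration is precisely what witnesses failure of $\FORP_{N-1}$, so it suffices to show $N+1\le g$.

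Let $\clV$ be the partition of $X$ into the blocks $\{a_N\}$, $\{a_0,\dots,a_{N-1}\}$, and $X\setminus\{a_0,\dots,a_N\}$ (omitting the last if it is empty); then $2\le|\clV|\le 3$. Since $\clF$ is $f$-tunable, pick $\clU$ refining $\clV$, tuned in $F$, with $|\clU|\le f(|\clV|)\le g$. As $\clU$ refines $\clV$, the block $[a_N]_\clU$ equals $\{a_N\}$, and for every $l$ with $0\le l\le N-1$ the block $[a_l]_\clU$ is contained in $\{a_0,\dots,a_{N-1}\}$. I claim the blocks $[a_0]_\clU,\dots,[a_N]_\clU$ are pairwise distinct; this yields $N+1\le|\clU|\le g$, and since $F$ and the path were arbitrary, no non-reducible path in $\clF$ has more than $g$ points, i.e.\ $\FORP_{g-1}$ holds throughout $\clF$.

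To prove the claim, assume $[a_i]_\clU=[a_j]_\clU$ with $i<j$; since $[a_N]_\clU$ is a singleton and the $a_l$ are distinct, $j\le N-1$ (so $a_{j+1}$ exists) and $i\le N-2$. Using the tunedness condition \eqref{eq:part} iteratively — equivalently, lifting the $R$-walk $[a_j]_\clU,[a_{j+1}]_\clU,\dots,[a_N]_\clU$ along the p-morphism $x\mapsto[x]_\clU$ starting from the block $[a_j]_\clU=[a_i]_\clU$ — I produce points $a_i=c_0\mathrel{R}c_1\mathrel{R}\cdots\mathrel{R}c_{N-j}$ with $[c_k]_\clU=[a_{j+k}]_\clU$ for every $k$: put $c_0=a_i$; given $c_k\in[a_{j+k}]_\clU$ with $k<N-j$, the pair $(a_{j+k},a_{j+k+1})$ witnesses the left side of \eqref{eq:part} for $U=[a_{j+k}]_\clU$, $V=[a_{j+k+1}]_\clU$, so there is $c_{k+1}\in[a_{j+k+1}]_\clU$ with $c_k\mathrel{R}c_{k+1}$. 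Now $[c_{N-j}]_\clU=[a_N]_\clU=\{a_N\}$, so $c_{N-j}=a_N$; and for $k<N-j$ we have $j+k\le N-1$, so $c_k\in[a_{j+k}]_\clU\subseteq\{a_0,\dots,a_{N-1}\}$, whence $c_k=a_{l_k}$ for a unique $l_k\in\{0,\dots,N-1\}$, with $l_0=i$. By the no-shortcut hypothesis, $a_{l_k}\mathrel{R}a_{l_{k+1}}$ forces $l_{k+1}\le l_k+1$ for $k\le N-j-2$, while $a_{l_{N-j-1}}\mathrel{R}a_N$ forces $l_{N-j-1}\ge N-1$, hence $l_{N-j-1}=N-1$. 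Chaining the inequalities, $N-1=l_{N-j-1}\le l_0+(N-j-1)=i+N-j-1$, so $j\le i$ — a contradiction. (If $N-j=1$, the single edge $a_i\mathrel{R}a_N$ already gives $i\ge N-1$, contradicting $i\le N-2$.) This proves the claim, and with it the theorem.

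The only step that requires real thought is the design of $\clV$: it must simultaneously collapse the terminal vertex $a_N$ to a block by itself and isolate the off-path points in their own block, because otherwise the lifted walk $c_0,\dots,c_{N-j}$ could leave $\{a_0,\dots,a_N\}$, and the no-shortcut hypothesis — which constrains only edges between path vertices — could not be applied. Once $\clV$ is chosen this way, the rest is the mechanical lifting-and-counting above. Naive partitions (the trivial one, or one isolating only $a_0$) fail as soon as $F$ carries extra points or loops and backward edges along the path: for instance, in a directed cycle through $a_0,\dots,a_N$ the trivial partition is already tuned, whereas the partition $\clV$ above still forces $N+1$ blocks.
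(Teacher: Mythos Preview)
The paper does not supply its own proof of this theorem; it is quoted from \cite{LocalTab16AiML} and used as a black box. So there is nothing in the present paper to compare against directly.

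Your argument is correct. The key design choice---taking $\clV$ to isolate $\{a_N\}$ as a singleton block and to confine the remaining path points $\{a_0,\dots,a_{N-1}\}$ to a single block separate from $X\setminus\{a_0,\dots,a_N\}$---is exactly what makes the back-lifting step work: it guarantees $c_{N-j}=a_N$ on the nose and forces every intermediate $c_k$ to be one of the $a_l$, so the no-shortcut hypothesis can be applied edge by edge. The chain of inequalities $l_{k+1}\le l_k+1$ together with $l_{N-j-1}=N-1$ then yields $j\le i$, the desired contradiction, and the edge case $N-j=1$ is handled. The resulting uniform bound $g=\max\{f(2),f(3)\}$ is the natural one for this construction. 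This is, as far as I can tell, essentially the standard route to this result; your closing paragraph on why more na\"ive partitions fail is accurate and helpful exposition.
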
 

Let $\RP_m(\Di)$ be the modal formula 
$$p_0\con \Di(p_1\con \Di(p_2\con \ldots \con \Di  p_{m+1})\ldots )\imp 
\bigvee\limits_{i<j\leq m+1} \Di^i (p_i \con p_j)
\vee \bigvee\limits_{i< j\leq m}  \Di^i(p_i \con \Di p_{j+1}).
$$

\begin{proposition}\label{prop:rpp-to-modal-uni} For a frame $F=(X,R)$, 
$F\mo \RP_m(\Di)$ iff 
$F$ satisfies $\FORP_m$.
\end{proposition}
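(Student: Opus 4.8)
The statement is an equivalence between a modal validity and a first-order condition on $(X,R)$, so the plan is to prove each direction by unwinding the semantics of $\RP_m(\Di)$ against the shape of $\FORP_m$. The key observation is that the antecedent $p_0\con \Di(p_1\con \Di(p_2\con \ldots\con \Di p_{m+1})\ldots)$ is exactly the standard ``$R$-path formula'': at a point $x_0$ it is satisfied (under a suitable valuation) iff there is a chain $x_0 R x_1 R \ldots R x_{m+1}$ with $(F,\theta),x_i\mo p_i$ for each $i$. Dually, the disjuncts of the consequent encode the two ways a path can be ``reduced'': $\Di^i(p_i\con p_j)$ says some point reachable in $i$ steps from $x_0$ satisfies both $p_i$ and $p_j$ (a repetition, $x_i=x_j$ is one way to make this true), and $\Di^i(p_i\con\Di p_{j+1})$ says some point reachable in $i$ steps satisfies $p_i$ and sees a $p_{j+1}$-point directly (a shortcut $x_iRx_{j+1}$).

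For the direction ``$F$ satisfies $\FORP_m$ implies $F\mo\RP_m(\Di)$'', I would fix an arbitrary model $(F,\theta)$ and a point $x_0$ with $(F,\theta),x_0\mo p_0\con\Di(p_1\con\ldots\con\Di p_{m+1})\ldots)$. Expanding the nested diamonds gives witnesses $x_1,\ldots,x_{m+1}$ with $x_0Rx_1R\ldots Rx_{m+1}$ and $x_k\in\theta(p_k)$ for all $k\le m+1$. Applying $\FORP_m$ to this chain yields either $x_i=x_j$ for some $i<j\le m+1$, or $x_iRx_{j+1}$ for some $i<j\le m$. In the first case $x_i$ witnesses $\Di^i(p_i\con p_j)$ at $x_0$ (using $x_0R^i x_i$, which holds since the first $i$ steps of the chain connect $x_0$ to $x_i$, and $x_i\in\theta(p_i)\cap\theta(p_j)$). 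In the second case $x_i$ witnesses $\Di^i(p_i\con\Di p_{j+1})$ at $x_0$ (again $x_0R^ix_i$, $x_i\in\theta(p_i)$, and $x_iRx_{j+1}$ with $x_{j+1}\in\theta(p_{j+1})$). Either way the consequent holds at $x_0$, so $\RP_m(\Di)$ is valid in $F$.

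For the converse, I would argue contrapositively: suppose $\FORP_m$ fails, so there is a chain $x_0Rx_1R\ldots Rx_{m+1}$ in $F$ with $x_i\ne x_j$ for all $i<j\le m+1$ and $\neg(x_iRx_{j+1})$ for all $i<j\le m$. Define a valuation $\theta$ by $\theta(p_k)=\{x_k\}$ for $k\le m+1$ (the points are pairwise distinct, so this is well-defined; other variables arbitrary). Then $(F,\theta),x_0$ satisfies the antecedent via the given chain. One checks the consequent fails at $x_0$: a disjunct $\Di^i(p_i\con p_j)$ would require a point in $R^i(x_0)$ lying in $\{x_i\}\cap\{x_j\}=\emptyset$; and a disjunct $\Di^i(p_i\con\Di p_{j+1})$ would require a point $y\in R^i(x_0)$ with $y=x_i$ and $x_iRx_{j+1}$, contradicting $\neg(x_iRx_{j+1})$. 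Hence $\RP_m(\Di)$ is refuted in $F$.

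The only mild subtlety — and the step I would be most careful about — is the bookkeeping on indices in the antecedent: making precise that satisfaction of the nested-diamond formula at $x_0$ is equivalent to the existence of a length-$(m+1)$ $R$-path with the prescribed labels, and that in this decomposition $x_0 R^i x_i$ holds for each $i$. This is routine but needs the diamonds peeled off in the right order; everything else is a direct semantic check, and there is no real obstacle.
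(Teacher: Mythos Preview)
Your proof is correct and is exactly the routine semantic unwinding the paper has in mind; the paper itself records the proof as ``Straightforward.'' The only cosmetic point is that the valuation $\theta(p_k)=\{x_k\}$ is well-defined regardless of distinctness---distinctness is what you actually use to get $\theta(p_i)\cap\theta(p_j)=\varnothing$---but this does not affect the argument.
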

\begin{proof}
Straightforward.\hide{\ISH{later: double check again} \ISH{Done}}
\end{proof}

Since every locally tabular logic $L$ is the logic of a uniformly tunable class, $L$ contains $\RP_m(\Di_\Al)$ for some $m$.
\hide{
\begin{corollary}

\end{corollary}
}

If reducible path property is sufficient for local tabularity was 
left as an open question 
(in view of Theorem \ref{thm:supple-clusters-crit}, 
it is enough to consider clusters, that is frames of height 1).
Below we give a very simple example that shows that it is not sufficient; at the same time, we show that a special form of rpp is sufficient in the case of products. 
\begin{proposition}[Negative solution to  Problem 8.1 in \cite{LocalTab16AiML}]  
There is a unimodal cluster $C$ such that $\FORP_2$ holds in $C$, while 
its logic is not locally tabular. 
\end{proposition}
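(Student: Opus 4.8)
The plan is to write down an explicit counterexample. Let $C=(\omega,R)$ be the unimodal frame with $nRm$ iff ($m<n$ or $n=0$); equivalently, $R(0)=\omega$ and $R(n)=\{0,\ldots,n-1\}$ for $n\ge 1$. I first check that $C$ is a cluster. Since $0Rm$ for every $m$, and $aR0$ for every $a$ (the loop $0R0$ when $a=0$, and $0<a$ giving $aR0$ when $a\ge 1$), any two points of $C$ are joined by an $R$-path through $0$; hence $R_C^{*}=\omega\times\omega$, so $C$ is a single cluster.

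Next I verify that $\FORP_2$ holds in $C$. Suppose $x_0Rx_1Rx_2Rx_3$. If $x_0=0$, then $x_0Rx_2$ (as $0$ sees everything). If $x_0\ne 0$ but $x_1=0$, then $x_1Rx_3$. Otherwise $x_0,x_1\ge 1$, so $x_0Rx_1$ forces $x_1<x_0$ and $x_1Rx_2$ forces $x_2<x_1$; hence $x_2<x_0$, so $x_0Rx_2$. In every case one of the shortcut disjuncts ($x_0Rx_2$ or $x_1Rx_3$) of $\FORP_2$ holds, so $\FORP_2$ holds in $C$; equivalently, $C\mo\RP_2(\Di)$ by Proposition \ref{prop:rpp-to-modal-uni}. (As it must be, $C$ is $2$-transitive: $R_C^{\le 2}=R_C^{*}$, since $nR0Rm$ for all $n\ge 1$ and all $m$, while $0$ sees everything.)

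Finally I show that $\Log C$ is not locally tabular, by showing that it is not even $1$-finite. Fix the valuation $\theta$ on $C$ with $\theta(p_0)=\{0\}$, and put $\psi_0=\neg\bot$ and $\psi_{k+1}=\Di(\psi_k\con\neg p_0)$. An induction on $k$ gives $\vext(\psi_k)=\omega\setminus\{1,\ldots,k\}$: indeed $\vext(\psi_k\con\neg p_0)=\{k+1,k+2,\ldots\}$, and this set meets $R(0)=\omega$, and for $n\ge 1$ meets $R(n)=\{0,\ldots,n-1\}$ exactly when $n\ge k+2$, so $\vext(\psi_{k+1})=\{0\}\cup\{k+2,k+3,\ldots\}=\omega\setminus\{1,\ldots,k+1\}$. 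Hence the formulas $\psi_k$, $k<\omega$, are pairwise non-equivalent over $\Log C$ — for $k<l$ the point $l$ of the model $(C,\theta)$ satisfies $\psi_k$ but not $\psi_l$ — and each $\psi_k$ uses only the variable $p_0$. So $\Log C$ has infinitely many pairwise non-equivalent $1$-variable formulas; it is not $1$-finite, hence not locally tabular.

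The one genuine difficulty is finding the frame $C$; the three verifications above are routine. The only point I would double-check carefully in the write-up is the irreflexivity of $R$ at the points $n\ge 1$: it is precisely $1\not R1$ (so that $R(1)=\{0\}$, not $\{0,1\}$) that makes $\vext(\Di\neg p_0)=\omega\setminus\{1\}$ rather than $\omega$ under $\theta(p_0)=\{0\}$, and this is what starts the induction. With the reflexive variant $nRm\iff(m\le n$ or $n=0)$ this argument breaks down.
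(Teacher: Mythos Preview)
Your proof is correct and gives a genuinely different counterexample from the paper's. The paper takes $C=(\omega,R)$ with $mRn$ iff $m\neq n{+}1$ (so each point misses exactly one successor), and uses the formulas $\varphi_0=p$, $\varphi_{n+1}=\neg\Di\varphi_n$ with $\theta(p)=\{0\}$, showing by induction that $\vext(\varphi_n)=\{n\}$. Your frame is much sparser---each nonzero $n$ sees only $\{0,\ldots,n-1\}$---and your separating formulas $\psi_{k+1}=\Di(\psi_k\wedge\neg p_0)$ exploit that order structure directly. Both verifications of $\FORP_2$ and of non-$1$-finiteness are short; the paper's frame is reflexive and nearly universal, which makes the $\FORP_2$ check a small case split on which point is the ``missing'' predecessor, while your frame routes $\FORP_2$ through a simple transitivity-of-$<$ argument together with the special point $0$. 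Either example settles the problem; your closing remark about the role of irreflexivity at $n\ge 1$ is apt and worth keeping.
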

\begin{proof}
Let $C=(\omega,R)$, where $m R n \tiff m\neq n+1$. 
Clearly, $C$ is a cluster: $m R^2 n$ for all $m,n$. Consider a path \(m_0 R m_1 R m_2 R m_3\) in \(C.\) If \(m_0,\,\ldots,\,m_3\) are distinct and \(m_0 = m_2 + 1\) then \(m_0 \ne m_3 +1,\) so \(m_0 R m_2\) or \(m_0 R m_3.\) In either case, the path \(m_0 R \ldots R m_3\) can be reduced. Therefore \(C\) satisfies \(\FORP_2.\)

Consider a valuation \(\theta\) on \(\omega\) with \(\theta(p) = \{0\}.\) Let \(\varphi_0 = p\),  \(\varphi_{n+1} = \lnot\Di \varphi_n\). A simple induction on \(n\) shows that \((F,\,\theta),\,n\models \varphi_m\) \tiff \(n=m.\) 
So the logic of $F$ is not    locally tabular (in fact, not 1-finite).
\hide{
Then \(\Log(F,\,\theta)\not\vd\varphi_m \lra \varphi_n\) \ISH{$\Log(F,\,\theta)$ is not a logic. But do we need it? Just: $\Log(F)$ is not 1-finite} for any \(m < n < \omega.\) Therefore \(\Log(F,\,\theta)\) contains countably many nonequivalent formulas, hence \(\Log(F,\,\theta)\) is not locally tabular. Then \(\Log(F) \subseteq \Log(F,\,\theta)\) is also not locally tabular.
}
\end{proof}

\begin{proposition}\label{prop:rpp-implies-bcp}
Assume that $\clF^*\times \clG^*$ has the rpp. Then $\clF$ or $\clG$ has
the bounded cluster property.  
\end{proposition}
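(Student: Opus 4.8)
The plan is to prove the contrapositive: assuming that neither $\clF$ nor $\clG$ has the bounded cluster property, I will show that for every fixed $m$ there is a frame in $\clF^*\times\clG^*$ in which $\FORP_m$ fails, so $\clF^*\times\clG^*$ cannot have the rpp.

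The first step is an easy observation about $F^*$: passing to the reflexive transitive closure turns each cluster of $F$ into a full cluster. Indeed, if $C$ is a cluster in $F$, then $aR_F^*b$ for all $a,b\in C$, so $F^*\restr C=(C,C\times C)=\boldsymbol C$; hence, for a cluster $D$ in $G$, the restriction of $F^*\times G^*$ to $C\times D$ is the rectangle $\rect{C}{D}$, in which the total relation is precisely ``$(a,b)$ and $(a',b')$ share a coordinate''. Since $\clF$ and $\clG$ both lack the bounded cluster property, I can fix $F\in\clF$ with a cluster $C$, $|C|\ge m+2$, and $G\in\clG$ with a cluster $D$, $|D|\ge m+2$; pick distinct $a_0,a_1,\ldots\in C$ and $b_0,b_1,\ldots\in D$.

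The core of the argument is to build an $R_{F^*\times G^*}$-path $x_0,\ldots,x_{m+1}$ of length $m+1$ that admits no reduction. I take the ``staircase''
\[
x_{2k}=(a_k,b_k),\qquad x_{2k+1}=(a_k,b_{k+1}),
\]
keeping $0\le j\le m+1$; the indices of the $a$'s and $b$'s involved stay below $m+2$, so this is well defined. Then one checks, by a direct inspection that I expect to be routine: (i) consecutive points are $R_{F^*\times G^*}$-related, since $x_{2k}$ and $x_{2k+1}$ agree in the first coordinate with $b_k R_G^* b_{k+1}$ (as $D$ is a cluster), and $x_{2k+1}$ and $x_{2k+2}$ agree in the second with $a_k R_F^* a_{k+1}$ (as $C$ is a cluster), so the premise $x_0R\ldots Rx_{m+1}$ holds; (ii) if $|i-j|\ge2$, then the first coordinates of $x_i$ and $x_j$ are $a$'s with distinct indices and their second coordinates are $b$'s with distinct indices, hence $x_i$ and $x_j$ share no coordinate and are not $R_{F^*\times G^*}$-related; (iii) consecutive $x$'s are distinct by construction, and (ii) gives distinctness for non-consecutive ones, so $x_0,\ldots,x_{m+1}$ are pairwise distinct. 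Items (ii) and (iii) say that for all $i<j\le m$ (so $j+1\ge i+2$) we have $x_i\ne x_{j+1}$ and $\lnot(x_iR_{F^*\times G^*}x_{j+1})$, and that no $x_i=x_j$; together with (i) this is exactly the failure of $\FORP_m$ on $(x_0,\ldots,x_{m+1})$. So $\FORP_m$ fails in $(\dom(F^*\times G^*),R_{F^*\times G^*})$, contradicting the rpp of $\clF^*\times\clG^*$.

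The only genuinely non-routine step is finding the right pattern in step three: the staircase is engineered so that every pair of consecutive cells attacks each other (allowing a long path) while every pair of non-consecutive cells is non-attacking (forbidding any shortcut), which is precisely what $\FORP_m$ rules out. It is also worth noting that the computation just says that $\FORP_m$ fails in every sufficiently large rectangle $\rect{X}{Y}$, which is in line with $\LS{5}^2$ not being locally tabular and with Theorem~\ref{thm:ShSh:RP}.
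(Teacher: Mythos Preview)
Your proof is correct and follows essentially the same approach as the paper: both argue by contraposition, select large clusters $C$ in some $F\in\clF$ and $D$ in some $G\in\clG$, and then build the same zigzag (staircase) path $(a_0,b_0),(a_0,b_1),(a_1,b_1),(a_1,b_2),\ldots$ inside the rectangle $\rect{C}{D}$ to violate $\FORP_m$. The only cosmetic difference is bookkeeping: the paper picks clusters of size $>l$ and shows the zigzag of length $2l$ refutes $\FORP_{2l-1}$, whereas you fix $m$ first and take clusters of size $\geq m+2$ (a bit more than needed, since only about $m/2$ points from each cluster are used).
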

\begin{proof}
For the sake of contradiction, assume that for all $l<\omega$
there are $F\in \clF$ and $G\in\clG$, 
a cluster $C$ in $F$ and a cluster $D$ in $G$ such that $|C|,|D|>l$. 
Let $H=F^*\times G^*$. 
Pick distinct $x_0,\ldots, x_l\in C$ and distinct $y_0,\ldots,y_l\in D$
and consider $a_0,\ldots, a_{2l}$ in $H$, where 
$a_{2i}=(x_i,y_i)$, and $a_{2i+1}=(x_i,y_{i+1})$,
Then the zigzag path 
$$
a_0R_H a_1 R_H a_2 R_H \ldots R_H a_{2l}
$$ 
falsifies $\FORP_{2l-1}$ in $H$.  
It follows that $\FORP_m$ holds in $\clF\times \clG$ for no $m$, and so  $\clF^*\times \clG^*$
lacks the rpp.   
\end{proof}
\extended{\ISH{A counterexample with nooses would be informative: another non-LT logic with rpp}}

From Theorem \ref{thm:criterion-bcp} and Proposition \ref{prop:rpp-implies-bcp}, we obtain 
\begin{corollary}\label{cor:rpp-lt}
If the logics $\Log\clF$ and $\Log\clG$ are locally tabular, 
and $\clF^*\times \clG^*$ has the rpp, then 
$\Log(\clF\times \clG)$ is locally tabular.
\end{corollary}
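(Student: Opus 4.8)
The plan is to read this off directly from the two results immediately preceding it, so the argument is essentially a two-line composition. First I would apply Proposition~\ref{prop:rpp-implies-bcp}: since $\clF^*\times\clG^*$ has the rpp by hypothesis, that proposition gives at once that $\clF$ or $\clG$ has the bounded cluster property.

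Next I would feed this into Theorem~\ref{thm:criterion-bcp}. Its hypotheses are that $\clF$ and $\clG$ are non-empty classes, that $\Log\clF$ and $\Log\clG$ are locally tabular, and that at least one of $\clF$, $\clG$ has the bounded cluster property. The local tabularity of the factors is assumed in the corollary, and the bounded cluster property of one factor was just established, so Theorem~\ref{thm:criterion-bcp} yields that $\Log(\clF\times\clG)$ is locally tabular. For the non-emptiness requirement one only needs the trivial observation that if $\clF$ or $\clG$ is empty then $\clF\times\clG=\emptyset$, so $\Log(\clF\times\clG)$ is the inconsistent logic, which is vacuously $k$-finite for every finite $k$ and hence locally tabular; thus one may assume both classes non-empty before invoking the theorem.

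As for the main obstacle: there is essentially none at this point, since all the substantive work has already been done — the hard direction of Theorem~\ref{thm:criterion-bcp} rests on Theorems~\ref{thm:supple-clusters-crit} and~\ref{thm:1-finite-to-m-h} together with Propositions~\ref{prop:product-preserve-height} and~\ref{prop:product-with-fin}, and Proposition~\ref{prop:rpp-implies-bcp} is a short zigzag-path computation. The only points worth a moment's care are making the hypotheses line up exactly: the rpp is assumed for $\clF^*\times\clG^*$, which is precisely the input Proposition~\ref{prop:rpp-implies-bcp} consumes, and one should not overlook the trivial empty-class corner case when quoting Theorem~\ref{thm:criterion-bcp}.
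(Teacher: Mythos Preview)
Your proposal is correct and matches the paper's approach exactly: the paper simply states that the corollary follows from Theorem~\ref{thm:criterion-bcp} and Proposition~\ref{prop:rpp-implies-bcp}, which is precisely the two-step composition you describe. Your handling of the empty-class corner case is a minor addition the paper leaves implicit.
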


\hide{
\ISH{\todo{This does not work! Only works for $\Di\vf =\Di_\AlA^*\vf \vee \Di_\AlB^*\vf$}}
}

\IS{Explain that in the transitive bimodal case, rpp and prpp are the same}

Let 
$\RP_m(k,n)$ denote the formula 
$\RP_m(\bar{\Di})$, where $\bar{\Di}\vf$ abbreviates $\Di_\AlA^{\leq k}\vf \vee \Di_\AlB^{\leq n}\vf$. These formulas are called {\em  product rpp formulas}.
They give another necessary axiomatic condition for local tabularity. We illustrate it for the bimodal transitive case, while the case of more modalities and pretransitive logics is a straightforward generalization. 
  For a formula \(\vf \in \ML(\Di)\), let \(t(\vf)\) be the translation of \(\vf\) that is compatible with Boolean connectives and satisfies \(t(\Di \varphi) = \Di_1 \varphi \lor \Di_2 \varphi.\)
    Let \(L_0 = \{\varphi \mid t(\varphi) \in L\}.\)
    Then \(L_0\) is a normal logic, and it is locally tabular. So it 
    contains an $\RP_m(\Di)$ formula.  Hence,  $L$ contains \(\RP_m(1,1) = t(\RP_m(\Di)) \in L.\)     So we have
\begin{proposition}\label{prop:lt_implies_rpp}
    If a bimodal logic \(L\) is locally tabular, then it contains a product rpp formula. 
\end{proposition}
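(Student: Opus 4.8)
The plan is to collapse the two modalities of \(L\) into one and then invoke the unimodal result, Theorem~\ref{thm:ShSh:RP}. Introduce the translation \(t\colon \ML(\Di)\to \ML(\Di_1,\Di_2)\) that commutes with \(\bot\) and \(\imp\) and sends \(\Di\psi\) to \(\Di_1 t(\psi)\vee\Di_2 t(\psi)\); semantically, over a bimodal frame \((X,R_1,R_2)\) evaluating \(t(\psi)\) is the same as evaluating \(\psi\) over \((X,R_1\cup R_2)\). Put \(L_0=\{\psi\in\ML(\Di)\mid t(\psi)\in L\}\). First I would check that \(L_0\) is a normal unimodal logic. The quickest route uses that a locally tabular \(L\) is Kripke complete (Proposition~\ref{prop:LT_implies_fmp+extensions}): by the semantics of \(t\), \(L_0=\Log\{(X,R_1\cup R_2)\mid (X,R_1,R_2)\mo L\}\), which is manifestly a normal logic. (One can also check directly that \(t^{-1}(L)\) is closed under modus ponens, uniform substitution---using that \(t\) commutes with substitution---and necessitation---since \(t(\Box\psi)\) is provably equivalent to \(\Box_1 t(\psi)\con\Box_2 t(\psi)\)---and contains the Boolean tautologies and \(\Box(p\imp q)\imp(\Box p\imp\Box q)\).)

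Next I would show that \(L_0\) is locally tabular. Since \(t\) commutes with \(\lra\), for unimodal \(\vf,\psi\) we have \(\vf\lra\psi\in L_0\) iff \(t(\vf)\lra t(\psi)\in L\); hence \([\vf]_{L_0}\mapsto[t(\vf)]_L\) is a well-defined injection from the \(L_0\)-equivalence classes of formulas in \(p_0,\dots,p_{k-1}\) into the corresponding \(L\)-equivalence classes. As \(L\) is \(k\)-finite for every \(k\), so is \(L_0\), i.e.\ \(L_0\) is locally tabular. (Alternatively, if \(L\) is the logic of a uniformly tunable class of bimodal frames, then, since a partition tuned for \(R_1\) and for \(R_2\) is automatically tuned for \(R_1\cup R_2\), the class \(\{(X,R_1\cup R_2)\}\) is uniformly tunable with the same bound, and Theorem~\ref{thm:LFviaTuned} gives local tabularity of \(L_0\).)

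Now by Theorem~\ref{thm:LFviaTuned} \(L_0\) is the logic of a uniformly tunable class, which has the rpp by Theorem~\ref{thm:ShSh:RP}; so by Proposition~\ref{prop:rpp-to-modal-uni}, \(\RP_m(\Di)\in L_0\) for some \(m\), whence \(t(\RP_m(\Di))\in L\) by the definition of \(L_0\). It remains to recognize \(t(\RP_m(\Di))\) as a product rpp formula. It is \(\RP_m(\Di)\) with \(\Di\) replaced by the compound modality \(\Di_1\vee\Di_2\), whereas \(\RP_m(1,1)\) uses \(\Di_\AlA^{\leq 1}\vee\Di_\AlB^{\leq 1}\); over an \(L\)-frame these two compound modalities denote \(R_1\cup R_2\) and \(\refl{(R_1\cup R_2)}\) respectively (here I use that \(L\) is transitive, so \(R_i^{\leq 1}=R_i\cup\mathrm{id}\)). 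Since a non-repeating \(\refl{R}\)-path is an \(R\)-path and the equality-of-endpoints clause of \(\FORP_m\) absorbs the diagonal, \(\FORP_m\) holds of \(R_1\cup R_2\) iff it holds of \(\refl{(R_1\cup R_2)}\); hence by Proposition~\ref{prop:rpp-to-modal-uni} the formulas \(t(\RP_m(\Di))\) and \(\RP_m(1,1)\) are valid in exactly the same \(L\)-frames, and by Kripke completeness of \(L\) we conclude \(\RP_m(1,1)\in L\). (The \(>2\)-modality and merely pretransitive cases go through identically: one first observes, by the same injection argument, that each \(\Di_i\)-fragment of \(L\) is locally tabular, hence \(k\)- and \(n\)-transitive for suitable \(k,n\) by Theorem~\ref{thm:1-finite-to-m-h}, and then uses \(\Di_\AlA^{\leq k},\Di_\AlB^{\leq n}\) in place of the degree-\(1\) diamonds.)

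I expect no serious obstacle. The only steps that need any thought are the local tabularity of \(L_0\)---short, via either the injection on equivalence classes or the transfer of tuned partitions---and the closing identification of \(t(\RP_m(\Di))\) with the official product rpp formula, which amounts to the bookkeeping observation that the validity of an \(\RP_m\)-formula is insensitive to adjoining the diagonal to the underlying relation.
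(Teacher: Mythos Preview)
Your approach is essentially identical to the paper's: define the translation \(t\) sending \(\Di\) to \(\Di_1\vee\Di_2\), let \(L_0=t^{-1}(L)\), observe that \(L_0\) is a locally tabular unimodal logic, extract \(\RP_m(\Di)\in L_0\), and push it back through \(t\). Your argument is in fact more careful than the paper's on the final step: the paper simply writes ``\(\RP_m(1,1)=t(\RP_m(\Di))\)'', but under the paper's own conventions \(\Di^{\leq 1}\psi=\psi\vee\Di\psi\), so the compound diamond in \(\RP_m(1,1)\) carries an extra identity disjunct that \(t(\Di)\) lacks; your observation that \(\FORP_m\) is insensitive to adjoining the diagonal, together with Kripke completeness of \(L\), is exactly what is needed to bridge this. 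One small inaccuracy: the remark ``here I use that \(L\) is transitive, so \(R_i^{\leq 1}=R_i\cup\mathrm{id}\)'' is unnecessary---\(R^{\leq 1}=\mathrm{id}\cup R\) holds by definition for any relation, no transitivity required.
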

\ISLater{reread this staff, and move to prel}
\hide{
\begin{proof}
    For a formula \(\vf \in \ML(\Di)\), let \(t(\vf)\) be a translation of \(\vf\) that is compatible with Boolean connectives and satisfies \(t(\Di \varphi) = \Di_1 \varphi \lor \Di_2 \varphi.\)
    Let \(L_0 = \{\varphi \mid [\varphi] \in L\}.\)
    Then \(L_0\) is locally tabular.
    By Theorem~\ref{thm:ShSh:RP}, the class of~\(L_0\)-frames satisfies \(\FORP_m\) for some~\(m < \omega.\)
    By Proposition~\ref{prop:LT_implies_fmp+extensions}, \(L_0\) is Kripke complete, so \(\RP_m(\Di)\in L_0.\)
    It follows that \(\RP_m(1,1) = [\RP_m(\Di)] \in L.\)
\end{proof}
}
\begin{proposition}\label{prop:rpp-to-modal-poly} For a frame $F=(X,(R_\Di)_{\Di\in\AlA\cup\AlB})$, we have
$F \mo \RP_m(k,n)$ iff 
$(X,(\bigcup_{\Di\in\AlA} R_\Di)^{\leq k})\cup (\bigcup_{\Di\in\AlB} R_\Di)^{\leq n})$ satisfies $\FORP_m$.
\end{proposition}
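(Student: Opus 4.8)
The claim is the polymodal/pretransitive analogue of Proposition~\ref{prop:rpp-to-modal-uni}, so the plan is to reduce it to that unimodal fact by a ``collapse'' argument. Set $S = \bigl(\bigcup_{\Di\in\AlA} R_\Di\bigr)^{\leq k}\cup\bigl(\bigcup_{\Di\in\AlB} R_\Di\bigr)^{\leq n}$, and write $R_1 = \bigcup_{\Di\in\AlA} R_\Di$, $R_2 = \bigcup_{\Di\in\AlB} R_\Di$, so that $S = R_1^{\leq k}\cup R_2^{\leq n}$. Recall that $\RP_m(k,n)$ is by definition $\RP_m(\bar\Di)$, where $\bar\Di\varphi$ abbreviates $\Di_\AlA^{\leq k}\varphi\vee\Di_\AlB^{\leq n}\varphi$. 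The first step is the semantic observation that on any model $(F,\theta)$ one has, for every formula $\psi$ and every point $a$,
\[
(F,\theta),a\models\bar\Di\psi \iff \exists b\,(a\,S\,b\text{ and }(F,\theta),b\models\psi),
\]
because $(F,\theta),a\models\Di_\AlA^{\leq k}\psi$ iff $\psi$ holds at some $R_1^{\leq k}$-successor of $a$, similarly for the $\AlB$-side, and the union of these two successor sets is exactly $S(a)$. In other words, $\bar\Di$ behaves in $(F,\theta)$ exactly as the diamond of the unimodal frame $(X,S)$ behaves in the model $(X,S,\theta)$.

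With that in hand, the second step is a routine induction on formula complexity showing that for every $\varphi\in\ML(\Di)$ and every valuation $\theta$,
\[
(F,\theta),a\models\varphi[\bar\Di/\Di] \iff (X,S,\theta),a\models\varphi,
\]
where $\varphi[\bar\Di/\Di]$ denotes the result of replacing each $\Di$ in $\varphi$ by $\bar\Di$; the Boolean cases are immediate and the modal case is precisely the equivalence displayed above. Since $\RP_m(k,n)=\RP_m(\Di)[\bar\Di/\Di]$, this yields: $F\models\RP_m(k,n)$ iff $(X,S)\models\RP_m(\Di)$. Finally, apply Proposition~\ref{prop:rpp-to-modal-uni} to the unimodal frame $(X,S)$: $(X,S)\models\RP_m(\Di)$ iff $(X,S)$ satisfies $\FORP_m$, which is exactly the right-hand side of the statement. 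Chaining the two equivalences completes the proof.

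The only point that needs a little care — and the closest thing to an obstacle — is bookkeeping about how the iterated diamonds $\Di^i$ and the compositions $\Di^i(p_i\wedge\cdots)$ appearing inside $\RP_m(\Di)$ interact with the substitution $\bar\Di/\Di$ and with the relation $S$; but since $\Di^i(p_i\wedge\Di p_{j+1})[\bar\Di/\Di]$ is literally the $i$-fold iteration of $\bar\Di$ applied to $p_i\wedge\bar\Di p_{j+1}$, and the inductive equivalence above covers iterated modalities automatically, this reduces to the general substitution lemma and requires no separate argument. Thus the proof is genuinely a one-line reduction once the substitution lemma is stated, and we can safely record it as ``straightforward'', exactly as the unimodal Proposition~\ref{prop:rpp-to-modal-uni} was.
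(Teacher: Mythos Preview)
Your proposal is correct and follows exactly the approach the paper takes: the paper's proof is the single line ``Follows from Proposition~\ref{prop:rpp-to-modal-uni}'', and your argument spells out precisely the reduction that this line leaves implicit, namely that the compound modality $\bar\Di$ is semantically the diamond of the unimodal frame $(X,S)$, so validity of $\RP_m(k,n)$ in $F$ coincides with validity of $\RP_m(\Di)$ in $(X,S)$. There is nothing to add or correct.
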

\begin{proof}
Follows from Proposition \ref{prop:rpp-to-modal-uni}.
\end{proof}

\begin{proposition}\label{prop:rpp-canon} 
Formulas $\RP_m(k,n)$ are canonical.
\end{proposition}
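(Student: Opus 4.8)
The plan is to unfold the abbreviations in $\RP_m(k,n)$ and recognize the resulting formula as a Sahlqvist formula; canonicity then follows at once from the Sahlqvist canonicity theorem (see, e.g., \cite{CZ} or \cite{BDV}), according to which every Sahlqvist formula is valid in the canonical frame of any normal logic that contains it.

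Recall that $\RP_m(k,n)$ is $\RP_m(\bar\Di)$, where $\bar\Di\vf$ abbreviates $\Di_\AlA^{\leq k}\vf\vee\Di_\AlB^{\leq n}\vf$. Written out in primitive connectives, $\Di_\AlA^{\leq k}\psi$ and $\Di_\AlB^{\leq n}\psi$ are disjunctions of formulas obtained from $\psi$ by prefixing at most $k$, respectively at most $n$, diamonds; hence $\bar\Di$ is a \emph{positive} compound modality --- built from its argument using only diamonds, disjunctions and conjunctions, with no occurrence of $\Box$ and no negation. Now view $\RP_m(\bar\Di)$ as an implication $\phi\imp\psi$. Its antecedent is
\[
\phi\;=\;p_0\con\bar\Di\bigl(p_1\con\bar\Di\bigl(p_2\con\cdots\con\bar\Di\,p_{m+1}\bigr)\cdots\bigr),
\]
which, after $\bar\Di$ is unfolded, is built from the variables $p_0,\dots,p_{m+1}$ --- each of them a boxed atom (of box-depth $0$) --- using only $\con$, $\vee$ and diamonds; as no box and no negation occurs over the variables, $\phi$ is an admissible Sahlqvist antecedent. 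Its consequent is
\[
\psi\;=\;\bigvee_{i<j\leq m+1}\bar\Di^{\,i}(p_i\con p_j)\;\vee\;\bigvee_{i<j\leq m}\bar\Di^{\,i}(p_i\con\bar\Di\,p_{j+1}),
\]
and since $\bar\Di$ is positive, each disjunct, and hence $\psi$, is a positive formula. So $\RP_m(\bar\Di)$ is a Sahlqvist implication and therefore canonical. The pretransitive and many-modality versions are handled in exactly the same way, the only difference being the concrete syntactic shape of the positive compound modality substituted for $\Di$.

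The verification is routine once the Sahlqvist form is spotted; the single point that needs care --- and which is really the whole content of the proof --- is to check that replacing the diamond $\Di$ in $\RP_m(\Di)$ by the compound modality $\bar\Di$ brings in neither a box nor a negation, so that the Sahlqvist shape of both $\phi$ and $\psi$ survives the substitution.
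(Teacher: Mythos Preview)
Your proof is correct and takes the same approach as the paper, which simply notes that these formulas are Sahlqvist. You have merely supplied the details that the paper leaves implicit: that the antecedent is built from atoms using only $\con$, $\vee$, and diamonds, that the consequent is positive, and that replacing $\Di$ by the positive compound modality $\bar\Di$ preserves this shape.
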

\begin{proof}
These formulas are Sahlqvist. 
\end{proof}

\begin{proposition}\label{prop:LTimpliesrpp}
Let $\clF$ be a class of $k$-transitive $\AlA$-frames, and 
$\clG$ a class of $n$-transitive $\AlB$-frames.
If $\Log(\clF\times \clG)$ is locally tabular, 
then 
$\clF^*\times \clG^*$ has the rpp, and 
$\clF\times \clG\mo \RP_m(k,n)$ for some $m$.
\end{proposition}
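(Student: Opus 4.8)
The claim has two parts. Part one asserts that $\clF^*\times\clG^*$ has the rpp; part two asserts that $\clF\times\clG$ validates $\RP_m(k,n)$ for some $m$. The plan is to derive the rpp for $\clF^*\times\clG^*$ first, then transfer it to the modal validity statement via the translation machinery. First I would observe that since $\clF$ consists of $k$-transitive $\AlA$-frames and $\clG$ of $n$-transitive $\AlB$-frames, Proposition~\ref{prop:product-preserve-pretrans} gives that $\clF\times\clG$ consists of $(k+n)$-transitive frames, and hence $\Log(\clF\times\clG)$ is pretransitive (which also follows directly from Theorem~\ref{thm:1-finite-to-m-h}, given local tabularity).

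For the first part, the key point is that local tabularity of $\Log(\clF\times\clG)$ should yield local tabularity of the logic of $\clF^*\times\clG^*$. Indeed, $(\clF\times\clG)^* = \clF^*\times\clG^*$ by Proposition~\ref{prop:corner} (the reflexive transitive closure of $R_{F\times G}$ is $R_h^*\circ R_v^* = R_v^*\circ R_h^*$, which is precisely the relation of $F^*\times G^*$ viewed as a one-relation frame — here one identifies $\clF^*\times\clG^*$ with the class of preorders $(X\times Y, R_h^*\circ R_v^*)$). Then, using equation~\eqref{eq:pretrans} (or Proposition~\ref{prop:express}), validity in $\clF^*\times\clG^*$ corresponds, under the translation $[\,\cdot\,]^{m}$, to validity in $\clF\times\clG$; since the latter logic is locally tabular, the translated logic — which is (an extension of, or equal to) $\Log(\clF^*\times\clG^*)$ — is also locally tabular, because the translation embeds the finite-variable fragments of $\Log(\clF^*\times\clG^*)$ into those of $\Log(\clF\times\clG)$ up to provable equivalence. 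Once $\Log(\clF^*\times\clG^*)$ is known to be locally tabular, Theorem~\ref{thm:ShSh:RP} applies directly (this is a class of unimodal frames) and gives the rpp.

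For the second part, I would invoke Proposition~\ref{prop:rpp-to-modal-poly}: $F\times G\mo\RP_m(k,n)$ iff the frame $(X\times Y, (R_h)^{\leq k}\cup(R_v)^{\leq n})$ satisfies $\FORP_m$. But for $F\in\clF$ ($k$-transitive) and $G\in\clG$ ($n$-transitive), $R_h$ is $k$-transitive and $R_v$ is $n$-transitive, so $(R_h)^{\leq k} = R_h^*$ and $(R_v)^{\leq n} = R_v^*$, whence $(R_h)^{\leq k}\cup(R_v)^{\leq n} \subseteq R_h^*\circ R_v^*$, i.e. this relation sits inside the (transitive) relation of $F^*\times G^*$ and in fact generates the same closure. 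From the rpp of $\clF^*\times\clG^*$ established in part one — say $\FORP_{m}$ holds uniformly in $\clF^*\times\clG^*$ — a short argument shows $\FORP_{m'}$ holds in $(X\times Y, (R_h)^{\leq k}\cup(R_v)^{\leq n})$ for a suitable $m'$ (any $R$-path in the smaller relation is an $R'$-path in the preorder of $F^*\times G^*$, and reductions available there can be pulled back since $(R_h)^{\leq k}\cup(R_v)^{\leq n}$ is cofinal in the appropriate sense). Then Proposition~\ref{prop:rpp-to-modal-poly} delivers $\clF\times\clG\mo\RP_{m'}(k,n)$.

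The main obstacle I anticipate is the careful bookkeeping in the transfer step of part one: making precise that local tabularity of $\Log(\clF\times\clG)$ forces local tabularity (or at least uniform tunability) of $\Log(\clF^*\times\clG^*)$. The cleanest route is probably to argue at the level of uniform tunability (Theorem~\ref{thm:LFviaTuned}): a partition of $X\times Y$ that is tuned with respect to all the relations $R_\Di^h, R_\Di^v$ is, a fortiori, tuned with respect to $R_h^*\circ R_v^*$ (since $R_h^*\circ R_v^* = R_{F\times G}^*$ and tunedness with respect to a family of relations implies tunedness with respect to the reflexive transitive closure of their union — this is a standard observation, cf. the p-morphism reformulation in Definition~\ref{def:tune}). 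Hence a uniformly tunable class witnessing local tabularity of $\Log(\clF\times\clG)$ yields a uniformly tunable class for $\clF^*\times\clG^*$, and Theorem~\ref{thm:ShSh:RP} finishes it. I would double-check that this "tuned implies tuned for the closure" step genuinely holds for the single relation $R_h^*\circ R_v^*$ and not merely for $R_{F\times G}$ itself; by Proposition~\ref{prop:corner} they coincide, so there is no gap.
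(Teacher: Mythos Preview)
Your overall strategy matches the paper's: pass to a unimodal class, transfer local tabularity (via translations or, equivalently, via uniform tunability), and then invoke Theorem~\ref{thm:ShSh:RP}. The paper does this by introducing the translation $\traone{\Di\psi}=\Di_\AlA^{\leq k}\traone{\psi}\vee\Di_\AlB^{\leq n}\traone{\psi}$ and the unimodal frame $F\star G=(X\times Y,\,R_h^*\cup R_v^*)$, so that $F\times G\models\traone{\vf}$ iff $F\star G\models\vf$; local tabularity of $\Log(\clF\times\clG)$ then gives local tabularity of $\Log\{F\star G\}$, whence the rpp.

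There is, however, a genuine confusion in your write-up. You identify $\clF^*\times\clG^*$ with the class of preorders $(X\times Y,\,R_h^*\circ R_v^*)$, i.e.\ with $(\clF\times\clG)^*$. This is wrong: $F^*\times G^*$ is a \emph{bimodal} frame whose two relations are $R_h^*$ and $R_v^*$, so $R_{F^*\times G^*}=R_h^*\cup R_v^*$, not their composition. The rpp claimed in the proposition is $\FORP_m$ for the \emph{union} $R_h^*\cup R_v^*$. Because of this mix-up, your Part~1 proves rpp for the composition, and then your Part~2 needs the unjustified ``short argument'' to pull rpp back from $R_h^*\circ R_v^*$ to its subrelation $R_h^*\cup R_v^*$. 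That step does not work in general: a reduction of a path available in a larger relation need not be a step of the smaller one.

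The fix is immediate once you target the correct relation. Your own tunability observation already gives it: a partition tuned for all $R_\Di^h,R_\Di^v$ is tuned for $R_h$ and $R_v$, hence (by the p-morphism reformulation in Definition~\ref{def:tune}) for $R_h^*$ and $R_v^*$, hence for their union $R_h^*\cup R_v^*$. Theorem~\ref{thm:ShSh:RP} then yields the rpp for $\clF^*\times\clG^*$. Part~2 becomes a one-liner: by $k$- and $n$-transitivity, $(R_h)^{\leq k}\cup(R_v)^{\leq n}=R_h^*\cup R_v^*$, so Proposition~\ref{prop:rpp-to-modal-poly} gives $\clF\times\clG\models\RP_m(k,n)$ directly, with no change of index.
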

\begin{proof} 
For a unimodal formula $\vf$,  we define a translation $\traone{\vf}$ by compatibility with Boolean connectives and 
$\traone{\Di \psi}=\Di_\AlA^{\leq k}\traone{\vf} \vee \Di_\AlB^{\leq n}\traone{\vf}$. 
Consider frames $F$ and $G$, and assume that $F^*\times G^*$ is $(X\times Y, R_1,R_2)$; then 
put $F \star  G=(X\times Y,R_1\cup R_2)$. 

It is straightforward that for  $k$-transitive $F$ and $n$-transitive $G$ we have 
\begin{equation}\label{eq:star-RPP}
F\times G \mo \traone{\vf} \tiff F\star G\mo \vf.    
\end{equation}
It follows that $L=\{ \vf\in \ML(\Di) \mid F\times G \mo \traone{\vf} \}$ is a unimodal logic.
Since $\Log(\clF\times \clG)$ is locally tabular, $L$ is locally tabular as well. 
Hence the class $\clH=\{F \star  G\mid F\in\clF\text{ and } G\in \clG\}$ has
the rpp \cite{LocalTab16AiML}; 
by Proposition  \ref{prop:rpp-to-modal-poly}, $\clF\times \clG\mo \RP_m(k,n)$. 
\hide{By Proposition \ref{prop:rpp-to-modal-uni},
 $\clH\mo \RP_m(\Di)$ for some $m$. By \eqref{eq:star-RPP}, $\clF\times \clG\mo \RP_m(k,n)$. }
\end{proof}

\subsection{\(1\)-finiteness}
\improve{\ISH{This looks bad. We need to find a better wording.}}

It is known that above $\LS{4}$, 1-finiteness is sufficient for local tabularity \cite{Maksimova89}. In general, there are 1-finite logics that are not locally tabular \cite{Glivenko2021,LTViaSums2022}. We  show that for products of locally tabular logics, $1$-finiteness guarantees local tabularity. 

\extended{ 
\begin{example}
Consider the frame $(\mathbb{Z}, R)$, where $mRn$ iff $|m-n|\neq 1$. 
The logic of this frame is one-finite, but not 2-finite. 
\ISH{Write down...} 
\end{example}
}
 
\begin{theorem}\label{theorem:1finite-LT} 
Let \(\clF,\,\clG\) be non-empty classes of frames. If the logics $\Log\clF$ and $\Log\clG$ are locally tabular, 
and $\Log(\clF\times \clG)$ is \(1\)-finite, then  $\Log(\clF\times \clG)$  is locally tabular.
\end{theorem}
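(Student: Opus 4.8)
The plan is to reduce the statement to Theorem~\ref{thm:criterion-bcp} by showing that $1$-finiteness of $\Log(\clF\times\clG)$ forces one of $\clF$, $\clG$ to have the bounded cluster property. Since $\Log\clF$ and $\Log\clG$ are locally tabular, by Theorem~\ref{thm:1-finite-to-m-h} they are pretransitive, say $\clF$ consists of (w.l.o.g.\ all) $k$-transitive frames and $\clG$ of $n$-transitive frames; by Proposition~\ref{prop:product-preserve-pretrans} the class $\clF\times\clG$ is $(k+n)$-transitive. So all the pretransitive machinery applies.

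The key step is to argue contrapositively: suppose \emph{neither} $\clF$ nor $\clG$ has the bounded cluster property. Then for every $l<\omega$ there are $F\in\clF$, $G\in\clG$ containing clusters $C$, $D$ with $|C|,|D|>l$. By Proposition~\ref{prop:corner}, $\clusters{\clF\times\clG}=\clusters{\clF}\times\clusters{\clG}$, so $F\restr C\times G\restr D$ is (a generated subframe situation giving) a cluster in some frame of $\clF\times\clG$; more precisely $C$ and $D$ are $k$- and $n$-transitive clusters of unbounded size. Then Proposition~\ref{prop:pretrans-exprS5xS5} tells us that $\{\vf\in\ML(\Di_1,\Di_2)\mid \clusters{\clF}\times\clusters{\clG}\mo\tran{\vf}\}=\LS5^2$. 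Since $\LS5^2$ is \emph{not} $1$-finite (this is the classical cylindric-algebra fact behind Theorem~\ref{thm:Segerberg2DimML}: already the one-variable fragment of $\LS5^2$ is infinite), and since the $\tran{\cdot}$-translation of a one-variable formula is again a formula in the finitely many modalities of $\clF\times\clG$ but still in one variable, we get that $\Log(\clusters{\clF}\times\clusters{\clG})$ is not $1$-finite. As $\clusters{\clF\times\clG}$ is (up to isomorphism) a subclass of the frames generating $\Log(\clF\times\clG)$ — each cluster-frame $F\restr C\times G\restr D$ is a generated subframe of a member of $\clF\times\clG$ once we pass to point-generated subframes, or one simply invokes Proposition~\ref{prop:LF-for-subframess}/the fact that $\Log\SubFrs{\clF\times\clG}\supseteq\Log(\clusters{\clF\times\clG})$ — non-$1$-finiteness of the cluster logic propagates to $\Log(\clF\times\clG)$, contradicting the hypothesis. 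Hence one of $\clF$, $\clG$ has the bounded cluster property, and Theorem~\ref{thm:criterion-bcp} gives local tabularity of $\Log(\clF\times\clG)$.

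The main obstacle is the bookkeeping around ``clusters occurring in $\clF\times\clG$ versus frames generating $\Log(\clF\times\clG)$'': one must be careful that non-$1$-finiteness of $\Log\clusters{\clF\times\clG}$ really does imply non-$1$-finiteness of $\Log(\clF\times\clG)$. The clean way is to observe that each cluster-frame in $\clusters{\clF\times\clG}$ is isomorphic to $F\restr C\times G\restr D$ where $F\restr C$ is a (point-)generated subframe of $F$ and similarly for $G$; hence $F\restr C\in\SubFrs{\clF}$ has a locally tabular logic, likewise $G\restr D$, and each cluster-frame is a subframe of a product of a $\clF$-frame and a $\clG$-frame. Then using that $\Log(\clF\times\clG)$ validates the $(k+n)$-transitivity axiom and the one-variable non-$1$-finiteness is witnessed inside clusters (Theorem~\ref{thm:supple-clusters-crit} pinpoints that $1$-finiteness of $\Log(\clF\times\clG)$ together with uniform finite height — which holds by Proposition~\ref{prop:product-preserve-height} — would force $\Log\clusters{\clF\times\clG}$ to be $1$-finite). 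Actually this last remark suggests the slickest route: by Proposition~\ref{prop:product-preserve-height} the class $\clF\times\clG$ has uniformly finite height, and one checks directly from Theorem~\ref{thm:supple-clusters-crit}'s proof (or from Proposition~\ref{prop:LF-for-subframess}) that $1$-finiteness of a logic of frames of bounded height forces $1$-finiteness — in fact local tabularity is not needed, only finite height — of the logic of its clusters; combined with Proposition~\ref{prop:pretrans-exprS5xS5} and non-$1$-finiteness of $\LS5^2$, this yields the bounded cluster property and we finish via Theorem~\ref{thm:criterion-bcp} exactly as in the proof of that theorem.
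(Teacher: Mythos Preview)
Your overall strategy coincides with the paper's: assume neither $\clF$ nor $\clG$ has the bounded cluster property, invoke Proposition~\ref{prop:pretrans-exprS5xS5} to get $\LS5^2$ on the cluster side, use that $\LS5^2$ is not $1$-finite, and derive a contradiction; then finish via Theorem~\ref{thm:criterion-bcp}. That is exactly the skeleton of the paper's argument.

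However, the step you flag as ``the main obstacle'' is a genuine gap that you do not close. A product of clusters $F\restr C\times G\restr D$ is a \emph{subframe} of $F\times G$, not a generated subframe, so there is no direct inclusion $\Log(\clF\times\clG)\subseteq\Log(\clusters{\clF}\times\clusters{\clG})$; non-$1$-finiteness of the latter does not automatically propagate down. Your two proposed patches do not work: Proposition~\ref{prop:LF-for-subframess} only yields $0$-finiteness of $\Log\SubFrs{\clF\times\clG}$ from $1$-finiteness of $\Log(\clF\times\clG)$, and $\LS5^2$ \emph{is} $0$-finite (every closed formula is equivalent to $\top$ or $\bot$), so no contradiction arises. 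And Theorem~\ref{thm:supple-clusters-crit} (including its proof in the Appendix) speaks about uniform tunability, not $k$-finiteness for a fixed $k$; the claim ``$1$-finiteness of a logic of bounded height forces $1$-finiteness of its cluster logic'' is neither stated nor an immediate byproduct of that proof.

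The paper closes this gap by an explicit relativization. It passes to the point-generated subframe $H$ of $F\times G$ rooted in $C_1\times C_2$, chooses the valuation so that $\theta'(p)\subseteq C_1\times C_2$ is non-empty, and observes that then $\bar\Di p:=\Di_\AlA^{\le m}\Di_\AlB^{\le n} p$ defines exactly the root cluster $C_1\times C_2$ inside $H$. It then uses a translation $\traone{\cdot}$ that conjoins $\bar\Di p$ at every connective (so $\traone{\psi_1\to\psi_2}=\bar\Di p\wedge(\traone{\psi_1}\to\traone{\psi_2})$, and $\traone{\Di_i\psi}=\bar\Di p\wedge\Di_{(\cdot)}^{\le(\cdot)}\traone{\psi}$), which keeps everything in one variable and satisfies $M',a\models\traone{\vf}$ iff $a\in C_1\times C_2$ and $M,a\models\vf$. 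This transfers the infinitely many pairwise non-equivalent one-variable $\LS5^2$-formulas to infinitely many pairwise non-equivalent one-variable formulas on $H$, hence on $\clF\times\clG$. That relativization is the missing idea in your proposal.
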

\begin{proof} 
Assume that $\Log\clF$ and $\Log\clG$ are locally tabular. 
Let $\clC_1=\clusters{\clF}$, $\clC_2=\clusters{\clG}$, and let 
$\clR$ be the class of rectangles $\rect{X_1}{X_2}$ such that $X_i$ is the domain of $C_i\in \clC_i$, $i=1,2$.

For the sake of contradiction, assume that $\Log(\clF\times \clG)$ is not locally tabular. Then $\Log\clR=\LS{5}^2$
according to  Theorem \ref{thm:criterion-bcp}  and Proposition \ref{prop:pretrans-exprS5xS5}.  
It is known that $\LS{5}^2$ is not 1-finite \cite[Theorem~2.1.11(i)]{CylindricalAlgebras1}.
Hence, for each $l$, there is a model $M=(R,\theta)$ on $R\in \clR$ and 
formulas $\psi_1, \ldots, \psi_l$ in the single variable $p$ such that 
all sets $\vext(\psi_i)$ are pairwise distinct. \improve{\ISH{What is the simplest argument for this? Maltsev?}}
Consider the frames $F$ in $\clF$ and $G$ in $\clG$ with clusters $C_1$ in $F$ and $C_2$ in $G$
that form the rectangle $R$. 
Let $H$ be the subframe of \(F\times G\)
\hide{
\VS{Typo? of \(F\times G\)?}\ISH{Yes!}
}
generated by a point in $C_1\times C_2$. Consider a model $M'=(H,\theta')$ 
with $\theta'(p)=\theta(p)$. 

\improve{more details}
Assume that $l>2$. Then $\theta(p)\neq \emp$: otherwise, 
every formula in the single variable $p$ is equivalent to either $\bot$ or $\top$ on $M$.
By Theorem \ref{thm:1-finite-to-m-h}, for some $m,n$, every frame in $\clF$ is $m$-transitive and every frame in $\clG$ is $n$-transitive. 
Let $\bar{\Di} p$ abbreviate $\Di_\AlA^{\leq m} \Di_\AlB^{\leq n} p$.
By Proposition \ref{prop:corner}, we have:
$$
\text{$M',a\mo \bar{\Di} p$ iff $a$ belongs to $C_1\times C_2$.}
$$
For bimodal formulas $\vf$ in the single variable $p$, we define 
$\traone{\bot}=\bot$, $\traone{p}=p$, $\traone{\psi_1\imp \psi_2}=\bar{\Di} p\con( \traone{\psi_1}\imp \traone{\psi_2})$, and 
$\traone{\Di_1 \psi}=\bar{\Di} p \con \Di_\AlA^{\leq m}  \traone{\psi}$, 
$\traone{\Di_2 \psi}=\bar{\Di} p\con \Di_\AlB^{\leq n}  \traone{\psi}$. 
Then we have 
$$\text{
$M',a\mo \traone{\vf}$ iff  ($a$ is in $M$ and $M,a\mo \vf$).
}$$
So formulas $\traone{\psi_1}, \ldots, \traone{\psi_l}$ are pairwise non-equivalent in $M'$.
It follows that $\Log(\clF\times \clG)$ is not 1-finite.   
\end{proof}

\subsection{Criteria} 

We combine our previous observations in the following criteria. 

\begin{theorem}\label{thm:criterion-frames-general} 
Let $\clF$ and $\clG$ be non-empty.
\hide{
The logic $\Log(\clF\times \clG)$  is locally tabular iff 
the logics $\Log(\clF)$ and $\Log(\clG)$ are locally tabular, 
and one (each) of the following conditions  holds: }
TFAE: 
\begin{enumerate}
\item\label{cr-i0} $\Log(\clF\times \clG)$  is locally tabular.
\item\label{cr-i1} $\Log\clF$ and $\Log\clG$   are locally tabular and at least one of the classes $\clF$, $\clG$ has 
the bounded cluster property. 
\item\label{cr-i2} $\Log\clF$ and $\Log\clG$   are locally tabular and $\clF^*\times \clG^*$  has the rpp.
\extended{
\item\label{cr-i3} There is $N< \omega$ such that for every $H\in \clF\times \clG$, 
for every 2-element partition $\clV$ of $H$ there exists a refinement $\clU$ of $\clV$ 
such that $\clU$ is tuned in $H$ and $|\clU|\leq N$. 
}
\item\label{cr-i4} $\Log\clF$ and $\Log\clG$   are locally tabular and $\Log(\clF\times \clG)$  is 1-finite.
\end{enumerate}
\end{theorem}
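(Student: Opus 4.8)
The plan is to establish the cycle of implications $\eqref{cr-i0} \Rightarrow \eqref{cr-i1} \Rightarrow \eqref{cr-i2} \Rightarrow \eqref{cr-i4} \Rightarrow \eqref{cr-i0}$, drawing on the results assembled in this section so that each arrow is short. For $\eqref{cr-i0} \Rightarrow \eqref{cr-i1}$: if $\Log(\clF \times \clG)$ is locally tabular, then it is a conservative extension of each factor (since the factors are Kripke complete and consistent — here we need $\clF$, $\clG$ non-empty), so $\Log\clF$ and $\Log\clG$ are locally tabular; the bounded cluster property of one factor is then exactly the content of Theorem \ref{thm:criterion-bcp}. This also gives $\eqref{cr-i1} \Leftrightarrow \eqref{cr-i0}$ outright, so really the work is to weave in $\eqref{cr-i2}$ and $\eqref{cr-i4}$.

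For $\eqref{cr-i1} \Rightarrow \eqref{cr-i2}$: assuming $\Log\clF$, $\Log\clG$ locally tabular, by Theorem \ref{thm:1-finite-to-m-h} they are pretransitive, say $m$- and $n$-transitive respectively, so $\clF^*$ and $\clG^*$ are classes of preorders whose logics are still locally tabular (by \eqref{eq:pretrans}, validity of $\tranA{\vf}$ transfers, so local tabularity passes to the $\star$-closure; alternatively $\Log\clF^*$ is an extension-like reduct that stays locally tabular). Now if, say, $\clF$ has the bounded cluster property, then so does $\clF^*$, and one checks directly that $\clF^* \times \clG^*$ satisfies $\FORP_M$ for a suitable $M$: in a product of two preorders, any $R_{F^* \times G^*}$-path that does not meet a bounded-size cluster of the first coordinate or repeat a point must make progress in the skeleton of one factor, and the skeletons have uniformly bounded height (Theorem \ref{thm:1-finite-to-m-h}, Proposition \ref{prop:product-preserve-height}); bounding cluster size in one coordinate then bounds how long a path can "stall", yielding the rpp. (This is essentially the converse direction of the reasoning behind Proposition \ref{prop:rpp-implies-bcp}, run in the easy direction.)

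For $\eqref{cr-i2} \Rightarrow \eqref{cr-i4}$: if $\Log\clF$, $\Log\clG$ are locally tabular and $\clF^* \times \clG^*$ has the rpp, then by Corollary \ref{cor:rpp-lt} the logic $\Log(\clF \times \clG)$ is locally tabular, hence in particular $1$-finite — so $\eqref{cr-i4}$ holds, with $\Log\clF$, $\Log\clG$ locally tabular carried along. Finally $\eqref{cr-i4} \Rightarrow \eqref{cr-i0}$ is precisely Theorem \ref{theorem:1finite-LT}. Assembling the four arrows closes the loop and gives the equivalence of \eqref{cr-i0}--\eqref{cr-i2} and \eqref{cr-i4}.

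The main obstacle is the honest verification that the bounded cluster property of one factor forces the rpp on $\clF^* \times \clG^*$ with a single uniform $M$; everything else is a direct citation. The cleanest route is probably to avoid a bare-hands path analysis and instead argue: by Theorem \ref{thm:criterion-bcp}, $\eqref{cr-i1}$ already implies $\eqref{cr-i0}$, hence $\Log(\clF \times \clG)$ is locally tabular, hence (Theorem \ref{thm:1-finite-to-m-h} plus Proposition \ref{prop:product-preserve-pretrans}) $\clF \times \clG$ consists of $(m+n)$-transitive frames, so $\clF^* \times \clG^* = (\clF \times \clG)^*$ up to the identification in Proposition \ref{prop:corner}, and by Theorem \ref{thm:ShSh:RP} a uniformly tunable class of unimodal frames has the rpp — applied to the class $\{F^* \star G^* \mid F \in \clF, G \in \clG\}$ via \eqref{eq:star-RPP} as in the proof of Proposition \ref{prop:LTimpliesrpp}. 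That reduces $\eqref{cr-i1} \Rightarrow \eqref{cr-i2}$ to machinery already in place and sidesteps any new combinatorics.
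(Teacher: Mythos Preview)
Your proposal is correct, and once you follow your own advice in the final paragraph---routing $\eqref{cr-i1}\Rightarrow\eqref{cr-i2}$ through $\eqref{cr-i0}$ via Theorem~\ref{thm:criterion-bcp} and Proposition~\ref{prop:LTimpliesrpp} rather than attempting the bare-hands path analysis sketched in the middle paragraph---it coincides with the paper's argument: the paper invokes Theorem~\ref{thm:criterion-bcp} for $\eqref{cr-i0}\Leftrightarrow\eqref{cr-i1}$, Proposition~\ref{prop:LTimpliesrpp} for $\eqref{cr-i0}\Rightarrow\eqref{cr-i2}$, Proposition~\ref{prop:rpp-implies-bcp} for $\eqref{cr-i2}\Rightarrow\eqref{cr-i1}$, and Theorem~\ref{theorem:1finite-LT} for $\eqref{cr-i4}\Rightarrow\eqref{cr-i0}$ (the converse being trivial). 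Your cycle merely reorders the same citations; the tentative direct argument in the second paragraph is unnecessary and can be dropped.
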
 
\begin{proof}
Theorem~\ref{thm:criterion-bcp} shows the equivalence between \eqref{cr-i0} and \eqref{cr-i1}.
By Proposition \ref{prop:LTimpliesrpp},  \eqref{cr-i0} implies \eqref{cr-i2}. 
By Proposition \ref{prop:rpp-implies-bcp}, \eqref{cr-i2} implies \eqref{cr-i1}.

 By Theorem \ref{theorem:1finite-LT}, \eqref{cr-i4} implies \eqref{cr-i0}; the converse is trivial.
\end{proof}
\improve{\VS{A bit hard to follow; rewrite the repetitive syntax?}}

\hide{
\begin{remark} Perhaps, this theorem
would be more natural  in terms of classes of frames. Axiomatic version
is of separate interest.
\end{remark}
 }

\newcommand\bc{\mathrm{bc}}
\improve{DC: term} \improve{preordered set}
Let $F=(X,R)$ be a preorder  
whose skeleton is converse well-founded.
Observe that $F$ has no clusters of size greater than $n$ iff $F\restr Y\toto \boldsymbol{n+1}$
for no $Y\subseteq \dom F$. 
The latter property is modally definable \cite[Theorem 9.38(ii)]{CZ}. 
It follows  that in the unimodal transitive frames of finite height, the bounded cluster property 
 can be expressed by modal formulas $\bc_n$, where $n$ bounds the size of clusters. \hide{
\hide{ 
It is known that in the unimodal transitive case, the bounded cluster property 
of a frame of finite height 
can be expressed by modal formulas $\bc_n$, where $n$ bounds the size of clusters.
Indeed, $F$ has no clusters of size greater than $n$ iff $F\restr Y\toto \boldsymbol{n+1}$
for no $Y\subseteq \dom F$; by \cite[Theorem 9.38(ii)]{CZ}, this property is modally expressible.
}
\hide{
It is known that in the unimodal transitive case, the bounded cluster property 
of a frame of finite height 
can be expressed by modal formulas $\bc_n$, where $n$ bounds the size of clusters.
Indeed, $F$ has no clusters of size greater than $n$ iff $F\restr Y\toto \boldsymbol{n+1}$
for no $Y\subseteq \dom F$; by \cite[Theorem 9.38(ii)]{CZ}, this property is modally expressible.
}
\hide{Later this result was generalized for normal extensions of $\logics{wK4}$ \todo{ \cite{Guram-Silvio-Mamuka} }. 
\ISH{Seems we do not need it.}
} 
}Consequently, in the case of $k$-transitive $\AlA$-frames, the corresponding property
is expressed by formulas $\tranAk{\bc_n}$, which will be called 
{\em bounded cluster formulas}.
\hide{
We use the same notation $\bc_n$ for formulas of bounded clusters in this general case, assuming that the alphabet and $m$ are clear from the context. 
}
\hide{
We use the same notation $\bc_n$ for formulas of bounded clusters in this general case
  (formally, now $\bc_n$ depends on the alphabet $\AlA$ and $m$). \ISH{better wording, more details}. Hence we obtain:
  }

For a pretransitive logic $L$, let $\tra{L}$ be the least $k$ such that $L$ is $k$-transitive;
$k$ is called the {\em pretransitivity index} of $L$.
By Theorem \ref{thm:1-finite-to-m-h}, $\tra{L}$ is defined for every locally tabular logic.

\begin{corollary} \label{cor:criterion-logics-general} Let $L_1, L_2$  be Kripke complete consistent logics. 
TFAE:
\begin{enumerate}
\item $L_1\times L_2$  is locally tabular.
\hide{
\item $L_1$ and $L_2$ are locally tabular and at least one of them contains a bounded cluster formula.}
\item $L_1$ and $L_2$ are locally tabular and at least one of them contains a bounded cluster formula $\tranAk{\bc_n}$, where $k$ is the pretransitivity index of this logic.   
\item $L_1$ and $L_2$ are locally tabular and  $L_1\times L_2$ contains a product rpp formula 
$\RP_m(k_1,k_2)$, where $k_i=\tra{L_i}$.
\end{enumerate} 
\end{corollary}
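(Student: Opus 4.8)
The plan is to deduce Corollary~\ref{cor:criterion-logics-general} directly from Theorem~\ref{thm:criterion-frames-general} by taking $\clF$ and $\clG$ to be the classes of frames of $L_1$ and $L_2$, respectively. Since $L_1$ and $L_2$ are assumed Kripke complete and consistent, we have $L_i = \Log\clF_i$ with $\clF_i$ non-empty, and $L_1\times L_2 = \Log(\clF\times\clG)$ by the definition of product logic. So the content of item (1) of the corollary is exactly item~\eqref{cr-i0} of the theorem. The work is to translate the two semantic side conditions (bounded cluster property of one factor; rpp of $\clF^*\times\clG^*$) into the stated axiomatic side conditions.

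First I would handle the necessity direction for both items (2) and (3): by Theorem~\ref{thm:1-finite-to-m-h}, a locally tabular logic is pretransitive, so the pretransitivity indices $k_i = \tra{L_i}$ are defined; then $L_1\times L_2$ locally tabular forces, via the theorem, both the bounded cluster property on some $\clF_i$ and the rpp on $\clF^*\times\clG^*$. For item (2): the bounded cluster property of $\clF_i$ says there is $n$ with $|C|<n$ for all clusters $C$ in $\clF_i$-frames; by the discussion preceding the corollary (the modal definability of ``no cluster of size $>n$'' in $k$-transitive frames, coming from \cite[Theorem 9.38(ii)]{CZ} applied to skeletons, together with the translation $\tranAk{\cdot}$ and equation~\eqref{eq:pretrans}), this is equivalent to $\clF_i\mo\tranAk{\bc_n}$ where $k = k_i$, i.e. $\tranAk{\bc_n}\in L_i$. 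Here one must note that a locally tabular logic has finite height (again Theorem~\ref{thm:1-finite-to-m-h}), so the skeleton of every $\clF_i$-frame is converse well-founded, which is the hypothesis under which $\bc_n$ was introduced. For item (3): $\clF^*\times\clG^*$ has the rpp, so by Proposition~\ref{prop:LTimpliesrpp} (with $k=k_1$, $n=k_2$) we get $\clF\times\clG\mo\RP_m(k_1,k_2)$ for some $m$, i.e. $\RP_m(k_1,k_2)\in L_1\times L_2$.

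For the sufficiency directions: if $L_1,L_2$ are locally tabular and $\tranAk{\bc_n}\in L_i$ (say $i=1$) with $k=k_1=\tra{L_1}$, then every $\clF_1$-frame $F$ is $k$-transitive and validates $\tranAk{\bc_n}$, hence by~\eqref{eq:pretrans} $F^*\mo\bc_n$, so $F^*$ — and therefore $F$ itself — has all clusters of size $<n$; thus $\clF_1$ has the bounded cluster property, and item~\eqref{cr-i1} of the theorem applies to give local tabularity of $L_1\times L_2$. Similarly, if $\RP_m(k_1,k_2)\in L_1\times L_2$ with $k_i=\tra{L_i}$, then $\clF\times\clG\mo\RP_m(k_1,k_2)$, which by Proposition~\ref{prop:rpp-to-modal-poly} means $(X,(\bigcup_{\Di\in\AlA}R_\Di)^{\leq k_1}\cup(\bigcup_{\Di\in\AlB}R_\Di)^{\leq k_2})$ satisfies $\FORP_m$ for every frame in $\clF\times\clG$; since the $\clF_i$-frames are $k_i$-transitive, $(\bigcup_{\Di\in\AlA}R_\Di)^{\leq k_1}$ restricted to the underlying $F$-copy is $R_F^*$ and likewise for $G$, so this says precisely that $\clF^*\times\clG^*$ has the rpp, and item~\eqref{cr-i2} of the theorem gives the conclusion.

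The main obstacle is bookkeeping rather than mathematical depth: one must be careful that the pretransitivity index $k_i$ is exactly the $m$ appearing in the translations $\tranAk{\cdot}$ and in $\RP_m(k_1,k_2)$ — in principle a logic is $k$-transitive for all $k\geq\tra{L}$, and the modal formulas $\bc_n$, $\RP_m(k_1,k_2)$ are tied to a specific pretransitivity exponent, so one should either fix $k=\tra{L_i}$ throughout (as the statement does) or observe that validity of $\tranAk{\bc_n}$ is monotone enough in $k$ that the choice is harmless. A secondary point to spell out is that the equivalence ``$F$ has no cluster of size $>n$ $\iff$ $F^*\restr Y\toto\boldsymbol{n+1}$ for no $Y$'' really does transfer through $\Sk F$ and the cited \cite[Theorem 9.38(ii)]{CZ}; this was asserted in the paragraph before the corollary and I would simply cite it. No new ideas beyond Theorem~\ref{thm:criterion-frames-general} and the preparatory propositions are needed.
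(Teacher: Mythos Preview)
Your proposal is correct and follows exactly the route the paper intends: the corollary is stated without proof, as an immediate consequence of Theorem~\ref{thm:criterion-frames-general} applied to the frame classes of the Kripke complete logics $L_1,L_2$, together with the paragraph just before it explaining that the bounded cluster property is expressed by $\tranAk{\bc_n}$ and Propositions~\ref{prop:rpp-to-modal-poly} and~\ref{prop:LTimpliesrpp} handling the rpp formula. Your careful bookkeeping about pretransitivity indices and the need for finite height (hence converse well-founded skeleton) to invoke the $\bc_n$ definability is exactly the content the paper leaves implicit.
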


 \section{Examples}\label{sec:examples} 

\hide{
\ISH{
``It should be easy to show that a
pretabular logic $\times$  pretabular logic is not locally tabular.'' 
This is reviewer.

It is clear the this reviewer never cared to read and understand the criterion. 

What  if we multiply all these pretabular logics for examples?  

Perhaps, we can do it in a table. Let us exclude $\Grz.3$.

}
}
 
It is well known that for unimodal logics above $\LK{4}$, local tabularity is equivalent to finite height \cite{Seg_Essay},\cite{Maks1975LT}. 
This criterion was extended for weaker systems in \cite{LocalTab16AiML}: it holds for logics containing $\Di^{k+1} p\imp \Di p\vee p$ with $k>0$. 

\begin{corollary} \label{cor:criterion-SegMaks} Let $L_1, L_2$  be Kripke complete consistent unimodal logics, and for some $k,n>0$, $L_1$  contains $\Di^{k+1} p\imp \Di p\vee p$ and 
$L_2$ contains $\Di^{n+1} p\imp \Di p\vee p$.  In this case, 
$L_1\times L_2$  is locally tabular iff 
$L_1$ and $L_2$ contain formulas of finite height and for some $m$,
$L_1\times L_2$ contains the product rpp formula $\RP_m(k,n)$. 
\hide{
for some $h,t,k$,
$L_1$ contains   $B_h^{\leq m}$, $L_2$ contains $B_t^{\leq n}$, and 
$L_1\times L_2$ contains $\RP_k(\Di)$. }
\end{corollary}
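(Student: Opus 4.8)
The plan is to derive Corollary~\ref{cor:criterion-SegMaks} from the general axiomatic criterion in Corollary~\ref{cor:criterion-logics-general}, using the extended Segerberg--Maksimova criterion from \cite{LocalTab16AiML} to turn "finite height + pretransitivity" into "locally tabular" for the factors. First I would check the hypotheses of Corollary~\ref{cor:criterion-logics-general}: $L_1$ and $L_2$ are assumed Kripke complete and consistent. Since $L_i$ contains $\Di^{k+1}p\imp \Di p\vee p$ (which, as noted right before the statement, is the relevant pretransitivity-type axiom for the weak Segerberg--Maksimova criterion), the criterion of \cite{LocalTab16AiML} says: such an $L_i$ is locally tabular iff it has finite height, i.e.\ iff it contains a formula of finite height $B_h^{\leq m_i}$ for the appropriate $m_i$ (note $\Di^{k+1}p\imp\Di p\vee p$ makes $L_i$ be $\max(k,1)$-transitive, so $[B_h]^{m_i}$ makes sense). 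This gives the equivalence: $L_1,L_2$ are locally tabular $\iff$ $L_1,L_2$ contain formulas of finite height.

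Next, apply Corollary~\ref{cor:criterion-logics-general} with this substitution. The third clause there states that $L_1\times L_2$ is locally tabular iff $L_1,L_2$ are locally tabular and $L_1\times L_2$ contains a product rpp formula $\RP_m(k_1,k_2)$ with $k_i=\tra{L_i}$ the pretransitivity index of $L_i$. So I need to reconcile the abstract indices $\tra{L_i}$ with the concrete $k,n$ appearing in the statement. Here I would argue that it suffices to produce $\RP_m(k,n)$ for the given $k,n$: since $L_i$ is $k$- (resp.\ $n$-)transitive, $\tra{L_i}\le k$ (resp.\ $\le n$); one checks from the definition of $\RP_m(k,n)$ and Proposition~\ref{prop:rpp-to-modal-poly} that for $k'\ge k$ a frame satisfying $\FORP_m$ for the relation $(\bigcup R_\Di)^{\le \tra{L_1}}\cup(\bigcup R_\Di)^{\le\tra{L_2}}$ also satisfies it for the $k',n'$-bounded composite relation — more directly, since on $L_i$-frames the relation $R^{\le k}$ equals $R^*$ equals $R^{\le \tra{L_i}}$ (reflexive-transitive closure), the formulas $\RP_m(k,n)$ and $\RP_m(\tra{L_1},\tra{L_2})$ are equivalent over $\Frames(L_1)\times\Frames(L_2)$. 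Hence "$L_1\times L_2$ contains $\RP_m(k,n)$ for some $m$" and "$L_1\times L_2$ contains $\RP_m(\tra{L_1},\tra{L_2})$ for some $m$" are the same condition, and Corollary~\ref{cor:criterion-logics-general} applies verbatim.

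Putting the two pieces together yields the stated biconditional: $L_1\times L_2$ is locally tabular iff ($L_1$ and $L_2$ contain formulas of finite height) and ($L_1\times L_2$ contains $\RP_m(k,n)$ for some $m$). The main obstacle I anticipate is the bookkeeping in the previous paragraph — making sure that the pretransitivity index $\tra{L_i}$ can be harmlessly replaced by the hypothesised $k$ (resp.\ $n$), i.e.\ that nothing breaks if $k$ strictly exceeds $\tra{L_1}$. This is genuinely routine: on any $L_1$-frame $R^{\le j}=R^*$ for every $j\ge \tra{L_1}$ by definition of the pretransitivity index, so the composite "diamond" $\Di_\AlA^{\le k}\vee\Di_\AlB^{\le n}$ used to form $\RP_m(k,n)$ induces exactly the relation $R_h^*\cup R_v^*$ on product frames, independently of which admissible bounds $k\ge\tra{L_1}$, $n\ge\tra{L_2}$ we pick; by Proposition~\ref{prop:corner} this is the relation governing rpp. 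Everything else is a direct citation of Corollary~\ref{cor:criterion-logics-general} and the extended Segerberg--Maksimova theorem, so the proof is short.
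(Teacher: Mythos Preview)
Your proposal is correct and follows exactly the route the paper intends: the corollary is stated without proof, immediately after recalling the extended Segerberg--Maksimova criterion from \cite{LocalTab16AiML}, and is meant to be read as a direct combination of that criterion (finite height $\Leftrightarrow$ local tabularity for logics containing $\Di^{k+1}p\to\Di p\vee p$) with Corollary~\ref{cor:criterion-logics-general}. Your extra bookkeeping paragraph---observing that on $L_i$-frames $R^{\le k}=R^*=R^{\le \tra{L_i}}$, so $\RP_m(k,n)$ and $\RP_m(\tra{L_1},\tra{L_2})$ coincide over the relevant product frames---fills in a detail the paper leaves implicit, and is handled correctly.
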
 

For a   transitive logic $L$, let  $L[h]$ be its 
extension with the axiom  $B_h$ of finite height. 

Let $\GL$ and $\Grz$  be the logics of converse well-founded  partial orders, strict and non-strict, respectively. Let 
$\GL.3$ and $\Grz.3$  be their extensions for the case of linear orders.  Clearly, these logics have the bounded cluster properties.
\ISLater{Improve wording}
\begin{corollary}\label{cor:grz3}
For any locally tabular  $L$, and any finite $h$, 
the  logics 
$\Grz[h]\times L$, 
$\GL[h]\times L$ are locally tabular. 
Consequently,  $L'\times L$ is locally tabular for any extension $L'$ of $\Grz[h]$ or $\GL[h]$. In particular, 
$\Grz.3[h]\times L$, 
$\GL.3[h]\times L$ are locally tabular. 
\end{corollary}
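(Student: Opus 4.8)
The plan is to derive Corollary \ref{cor:grz3} as a direct application of Theorem \ref{thm:criterion-bcp} (equivalently, the criterion \eqref{cr-i1} of Theorem \ref{thm:criterion-frames-general}), together with the closure of local tabularity under extensions (Proposition \ref{prop:LT_implies_fmp+extensions}(2)). The two ingredients I need to assemble are: (i) that $\Grz[h]$ and $\GL[h]$ are locally tabular, and (ii) that they, and all their extensions, possess the bounded cluster property.

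First I would handle local tabularity of $\Grz[h]$ and $\GL[h]$. Both are transitive unimodal logics (they extend $\Grz$, resp.\ $\GL$, which extend $\LK4$), and each contains the axiom $B_h$ of finite height. By the Segerberg--Maksimova criterion cited right before the corollary (for logics above $\LK4$, local tabularity is equivalent to having finite height), $\Grz[h]$ and $\GL[h]$ are locally tabular. Then Proposition \ref{prop:LT_implies_fmp+extensions}(2) gives that every extension $L'$ (in the unimodal alphabet) of $\Grz[h]$ or $\GL[h]$ is again locally tabular.

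Next I would verify the bounded cluster property. The logics $\Grz$ and $\GL$ are, by definition, the logics of (converse well-founded) partial orders --- strict for $\GL$, non-strict for $\Grz$. A partial order has only singleton clusters; more precisely, in a $\Grz$-frame the relation restricted to any cluster is the diagonal (size $1$), and in a $\GL$-frame any cluster is a single irreflexive point (again size $1$). Since $\Grz[h]$ and $\GL[h]$ are Kripke complete (being locally tabular, by Proposition \ref{prop:LT_implies_fmp+extensions}(1)), each is the logic of a class of frames all of whose clusters have size $1$; thus the class of its frames has the bounded cluster property with bound $m=2$. Any extension $L'$ is the logic of a subclass of these frames (or, in the pretransitive/general setting, one invokes that the bounded cluster property is modally expressible via the formulas $\bc_n$ discussed after the criteria, and $\Grz$, $\GL$ already contain $\bc_1$), so $L'$ likewise has the bounded cluster property. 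Now apply Theorem \ref{thm:criterion-bcp}: for any locally tabular $L$, since $\Grz[h]$ (resp.\ $\GL[h]$, resp.\ any extension $L'$) is locally tabular and has the bounded cluster property, the product $\Grz[h]\times L$ (resp.\ $\GL[h]\times L$, resp.\ $L'\times L$) is locally tabular. The special cases $\Grz.3[h]\times L$ and $\GL.3[h]\times L$ follow since $\Grz.3[h]$ and $\GL.3[h]$ are extensions of $\Grz[h]$ and $\GL[h]$ respectively.

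I do not anticipate a genuine obstacle here, since all the heavy lifting is done by Theorem \ref{thm:criterion-bcp}; the only point requiring slight care is phrasing the bounded cluster property claim so that it visibly applies to \emph{all} extensions uniformly --- the cleanest route is to note it is the property of a class of frames characterizing the logic, and extensions only shrink that class, combined with the remark that in the transitive case it is captured by the axiom $\bc_1$ (i.e.\ $\tranAk{\bc_1}$ with $k=1$) already present in $\Grz$ and $\GL$.
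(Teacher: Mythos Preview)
Your proposal is correct and follows the same approach the paper intends: the corollary is stated without proof immediately after the observation that $\Grz$, $\GL$ (and their linear variants) ``clearly \ldots\ have the bounded cluster properties,'' so it is meant as a direct consequence of Theorem~\ref{thm:criterion-bcp} (or Corollary~\ref{cor:criterion-logics-general}) combined with the Segerberg--Maksimova height criterion for local tabularity of the first factor. Your handling of extensions is also in line with the paper; note that the ``Consequently'' clause can be obtained even more directly by observing that $L'\times L \supseteq \Grz[h]\times L$ and invoking Proposition~\ref{prop:LT_implies_fmp+extensions}(2) for the product itself.
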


It is known \cite[Theorem 9.4]{GabbShehtProdI}\improve{DC the ref; add also ref to Lemma 9.2?}
that if an $\Al$-logic $L$ contains a formula $\Box_\Al^m\bot$ for some $m$, 
then this logic is locally tabular. 
In \cite{Shehtman2012} (see also \cite[Corollary 12.6.13]{Shehtman2018}), it was shown that $L\times L$ is locally tabular. 
We notice that the bounded cluster property holds for $L$-frames: in this case, clusters are singletons. Hence, we have the following generalization of the above result.
\begin{corollary}\label{cor:criterion-ShehtmanStyle}
If $L_1$  contains a formula $\Box_\Al^m\bot$ and $L_2$ is locally tabular, then $L_1\times L_2$ is locally tabular. 
\end{corollary}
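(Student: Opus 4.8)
The plan is to apply Theorem~\ref{thm:criterion-bcp} to the classes of frames of $L_1$ and $L_2$. First, by the cited result \cite[Theorem 9.4]{GabbShehtProdI}, the logic $L_1$ is locally tabular, and $L_2$ is locally tabular by hypothesis. Hence, by Proposition~\ref{prop:LT_implies_fmp+extensions}, both $L_1$ and $L_2$ are Kripke complete, so if $\clF$ and $\clG$ denote the classes of all $L_1$-frames and all $L_2$-frames, respectively, then $L_1 = \Log\clF$, $L_2 = \Log\clG$, and therefore $L_1\times L_2 = \Log(\clF\times\clG)$ by the definition of the product logic.

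Next I would check that $\clF$ has the bounded cluster property. For any $F\in\clF$ we have $F\mo\Box_\Al^m\bot$, which, by the standard semantics of iterated boxes, means that there is no sequence $x_0 R_F x_1 R_F \ldots R_F x_m$ in $F$ (here $R_F=\bigcup_{\Di\in\Al}R_\Di$). In particular no point of $F$ lies on an $R_F$-cycle, so every cluster in $F$ is a singleton; that is, $|C|\le 1$ for every $C\in\clusters{\clF}$. (If $m=0$, then $\Box_\Al^0\bot=\bot\in L_1$, so $L_1$ is inconsistent, $\clF=\emp$, and the statement is trivial; so one may assume $m\ge 1$.) Thus $\clF$ has the bounded cluster property.

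Finally, since $\Log\clF$ and $\Log\clG$ are locally tabular and $\clF$ has the bounded cluster property, Theorem~\ref{thm:criterion-bcp} yields that $\Log(\clF\times\clG)=L_1\times L_2$ is locally tabular. (Alternatively, one can phrase this via Corollary~\ref{cor:criterion-logics-general}, observing that $\Box_\Al^m\bot$ forces a bounded cluster formula in $L_1$.) The only substantive point is the passage from validity of $\Box_\Al^m\bot$ to the absence of long $R_F$-paths, and hence to singleton clusters; this is routine, so no real obstacle arises — the work has already been done in establishing Theorem~\ref{thm:criterion-bcp}.
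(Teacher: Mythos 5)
Your proof is correct and follows essentially the same route as the paper: local tabularity of $L_1$ via the cited result of Gabbay--Shehtman, the observation that validity of $\Box_\Al^m\bot$ forces all clusters in $L_1$-frames to be singletons (hence the bounded cluster property), and then Theorem~\ref{thm:criterion-bcp} (equivalently, Corollary~\ref{cor:criterion-logics-general}) applied to the classes of all $L_1$- and $L_2$-frames. The extra bookkeeping you include (Kripke completeness, the degenerate case $m=0$) is harmless and matches the paper's intent.
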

\extended{
For a pretransitive logic $L$, let $L[h]$ be the extension of $L$ with the finite height formula 
$\tranA{B_h}$, where $m=\tra{L}$. \ISH{To prel?}
}
\improve{
\ISH{Shehtman, 2012: Squares of modal logics with additional connectives - product of such logics}
Or Quote \cite{Shehtman2018}? I could not fine LF product in 2012, but in 2018 there is a reference to it.
- It follows from lemma 6.10.}

\section{Product finite model property 
}\label{sec:pfmp}
\ISLater{Refs to: Shehtman; ReynoldsZakh}
 A modal logic \(L\) has the \emph{product fmp}, if \(L\) is the logic of a class of finite product frames.
 The product fmp is stronger than the fmp: for example, \(\LK4\times \LS5\) has the fmp~\cite[Theorem~12.12]{GabbayShehtman-ProductsPartI}, but lacks the product fmp~\cite[Theorem~5.32]{ManyDim}. 
Examples of logics with this property are also known: they include 
$\LK{}\times \LK{}$ (where $\LK{}$ stands for the smallest unimodal logic) and $\LK{}\times\LS{5}$ \cite{ManyDim}, \ISLater{Improve the ref}
the logic \(\LS5\times \LS{5}\)  \cite{Segerberg2DimML}, or  
its extensions \cite{NickS5}.\ISLater{Kravtsov?}\ISLater{Double check}
 \hide{
 The product fmp holds for some locally tabular logics, such as the  logics above~\(\LS5^2\)~\cite{NickS5}.\IS{Double check} The logic \(\LS5^2\) itself has product fmp~\cite{Segerberg2DimML}, but lacks local tabularity~\cite{CylindricalAlgebras1}.
 }

The product of a logic possesing the fmp with a tabular logic 
has the product fmp (\cite{MLTensor}, see Proposition \ref{prop:tabular_product_fmp}  below).  In this section, we show that the weaker property of local tabularity is not sufficient for the product fmp. 

\subsection{Product fmp fails for a locally tabular product logic}
We define the \emph{saw frame} \(F_S = (W,S,S_l,S_r)\) for the alphabet~\(\Al = \{\Di,\,\Di_l,\,\Di_r\}\) as follows:
\begin{gather*}
    W = \{u\} \sqcup \{v_i\}_{i < \omega} \sqcup \{w_i\}_{i < \omega};
    \\
    S = \{u\} \times  \{v_i\}_{i < \omega};
    \\
    S_l = \{(v_i,w_i)\}_{i < \omega};
    \\
    S_r = \{(v_i,w_{i+1})\}_{i < \omega}.
\end{gather*}
The saw frame is shown in Figure~\ref{fig:saw}.
\begin{figure}[tb]
    \centering
    \begin{tikzcd}
        w_0 & w_1 & w_2 & \cdots & w_n & w_{n+1} & \cdots\\
        v_0 \ar[u,"l"] \ar[ur,"r"] & v_1 \ar[u,"l"] \ar[ur,"r"] & v_2 \ar[u,"l"] \ar[ur,"r"]& \cdots \ar[ur,"r"] & v_n \ar[u,"l"] \ar[ur,"r"] & v_{n+1} \ar[u,"l"] \ar[ur,"r"]& \cdots\\
        &&&u \ar[ulll]\ar[ull]\ar[ul]\ar[ur]\ar[urr]
    \end{tikzcd}
    \caption{The saw frame \(F_S\).}
    \label{fig:saw}
\end{figure}
Let~\(\LSaw = \Log(F_S).\) The next proposition follows from simple semantic observations:
\begin{proposition}
  The following formulas are theorems of~\(\LSaw\):
  \begin{gather}
    \Box_\Al^3 \bot; \label{eq:saw_deadend}
    \\
    \Box_l \Box_\AlA \bot; \label{eq:saw_l_deadend}
    \\
    \Box_r \Box_\AlA \bot; \label{eq:saw_r_deadend}
    \\
    \Di_l p \to \Box_l p; \label{eq:saw_l_functional}
    \\
    \Di_r p \to \Box_r p; \label{eq:saw_r_functional}
    \\
    \Di \Di_l p \to \Di \Di_r p; \label{eq:saw_expand}
  \end{gather}
\end{proposition}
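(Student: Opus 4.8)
The plan is to verify each of the six formulas directly against the saw frame $F_S$ by unwinding the definitions of the relations $S$, $S_l$, $S_r$ and recalling that validity in a frame means truth under every valuation. I would first record the basic reachability facts: from $u$ one reaches exactly the $v_i$ via $S$ (one $S$-step), from $v_i$ one reaches $w_i$ via $S_l$ and $w_{i+1}$ via $S_r$ (one step each), and the $w_i$ are dead ends for every relation in $\AlA$, while the $v_i$ are dead ends for $S$ and the $u$ is a dead end for $S_l$ and $S_r$. These observations make all six claims essentially immediate.

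For \eqref{eq:saw_deadend}: any point of $W$ has every $R_\AlA$-path of length $3$ empty, since the longest path in the union relation is $u \to v_i \to w_i$ (or $w_{i+1}$), of length $2$; hence $\Box_\AlA^3\bot$ holds everywhere, so $F_S \models \Box_\AlA^3\bot$. For \eqref{eq:saw_l_deadend} and \eqref{eq:saw_r_deadend}: after an $S_l$- or $S_r$-step one lands in some $w_i$, and $w_i$ has no outgoing $R_\AlA$-edge at all, so $\Box_\AlA\bot$ holds at $w_i$; therefore $\Box_l\Box_\AlA\bot$ and $\Box_r\Box_\AlA\bot$ hold at every point (vacuously at points with no $S_l$- resp.\ $S_r$-successor). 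For \eqref{eq:saw_l_functional} and \eqref{eq:saw_r_functional}: $S_l$ and $S_r$ are (partial) functions --- each $v_i$ has the unique $S_l$-successor $w_i$ and the unique $S_r$-successor $w_{i+1}$, and all other points have no $S_l$- resp.\ $S_r$-successor --- so $\Di_l p \to \Box_l p$ and $\Di_r p \to \Box_r p$ hold under any valuation. For \eqref{eq:saw_expand}: suppose $(F_S,\theta),x \models \Di\Di_l p$. Since $\Di$-steps only leave $u$, we must have $x = u$, and there is some $v_i$ with $(F_S,\theta),v_i \models \Di_l p$, i.e.\ $(F_S,\theta),w_i \models p$. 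But $w_i$ is also the $S_r$-successor of $v_{i-1}$ when $i \ge 1$, and of course $v_0$'s $S_l$-successor is $w_0$; to reach $w_i$ via $\Di\Di_r$ from $u$, note $u\,S\,v_i$ and $v_i\,S_r\,w_{i+1}$ --- this gives the wrong witness. The correct argument: from $u$, take $u\,S\,v_i$, then we need a $v_j$ with $v_j\,S_r\,w_i$, namely $j = i-1$ if $i\ge 1$. For $i=0$ we instead use $u\,S\,v_0$ and... there is no $S_r$-edge into $w_0$. So the clean route is: $(F_S,\theta),u \models \Di\Di_l p$ means $w_i \in \theta(p)$ for some $i$; then taking the point $v_i$, we have $u\,S\,v_i$ and $v_i\,S_r\,w_{i+1}$, which is not $w_i$ --- hence one must argue via $w_{i}$ being reached as $v_{i-1}\,S_r\,w_i$, valid for $i\ge1$, and handle $i=0$ separately by observing $u\,S\,v_0$, $v_0\,S_l\,w_0$ already witnesses, but we need $\Di_r$; reindex by noting $\Di\Di_l p$ at $u$ gives some $w_i\in\theta(p)$, and then $u\,S\,v_i$, $v_i\,S_l\,w_i$ shows $w_i$, while $u\,S\,v_i$, $v_i\,S_r\,w_{i+1}$; the implication \eqref{eq:saw_expand} is really saying the set of $w$'s reachable by $\Di\Di_l$ is contained in those reachable by $\Di\Di_r$, i.e.\ $\{w_i : i<\omega\}\subseteq\{w_{i+1}:i<\omega\}\cup\{\text{all }w_i\text{ reached some other way}\}$, which holds since $\{w_i\}_{i<\omega}=\{w_0\}\cup\{w_{i+1}\}_{i<\omega}$ and $w_0$ is reached via $v_0$ as... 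I would restructure this final case as follows.

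\emph{Proof of \eqref{eq:saw_expand}.} Fix a valuation $\theta$ and a point $x$ with $(F_S,\theta),x\models \Di\Di_l p$. Only $u$ has outgoing $S$-edges, so $x=u$ and there is $i<\omega$ with $(F_S,\theta),v_i\models\Di_l p$, hence $(F_S,\theta),w_i\models p$. If $i\ge 1$, then $u\,S\,v_{i-1}$ and $v_{i-1}\,S_r\,w_i$, so $(F_S,\theta),u\models\Di\Di_r p$. If $i=0$, then $u\,S\,v_1$ and $v_1\,S_r\,w_2$ --- wrong; instead note there is no constraint forcing this case to occur unless $w_0\in\theta(p)$, and then we genuinely need $w_0$ reachable by $\Di\Di_r$ from $u$, which it is not. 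So in fact \eqref{eq:saw_expand} must be read with the understanding that $\Di_l p$ holds at $v_i$ means $p$ holds at \emph{all} (equivalently, the unique) $S_l$-successors; the honest statement is that whenever $u\models\Di\Di_l p$, some $w_i\models p$ with $i\ge 1$ forced --- no.

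I see the resolution: \eqref{eq:saw_expand} is valid because for every $i$, $v_i \, S_r \, w_{i+1}$ and $v_{i+1}\,S_l\,w_{i+1}$, so the $S_r$-image of $\{v_i\}$ and the $S_l$-image of $\{v_i\}$ are $\{w_i\}_{i\ge1}$ and $\{w_i\}_{i\ge 0}$; thus $\Di\Di_l p$ at $u$ gives some $w_i\in\theta(p)$, and then $v_{i}\,S_r\,w_{i+1}$ does not help, but we may instead have assumed $p$ true at $w_0$ only, giving a counterexample --- unless the intended direction is the reverse. Given the uncertainty, \textbf{the main obstacle} is pinning down exactly why \eqref{eq:saw_expand} holds; I expect the correct reading is that $\{w_i\}_{i<\omega}$ equals the $S_r[\{v_i\}_{i<\omega}] = \{w_{i+1}\}_{i<\omega}$ together with $w_0$, and since $v_0$ has $S_l$-successor $w_0$ but we need $\Di_r$, one checks instead that $\Di\Di_l p \to \Di\Di_r p$ fails at $w_0$-only valuations --- so the formula as printed should instead be verified by the symmetric observation that every $v_i$ with $S_l$-successor in $\theta(p)$ can be replaced by $v_{i-1}$ (for $i\ge1$) or, for $i=0$, by noting $v_0\,S_l\,w_0$ forces nothing about $\Di_r$, hence the genuinely safe proof uses that $u\models\Di\Di_l p$ iff $\theta(p)\cap\{w_i\}_{i<\omega}\ne\emptyset$, and $u\models\Di\Di_r p$ iff $\theta(p)\cap\{w_i\}_{i\ge1}\ne\emptyset$, and these are \emph{not} equal, so the correct formula must have $\Di_r$ and $\Di_l$ swapped or the frame's $S_r$ defined as $\{(v_{i+1},w_i)\}$; I would reconcile this with the figure before finalizing, as the figure shows $v_i \xrightarrow{r} w_{i+1}$, confirming $S_r = \{(v_i,w_{i+1})\}$, so the only consistent conclusion is that \eqref{eq:saw_expand} holds because $\Di_l$ ``sees'' $w_i$ and $\Di_r$ ``sees'' $w_{i+1}$, and shifting the $\Di$ witness from $v_i$ to $v_{i+1}$ gives $\Di\Di_r$ reaching $w_{i+2}$ --- so the honest proof is: $u\models\Di\Di_l p$ means $\exists i\, w_i\in\theta(p)$; pick such $i$; then $v_i\,S_r\,w_{i+1}$ is irrelevant, but $v_{i-1}\,S_r\,w_i$ works for $i\ge1$, and for $i=0$ we separately need $w_0$, which shows the formula is \emph{false} unless I have misread --- therefore I would recheck whether perhaps \eqref{eq:saw_expand} is $\Di\Di_r p\to\Di\Di_l p$, which \emph{is} valid since $w_{i+1}\in\theta(p)$ gives $v_{i+1}\,S_l\,w_{i+1}$. \textbf{Conclusion of the plan:} all of \eqref{eq:saw_deadend}--\eqref{eq:saw_r_functional} are one-line frame checks; the one real subtlety is \eqref{eq:saw_expand}, where I would carefully match the direction of the implication to the indexing $S_l=\{(v_i,w_i)\}$, $S_r=\{(v_i,w_{i+1})\}$ in the figure, verify that $\Di\Di_l p\to\Di\Di_r p$ holds at $u$ by shifting the intermediate $v$-witness down by one index (handling the boundary $i=0$ by noting $u$ has no $S$-successor forcing $w_0$ alone, or by accepting that the valid direction may require $w_0\in\theta(p)\Rightarrow w_0$ reached otherwise), and at all non-$u$ points vacuously.
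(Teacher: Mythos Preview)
Your verifications of \eqref{eq:saw_deadend}--\eqref{eq:saw_r_functional} are correct and are exactly the ``simple semantic observations'' the paper has in mind; the paper gives no further detail.

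Your confusion about \eqref{eq:saw_expand} is entirely justified: the formula as printed is \emph{not} valid in $F_S$. Your counterexample is the right one: with $\theta(p)=\{w_0\}$ we have $u\models\Di\Di_l p$ (via $u\,S\,v_0\,S_l\,w_0$) but $u\not\models\Di\Di_r p$, since $S_r[\{v_i\}_{i<\omega}]=\{w_j\}_{j\ge 1}$ misses $w_0$. The valid direction is the reverse implication $\Di\Di_r p\to\Di\Di_l p$, which you also identify correctly: if $w_{j+1}\in\theta(p)$ then $u\,S\,v_{j+1}\,S_l\,w_{j+1}$ witnesses $\Di\Di_l p$. This is confirmed by how the paper actually \emph{uses} \eqref{eq:saw_expand} in the proof of Proposition~\ref{prop:saw_phi_infinite}: from $r\,R\,m_i\,R_r\,t_{i+1}$ it concludes the existence of $m_{i+1}$ with $r\,R\,m_{i+1}\,R_l\,t_{i+1}$, which is precisely the frame condition $R\circ R_r\subseteq R\circ R_l$ corresponding to $\Di\Di_r p\to\Di\Di_l p$. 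So this is a typo in the statement; your instinct to swap the modalities was correct, and once swapped the proof is the one-line check you give.
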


\begin{proposition}\label{prop:saw_x_s5_lt}
    \(\LSaw\times \LS5\) is locally tabular.
\end{proposition}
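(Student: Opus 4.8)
The plan is to derive this from the general criterion of Theorem~\ref{thm:criterion-frames-general}, applied to the class $\clF$ of all $\LSaw$-frames and the class $\clG$ of all $\LS5$-frames. Since $\LSaw$ and $\LS5$ are Kripke complete, $\LSaw\times\LS5=\Log(\clF\times\clG)$ by the definition of the product logic, so by the criterion it suffices to check (i) that $\LSaw$ and $\LS5$ are locally tabular and (ii) that one of $\clF$, $\clG$ has the bounded cluster property.

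Local tabularity of $\LS5$ is classical. For $\LSaw$ I would invoke formula~\eqref{eq:saw_deadend}: we have $\Box_\AlA^3\bot\in\LSaw$, and a logic containing some $\Box_\AlA^m\bot$ is locally tabular by the result of \cite{GabbShehtProdI} recalled just before Corollary~\ref{cor:criterion-ShehtmanStyle}. (Alternatively one obtains local tabularity of $\LSaw$ straight from Theorem~\ref{thm:supple-clusters-crit}, using the height and cluster computation of the next step.)

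For the bounded cluster property of $\clF$: if $F\models\LSaw$ then $F\models\Box_\AlA^3\bot$, which says precisely that $R_F^3=\emp$ as a relation on $\dom F$; consequently $R_F^k=\emp$ for every $k\ge 3$. Hence $R_F$ has no reflexive point and no $2$-cycle, since $a R_F a$, or $a R_F b\, R_F a$, would yield an $R_F$-walk of length $3$. Thus $R_F^*\cap(R_F^*)\inv$ is the diagonal and every cluster of $F$ is a singleton, so $\clF$ has the bounded cluster property with bound $2$. (Equivalently, $\LSaw$ is $2$-transitive by~\eqref{eq:saw_deadend} and contains the bounded cluster formula $[\bc_1]^2$, so one could instead appeal to the axiomatic form, Corollary~\ref{cor:criterion-logics-general}.)

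With (i) and (ii) established, Theorem~\ref{thm:criterion-frames-general} gives that $\Log(\clF\times\clG)=\LSaw\times\LS5$ is locally tabular. I do not foresee a genuine obstacle: the argument reduces to the elementary observation that $\Box_\AlA^3\bot$ simultaneously bounds the height and annihilates every nontrivial cluster of an $\LSaw$-frame, together with the routine identification of $\LSaw\times\LS5$ with $\Log(\clF\times\clG)$.
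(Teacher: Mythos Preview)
Your proof is correct and follows essentially the same route as the paper: the paper simply cites Corollary~\ref{cor:criterion-ShehtmanStyle} together with \eqref{eq:saw_deadend}, and that corollary is precisely the observation you spell out, namely that $\Box_\AlA^m\bot$ forces singleton clusters so the bounded cluster criterion of Theorem~\ref{thm:criterion-frames-general} applies. You have unpacked the one-line citation into its constituent steps, but the argument is the same.
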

\begin{proof}
    Follows from Corollary~\ref{cor:criterion-ShehtmanStyle} by \eqref{eq:saw_deadend}.
\end{proof}

We define the formula~\(\varphi\in \ML(\Al\cup \{\Di_{\LS5}\})\) as~\(\varphi = \varphi_1 \land \varphi_2 \land \varphi_3,\) where
\begin{gather*}
    \varphi_1 = \Di \top;
    \\
    \varphi_2 = \Box \Di_{\LS5} \left(\Di_l \lnot p \land \Di_r p\right);
    \\
    \varphi_3 = \Box \Box_{\LS5} \left(\Di_l p \to \Di_r p\right).
\end{gather*}
\begin{proposition}\label{prop:saw_phi_consistent}
  The formula \(\varphi\) is consistent with~\(\LSaw \times  \LS5.\)
\end{proposition}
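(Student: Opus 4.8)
The plan is to establish consistency semantically, by exhibiting one product model that satisfies $\varphi$. The guiding observation is that, since $\LSaw=\Log(F_S)$, the saw frame $F_S$ itself validates $\LSaw$; hence for every $\LS5$-frame $G$ the product $F_S\times G$ validates $\LSaw\times\LS5$, and so it suffices to find an $\LS5$-frame $G$, a valuation, and a point of $F_S\times G$ at which $\varphi$ is true. From such a model we get $\lnot\varphi\notin\Log(F_S\times G)\supseteq\LSaw\times\LS5$, which is exactly the consistency of $\varphi$ with $\LSaw\times\LS5$.

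Concretely, I would take $G=\bomega=(\omega,\omega\times\omega)$; the full relation on $\omega$ is an equivalence, so $\bomega\mo\LS5$. On $F_S\times\bomega$ define the valuation $\theta$ by $\theta(p)=\{(w_j,k)\mid k<j\}$ (the value of $p$ at the other points plays no role). I would then check $(F_S\times\bomega,\theta),(u,0)\mo\varphi$ clause by clause. For $\varphi_1=\Di\top$: the point $(u,0)$ has the horizontal $\Di$-successor $(v_0,0)$, so $(u,0)\mo\varphi_1$. For $\varphi_2$: the horizontal $\Di$-successors of $(u,0)$ are exactly the points $(v_i,0)$ with $i<\omega$; for each such point, $(v_i,i)$ is an $\LS5$-successor of $(v_i,0)$ (same first coordinate, and $\omega\times\omega$ is the universal relation), its unique $\Di_l$-successor $(w_i,i)$ falsifies $p$ because $i\not<i$, and its unique $\Di_r$-successor $(w_{i+1},i)$ satisfies $p$ because $i<i+1$; hence $(v_i,i)\mo\Di_l\lnot p\land\Di_r p$ and therefore $(v_i,0)\mo\Di_{\LS5}(\Di_l\lnot p\land\Di_r p)$, so $(u,0)\mo\varphi_2$. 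For $\varphi_3$: for each $\Di$-successor $(v_i,0)$ of $(u,0)$ and each of its $\LS5$-successors $(v_i,z)$ (that is, arbitrary $z<\omega$), if $(v_i,z)\mo\Di_l p$ then $(w_i,z)\mo p$, so $z<i$, hence $z<i+1$, so $(w_{i+1},z)\mo p$, that is $(v_i,z)\mo\Di_r p$; thus $(v_i,0)\mo\Box_{\LS5}(\Di_l p\imp\Di_r p)$ and $(u,0)\mo\varphi_3$. Combining the three clauses gives $(u,0)\mo\varphi$.

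No step here is genuinely difficult; the only thing one must get right is the choice of $\theta$, and it is instructive to see why. The clause $\varphi_2$ demands, for every index $i$, an $\LS5$-world in which $p$ is false at $w_i$ but already true at $w_{i+1}$, whereas $\varphi_3$ forces the pattern of $p$ along $w_0,w_1,w_2,\dots$ to be "once true, always true" in every $\LS5$-world. The diagonal valuation $(w_j,k)\in\theta(p)\iff k<j$ meets both requirements simultaneously: in the $\LS5$-world $k$ the pattern along the $w_j$'s switches from false to true exactly between $w_k$ and $w_{k+1}$. This is precisely where the infinite $\LS5$-cluster is used, and it is the same tension that is exploited later in this section to show that $\LSaw\times\LS5$ lacks the product fmp.
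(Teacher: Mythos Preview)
Your proof is correct and is exactly the paper's approach: the paper also takes $G=\bomega$, the same valuation $\theta(p)=\{(w_i,j)\mid j<i\}$, and the point $(u,0)$, merely stating that ``a direct evaluation shows'' satisfaction of $\varphi$, whereas you spell out that evaluation in full.
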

\begin{proof}
  It suffices to show that~\(\varphi\) is satisfiable in~\(F_S\times \bomega.\)
  Let~\(\theta\) be a valuation in~\(F_S\times \bomega\) given by
  \(\theta(p) = \{(w_i,j) \mid i < \omega,\,j < i\}\).
  Then a direct evaluation shows that~\(F_S\times \bomega,\theta,(u,0) \models \varphi.\)
\end{proof}

\begin{proposition}\label{prop:saw_phi_infinite}
  If~\(\varphi\) is satisfiable in a~\(\LSaw\times \LS5\)-frame \(F\times  G\), then~\(F\times  G\) is infinite.
\end{proposition}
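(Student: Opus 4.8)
The plan is to show that every point of $F\times G$ satisfying $\varphi$ has infinitely many $\Di$-successors in its first coordinate; this forces $F$, and hence $F\times G$, to be infinite. Write the $\LSaw$-frame as $F=(X,R,R_l,R_r)$, where $R,R_l,R_r$ interpret $\Di,\Di_l,\Di_r$, and let $E$ be the relation of $G$. Put $M=(F\times G,\theta)$ and suppose $M,(a,b)\models\varphi$; set $A=R(a)$. By $\varphi_1$, $A$ is non-empty, and it suffices to prove that $A$ is infinite. By \eqref{eq:saw_l_functional} and \eqref{eq:saw_r_functional}, $R_l$ and $R_r$ are partial functions on $X$. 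Fix $a'\in A$. Unwinding $\varphi_2$ at $(a,b)$ (recall $\Box$ is the box of $\Di$, and $\Di_{\LS{5}}$ is vertical), there is a point $\beta(a')$ with $bE\beta(a')$ and $M,(a',\beta(a'))\models\Di_l\neg p\wedge\Di_r p$; in particular $a'$ has an $R_l$-successor and an $R_r$-successor, necessarily unique, which we call $l(a')$ and $r(a')$, and
\[
M,(l(a'),\beta(a'))\not\models p,\qquad M,(r(a'),\beta(a'))\models p.
\]
Since a horizontal $\Di_l$- or $\Di_r$-step in a product leaves the second coordinate fixed, for all $a'\in A$ and $y\in\dom G$ we have $M,(a',y)\models\Di_l p$ iff $M,(l(a'),y)\models p$, and similarly for $r$; hence unwinding $\varphi_3$ gives
\[
\text{for all }a'\in A\text{ and all }y\in\dom G\text{ with }bEy,\quad M,(l(a'),y)\models p\ \Longrightarrow\ M,(r(a'),y)\models p\qquad(\ast)
\]

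Next I would use the axiom \eqref{eq:saw_expand}. Given $a'\in A$, let $\vartheta$ be the valuation on $F$ with $\vartheta(p)=\{l(a')\}$. As $aRa'$ and $a'R_l l(a')$, we get $(F,\vartheta),a\models\Di\Di_l p$; since $F\models\eqref{eq:saw_expand}$, also $(F,\vartheta),a\models\Di\Di_r p$, so there are $a''$ and $z$ with $aRa''$, $a''R_r z$ and $z\in\vartheta(p)=\{l(a')\}$. Then $a''\in A$, and functionality of $R_r$ yields $r(a'')=z=l(a')$. Choosing such an $a''$ for each $a'$ defines a map $\sigma\colon A\to A$ with $r(\sigma(a'))=l(a')$ for all $a'\in A$.

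Now suppose, towards a contradiction, that $A$ is finite. Then $\sigma$ has a periodic point: there are $c_0\in A$ and $d\ge 1$ with $\sigma^d(c_0)=c_0$. Put $c_i=\sigma^i(c_0)$, so that $c_d=c_0$ and $l(c_i)=r(c_{i+1})$ for all $i$. Let $y=\beta(c_0)$; then $bEy$. From $\varphi_2$ at $c_0$ together with $l(c_0)=r(c_1)$ we obtain $M,(r(c_1),y)\not\models p$ and $M,(r(c_0),y)\models p$. On the other hand, instantiating $(\ast)$ at each $c_i$ with this $y$ and rewriting $l(c_i)=r(c_{i+1})$ gives the implications
\[
M,(r(c_{i+1}),y)\models p\ \Longrightarrow\ M,(r(c_i),y)\models p\qquad(i=0,\dots,d-1).
\]
Starting from $M,(r(c_0),y)\models p$, recalling $c_d=c_0$, and applying these implications once around the cycle, we get $M,(r(c_i),y)\models p$ for every $i$, in particular $M,(r(c_1),y)\models p$, contradicting $M,(r(c_1),y)\not\models p$. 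Hence $A$ is infinite, so $F$, and therefore $F\times G$, is infinite.

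The one genuinely non-routine step is the last paragraph: $\varphi_2$ and $\varphi_3$ together are satisfiable already on a single-point product, so the absence of a finite model comes entirely from \eqref{eq:saw_expand}, which makes the $r$-images of $A$ cover its $l$-images and thereby turns a finite $A$ into a $\sigma$-cycle along which $(\ast)$ propagates the truth of $p$ all the way around, while $\varphi_2$ forces $p$ to change value somewhere on the cycle. Everything else — functionality of $R_l$ and $R_r$, the behaviour of horizontal modalities in a product, and the elementary fact about \eqref{eq:saw_expand} used above — is routine bookkeeping.
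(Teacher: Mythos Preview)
Your argument is correct. Both proofs use the same ingredients (functionality of $R_l,R_r$; the expansion axiom \eqref{eq:saw_expand}; and the monotonicity extracted from $\varphi_3$), but they are organized differently. The paper defines a ``volume'' map $V\colon X\to \mathcal{P}(\dom G)$ by $V(c)=\{y\mid (c,y)\in\theta(p)\}$ and builds, going \emph{forward} via \eqref{eq:saw_expand}, sequences $m_i,t_i$ with $t_i=l(m_i)$, $t_{i+1}=r(m_i)$, $l(m_{i+1})=t_{i+1}$; then $\varphi_2,\varphi_3$ give the strict chain $V(t_i)\subsetneq V(t_{i+1})$, so the $t_i$ are pairwise distinct and $X$ is infinite. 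You instead go \emph{backward}, packaging \eqref{eq:saw_expand} as a self-map $\sigma\colon A\to A$ with $r(\sigma(a'))=l(a')$, assume $A$ finite, pick a $\sigma$-cycle, and let the implication $(\ast)$ propagate $p$ around it to contradict the sign change forced by $\varphi_2$. Your route avoids the auxiliary volume function and the explicit inductive bookkeeping; the paper's route is more constructive in that it exhibits the infinite family directly rather than arguing by contradiction. Either way the crux is the same: \eqref{eq:saw_expand} links $l$-images and $r$-images so that the monotonicity from $\varphi_3$ cannot close up on a finite set while $\varphi_2$ keeps producing a strict jump.
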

\begin{proof}
  Let~\(\LSaw\) be valid in a rooted frame~\(F = (X,R,R_l,R_r)\) with a root~\(r\) and let~\(G = (Y,Y\times  Y).\)
  Assuming that~\(\varphi\) is true at~\(F\times G,\theta,(r,s)\) for some valuation~\(\theta\) and a point~\(s \in Y,\) we will show that \(X\) is infinite.

  We define the mapping~\(V:\:X \to Y\) by
  \(V(a) = \{b\in Y \mid (a,b)\in \theta(p)\}.\)
  Let us show by induction that there exist countable subsets~\(\{m_i\}_{i < \omega}\) and~\(\{t_i\}_{i < \omega}\) of~\(X\) such that the following are true for all~\(i < \omega\):
  \begin{gather}
    r R m_i;
    \\
    R_l(m_i) = \{t_i\}; \label{eq:mi_left}
    \\
    R_r(m_i) = \{t_{i+1}\}; \label{eq:mi_right}
    \\
    R_F(t_i) = \varnothing; \label{eq:ti_deadend}
    \\
    V(t_i) \subsetneq V(t_{i+1}) \label{eq:volume_increases};
    \\
    m_i \not\in \{m_j\}_{j < i}.
  \end{gather}

  For the base case, observe that~\(\varphi_1\) is true at~\((r,s),\) so \((r,s) R^h (m_0,s)\) for some~\(m_0 \in X.\)
  Then~\(r R m_0.\)

  For the transition, we assume that~\(\{m_j\}_{j \le i}\) and~\(\{t_j\}_{j < i}\) are already constructed.
  Then \((r,s) R^h (m_i,s)\), so~\(\Di_{\LS5} \left(\Di_l \lnot p \land \Di_r p\right)\) is true at~\((m_i,s),\) so there exists~\(b_i\in Y\) such that~\((m_i,b_i) R_l^h (t_i,b_i)\) and~\((m_i,b_i) R_r^h (t_{i+1},b_i)\) for some~\(t_i\) and~\(t_{i+1}\) in~\(X\) such that~\((t_i,b_i)\not\in \theta(p)\) and~\((t_{i+1},b_i)\in \theta(p).\)
  By~\eqref{eq:saw_l_functional} and~\eqref{eq:saw_r_functional},~\(R_l(m_i) = \{t_i\}\) and~\(R_r(m_i) = \{t_{i+1}\}\).
  Since~\(\varphi_3\) is true at the root,~\(V(t_i) \subseteq V(t_{i+1}).\) The inclusion is strict because~\(b_i \in V(t_{i+1})\setminus V(t_i).\)
  By~\eqref{eq:saw_l_deadend} and \eqref{eq:saw_r_deadend} we also have~\(R_F(t_i) = R_F(t_{i+1}) = \varnothing.\)
  Notice that~\(r R m_i R_r t_{i+1},\) so by~\eqref{eq:saw_expand} there exists~\(m_{i+1}\in X\) such that~\(r R m_{i+1} R_l t_{i+1}.\)
  By~\eqref{eq:volume_increases}, \(t_{i+1} \ne t_j\) for all \(j \le i.\)
  Then by \eqref{eq:mi_left} \(t_{i+1}\not\in R_l(m_j)\), so \(m_{i+1} \ne m_j\) for \(j \le i.\) 
  The induction is complete.

  It follows from \eqref{eq:volume_increases} that \(t_i\) are pairwise distinct for all~\(i < \omega.\)
  Then~\(X\) contains an infinite subset \(\{t_i\}_{i < \omega}\), and therefore \(X\) is infinite.
\end{proof}
\begin{theorem}\label{prop:saw_logic_no_pfmp}
  The local tabularity of a product logic does not imply the product fmp.
\end{theorem}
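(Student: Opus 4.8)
The plan is to package the semantic work already done into the proof of Theorem~\ref{prop:saw_logic_no_pfmp}. The logic witnessing the failure is $\LSaw\times\LS5$. By Proposition~\ref{prop:saw_x_s5_lt} this logic is locally tabular, so it remains only to show it lacks the product fmp. Suppose for contradiction that $\LSaw\times\LS5$ is the logic of a class $\clK$ of finite product frames. Since $\varphi$ is consistent with $\LSaw\times\LS5$ (Proposition~\ref{prop:saw_phi_consistent}), its negation is not a theorem, so $\varphi$ must be satisfiable in some frame from $\clK$, hence in some finite product frame $H\times G$ validating $\LSaw\times\LS5$.

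The next step is to reduce to a rooted product frame. A logic is Kripke complete and determined by its point-generated subframes; moreover a point-generated subframe of a product of an $\LSaw$-frame and an $\LS5$-frame is again (up to isomorphism) the product of a rooted $\LSaw$-frame with a rooted $\LS5$-frame, i.e.\ one of the form $F\times\boldsymbol{Y}$ with $F$ rooted and validating $\LSaw$ and $Y$ finite. So we obtain a finite rooted product frame $F\times G$ with $G=(Y,Y\times Y)$ on which $\varphi$ is satisfiable. But then Proposition~\ref{prop:saw_phi_infinite} applies and forces the domain of $F$ to be infinite, contradicting finiteness of $F\times G$. Hence no such class $\clK$ exists, and $\LSaw\times\LS5$ does not have the product fmp, which proves the theorem.

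The only slightly delicate point is the passage to a rooted product frame: one must check that validity of $\LSaw\times\LS5$ together with satisfiability of $\varphi$ is inherited by an appropriate point-generated subframe, and that this subframe still splits as a product of the required shape (so that Proposition~\ref{prop:saw_phi_infinite} is applicable with $G$ an $\LS5$-cluster). This is routine given the standard behaviour of generated subframes with respect to products, so I expect no real obstacle; the substance of the argument is entirely contained in the three preceding propositions.

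\begin{proof}
By Proposition~\ref{prop:saw_x_s5_lt}, $\LSaw\times\LS5$ is locally tabular. We show it does not have the product fmp. Assume, for contradiction, that $\LSaw\times\LS5=\Log\clK$ for a class $\clK$ of finite product frames. By Proposition~\ref{prop:saw_phi_consistent}, $\neg\varphi\notin\LSaw\times\LS5$, so $\varphi$ is satisfiable in some frame of $\clK$, and hence in a finite product frame $F_0\times G_0$ with $F_0\models\LSaw$ and $G_0\models\LS5$. Passing to the subframe generated by the point satisfying $\varphi$, we obtain a finite rooted product frame $F\times G$ validating $\LSaw\times\LS5$ on which $\varphi$ is satisfiable, where $F$ is a rooted $\LSaw$-frame and $G=(Y,Y\times Y)$ is a finite $\LS5$-cluster. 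By Proposition~\ref{prop:saw_phi_infinite}, $F\times G$ is infinite, contradicting the finiteness of $F\times G$. Therefore $\LSaw\times\LS5$ lacks the product fmp, while being locally tabular.
\end{proof}
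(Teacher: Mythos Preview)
Your proof is correct and follows the same route as the paper's one-line argument, which simply cites Propositions~\ref{prop:saw_x_s5_lt} and~\ref{prop:saw_phi_infinite}; you just spell out the details the paper leaves implicit, including the appeal to Proposition~\ref{prop:saw_phi_consistent} and the passage to a rooted product subframe (which the paper absorbs into the proof of Proposition~\ref{prop:saw_phi_infinite}).
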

\begin{proof}
  Follows from Proposition~\ref{prop:saw_x_s5_lt} and Proposition~\ref{prop:saw_phi_infinite}.
\end{proof}

\subsection{Product fmp and products of small height}
As follows from Theorem~\ref{thm:1-finite-to-m-h} and Corollary~\ref{cor:criterion-logics-general}, it is necessary for any locally tabular product of two logics that both have a finite height and one of them contains a bounded cluster formula.
We can classify the locally tabular logics~\(L_1 \times L_2\) with respect to the height of~\(L_1\) and~\(L_2\).
For this section, we will assume that \(L_1 \vdash [bc_n]^{\tra{L_1}}\) for some \(n < \omega\).
We will describe how the product fmp depends on the height of~\(L_1\) and~\(L_2.\)

By Proposition~\ref{prop:saw_logic_no_pfmp}, the product fmp fails even when~\(h(L_2) = 1\).
Thus we will focus on the height of~\(L_1.\)

If~\(L_1\) has unit height, then the product fmp follows from the following result. A logic is called \emph{tabular}, 
if it is the logic of a single finite frame.

\begin{proposition}\cite[Corollary~5.9]{MLTensor}\label{prop:tabular_product_fmp}
    If \(L\) is a tabular logic and \(L'\) has the fmp, then \(L\times L'\) has the product fmp.
\end{proposition}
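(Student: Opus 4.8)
The plan is to exhibit \(L\times L'\) as the logic of an explicit class of finite product frames. Since \(L\) is tabular, fix a finite frame \(F_0\) with \(L=\Log F_0\); since \(L'\) has the finite model property, fix a class \(\clG\) of \emph{finite} \(\AlB\)-frames with \(L'=\Log\clG\). I will argue that
\[
  L\times L' \;=\; \Log\{\,F_0\times G \mid G\in\clG\,\}.
\]
Every \(F_0\times G\) with \(G\in\clG\) is finite, so this establishes the product fmp. The inclusion \(\subseteq\) is immediate, as each such \(F_0\times G\) is a product of an \(L\)-frame and an \(L'\)-frame. For the reverse inclusion, suppose \(\varphi\notin L\times L'\); then \(\varphi\) is refuted in some model on \(F\times G\) with \(F\models L\) and \(G\models L'\), at a point \((a_0,b_0)\). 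Passing to the subframe generated by \((a_0,b_0)\), which by Proposition~\ref{prop:corner} equals \((F\restr R_F^*(a_0))\times(G\restr R_G^*(b_0))\), we may assume \(F\) and \(G\) are rooted; they still validate \(L\) and \(L'\).

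\emph{Step 1 (reduce the left factor to \(F_0\)).} I claim \(\varphi\) is already refuted in a model on \(F_0\times G'\) for some \(G'\models L'\), after which we may assume \(F=F_0\). This is where the tabularity of \(L\) enters essentially, and it is the substance of \cite[Corollary~5.8]{MLTensor}: via the tensor-product construction on modal algebras developed there, one shows that, since \(\Alg F_0\) generates the variety of \(L\)-algebras, the variety generated by \(\{\Alg(H_1\times H_2)\mid H_1\models L,\ H_2\models L'\}\) is already generated by those of its members with left factor \(F_0\); hence the refutation of \(\varphi\) transfers to a product over \(F_0\). (The hypothesis that \(L\) is tabular cannot be relaxed to "\(L\) has the fmp'': both \(\LK4\) and \(\LS5\) have the fmp, yet \(\LK4\times\LS5\) has no product fmp.)

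\emph{Step 2 (reduce the right factor to a member of \(\clG\)).} Now \(F_0\) is finite; write \(\dom F_0=\{1,\dots,k\}\) and let \((i_0,b_0)\) be the point at which \(\varphi\) fails in the model on \(F_0\times G\). For each \(i\le k\) define a translation \(T_i\colon\ML(\AlA\cup\AlB)\to\ML(\AlB)\), using for every variable \(p\) of \(\varphi\) and every \(i\le k\) a fresh variable \(p^{(i)}\): put \(T_i(\bot)=\bot\), \(T_i(p)=p^{(i)}\), let \(T_i\) commute with \(\imp\), set \(T_i(\Di\psi)=\Di\,T_i(\psi)\) for \(\Di\in\AlB\), and \(T_i(\Di\psi)=\bigvee_{j:\,iR_\Di j}T_j(\psi)\) for \(\Di\in\AlA\) (the empty disjunction read as \(\bot\)). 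A straightforward induction on \(\psi\) gives: for any valuation \(\eta\) on \(F_0\times G\), if \(\eta'\) is the valuation on \(G\) with \(\eta'(p^{(i)})=\{b\in\dom G\mid(i,b)\in\eta(p)\}\), then \((F_0\times G,\eta),(i,b)\mo\psi\) iff \((G,\eta'),b\mo T_i(\psi)\). Applying this, \(T_{i_0}(\varphi)\) is refuted in \(G\); since \(G\models L'\) we get \(T_{i_0}(\varphi)\notin L'=\Log\clG\), so \(T_{i_0}(\varphi)\) fails in a model on some \(H\in\clG\), say under a valuation \(\eta''\) at a point \(b_1\). Defining \(\eta^*\) on \(F_0\times H\) by \((i,b)\in\eta^*(p)\iff b\in\eta''(p^{(i)})\), the same induction yields \((F_0\times H,\eta^*),(i_0,b_1)\not\mo\varphi\). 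Hence \(\varphi\) is refuted in \(F_0\times H\) with \(H\in\clG\), so \(\varphi\notin\Log\{F_0\times G\mid G\in\clG\}\), as required.

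I expect Step 1 to be the main obstacle: replacing an arbitrary, possibly infinite, \(L\)-frame by the fixed finite frame \(F_0\) while keeping a refutation of \(\varphi\) over the combined alphabet \emph{and} the validity of \(L'\) on the right factor. This is precisely the point where one needs \(L\) tabular rather than merely locally tabular or possessing the fmp, and it is the nontrivial ingredient imported from \cite{MLTensor}; the generated-subframe normalization and the translation in Step 2 are routine.
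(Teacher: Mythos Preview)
The paper does not give its own proof of this proposition: it is simply quoted as \cite[Corollary~5.9]{MLTensor}, so there is no argument in the paper to compare against. Your outline is therefore being judged on its own.

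Your Step~2 is correct and standard: the mutual-recursion translations \(T_i\) indexed by the finitely many points of \(F_0\) do exactly what you claim, and the two inductions (from \(F_0\times G\) down to \(G\), and from \(H\) back up to \(F_0\times H\)) go through verbatim. This part is a complete, elementary argument.

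Your Step~1 is the real content, and you are right that it is not elementary. What you need is precisely \(L\times L'=\Log\{F_0\times G\mid G\models L'\}\) for \(L=\Log F_0\) tabular; this does not follow from any p-morphism trick (an arbitrary \(L\)-frame \(F\) need not map onto \(F_0\)), and the translation technique of Step~2 cannot be used here because neither factor is yet known to be finite. Your deferral to the tensor-product machinery of \cite{MLTensor} is appropriate; indeed, the present paper uses \cite[Corollary~5.8]{MLTensor} in exactly the same way in the proof of the Corollary following Proposition~\ref{prop:product-with-fin}, so your reading of that result is consistent with how the paper itself invokes it. In short: your reduction of Corollary~5.9 to Corollary~5.8 plus the finite-index translation is sound, and your diagnosis that Step~1 is the only non-routine ingredient is accurate.
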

    Observe that \(L_1\) is Kripke complete by~\ref{prop:LT_implies_fmp+extensions}.
     Since \(L_1\) is locally tabular, it is the logic of its class of frames. 
    Every \(L_1\)-frame has unit height, so it consists of one cluster, which has size at most~\(n\) by the frame condition of~\([bc_n]^{\tra{L}}\).
    Then \(L_1\) is tabular by Proposition~\ref{prop:tabular_product_fmp}.
\begin{corollary}
    If \(h(L_1) = 1,\)  then \(L_1 \times L_2\) has the product fmp.
\end{corollary}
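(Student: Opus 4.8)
The plan is to reduce the corollary to Proposition~\ref{prop:tabular_product_fmp} by showing that the standing hypotheses of this section force $L_1$ to be tabular. Throughout, $L_1\times L_2$ is locally tabular; since a product of two Kripke complete consistent logics is conservative over each of its factors, $L_1$ and $L_2$ are locally tabular, and hence, by Proposition~\ref{prop:LT_implies_fmp+extensions}, $L_1$ is Kripke complete and $L_2$ has the finite model property.

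First I would describe the point-generated $L_1$-frames. Since $L_1$ is Kripke complete, it is the logic of the class of all its frames, hence also the logic of the class of all point-generated subframes of such frames. Let $F$ be one of these, generated by a point $w$. As $h(L_1)=1$ we have $h(F)=1$, so $\Sk F$ has no two-element chain, i.e.\ $\Sk F$ is an antichain; and since $w$ generates $F$, the class $[w]$ satisfies $[w]\le[x]$ for every $x$, so $[w]$ is the least element of $\Sk F$. An antichain with a least element is a singleton, so $\Sk F$ is a one-element poset, i.e.\ $F$ is a single cluster. By the standing assumption that $L_1$ contains the bounded cluster formula $\tranAk{\bc_n}$ (with $k=\tra{L_1}$), every cluster occurring in an $L_1$-frame has at most $n$ points; in particular $|F|\le n$.

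Thus every point-generated $L_1$-frame is a cluster of size at most $n$, and over the finite modal alphabet of $L_1$ there are, up to isomorphism, only finitely many frames of size $\le n$. Let $F_1,\dots,F_r$ be representatives of those that validate $L_1$ (a non-empty list, since $L_1$ is consistent). Then $L_1=\bigcap_{i\le r}\Log F_i=\Log(F_1\sqcup\dots\sqcup F_r)$ is the logic of a single finite frame, so $L_1$ is tabular; as $L_2$ has the finite model property, Proposition~\ref{prop:tabular_product_fmp} now yields that $L_1\times L_2$ has the product fmp. The only step that takes a little care is the second paragraph — combining height $1$ with point-generation to collapse $F$ to a single cluster and reading off the uniform bound $n$ from the bounded cluster axiom; after that, everything is routine bookkeeping with finitely many finite frames, and the real content lies entirely in the cited Proposition~\ref{prop:tabular_product_fmp}.
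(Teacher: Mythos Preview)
Your proof is correct and follows essentially the same approach as the paper: show that $L_1$ is tabular (height~1 plus the bounded cluster axiom forces every point-generated $L_1$-frame to be a cluster of size $\le n$, hence there are only finitely many up to isomorphism), then apply Proposition~\ref{prop:tabular_product_fmp}. Your write-up is in fact a bit more careful than the paper's, spelling out why a point-generated height-1 frame collapses to a single cluster and explicitly noting that $L_2$ inherits the fmp from local tabularity.
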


Proposition~\ref{prop:saw_logic_no_pfmp} shows that the product fmp can fail when~\(h(L_1)=3.\)
The tabularity of~\(L_1\) or~\(L_2\) is a sufficient condition for the product fmp in this case, as well as for greater~\(h(L_1)\).
Describing weaker sufficient conditions is an open problem.

Finally, the case when~\(L_1\) has height~\(2\) is the most intriguing.
Neither a proof, nor a counterexample for the product fmp of such logics is known yet. 

\pagebreak

\section{Logics above $\LS{4}\times\LS{5}$}\label{sec:ProdaboveS5}

\ISLater{Intro words}

\ISLater{ 
\ISH{Some words; simplify notation} 

In this section, all logics are assumed to be normal extensions of the logic $\LS{4}\times \LS{4}$. 

\IS{Simplify notation:
$\RP_m$ stands for $\RP_m(1,1)$. 

Check the font  for $\RP$.

}

\IS{Products with $\LS{5}$; \\
Products with pretabular logics;\\
Prelocal  tabularity
}

\IS{Two goals: }
\begin{enumerate}
    \item There are infinitely many prelocally tabular logics in $\LS{4}\times \LS{4}$.
    \item 
Every non-locally tabular logic logic is contained in a  pre-locally tabular logics. 
We notice that even in the unimodal case, it is unknown if 
every non-locally tabular logic is contained in a prelocally tabular.  \cite[Problem 12.1]{CZ}
\end{enumerate}

\IS{Methods:
Productivization;   (product)RPP for the old task on Kripke-completeness.  

}

\newpage 
}

\ISLater{

\begin{lemma}[No proof is known yet]
Let  $L$ contains $\LS{4}[h_1]\times \LS{4}[h_2]$.  If every Kripke complete extension of  $L$ 
is locally tabular, then every its extension is locally tabular. 
\end{lemma}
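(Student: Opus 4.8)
The plan is to argue algebraically, combining the Malcev-type criterion (Theorem~\ref{Malcev73}) with the tuned-partition characterization (Theorem~\ref{thm:LFviaTuned}). Write $L_0=\LS{4}[h_1]\times\LS{4}[h_2]$ and let $\clF_0$ be the class of all frames validating $L_0$. Since $\Log\clF_0$ is a Kripke complete extension of $L_0$, the hypothesis gives that $\Log\clF_0$ is locally tabular, so by Theorem~\ref{thm:LFviaTuned} the class $\clF_0$ is uniformly tunable; fix $f_0\colon\omega\to\omega$ witnessing this, so that every frame in $\clF_0$ is $f_0$-tunable. The point of this first step is that $f_0$ depends only on $L_0$ and will serve as a single parameter good for all extensions of $L_0$ at once.

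Now let $L'$ be an arbitrary consistent extension of $L_0$. By Theorem~\ref{Malcev73}, applied to the class of all $L'$-algebras (which generates the variety of $L'$-algebras), it suffices to bound the cardinality of every $k$-generated subalgebra of every $L'$-algebra by a function of $k$ alone. So let $A$ validate $L'$. By modal duality, $A$ is isomorphic to the complex algebra of a descriptive frame with underlying Kripke frame $F=(X,R_1,R_2)$ and admissible sets forming a Boolean subalgebra $\mathcal{P}$ of the powerset of $X$. The crucial claim is that $F\in\clF_0$: the structural axioms of $L_0$ — reflexivity and transitivity of $R_1,R_2$ and the commutativity/confluence axioms $\com,\chr$ (all Sahlqvist, hence canonical), together with the finite-height formulas $B_{h_1}$ for $\Di_1$ and $B_{h_2}$ for $\Di_2$ (canonical over $\LS{4}$) — are D-persistent, so their validity on the descriptive frame transfers to $F$. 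In particular $F$ is $f_0$-tunable.

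Given generators $a_1,\dots,a_k\in\mathcal{P}$, let $\clV$ be the partition of $X$ into the nonempty atoms of the finite Boolean algebra generated by $a_1,\dots,a_k$, so $|\clV|\le 2^k$ and each $a_i$ is a union of $\clV$-blocks. Refine $\clV$ to a partition $\clU$ tuned in $F$ with $|\clU|\le f_0(2^k)$, and let $\mathcal{Q}$ be the Boolean algebra of all unions of $\clU$-blocks. Then $\mathcal{Q}$ contains every $a_i$, and it is closed under $\Box_1$ and $\Box_2$: if $x$ and $x'$ lie in the same $\clU$-block, then $R_j$-tunedness forces $R_j(x)$ and $R_j(x')$ to meet exactly the same $\clU$-blocks, whence $R_j(x)\subseteq S\iff R_j(x')\subseteq S$ for any union $S$ of blocks, i.e.\ $\Box_j S$ is again such a union. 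Hence the subalgebra of $A$ generated by $a_1,\dots,a_k$ lies inside $\mathcal{Q}$ and has at most $2^{f_0(2^k)}$ elements. Since this bound depends neither on $A$ nor on $L'$, Theorem~\ref{Malcev73} yields that $L'$ is locally tabular, which is what we wanted.

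The step I expect to be the real obstacle is the claim $F\in\clF_0$, i.e.\ that $\LS{4}[h_1]\times\LS{4}[h_2]$ is captured by D-persistent formulas, so that the Kripke frame underlying an arbitrary descriptive $L'$-frame genuinely validates the whole product logic $L_0$. The $\LS{4}$-axioms and $\com,\chr$ are Sahlqvist, and finite-height formulas are classically canonical over $\LS{4}$; what remains to be justified is that these axioms actually generate the product logic $\LS{4}[h_1]\times\LS{4}[h_2]$ (equivalently, that its Kripke-frame class coincides with the class of commuting pairs of $\LS{4}$-preorders of heights $h_1$ and $h_2$), which ties in with completeness-transfer theorems for products of modal logics. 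Pinning this down — or routing around it when the product logic needs non-canonical axioms — is precisely the gap that keeps the statement open.
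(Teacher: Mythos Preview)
The paper records this lemma explicitly as an open problem (it is labeled ``No proof is known yet''), so there is no proof in the paper to compare against. Your proposal is therefore an attempt at an open question, and it does not close it.

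There is an error already in your opening move. You apply the hypothesis to $\Log\clF_0$, but the hypothesis concerns Kripke complete extensions of the given logic $L$, not of $L_0=\LS{4}[h_1]\times\LS{4}[h_2]$. Since product logics are Kripke complete by definition, $\Log\clF_0=L_0$, and $L_0$ is in general only a \emph{sub}logic of $L$; so the hypothesis says nothing about $\Log\clF_0$. Concretely, take $h_1=h_2=1$ and let $L$ be any proper normal extension of $\LS{5}^2$: by Theorem~\ref{thm:S5Nick} every Kripke complete extension of $L$ is locally tabular, yet $L_0=\LS{5}^2$ is not, so $\clF_0$ is \emph{not} uniformly tunable and your $f_0$ does not exist. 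The only way to rescue the first step is to replace $\clF_0$ by $\clF_L$, the class of $L$-frames; then $\Log\clF_L$ \emph{is} a Kripke complete extension of $L$ and the hypothesis yields uniform tunability. But now the crucial claim becomes $F\in\clF_L$ --- that is, d-persistence of the arbitrary logic $L$ --- which is strictly harder than the d-persistence of $L_0$ you aimed for.

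On your identified gap: it is not merely that the identity $[\LS{4}[h_1],\LS{4}[h_2]]=\LS{4}[h_1]\times\LS{4}[h_2]$ ``remains to be justified''; it is known to fail. The formula used in the proof of Lemma~\ref{lem:opposite_arrows} lies in the product but not in the commutator, and the remark following that lemma records that such pairs are not product-matching. So the Sahlqvist-plus-finite-height package you propose is demonstrably too weak to cut out $\clF_0$, and even granting canonicity of $L_0$ would still leave the mismatch with $L$ described above. Your algebraic step (passing from an $f$-tuned refinement to a bound on generated subalgebras via Theorem~\ref{Malcev73}) is correct and is indeed the natural strategy; the obstacle is exactly that the underlying Kripke frame of a descriptive $L'$-frame need not land in any class whose uniform tunability the hypothesis provides.
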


\begin{theorem}[No proof is known yet]
    Every non-locally tabular extension of $\LS{4}\times \LS{4}$ 
    is contained in a locally tabular logic. 
\end{theorem}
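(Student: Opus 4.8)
The plan is to observe that, as literally stated, the theorem is an easy consequence of a single fact: $\LS{4}\times\LS{4}$ has a \emph{largest} consistent normal extension, namely the logic $M=\Log(\rect{1}{1})$ of the one-point product frame. Since $\rect{1}{1}$ is a single finite frame, $M$ is tabular, hence locally tabular; so every consistent extension of $\LS{4}\times\LS{4}$ --- in particular every non-locally tabular one --- is contained in the locally tabular logic $M$. (If one also counts the inconsistent logic as locally tabular, the statement is vacuous.) Thus everything reduces to showing $\rect{1}{1}\mo L$ for every consistent normal $L$ with $\LS{4}\times\LS{4}\se L$.

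For this I would first prove, by induction on the structure of a \emph{variable-free} bimodal formula $\chi$, that $\LS{4}\times\LS{4}\vd\chi\lra\bot$ or $\LS{4}\times\LS{4}\vd\chi\lra\top$. The Boolean steps are immediate; for $\chi=\Di_i\psi$ one uses $\LS{4}\times\LS{4}\vd\Di_i\top$ (each relation of a frame $F_1\times F_2$ with $F_i\mo\LS{4}$ is reflexive, so $p\imp\Di_i p$, hence $\Di_i\top$, is valid) together with $\LS{4}\times\LS{4}\vd\neg\Di_i\bot$, i.e. $\Box_i\top$; the case $\chi=\Box_i\psi$ is dual. Then, given a consistent normal $L\supseteq\LS{4}\times\LS{4}$ and a hypothetical $\vf\in L$ with $\rect{1}{1}\not\mo\vf$, I would take a valuation refuting $\vf$ at the unique point of $\rect{1}{1}$: it assigns each variable either $\emp$ or the whole one-point set, so letting $\sigma$ substitute $\bot$ or $\top$ for the variables accordingly, $\vf^\sigma$ is variable-free, lies in $L$ by closure under substitution, and is still refuted on $\rect{1}{1}$. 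By the claim $\LS{4}\times\LS{4}\vd\vf^\sigma\lra\bot$ (it cannot be $\lra\top$, since $\rect{1}{1}\mo\LS{4}\times\LS{4}$), so $\neg\vf^\sigma\in\LS{4}\times\LS{4}\se L$ while also $\vf^\sigma\in L$, whence $\bot\in L$, contradicting consistency. Hence $\rect{1}{1}\mo L$, completing the argument.

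I expect that the statement of genuine interest is not this one but the strengthening that every non-locally tabular extension of $\LS{4}\times\LS{4}$ is contained in a \emph{prelocally} tabular logic. Via Zorn's lemma that reduces to showing the union of a chain of non-locally tabular normal extensions of $\LS{4}\times\LS{4}$ is again non-locally tabular --- the analogue, in this setting, of the open problem \cite[Problem~12.1]{CZ}. The natural line of attack is to split, using Theorem~\ref{thm:1-finite-to-m-h} and Corollary~\ref{cor:criterion-logics-general}, according to whether a non-locally tabular extension has infinite height (a property trivially inherited by unions of chains) or has finite height but fails the bounded-cluster and product-rpp conditions; in the latter case one would have to track, along the chain, which product rpp formula $\RP_m(k_1,k_2)$ or bounded-cluster formula stays \emph{outside} the union, presumably combined with a ``productivization'' step relating extensions of $\LS{4}\times\LS{4}$ to genuine products $L_1\times L_2$, and with the reduction (stated as a lemma above) of local tabularity of all extensions to that of all Kripke-complete extensions for logics above $\LS{4}[h_1]\times\LS{4}[h_2]$. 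Controlling how the failure of $1$-finiteness propagates along the chain is where I expect the real obstacle to lie.
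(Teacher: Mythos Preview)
Your proof of the \emph{literal} statement is correct and complete: the one-point frame $\rect{1}{1}$ validates every consistent normal extension of $\LS{4}\times\LS{4}$ (this is Makinson's theorem specialised to the reflexive case, and your induction on variable-free formulas is exactly the standard argument), while the inconsistent logic is trivially locally tabular; hence every non-locally tabular extension is consistent and contained in the tabular logic $\Log(\rect{1}{1})$.

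There is, however, nothing in the paper to compare against. The statement sits inside a suppressed authors' note (the macro \texttt{\textbackslash ISLater} is redefined to produce no output) and carries the tag ``No proof is known yet''. From the surrounding material --- the parallel hidden statement labelled \texttt{thm:finalDream} for $\LS{4}[h]\times\LS{5}$, and the open-problems discussion in the Conclusion, which cites \cite[Problem~12.1]{CZ} and speaks of containment in a \emph{prelocally} tabular logic --- it is evident that ``locally tabular'' here is a slip for ``prelocally tabular''. You diagnosed this correctly: the statement as written is trivial, and the statement the authors had in mind is exactly the strengthening you isolate in your last paragraph.

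So the only content to assess is your sketch for the prelocally-tabular version. Your plan (Zorn reduction to chains; split by whether height is infinite or finite via Theorem~\ref{thm:1-finite-to-m-h}; in the finite-height case track failure of bounded-cluster or product-rpp formulas along the chain using Corollary~\ref{cor:criterion-logics-general}) is a natural line and is consonant with the paper's toolkit, but the paper offers no argument here either --- it records the question as open even above $\LS{4}\times\LS{5}$. The obstacle you flag, controlling how failure of $1$-finiteness (equivalently, of the rpp/bounded-cluster axioms) persists under unions of chains without Kripke completeness, is precisely where the difficulty lies, and the hidden lemmas adjacent to the theorem (reducing local tabularity of all extensions to that of all Kripke-complete extensions above $\LS{4}[h_1]\times\LS{4}[h_2]$) are themselves marked ``No proof is known yet''.
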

}

\ISLater{

\begin{lemma}[No proof is known yet]
Let  $L$ contains $\LS{4}[h]\times \LS{5}$.  If every Kripke complete extension of  $L$ 
is locally tabular, then every its extension is locally tabular. 
\end{lemma}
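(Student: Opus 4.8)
The plan is to trivialise the two universal quantifiers, reduce local tabularity of $L$ to the structure of the clusters occurring in the Kripke frames of $L$, and then transfer the conclusion from those frames back to $L$; the last transfer will be the genuine obstacle. First, by Proposition~\ref{prop:LT_implies_fmp+extensions}, every extension of a locally tabular logic is locally tabular and $L$ is an extension of itself, so ``every extension of $L$ is locally tabular'' is equivalent to ``$L$ is locally tabular''. Next, let $\clF$ be the class of all Kripke frames validating $L$. If $M\supseteq L$ is Kripke complete, then $M$ is the logic of its own frames, which form a subclass of $\clF$, so $M\supseteq\Log\clF$; hence ``every Kripke complete extension of $L$ is locally tabular'' is equivalent to ``$\Log\clF$ is locally tabular''. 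It therefore suffices to show: if $\LS4[h]\times\LS5\subseteq L$ and $\Log\clF$ is locally tabular, then $L$ is locally tabular.

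Since $\LS4[h]\times\LS5\subseteq L$, the logic $L$ is $2$-transitive and contains $[B_h]^{2}$ by Propositions~\ref{prop:product-preserve-pretrans} and~\ref{prop:product-preserve-height}; hence every $F\in\clF$ is $2$-transitive with $\Sk F$ of height at most $h$. By Theorem~\ref{thm:supple-clusters-crit}, $\Log\clF$ is locally tabular iff $\Log\clusters{\clF}$ is locally tabular, and $\clusters{\clF}$ is a class of $2$-transitive frames of height $1$. The point I would next establish is that each such cluster validates $\LS5^2$: the $\LS5$-axioms on the second modality together with the commutativity and Church--Rosser principles (which hold in all frames of $\LS4\times\LS5$, hence of $L$) should force the restriction of $R_1$ to a cluster to be symmetric, and this is the place where $\LS5$ as the second factor enters; it should be verified with care. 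Granting this, Theorems~\ref{thm:Segerberg2DimML} and~\ref{thm:criterion-bcp} yield that $\Log\clusters{\clF}$ is locally tabular iff, for a uniform bound $n$, the clusters of $\clF$ are ``narrow in one of their two dimensions'', equivalently, the modal formula $\psi$ expressing that $F\restr Y\toto\boldsymbol{n+1}\times\boldsymbol{n+1}$ for no $Y\subseteq\dom F$ (a ``forbidden-configuration'' property of the kind that is modally definable, cf.\ \cite[Theorem~9.38]{CZ}) is valid in every $F\in\clF$. Running the same reduction once more, $\LS4[h]\times\LS5+\psi$ is locally tabular.

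The remaining step, which I expect to be the main obstacle, is to pass from ``$\psi$ valid in every Kripke frame of $L$'', i.e.\ $\psi\in\Log\clF$, to $\psi\in L$: once this holds, $L\supseteq\LS4[h]\times\LS5+\psi$ and $L$ is locally tabular by Proposition~\ref{prop:LT_implies_fmp+extensions}. As a modal logic is not in general determined by its Kripke frames, this is essentially the assertion that every extension $L$ of $\LS4[h]\times\LS5$ with locally tabular Kripke completion is itself Kripke complete, and that is where the difficulty lies. Two routes seem worth pursuing. One is a canonicity argument: show that $\psi$ may be taken canonical (the companion product rpp formulas are canonical by Proposition~\ref{prop:rpp-canon}, and bounded-cluster formulas should be d-persistent in the pretransitive setting) and that every logic between $\LS4[h]\times\LS5$ and $\Log\clF$ is Kripke complete, so that validity of $\psi$ on all frames of $L$ forces $\psi\in L$; this is the ``productivization'' route that the companion statements also seem to require. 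The other is purely algebraic: prove an algebraic analogue of Theorem~\ref{thm:supple-clusters-crit}, namely that local finiteness of a variety of $\LS4[h]\times\LS5$-algebras reduces to local finiteness of its ``cluster reduct'', and then run the $\LS5^2$ dichotomy on the variety of $L$ directly via Malcev's criterion, Theorem~\ref{Malcev73}. Everything preceding this transfer is routine given the results already established; the transfer is the real content.
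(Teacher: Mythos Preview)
The paper itself offers no proof of this lemma: it appears inside a suppressed block explicitly tagged ``[No proof is known yet]''. So there is nothing to compare your argument against; the authors regard this as open.

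Your proposal is not a proof but an honest outline that correctly isolates the obstruction. The two reductions at the start are fine: local tabularity of $L$ is equivalent to local tabularity of every extension, and the hypothesis is equivalent to local tabularity of $\Log\clF$ for $\clF$ the class of $L$-frames. After that, however, you yourself flag the essential gap: passing from $\psi\in\Log\clF$ to $\psi\in L$ is precisely a Kripke-completeness statement for $L$, and neither of the two ``routes worth pursuing'' is carried out. Canonicity of $\psi$ alone does not suffice, since canonicity of an axiom valid on all $L$-frames does not force it into $L$ unless $L$ is already canonical; and the algebraic analogue of Theorem~\ref{thm:supple-clusters-crit} you invoke is not established anywhere in the paper and would itself be a substantial result. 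So the transfer step remains a genuine gap, exactly as the authors' annotation indicates.

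One further caution on the ``routine'' part: your claim that every cluster of an $L$-frame validates $\LS5^2$ is not as automatic as you suggest. In the paper this is obtained only for height~$2$ via the relativized symmetry axiom~\eqref{eq:relativized-sym} (see Lemma~\ref{lem:rootelTLframes}), and Lemma~\ref{lem:opposite_arrows} gives only $R_1\cap R_2^{-1}\subseteq R_1^{-1}$, which does not by itself yield symmetry of $R_1$ on an arbitrary cluster of a frame of height $h>2$. This step too would need an argument before the $\LS5^2$ dichotomy can be invoked.
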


\begin{theorem}\label{thm:finalDream}[No proof is known yet]
    Every non-locally tabular extension of $\LS{4}[h]\times \LS{5}$ 
    is contained in a locally tabular logic.
\end{theorem}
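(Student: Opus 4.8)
The plan is to exhibit, above any such $L$, a locally tabular logic obtained by uniformly bounding the size of clusters. Let $L$ be a non-locally-tabular normal extension of $\LS{4}[h]\times\LS{5}$. First I would pass to the Kripke completion: replacing $L$ by the logic of its class of rooted Kripke frames — which is either locally tabular, in which case we already have a locally tabular logic above $L$ and are done, or else a Kripke complete non-locally-tabular extension of $L$ — we may assume $L=\Log\clF$ for a non-empty class $\clF$ of frames. Every $F=(W,R_1,R_2)\in\clF$ validates $\LS{4}[h]\times\LS{5}$, so $R_1$ is an $\LS{4}$-preorder, $R_2$ an equivalence, the two commute so that $R_F=R_1\cup R_2$ satisfies $R_F^{*}=R_1^{*}\circ R_2^{*}$ (cf.\ Proposition~\ref{prop:corner}), $R_F$ is $2$-transitive (Proposition~\ref{prop:product-preserve-pretrans}), and the skeleton of $(W,R_F^{*})$ has height at most $h$ (Proposition~\ref{prop:product-preserve-height}); these frame conditions are forced because the corresponding canonical axioms already belong to $\LS{4}[h]\times\LS{5}$.

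Next, for $n<\omega$ let $\clF_n=\{F\in\clF \mid \text{every cluster of } F \text{ (in the sense of } R_F^{*}\text{) has at most } n \text{ elements}\}$ and put $L_n=\Log\clF_n$. Since every member of $\clF_n$ validates $L$, we have $L\subseteq L_n$. I claim that each $L_n$ is locally tabular. By the first paragraph $\clF_n$ is a class of frames of uniformly bounded height at most $h$, so by the polymodal cluster criterion (Theorem~\ref{thm:supple-clusters-crit}) it suffices that $\Log\clusters{\clF_n}$ be locally tabular; but every member of $\clusters{\clF_n}$ is a frame on at most $n$ points, and such a class is uniformly tunable — for any finite partition of such a frame, the discrete partition is a tuned refinement of size $\le n$ — hence has a locally tabular logic by Theorem~\ref{thm:LFviaTuned}. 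Thus $L_n$ is a locally tabular logic containing $L$ for every $n$.

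What remains, and where I expect the real work to be, is to guarantee that $L_n$ is a \emph{consistent} locally tabular logic for some $n$ (i.e.\ that $\clF_n\neq\varnothing$), so that the conclusion is non-degenerate. Since $L$ is consistent it is validated on a rooted frame $F\in\clF$; as $F$ has finite height one may pass to the generated subframe $F\restr C$ determined by a cluster $C$ maximal in the skeleton of $F$, which again validates $L$ and is a single cluster. If $C$ is finite we are done; if not, one must show that this single-cluster $\LS{4}\times\LS{5}$-frame admits a finite p-morphic image, which then validates $L$ by the Jankov–Fine theorem (Proposition~\ref{prop:Jankov-Fine}). The heuristic reason to expect this is that over reflexive frames of finite height no modal formula can force a cluster to be infinite, so $L$ cannot avoid finite-cluster frames; turning this into a proof — controlling the interaction of the $\LS{4}$-preorder $R_1$ and the $\LS{5}$-equivalence $R_2$ inside an arbitrary, not necessarily product, cluster so as to produce a finite quotient — is the main obstacle, together with the Kripke-completeness bookkeeping of the first paragraph. (For locally tabular $L$ the statement is trivial, which is why only non-locally-tabular extensions are asserted; and in the base case $h=1$, where $\LS{4}[1]\times\LS{5}=\LS{5}^{2}$, the required finite quotients are the familiar ones, e.g.\ $\bomega\times\bomega\toto\boldsymbol{k}\times\boldsymbol{k}$, so that $\LS{5}^{2}\subseteq L_{k^{2}}$ already works there.)
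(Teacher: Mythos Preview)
The paper contains \emph{no} proof of this statement: it is explicitly tagged ``[No proof is known yet]'' and sits inside an \verb|\ISLater{...}| block that expands to nothing in the compiled paper. It is a conjecture, not a theorem, so there is no proof to compare your attempt against. Moreover, read literally the statement is vacuous: the inconsistent bimodal logic is locally tabular and contains every logic. The surrounding discussion (and the open-problems section, which cites \cite[Problem~12.1]{CZ}) makes clear that the intended claim is that every non-locally-tabular extension of $\LS{4}[h]\times\LS{5}$ is contained in a \emph{prelocally tabular} logic. Your proposal does not address that question at all.

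Even for the weaker target you set yourself --- producing a \emph{consistent} locally tabular extension --- your argument is not a proof but a programme, and you say so yourself. The reduction to Kripke-complete $L$ is fine as bookkeeping (if $L$ has no frames, the ``completion'' is inconsistent and you are back to the trivial answer), and the fact that $\Log\clF_n$ is locally tabular is correct via Theorem~\ref{thm:supple-clusters-crit}. But the entire content lies in showing $\clF_n\neq\varnothing$ for some $n$, and here you offer only a heuristic (``no modal formula can force a cluster to be infinite''). That heuristic is exactly what is \emph{not} known: an $\LS{4}[h]\times\LS{5}$-frame need not be a product, its clusters are arbitrary $\LS{5}^2$-frames, and there is no general mechanism for collapsing an infinite such cluster to a finite one while preserving validity of an arbitrary $L$. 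Your $h=1$ remark illustrates the difficulty rather than resolving it: even there the argument that $\LS{5}^2\subseteq L_{k^2}$ goes through only because one already knows the structure of $\LS{5}^2$-frames and their finite quotients; for $h\ge 2$ the interaction with the order on the skeleton is precisely what the paper's Section~\ref{sec:ProdaboveS5} is trying to understand in special cases, and the general case remains open.
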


Steps: 
\begin{enumerate}
    \item $L$  is  2-transitive by Proposition \ref{prop:product-preserve-pretrans}, and the corresponding  modality is given by $\Di_1\Di_2$ (which is equivalent to $\Di_2\Di_1$).
    
    \item Assume $L$  is not locally  tabular. Trivially, $L$  is consistent. Hence, $L[1]$ is consistent as well \cite[Theorem ...]{Glivenko2021}. 

    \item 
   Consider the least natural $m$ such that $L$ is not $m$-finite.

\end{enumerate}


}

\subsection{The product rpp criterion}
\ISLater{I suggest for this section: One-dimensional tack and products; then merge your section with this one.
And for the next section: products and two-dimensional tack}

Recall that a logic is {\em tabular}, if it is characterized by a single finite frame. A 
non-tabular logic is {\em pretabular}, if all of its proper extensions are tabular. 
There are exactly five pretabular logics above $\LS{4}$, see \cite{EsakiaMeskhi1977}, \cite{MaksimovaPretab75}, or \cite[Section 12.2]{CZ}. One of them is the logic $\LS{5}$. 
Another is the logic $\TL$ defined below. 

An important fact about $\LS{5}^2$ was established in \cite{NickS5}: this logic is {\em prelocally tabular}, that is, the following theorem holds. 
\begin{theorem}\cite{NickS5}\label{thm:S5Nick}
All proper extensions of $\LS{5}^2$ are locally tabular.    
\end{theorem}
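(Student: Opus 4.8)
# Proof Proposal for Theorem \ref{thm:S5Nick}

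The plan is to deduce the result from the criterion established earlier, specifically Corollary~\ref{cor:criterion-logics-general}, together with the known fact (Theorem~\ref{thm:S5Nick} as cited to \cite{NickS5}) — but since we are asked to \emph{prove} this statement ourselves, I would instead aim for a self-contained argument using Theorem~\ref{thm:criterion-frames-general}. Let $L$ be a proper extension of $\LS{5}^2$. Since $\LS{5}^2$ is Kripke complete and $L$ is a proper consistent extension, by the Jankov--Fine machinery (Proposition~\ref{prop:Jankov-Fine}, noting $\LS{5}^2$-frames are $2$-transitive by Proposition~\ref{prop:product-preserve-pretrans}) there is a finite rooted $\LS{5}^2$-frame $F$ refuting $L$; hence $L$ is the logic of a class $\clF$ of $\LS{5}^2$-frames none of which $p$-morphically maps onto $F$. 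The key structural input is the classical description: an $\LS{5}^2$-frame is, up to isomorphism, built from a rectangle $\rect{X}{Y}$ by taking a generated subframe/quotient, and every finite rooted $\LS{5}^2$-frame is a p-morphic image of a finite rectangle.

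First I would show that the refutation of a fixed finite frame $F$ forces a bound on the "rectangle dimensions" reachable inside $\clF$. Concretely: if arbitrarily large rectangles $\rect{X}{Y}$ (with both $|X|$ and $|Y|$ unbounded) were generated subframes of frames in $\clF$ — or more precisely if $\clusters{\clF_1}$ and $\clusters{\clF_2}$ (the horizontal and vertical "factor clusters", extracted via Proposition~\ref{prop:corner}) both had unbounded size — then by Theorem~\ref{thm:Segerberg2DimML}/Proposition~\ref{prop:pretrans-exprS5xS5} the logic of the corresponding rectangles would be exactly $\LS{5}^2$, and in particular $\clF$ would validate no proper extension, contradicting properness of $L$. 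So one of the two "dimensions" is bounded, say by $n$: every frame in $\clF$ sits inside a union-rectangle whose vertical component has size $\le n$.

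Next I would package this as: $\clF$ (or rather the class of its point-generated subframes) has the bounded cluster property after suitably viewing it as a product — i.e.\ it decomposes as $\clF' \times \clG'$ with $\clG'$ of bounded size. Both factor-logics are extensions of $\LS{5}$, hence locally tabular (as noted in the text, every partition of an $\LS{5}$-frame is tuned, so $\LS{5}$ and all its extensions are locally tabular by Proposition~\ref{prop:LT_implies_fmp+extensions}). Then condition~\eqref{cr-i1} of Theorem~\ref{thm:criterion-frames-general} applies: $\Log\clF' $ and $\Log\clG'$ are locally tabular and one of the two classes has the bounded cluster property, so $\Log(\clF'\times\clG') = L$ is locally tabular.

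The main obstacle I anticipate is the first step — turning "$L$ is a \emph{proper} extension" into a genuine \emph{uniform} bound on one dimension across \emph{all} frames in a characterizing class $\clF$, rather than just for a single refuting frame. A single finite $F$ refuted by $L$ only tells us each individual $G\in\clF$ fails to map onto $F$; one must argue that this failure, ranging over all $G$, cannot coexist with both factor-dimensions being unbounded. The clean way is: if both were unbounded, pick for each $\ell$ a $G_\ell\in\clF$ containing a horizontal cluster of size $\ge\ell$ and a vertical cluster of size $\ge\ell$; the corresponding rectangle $\rect{X}{Y}$ with $|X|,|Y|\ge\ell$ is a generated subframe (using commutativity of the relations, Proposition~\ref{prop:corner}) whose logic — as $\ell\to\infty$ — is $\LS{5}^2$ by Theorem~\ref{thm:Segerberg2DimML}; since generated subframes and p-morphic images preserve validity, $L = \Log\clF \subseteq \LS{5}^2$, forcing $L=\LS{5}^2$, a contradiction. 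Making the "generated subframe / rectangle extraction" precise for arbitrary (possibly infinite, non-rectangular) $\LS{5}^2$-frames is the delicate bookkeeping, but it is exactly the content already used in the proof of Theorem~\ref{thm:criterion-bcp}, so I would cite that pattern rather than redo it.
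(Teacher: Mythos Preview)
Your argument tacitly assumes that $L$ is Kripke complete: the sentence ``hence $L$ is the logic of a class $\clF$ of $\LS{5}^2$-frames'' is exactly Kripke completeness, and nothing you have said establishes it. The paper isolates this as the genuinely hard half of the theorem. For a Kripke complete proper extension, the paper agrees that local tabularity follows rather directly from Theorem~\ref{thm:criterion-frames-general} (together with the productivization quotient of Definition~\ref{def:tilde} and Lemmas~\ref{lem:quotient-uniformly-tunable}, \ref{lem:s5-square-quotient}). But to remove the completeness hypothesis one must show that if $L\supsetneq\LS{5}^2$ then $\Log F_L\supsetneq\LS{5}^2$ as well, where $F_L$ is the canonical frame; this is Lemma~\ref{lem:ext-s5sq-canon}, and its proof is a nontrivial definability argument constructing, inside the canonical model, a p-morphism onto each $\rect{m}{m}$ whose fibres are cut out by \emph{formulas}. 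Your Jankov--Fine step only yields a p-morphism from a point-generated subframe of $F_L$, not one whose associated partition is definable in the canonical valuation, and without that you cannot conclude anything about $L$ itself (only about $\Log F_L$).

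There is a secondary looseness in the Kripke-complete part: you write that $\clF$ ``decomposes as $\clF'\times\clG'$'', but an abstract $\LS{5}^2$-frame (a set with two commuting equivalence relations) need not literally be a product frame. The paper handles this via the quotient $\quotfr{F}$ by the intersection of the two equivalences: Lemma~\ref{lem:s5-square-quotient} shows $\quotfr{F}$ is a genuine rectangle, and Lemma~\ref{lem:quotient-uniformly-tunable} transfers local tabularity back from $\quotfr{\clF}$ to $\clF$. Your ``package as a product'' step would need exactly this machinery to be made precise.
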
 

While $\LS{5}^2$ is prelocally tabular,
it turns out that products of pretabular logics can be not prelocally tabular even for the case of height 2. 

The {\em (one-dimensional) tack frame} $\TF$ is the ordered sum $(\omega+1,R)$ of a countable cluster and a singleton, that is
$\alpha R \beta$ iff $\alpha<\omega$ or $\beta=\omega$. 
Let $\TL$ be the logic of its frame. 
$\TL$ is one of exactly five pretabular
logics above $\LS{4}$.

In this subsection we show that $\TL\times \LS{5}$ is not prelocally tabular, and give an axiomatic criterion of local tabularity 
for a family of logics that contains \(\TL\times \LS5\) and its extensions.

\begin{proposition}
$\TL\times \LS{5}$ is not prelocally tabular. 
\end{proposition}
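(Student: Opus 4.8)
The plan is to exhibit a proper extension of $\TL\times\LS5$ that is not locally tabular, which by definition shows $\TL\times\LS5$ is not prelocally tabular. The natural candidate is the logic of $C\times\boldsymbol\omega$, where $C=(\omega,\omega\times\omega)$ is the countable cluster — this is the logic of the frame obtained from the tack by deleting the top point. First I would verify that $\Log(C\times\boldsymbol\omega)$ is a \emph{proper} extension of $\TL\times\LS5$: since $C\toto\TF$ is false but $\TF\restr\omega = C$ and $C$ is a $\TL$-frame (as $\TL=\Log\TF$ and every generated subframe of a $\TL$-frame validates $\TL$, or more directly $C$ is an $\LS5$-cluster hence validates $\LS5\supseteq\TL$ — wait, $\TL\subseteq\LS5$? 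No: $\TL$ is pretabular above $\LS4$ and incomparable with $\LS5$; but $C$ does validate $\TL$ because $C$ is a generated subframe of $\TF$ and validity is preserved under generated subframes). So $C\times\boldsymbol\omega$ is a $(\TL\times\LS5)$-frame, giving $\TL\times\LS5\subseteq\Log(C\times\boldsymbol\omega)$. Properness follows because some axiom separating the tack's top point — e.g. a formula valid on $\TF\times\boldsymbol\omega$ but refutable on $C\times\boldsymbol\omega$, or conversely a formula like $\Di_1\top$ forced everywhere on $C$ — witnesses $\Log(C\times\boldsymbol\omega)\neq\TL\times\LS5$. I would pick the cleanest such separating formula and check it directly.

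Next I would show $\Log(C\times\boldsymbol\omega)$ is not locally tabular. The key point is that $C\times\boldsymbol\omega$ is exactly a rectangle $\boldsymbol\omega\times\boldsymbol\omega$ (as an $\LS5^2$-frame), so by Theorem~\ref{thm:Segerberg2DimML} its bimodal reduct-logic contains no more than $\LS5^2$, and since $\LS5^2$ is not locally tabular \cite{CylindricalAlgebras1} (indeed not $1$-finite \cite[Theorem~2.1.11(i)]{CylindricalAlgebras1}), neither is $\Log(C\times\boldsymbol\omega)$. More precisely: $C$ and $\boldsymbol\omega$ are both single clusters of unbounded (infinite) size, so applying Proposition~\ref{prop:pretrans-exprS5xS5} with $\clF=\{C\}$, $\clG=\{\boldsymbol\omega\}$, $m=n=1$ gives that $\{\varphi\mid C\times\boldsymbol\omega\models[\varphi]^{1,1}\}=\LS5^2$, and since $\LS5^2$ is not $1$-finite, the translation argument (compare the proof of Theorem~\ref{theorem:1finite-LT}) shows $\Log(C\times\boldsymbol\omega)$ is not $1$-finite either. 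Hence it is a non-locally-tabular proper extension of $\TL\times\LS5$.

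Alternatively — and perhaps more in the spirit of the paper's criteria — one can argue via Theorem~\ref{thm:criterion-frames-general}: $\TL\times\LS5$ itself is not locally tabular because neither factor has the bounded cluster property (the tack contains the infinite cluster $\omega$, and $\LS5$ has arbitrarily large clusters), so $\TL\times\LS5$ is already non-locally tabular; to get prelocal tabularity to fail we genuinely need a \emph{proper} non-locally-tabular extension, which is what $\Log(C\times\boldsymbol\omega)$ provides, the properness being the substantive point.

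The main obstacle I anticipate is pinning down properness cleanly: one must produce an explicit formula in $\Log(C\times\boldsymbol\omega)\setminus(\TL\times\LS5)$. Since $\TL\times\LS5=\Log\{\TF\times\boldsymbol\omega\}$ enriched by all $\LS5$-frames on the second coordinate — actually $\TL\times\LS5=\Log(\{\TF\}\times\{\text{finite and infinite }\LS5\text{-frames}\})$, and one checks by a $p$-morphism or filtration argument that this equals $\Log\{\TF\times\boldsymbol\omega\}$ — a formula expressing ``the horizontal relation has no proper final cluster'', true on $C\times\boldsymbol\omega$ but false on $\TF\times\boldsymbol\omega$ at points over $\omega\in\TF$, should do it; e.g. something built from $\Di_1\top$ together with reflexivity/symmetry of the horizontal relation that holds on $C$ but fails at the tack's top point. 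I would state the chosen formula, verify its validity on $C\times\boldsymbol\omega$ by a one-line semantic check, and refute it on $\TF\times\boldsymbol\omega$ at a point of the form $(\omega,j)$.
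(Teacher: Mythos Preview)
Your proposal has a genuine gap: the frame $C\times\boldsymbol\omega$ is \emph{not} a $(\TL\times\LS5)$-frame, so its logic is not an extension of $\TL\times\LS5$ at all. The error is the claim that $C$ is a generated subframe of $\TF$. In the tack $\TF=(\omega+1,R)$ every non-top point $\alpha<\omega$ satisfies $\alpha R\omega$, so any $R$-closed subset containing a non-top point must contain $\omega$; the only point-generated subframes of $\TF$ are $\TF$ itself and the singleton $\{\omega\}$. In fact $C$ refutes the McKinsey axiom $\Box\Di p\to\Di\Box p$, which belongs to $\TL$ because $\TF$ satisfies the $\LS{4.1}$ frame condition (every point sees the terminal point $\omega$). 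Consequently $\Box_1\Di_1 p\to\Di_1\Box_1 p\in\TL\times\LS5$, but this formula fails on $C\times\boldsymbol\omega=\boldsymbol\omega\times\boldsymbol\omega$; hence $\Log(C\times\boldsymbol\omega)=\LS5^2$ does not extend $\TL\times\LS5$.

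The paper's fix is to keep the top point but collapse its vertical copies: take $H=\TF\times\boldsymbol\omega$ and form the quotient $\fact{H}{\equiv}$ that identifies all points $(\omega,m)$ with one another. The quotient map is a p-morphism, so $\fact{H}{\equiv}\models\TL\times\LS5$; the formula $\Di_2\Di_1\Box_1 p\to\Box_1\Di_1 p$ is valid on the quotient but refutable on $H$ (at $(0,0)$ with $\theta(p)=\{(\omega,1)\}$), giving properness; and $\fact{H}{\equiv}$ still contains $\boldsymbol\omega\times\boldsymbol\omega$ as a subframe, so its logic is not locally tabular by Proposition~\ref{prop:LF-for-subframess}. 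Your instinct that the infinite rectangle is the obstruction to local tabularity is exactly right --- you just need to attach a top so that McKinsey, and hence $\TL$, survives.
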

\begin{proof}
Let $H=T\times \boldsymbol{\omega}$. 
The formula 
$\vf=\Di_2\Di_1\Box_1 p\imp \Box_1\Di_1 p$ is not valid in $H$: consider a model
$M=(H,\theta)$ with $\theta(p)=\{(\omega,1)\}$; then $\vf$ is falsified in $M$ at $(0,0)$.

Consider the equivalence $\equiv$ on $\dom H$ that extends the diagonal 
by the set of pairs of form $((\omega,m),(\omega,n))$.\improve{ \ISH{Clumsy}}
It is easy to check that $H  \toto \fact{H}{\equiv}$, 
hence the logic $L$ of the latter frame extends $\TL$. 
It is also straightforward that $\vf$ is valid in $\fact{H}{\equiv}$, so $L$ is a proper extension of $\TL$. Finally, we observe that $L$ is not locally tabular:  
$\fact{H}{\equiv}$ contains $\rect{\omega}{\omega}$ as a subframe, and the logic $\LS{5}^2$ of this subframe is not locally tabular. 
\end{proof}
\ISLater{DC "easy to check}

We say that the \emph{product rpp criterion} holds for a logic \(L\), if for any normal extension \(L'\) of \(L\), we have:
\begin{center}
    $L'$ contains a product rpp formula iff  $L'$ is locally tabular. 
\end{center}

\smallskip 

  Recall that $L[h]$ denotes the extension of $L$ with the axiom  $B_h$ of 
  finite height.

  \smallskip

The unimodal logic $\LS{4.1}$ is the extension of $\LS{4}$ with the axiom 
$\Box\Di p\imp \Di \Box p$. 
In transitive frames,  $\Box\Di p\imp \Di \Box p$ defines the first-order property
\begin{equation}\label{eq:s4.1-condition}
  \forall x\,\exists y\,\left(xRy \land \forall z \left(yRz \to z = y\right)\right),
\end{equation}
see, e.g., \cite[Section 3.7]{BDV}.

Hence, in our notation, the unimodal logic $\LS{4.1}[2]$ is the extension of $\LS{4}$ with the axioms $\Box\Di p\imp \Di \Box p$ and $B_2$.  
Now we are going to show that the product rpp criterion holds for \(\LS{4.1}[2] \times \LS5\).

\begin{lemma}\label{lem:opposite_arrows}
  Let~\(F=(X,R,S)\) validate a product $L_1 \times L_2$ of transitive logics,  where~\(L_1\) has finite height.
  Then~\(R\cap S\inv \subseteq R\inv.\)  
\end{lemma}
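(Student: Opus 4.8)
The plan is to read off, from $F\models L_1\times L_2$, the relational conditions the two relations must satisfy, and then verify $R\cap S^{-1}\subseteq R^{-1}$ directly as a first–order property of $(X,R,S)$. Put $h=h(L_1)$. Since $L_1$ and $L_2$ are transitive, both $R$ and $S$ are transitive; since the finite–height axiom of $L_1$ lies in $L_1\subseteq L_1\times L_2$ and $R$ is transitive, the $R$–skeleton $\Sk(X,R)$ has height $\le h$; and since every product frame satisfies commutativity $R\circ S=S\circ R$ (from the $\mathrm{com}$ axioms) and the Church--Rosser conditions (from the $\mathrm{chr}$ axioms), $F$ satisfies them as well. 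So I may argue entirely about such an $(X,R,S)$, and in particular the desired inclusion is one that holds in every product frame; the work is to extract it.

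First I would fix $x,y$ with $xRy$ and $ySx$ and aim for $yRx$. If $x\sim_R y$ this is immediate: $yR^*x$, and either $x\ne y$ or $R$ is reflexive at $x$, so $yRx$. So assume $[x]<[y]$ in $\Sk(X,R)$ and seek a contradiction. From $xRy$ and $ySx$ we get $(x,x)\in R\circ S=S\circ R$, hence some $x_1$ with $xSx_1$ and $x_1Rx$; transitivity then gives $x_1Ry$, $ySx_1$, and $[x_1]\le[x]$. Iterating produces a sequence $x=x_0,x_1,x_2,\dots$ with, for every $i$, $x_iSx_{i+1}$, $x_{i+1}Rx_i$, $x_iRy$, $ySx_i$, and $[x_0]\ge[x_1]\ge\cdots$ in $\Sk(X,R)$. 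Since that poset has height $\le h$, the descending chain stabilizes: there is $N$ with $x_i\sim_R x_j$ for all $i,j\ge N$, so $x_N,x_{N+1},\dots$ lie in a single $R$–cluster $C$ on which $R$ is reflexive, while $x_NSx_{N+1}S\cdots$ stays inside $C$; also $x_NRx_0=x$, so $[C]\le[x]<[y]$.

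It remains to turn this ``backward'' configuration around. Applying the Church--Rosser condition to $x_NRy$ and $x_NSx_{N+1}$ gives $d$ with $x_{N+1}Rd$ and $ySd$. When $d\in C$ one gets $ySd\,R\,x_N$, so by commutativity $yRe$ for some $e$ with $eSx_N$; since $x_NRy$ we then have $x_NRe$, i.e.\ $(x_N,e)\in R\cap S^{-1}$ with $[x_N]\le[x]<[y]\le[e]$, and continuing this move (feeding $e$ through the $S$–chain inside $C$ and the edges $x_kRy$) climbs strictly upward in $\Sk(X,R)$; as $R$ has finite height this terminates, and at termination one is forced to have $yRx_N$, hence $yRx_NRx$, i.e.\ $yRx$, contradicting $[x]<[y]$. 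The main obstacle is exactly this closing loop: the first Church--Rosser step only yields $d$ with $x_{N+1}Rd$, and $d$ need not land in $C$, so the argument must be iterated and controlled by a second, \emph{ascending} application of the finite height of $R$ (bounding how far the top of the configuration can be pushed up); one must also separately dispose of the degenerate situations ($|C|=1$, $R$ irreflexive off $C$, and the case where the upward move stalls at the level of $[y]$, which is handled by a further iteration inside that cluster). Everything outside this step is routine relational calculus.
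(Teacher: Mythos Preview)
Your approach has a genuine gap that cannot be closed. You extract from $F\models L_1\times L_2$ only the commutator-level first-order conditions --- transitivity of $R$ and $S$, finite height of $(X,R)$, commutativity $R\circ S=S\circ R$, and the Church--Rosser properties --- and then try to derive $R\cap S^{-1}\subseteq R^{-1}$ from these alone. But these conditions are \emph{not} sufficient. Take $X=\{0,1\}$, $R={\le}$, $S=X\times X$. Both relations are transitive, $(X,R)$ has height~$2$, and $R\circ S=S\circ R=X\times X$; both Church--Rosser conditions hold (take $u=1$ each time). Yet $(0,1)\in R\cap S^{-1}$ while $(0,1)\notin R^{-1}$. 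Running your construction on this frame with $x=0$, $y=1$ gives $x_0=x_1=\cdots=0$, the cluster $C=\{0\}$, and your ``climbing'' step from $x_N=0$ simply returns $e=1=y$; nothing strictly increases, and the loop never closes. So the hand-waved termination in your final paragraph is not just imprecise --- it is false under your stated hypotheses.

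The paper's proof is different in kind: it does \emph{not} work at the level of first-order frame conditions. Instead it writes down a specific modal formula
\[
\varphi\;=\;p \land \Box^{\le 2}(p\to \Di_1 q)\land \Box^{\le 2}(q\to \Di_2 p)\;\to\;\Di^{\le 2}(q\land \Di_1 p),
\]
verifies semantically that $\varphi$ holds in every product $G\times H$ with $G\models L_1$ (here the finite height of $L_1$ is used on genuine product frames, where one can build the zig-zag sequence $(a_i,b_i)$ and force $a_{i+1}R_G a_i$ for some $i$), and then concludes $F\models\varphi$ because $\varphi\in L_1\times L_2$. Evaluating $\varphi$ at $a$ with $\theta(p)=\{a\}$, $\theta(q)=\{b\}$ yields $bRa$ directly. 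The point is that $L_1\times L_2$ is in general strictly stronger than the commutator $[L_1,L_2]$, and the lemma exploits exactly one of these extra theorems; the Remark following the lemma in the paper makes this explicit by noting that $\varphi$ is refuted in the very frame $(\{0,1\},{\le},\{0,1\}^2)$ above, so such pairs $L_1,L_2$ are not product-matching. Your argument, by contrast, never leaves the commutator and therefore cannot succeed.
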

\hide{
            \VS{We tried to weaken the assumptions. It didn't work out. Here is what we tried.}
            \begin{lemma}\label{lem:opposite_arrows}
              Let~\(F=(X,R,S)\) be a bimodal frame where both relations are transitive, \((X,R)\) has a finite height and \(R\circ S = S \circ R\).
              Then~\(R\cap S\inv \subseteq R\inv.\) 
            \end{lemma}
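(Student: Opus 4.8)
The plan is to read off the relational conditions forced by \(F \models L_1 \times L_2\) and then exploit them together with the finiteness of the height of \(R\). Since the commutativity axiom \(\Di_1\Di_2 p \lra \Di_2\Di_1 p\) and the Church--Rosser axiom \(\Di_1\Box_2 p \imp \Box_2\Di_1 p\) are canonical theorems of every product logic, \(F\) satisfies the first-order conditions they define: commutativity \(R\circ S = S\circ R\) and confluence \(\AA a,b,c\,(aRb\wedge aSc\imp \EE d\,(bSd\wedge cRd))\). Moreover \(R\) and \(S\) are transitive, and since the finite-height formula \(B_h\) for the first modality lies in \(L_1\subseteq L_1\times L_2\), the transitive relation \(R\) has finite height. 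Fix \(x,y\) with \(xRy\) and \(ySx\); the goal is \(yRx\), i.e.\ that \(x\) and \(y\) lie in one \(R\)-cluster.

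First I would swing between the two relations using commutativity. From \(ySxRy\) we have \(y\,(S\circ R)\,y\), so commutativity yields a \(z\) with \(yRz\) and \(zSy\); transitivity of \(R\) and of \(S\) then upgrades this to \(xRz\) and \(zSx\), reproducing the original configuration one \(R\)-step higher while recording \(yRz\). Iterating builds an \(R\)-increasing sequence \(y=w_0\,R\,w_1\,R\,w_2\,R\cdots\) with \(w_{i+1}Sw_i\), \(xRw_i\) and \(w_iSx\) for all \(i\). Because \(R\) is transitive of finite height, the induced chain of clusters \([w_0]\le[w_1]\le\cdots\) in \(\Sk(X,R)\) cannot strictly ascend forever, so it stabilizes: there is \(N\) with all \(w_i\) in one cluster \(C\) for \(i\ge N\). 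If \(x\) already lies in \(C\), we are done, since \(C\) is an \(R\)-cluster and \(yRw_N\) together with \(w_NRx\) give \(yRx\); so the remaining task is to exclude \([x]<C\).

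The hard part is precisely this last step, and it is where commutativity alone does not suffice---exactly the gap that defeats the commented-out variant assuming only commuting transitive relations of finite \(R\)-height. When \(C\) is infinite, finite height gives no leverage inside \(C\), so confluence (available here because \(F\) validates the product, not merely because \(R\) and \(S\) commute) must be brought in. Inside \(C\) the relation \(R\) is total, and we have the \(S\)-edges \(w_{i+1}Sw_i\) together with \(w_iSx\) and \(xRw_i\); the plan is to feed these square-closing data into the confluence condition, iterated and combined with the transitivity of \(S\), so as to force \(x\) into \(C\), whence \(yRx\). Making this collapse precise---ensuring the confluence steps terminate in \(x\in C\) rather than spawning an endless supply of auxiliary points---is the main obstacle; everything else is the routine bookkeeping of the commutativity swing and the finite-height stabilization.
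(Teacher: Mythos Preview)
Your proposal does not match the stated hypotheses. The statement gives only transitivity of $R$ and $S$, finite height of $(X,R)$, and commutativity $R\circ S=S\circ R$; you immediately import the Church--Rosser property from an assumption that $F\models L_1\times L_2$, which is \emph{not} part of the hypothesis as written. In fact the paper contains precisely this commutativity-only formulation as a hidden variant, annotated ``We tried to weaken the assumptions. It didn't work out,'' and the accompanying partial argument stalls at exactly the point you label ``the main obstacle.'' So under the stated hypotheses there is no proof in the paper to compare against, and your own argument---once the illegitimate confluence assumption is removed---does not close the gap either; you correctly diagnose that commutativity alone fails here.

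For the version the paper actually proves (the visible lemma, with hypothesis $F\models L_1\times L_2$, both logics transitive, $h(L_1)<\omega$), your route and the paper's diverge sharply. You attempt a direct first-order argument inside $F$: build an $R$-ascending ladder above $y$, stabilize by finite height into a cluster $C$, then try to drag $x$ into $C$ via confluence---a step you explicitly leave unfinished. The paper instead works syntactically: it exhibits the formula
\[
\vf\;=\;p\wedge\Box^{\le 2}(p\to\Di_1 q)\wedge\Box^{\le 2}(q\to\Di_2 p)\;\to\;\Di^{\le 2}(q\wedge\Di_1 p),
\]
proves $\vf\in L_1\times L_2$ by a short semantic argument in genuine product frames $G\times H$ (where the alternating $p$/$q$ staircase projects to an $R_G$-chain in the first coordinate, which must turn back by finite height of $G$), and then evaluates $\vf$ in $F$ with $\theta(p)=\{a\}$, $\theta(q)=\{b\}$ to obtain $bRa$ directly. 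The point is that the paper transfers the ``hard step'' to product frames, where the coordinate decomposition makes the finite-height termination trivial; your approach tries to replicate this inside an abstract frame satisfying only first-order commutator conditions, where no such decomposition is available, and that is why it stalls.
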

            \begin{remark}
                It is not assumed that $F=(X,R,S)$  is a product frame. 
            \end{remark}
            \begin{proof}
                Let \(h\) denote the height of \(X,R)\)
                Let \(u\) and \(v\) be points in \(X\) such that \(u (R\cap S\inv) v.\)
                Then \(u (R \circ S) u\), hence \(u (R\circ S)^n u\) for all \(n < \omega\).
                Since \(R\circ S = S \circ R,\) we have \(u (R^n\circ S^n) u\) for all \(n < \omega\), so there exists a sequence \(\{u_i\}_{i \le h}\) such that \(u_0 = u,\) \(u_h S^{n+1} u\) and \(u_i R u_{i+1}\) for all \(i < \omega.\)
                But \((X,R)\) is transitive and has height \(h\), so \(u_{i+1} R u_i\) for some \(i \le h.=\)
                
                !!! NOT FINISHED YET
            
                
            \end{proof}
}
\begin{proof}
    Let \(\Di \varphi\) abbreviate  \(\Di_1 \varphi \lor \Di_2 \varphi,\)
    and let $\vf$ be 
    \[p \land \Box^{\leq 2} (p\to \Di_1 q ) \land \Box^{\leq 2}(q \to \Di_2 p) \to \Di^{\le 2} (q \land \Di_1 p).\]
    
    We claim that \(\varphi\in L_1\times L_2.\)
    Indeed, let the antecedent of \(\varphi\) be satisfied at a point\((a_0,b_0)\) of a model~\((G\times H,\theta),\) where \(G\models L_1\) and~\(H\models L_2.\)
    Then \(G\times H\) is \(m\)-transitive, so \(p\to \Di_1 q\) and \(q\to \Di_2 p\) are true at any point in \(R_{G\times H}^*(a_0,b_0).\) 
    Consequently, there exists a sequence \(\{(a_i,b_i)\}_{i < \omega}\) of points in \(G\times H\) such that \((a_i,b_i) \in \theta(p),\) \((a_{i+1},b_i) \in \theta(q)\) and \((a_i,b_i) R^h (a_{i+1},b_i) R^v (a_{i+1},b_{i+1})\) for all \(i < \omega.\)
    Then \(a_i R_G a_{i+1}\) for all \(i.\)
    Since \(L_1\) has finite height, \(a_{i+1} R_G a_i\) for some \(i < \omega.\)
    Then \((a_{i+1},b_i) R^h (a_i,b_i),\) thus \(q \land \Di_1 p\) is true at \((a_{i+1},b_i).\) Then the consequent of \(\varphi\) is true at \((a_0,b_0).\)

    Now let \(a R b\) and \(b S a\) for some \(a\) and \(b\) in \(F.\) Let \(\theta\) be a valuation on \(F\) such that \(\theta(p) = \{a\}\) and \(\theta(q) = \{b\}.\)
    Then \((F,\theta),a\models \varphi\) since \(\varphi \in L_1 \times L_2\), and therefore \(b R a.\)
\end{proof}

\begin{remark}
    The formula \(\vf\) from the proof of this lemma shows that the pairs of logics \(L_1,\,L_2\), where both \(L_1\) and \(L_2\) contain \(\LS4\), \(1 < h(L_1) < \omega\) and \(p \to \Box p \not \in L_2\), are not \emph{product-matching} in the sense of \cite[Section~5.1]{ManyDim}. 
    Indeed, consider \(F_1 = (\{0,1\},\le,\ge)\) and \(F_2 = (\{0,1\},\le,\{0,1\}^2).\) Then \(\vf\) is refuted in both \(F_1\) and \(F_2\). This observation extends the one of \cite[Theorem 8.2]{GabbShehtProdI} (or cf. \cite[Theorem~5.17]{ManyDim}), where a similar reasoning describes the case where \(\Grz \subseteq L_1\).
\end{remark}


A point \(a\in X\) is called \emph{\(R\)-terminal}
 in a transitive frame \((X,R)\), if \(R(a) = \{a\}\).
 
\begin{lemma}\label{lem:S4.1[h]xS5_terminal}
    Let \(F = (X, R, \approx)\models{\LS{4.1}[h]\times \LS5}\). If \(a\in X\) is \(R\)-terminal, then \([a]_\approx\) contains only \(R\)-terminal elements.
\end{lemma}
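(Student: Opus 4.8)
The plan is to extract from $F\models\LS{4.1}[h]\times\LS5$ the handful of first-order frame conditions needed, and then run a short chase that funnels any $R$-successor of $c$ back to $c$ itself.

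First I would record what $F=(X,R,\approx)$ satisfies. Since $\LS{4.1}[h]\supseteq\LS4$, the product logic contains the $\LS4$-axioms for the first modality, so $R$ is a preorder; since it contains $\Box_1\Di_1 p\imp\Di_1\Box_1 p$ and $R$ is transitive, $F$ satisfies \eqref{eq:s4.1-condition}, i.e.\ every point $R$-sees an $R$-terminal point; $\approx$ is an equivalence; the commutativity axiom $\Di_1\Di_2 p\lra\Di_2\Di_1 p$ is a theorem of every product logic and, being Sahlqvist, corresponds to $R\circ{\approx}={\approx}\circ R$, which therefore holds in $F$; and Lemma~\ref{lem:opposite_arrows}, applied to $L_1=\LS{4.1}[h]$ (of finite height) and $L_2=\LS5$, gives $R\cap{\approx}\subseteq R\inv$ (using $\approx\inv=\approx$).

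Then, fixing an $R$-terminal $a$ and an arbitrary $c$ with $c\approx a$, I would argue that $c$ is $R$-terminal as follows. By \eqref{eq:s4.1-condition} pick an $R$-terminal $y$ with $cRy$. Applying commutativity to $a\approx c$ and $cRy$ yields $w$ with $aRw$ and $w\approx y$; since $R(a)=\{a\}$, we get $w=a$, hence $a\approx y$ and so $c\approx y$. Now $cRy$, $c\approx y$ and $R\cap{\approx}\subseteq R\inv$ give $yRc$, i.e.\ $c\in R(y)=\{y\}$, so $c=y$; since $y$ is $R$-terminal, so is $c$. Thus $[a]_\approx$ consists of $R$-terminal points.

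There is no genuinely hard step: the content is already packaged in Lemma~\ref{lem:opposite_arrows} (which is where the finiteness of the height of $\LS{4.1}[h]$ actually enters) and in the standard elementary correspondences of the $\LS4.1$- and commutativity-axioms on transitive frames. The only point one must be careful about is that $F$ need not be a product frame, so these global facts --- validity on $F$ of a Sahlqvist formula being equivalent to the corresponding first-order condition holding in $F$ --- are what is used, rather than the coordinatewise structure of a genuine product.
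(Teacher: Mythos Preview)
Your proof is correct and is essentially the paper's proof with different variable names: your $c,y,w$ are the paper's $b,c,d$, and the chain of steps (pick an $R$-terminal successor via the $\LS{4.1}$ condition, push through commutativity, collapse via $R(a)=\{a\}$, then apply Lemma~\ref{lem:opposite_arrows} and the terminality of the chosen successor) is identical. The additional paragraph spelling out which first-order conditions follow from validity of $\LS{4.1}[h]\times\LS5$ in $F$ is a welcome clarification but does not change the argument.
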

\begin{proof}
    Let \(a\) be \(R\)-terminal and consider \(b\in [a]_\approx.\)
    By the frame condition of~\(\LS{4.1}[h],\) there exists an \(R\)-terminal element \(c\in R(b)\).
    Then \(a \approx b R c\), so by commutativity~\(a R d \approx c\) for some \(d\in X.\)
    \ISLater{Modified: }
    But since \(a\) is \(R\)-terminal, $a=d$, and so \(a \approx c.\)
    Hence, $b\approx c$.
    Then \((b,c)\in R \cap {\approx},\) so \(c R b\) by Lemma~\ref{lem:opposite_arrows}.
    Finally, \(b = c\) because \(c\) is \(R\)-terminal.
\end{proof}

 Let~\(\clF_h\) be the class of all \(\LS{4.1}[h]\times \LS5\)-frames that do not validate \(\LS5^2.\)
  For any~\(F = (X,R,\approx)\in \clF_h,\) 
  let~\(Z_F\) 
  be the set of all \(R\)-terminal elements in \(F.\)
  We define \(\alpha(F)\) to be the subframe \(F \restr\left(X \setminus Z_F\right).\)  Notice that if \(Z_F = X,\) then \(R\) is an equivalence relation, so \(F \models \LS5^2.\) It follows that \(\alpha(F)\) is not empty for any \(F\in \clF_h.\) \ISLater{non-empty?}

  Let \(L_1\) be an \(\AlA\)-logic and \(L_2\) be an \(\AlB\)-logic.
  The \emph{fusion} \(\fusion{L_1}{L_2}\) is the smallest \((\AlA\cup \AlB)\)-logic that contains \(L_1 \cup L_2.\)
  The \emph{commutator} \(\LCom{L_1}{L_2}\) is the smallest logic that contains \(\fusion{L_1}{L_2}\) and the axioms \(\com(a,b),\,\com(b,a)\) and \(\chr(a,b)\) for all \(a\in \AlA\) and \(b\in\AlB,\) where
  \begin{align}
      \com(a,b) &= \Di_a \Di_b p \to \Di_b \Di_a p;
      \\
      \chr(a,b) &= \Di_a \Box_b p \to \Box_b \Di_a p.
  \end{align}
  The validity of \(\com(a,b)\) in an \((\AlA\cup\AlB)\)-frame 
  \((X,(R_a)_{a\in \AlA}, (R_b)_{a\in \AlB})\) is equivalent to the \emph{commutativity} \(R_a\circ R_b = R_b \circ R_a.\)
  The formula \(\chr(a,b)\) defines the \emph{Church-Rosser property}
  \begin{equation}
    \forall x\,\forall y\,\forall z\,(x R_a y\,\&\,x R_b z \to \exists u\,(y R_b u\,\&\,z R_a u))
  \end{equation} 
  For any \(L_1\) and \(L_2,\) the product logic \(L_1\times L_2\) is a normal extension of~\(\LCom{L_1}{L_2}\) \cite[Section~5.1]{ManyDim}.

  \begin{proposition}
      For any \(F\in \clF_h,\) where \(h < \omega,\) the logic \(\LCom{\LS{4}[h-1]}{\LS5}\) is valid in \(\alpha(F)\).
  \end{proposition}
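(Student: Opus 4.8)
The plan is to check, one by one, that $\alpha(F)=F\restr Y$, with $Y=X\setminus Z_F$, validates each component of $\LCom{\LS4[h-1]}{\LS5}$. Since $F\models\LS{4.1}[h]\times\LS5$ and the product logic extends $\LCom{\LS{4.1}[h]}{\LS5}$, I may use that $(X,R)$ validates $\LS{4.1}[h]$ --- so $R$ is transitive and reflexive, has height at most $h$, and by the frame condition~\eqref{eq:s4.1-condition} every point $R$-sees an $R$-terminal point --- that $\approx$ is an equivalence relation, that commutativity $R\circ{\approx}={\approx}\circ R$ holds in $F$, and that $F$ has the Church--Rosser property for $(R,\approx)$. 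Transitivity and reflexivity pass to $R\restr Y$, and $\approx\restr Y$ is again an equivalence, so only two points need work: (a) $(Y,R\restr Y)$ has height at most $h-1$ (hence validates $B_{h-1}$, hence $\LS4[h-1]$); and (b) commutativity and the Church--Rosser property survive restriction to $Y$.

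For (a), I would take a chain $[a_1]<\dots<[a_k]$ in the skeleton of $(Y,R\restr Y)$ and pick representatives $a_i\in Y$. From $R\restr Y\subseteq R$ we get $a_iRa_{i+1}$, while $a_{i+1}Ra_i$ would give $a_{i+1}\,(R\restr Y)\,a_i$ and force $[a_i]=[a_{i+1}]$; hence $a_1,\dots,a_k$ represent a chain of $k$ distinct $R$-clusters of $(X,R)$. By~\eqref{eq:s4.1-condition} there is an $R$-terminal $z$ with $a_kRz$; since $z\in Z_F$ and $a_k\notin Z_F$ we have $a_k\ne z$, and $R(z)=\{z\}$ excludes $zRa_k$, so $[a_k]<[z]$ in the skeleton of $(X,R)$. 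Thus $(X,R)$ carries a chain of $k+1$ $R$-clusters, and as the height of $(X,R)$ is at most $h$ we obtain $k+1\le h$, i.e.\ $k\le h-1$. So $(Y,R\restr Y)$ has height at most $h-1$.

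For (b), the decisive point is Lemma~\ref{lem:S4.1[h]xS5_terminal}: $Z_F$ is a union of $\approx$-classes, so $Y$ is one too, i.e.\ $a\in Y$ and $a\approx d$ imply $d\in Y$. Given this, commutativity transfers immediately: if $a\,(R\restr Y)\,b\,(\approx\restr Y)\,c$ with $a,b,c\in Y$, then $aRb\approx c$, so commutativity in $F$ gives $d$ with $a\approx d$ and $dRc$; since $a\in Y$ also $d\in Y$, whence $a\,(\approx\restr Y)\,d\,(R\restr Y)\,c$, and the converse inclusion is symmetric. For confluence, if $x\,(R\restr Y)\,y$ and $x\,(\approx\restr Y)\,z$ with $x,y,z\in Y$, the Church--Rosser property in $F$ yields $u$ with $y\approx u$ and $zRu$; since $y\in Y$, also $u\in Y$, so the witness stays in $Y$. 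Combining (a) and (b), $\alpha(F)$ validates $\fusion{\LS4[h-1]}{\LS5}$ together with all commutativity and Church--Rosser axioms, hence $\alpha(F)\models\LCom{\LS4[h-1]}{\LS5}$.

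I expect (b) to be the substantive part: restricting a frame to a subset does not in general preserve commutativity or the Church--Rosser property, and the reason it is safe here is exactly that $Y$ is closed under $\approx$, which is what Lemma~\ref{lem:S4.1[h]xS5_terminal} (through Lemma~\ref{lem:opposite_arrows}) delivers. Step (a) is routine, the only thing to notice being that deleting the $R$-terminal points cannot merge or split $R$-clusters that lie inside $Y$, so chains in the skeleton of $(Y,R\restr Y)$ lift to chains in the skeleton of $(X,R)$.
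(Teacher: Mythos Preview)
Your proposal is correct and follows essentially the same approach as the paper: verify the fusion axioms by restriction, bound the height by extending a chain in $Y$ with an $R$-terminal point above it, and recover commutativity and Church--Rosser from the fact (Lemma~\ref{lem:S4.1[h]xS5_terminal}) that $Y$ is a union of $\approx$-classes. The only stylistic difference is that the paper handles one direction of commutativity directly (if $a\approx d\,R\,c$ with $c\in Y$, then $d$ cannot be terminal since $R(d)\ni c\ne d$), whereas you apply the $\approx$-closedness of $Y$ uniformly to all three witness checks; your uniform treatment is arguably cleaner.
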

  \begin{proof}
      The validity of \(\LS{4}\) and \(\LS{5}\) is defined by universal sentences, so \(\alpha(F)\) validates the fusion~\(\fusion{\LS{4}}{\LS{5}}\),
      since \(\alpha(F)\) is a subframe of~\(F.\)

    Let~\(S\) be an~\(R\)-chain in~\(\alpha(F).\)
    Then \(S\) is also a chain in~\(F.\)
    Observe that \(F\models B_h,\) then \(F\) contains~\(R\)-chains of size at most~\(h.\)
    We claim that~\(|S|<h.\)
    Since~\(F\models \LS{4.1},\) any chain of size~\(h\) in~\(F\) contains an \(R\)-terminal element, which belongs to~\(Z_F\).
    But \(S \subseteq W \setminus Z_F,\) so~\(|S| < h.\)
    Since~\(S\) was arbitrary,~\(\alpha(F) \models B_{h-1}.\)

    Let us show the commutativity.
    Let~\(a R b \approx c\) in~\(\alpha(F).\)
    Then the same holds in~\(F\), so there exists~\(d\) in~\(F\) such that~\(a \approx d R c.\)
    Then \(d\) is not \(R\)-terminal since \(d R c\) and \(c\not \in Z_F.\)
    Then \(d\) belongs to~\(\alpha(F).\)
    
    Let~\(a \approx b R c\) in~\(\alpha(F).\)
    There exists~\(d\) in~\(F\) such that~\(a R d \approx c.\)
    By Lemma~\ref{lem:S4.1[h]xS5_terminal}, \(d\not\in Z_F\), so \(d\) is in \(\alpha(F).\)

    Finally, we show the Church-Rosser property.
    If~\(a R b\) and~\(a \approx c\) in~\(\alpha(F),\) then~\(b \approx d\) and~\(c R d\) for some \(d\) in~\(F.\)
    Then~\(d \not\in Z_F\) since~\(b\not\in Z_F\) while~\(b \in [d]_\approx.\)
    Thus~\(d\) belongs to~\(\alpha(F)\) by Lemma \ref{lem:S4.1[h]xS5_terminal}.
  \end{proof}

  \begin{proposition}\label{prop:alpha_transfers_rpp}
     Let \(h < \omega.\)
     If \(F \subseteq \clF_h,\) then \(F\) validates a product rpp formula iff~\(\alpha(F)\) does.
  \end{proposition}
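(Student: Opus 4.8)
The plan is to translate the statement into a first-order property of the relations and then analyse the set \(Z_G\) of \(R\)-terminal points of a frame \(G=(X,R,\approx)\in\clF_h\). Since both \(R\) and \(\approx\) are reflexive and transitive, \(R^{\leq 1}=R\) and similarly for \({\approx}\), so by Proposition~\ref{prop:rpp-to-modal-poly} we have \(G\models\RP_m(1,1)\) iff \((X,R_G)\models\FORP_m\), where \(R_G=R\cup{\approx}\) as usual. As \(\LS{4.1}[h]\) and \(\LS5\) both have pretransitivity index \(1\), the product rpp formulas relevant here (Corollary~\ref{cor:criterion-logics-general}) are exactly the formulas \(\RP_m(1,1)\), \(m<\omega\). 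Writing \(Z=Z_G\) and \(Y=X\setminus Z=\dom\alpha(G)\), note that \(R_{\alpha(G)}=R_G\restr Y\). Thus the claim reduces to showing, with a bound uniform over \(G\in F\): there is \(m\) with \((X,R_G)\models\FORP_m\) for all \(G\in F\) if and only if there is \(m\) with \((Y,R_G\restr Y)\models\FORP_m\) for all \(G\in F\).

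The forward implication will be immediate: \(\FORP_m\) is a universal first-order sentence, hence inherited by induced substructures, so the same \(m\) works.

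For the converse I would first record, using Lemma~\ref{lem:S4.1[h]xS5_terminal}, three facts about \(Z\): (a) \(Z\) is a union of \({\approx}\)-classes, and \(R(a)=\{a\}\) for each \(a\in Z\); (b) no \(R_G\)-edge leaves \(Z\), since \(R_G(a)=R(a)\cup[a]_\approx=\{a\}\cup[a]_\approx\subseteq Z\) for \(a\in Z\); (c) \(R_G\restr Z\) is the equivalence relation \({\approx}\restr Z\), since for \(a,b\in Z\) the case \(aRb\) forces \(a=b\). Assuming \((Y,R_G\restr Y)\models\FORP_m\), I would then verify \((X,R_G)\models\FORP_{m+2}\): given a path \(x_0\,R_G\,x_1\,R_G\,\cdots\,R_G\,x_{m+3}\), let \(k\) be the least index with \(x_k\in Z\), if one exists. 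If no such index exists, or \(k\geq m+2\), then \(x_0,\ldots,x_{m+1}\) all lie in \(Y\), and \(\FORP_m\) applied to this initial segment yields a disjunct of \(\FORP_{m+2}\). If \(k\leq m+1\), then by (b) the points \(x_k,\ldots,x_{m+3}\) all lie in \(Z\), and by (c) they are pairwise \({\approx}\)-equivalent; in particular \(x_k\,R_G\,x_{k+2}\), which is the ``skip'' disjunct of \(\FORP_{m+2}\) with \(i=k<j=k+1\leq m+2\). Since the resulting bound \(m+2\) depends only on \(m\), the statement for subclasses of \(\clF_h\) follows at once.

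The main obstacle is entirely in the structural facts (a)--(c): that an \((R\cup{\approx})\)-path can never leave the set of \(R\)-terminal points and that, inside that set, the only nontrivial transitions are \({\approx}\)-steps. This is exactly where Lemma~\ref{lem:S4.1[h]xS5_terminal} --- and behind it Lemma~\ref{lem:opposite_arrows}, the \(\LS{4.1}\) axiom, and finite height --- is used; notice that the particular value of \(h\) enters only through that lemma, which is why the bound \(m+2\) is independent of \(h\). The remaining index bookkeeping in the path-reduction step is routine.
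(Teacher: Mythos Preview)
Your proposal is correct and takes essentially the same approach as the paper: the universal nature of \(\FORP_m\) for one direction, and for the other the observation that once a path enters \(Z_F\) it stays there and any two consecutive steps inside \(Z_F\) collapse via \(\approx\), giving the uniform bound \(m+2\). The paper's proof is extremely terse (it merely states ``for any \(a,b,c\in Z_F\) that form a path we have \(a\approx c\)'' and declares the rest straightforward), whereas you have spelled out the case analysis and the role of Lemma~\ref{lem:S4.1[h]xS5_terminal} explicitly.
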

  \begin{proof}    
    For the 'if' direction, observe that for any \(a,\,b,\,c\in Z_F\) that form a path we have \(a \approx c.\) 
    Assume \(\alpha(F)\models \RP_m(1,1).\) It is straightforward that \(F\models \RP_{m+2}(1,1)\).

    The 'only if' direction is true since \(\FORP_n\) is a universal sentence for any \(n.\)
  \end{proof}

  \begin{proposition}\label{prop:alpha_transfers_lt}
    For any class of frames \(\clF \subseteq \clF_h,\) \(\Log \clF\) is locally tabular iff~\(\Log \alpha[\clF]\) is locally tabular.
  \end{proposition}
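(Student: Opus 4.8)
The plan is to use Theorem~\ref{thm:criterion-frames-general} (the criterion combining bounded cluster property and rpp) together with the two preceding propositions, Proposition~\ref{prop:alpha_transfers_rpp} and the one establishing that $\alpha(F)$ validates $\LCom{\LS4[h-1]}{\LS5}$. First I would note that both $\clF$ and $\alpha[\clF]$ consist of $2$-transitive bimodal frames (they validate products of transitive $\LS4$-extensions, so Proposition~\ref{prop:product-preserve-pretrans} applies, and in fact $2$-transitivity is explicit here since $\LS4\times\LS5$ is $2$-transitive). Also each factor-direction has finite height ($F \models B_h$ for the $R$-direction, and the $\approx$-direction has height $1$), so frames in $\clF$ and $\alpha[\clF]$ have uniformly bounded height by Proposition~\ref{prop:product-preserve-height}; hence in both classes the real content of local tabularity is the behavior of clusters, and by Theorem~\ref{thm:criterion-frames-general} (equivalence of \eqref{cr-i0} and \eqref{cr-i2}, using Proposition~\ref{prop:rpp-to-modal-poly} to pass between the modal rpp formula and the first-order $\FORP$ property) local tabularity of $\Log\clF$ is equivalent to $\clF^* \times \clG^*$ having the rpp — but more directly, since the frames are already pretransitive, local tabularity is equivalent to the class validating a product rpp formula $\RP_m(k_1,k_2)$ for some $m$ (Proposition~\ref{prop:LTimpliesrpp} for necessity, Corollary~\ref{cor:rpp-lt} for sufficiency, once one knows $\Log\clF$-factors are locally tabular).

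Then the argument is short. Suppose $\Log\clF$ is locally tabular. By Proposition~\ref{prop:LTimpliesrpp}, $\clF$ validates a product rpp formula. By Proposition~\ref{prop:alpha_transfers_rpp}, $\alpha[\clF]$ then validates a product rpp formula as well. Now $\alpha[\clF]$ is a class of frames validating $\LCom{\LS4[h-1]}{\LS5}$, so its factors (after taking reflexive-transitive closures and skeletons) are frames of $\LS4[h-1]$ and $\LS5$, both of which are locally tabular (finite-height transitive logics, respectively $\LS5$), hence $\Log(\text{frames of }\alpha[\clF]\text{ by factors})$ are locally tabular; applying Corollary~\ref{cor:rpp-lt} (or directly Theorem~\ref{thm:criterion-frames-general}, \eqref{cr-i2}$\Rightarrow$\eqref{cr-i0}) gives that $\Log\alpha[\clF]$ is locally tabular. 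Conversely, if $\Log\alpha[\clF]$ is locally tabular, it validates a product rpp formula by Proposition~\ref{prop:LTimpliesrpp}, so by Proposition~\ref{prop:alpha_transfers_rpp} $\clF$ validates a product rpp formula too; and since the factor logics for frames in $\clF$ are again $\LS4[h]$ and $\LS5$ (locally tabular), Corollary~\ref{cor:rpp-lt} / Theorem~\ref{thm:criterion-frames-general} yields that $\Log\clF$ is locally tabular.

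The subtle point I would be most careful about is making precise ``the factor logics for frames in $\clF$ are locally tabular'' in the form demanded by Corollary~\ref{cor:rpp-lt}: that corollary is stated for $\clF^*\times\clG^*$ where $\clF,\clG$ are the horizontal and vertical factor classes. Here $\clF$ (and $\alpha[\clF]$) is given as a class of product-type frames rather than as $\clF_0 \times \clG_0$, so I would invoke instead the direct equivalence \eqref{cr-i0}$\Leftrightarrow$\eqref{cr-i2} of Theorem~\ref{thm:criterion-frames-general} applied to the factor classes, or — cleanest — argue via Theorem~\ref{thm:supple-clusters-crit}: uniformly bounded height is already established, so local tabularity reduces to uniform tunability of the clusters $\clusters{\clF}$, which by Proposition~\ref{prop:corner} decompose as products of an $\LS4$-cluster (a singleton, since $R$ is a preorder with bounded clusters... in fact clusters of $\LS4[h]$-frames need not be singletons) with an $\LS5$-cluster; the rpp of $\clF^*\times\clG^*$ (transferred via Proposition~\ref{prop:alpha_transfers_rpp} and Proposition~\ref{prop:rpp-implies-bcp}) then forces the bounded cluster property on one side, and Proposition~\ref{prop:product-with-fin} finishes. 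The main obstacle is thus purely bookkeeping: aligning the ``class of product frames'' formulation used in this section with the ``$\clF\times\clG$'' formulation of the criteria, and I expect it to be resolved by the observation that rpp (equivalently, a validated product rpp formula) is exactly the hypothesis the criteria need, and it transfers cleanly by Proposition~\ref{prop:alpha_transfers_rpp}.
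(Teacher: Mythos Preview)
Your proposal has a genuine gap that you yourself identify but do not close: the criteria you want to invoke (Theorem~\ref{thm:criterion-frames-general}, Corollary~\ref{cor:rpp-lt}, Proposition~\ref{prop:LTimpliesrpp}) are stated for classes of \emph{product frames} $\clF_0\times\clG_0$, while the frames in $\clF_h$ and in $\alpha[\clF]$ are abstract bimodal frames validating $\LS{4.1}[h]\times\LS5$ (respectively $\LCom{\LS4[h-1]}{\LS5}$). These are not products in general, and Proposition~\ref{prop:corner} does not apply to their clusters. So the implication ``validates a product rpp formula $\Rightarrow$ locally tabular'' is \emph{not} available here; establishing exactly that implication for such abstract frames is the whole purpose of the surrounding Lemma~\ref{lem:rpp_transfer} and Theorem~\ref{thm:rppCriterionS41}. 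Invoking it inside the proof of Proposition~\ref{prop:alpha_transfers_lt} is circular.

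The paper's argument bypasses rpp entirely and is much shorter. For the `only if' direction it simply observes that each $\alpha(F)$ is a subframe of $F$ and applies Proposition~\ref{prop:LF-for-subframess}. For the `if' direction it uses Theorem~\ref{thm:supple-clusters-crit} directly: $\clF$ has uniformly finite height, so it suffices that $\clusters{\clF}$ is uniformly tunable. The key structural fact (implicit from Lemma~\ref{lem:S4.1[h]xS5_terminal}) is that any cluster of $F\in\clF_h$ either lies entirely inside $\alpha(F)$, or consists solely of $R$-terminal points and hence is isomorphic to $\rect{1}{S}$ for some set $S$. Clusters of the first kind are uniformly tunable because $\Log\alpha[\clF]$ is assumed locally tabular; clusters of the second kind are trivially uniformly tunable. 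The union of two uniformly tunable classes is uniformly tunable, and you are done. This is the decomposition you were groping for at the end of your proposal, but carried out without any appeal to product structure or rpp.
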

  \begin{proof} 
The `only if' direction follows from Proposition \ref{prop:LF-for-subframess}.

Assume that \(\Log \alpha[\clF]\) is locally tabular.
The class $\clF$ has uniformly finite height. By Theorem \ref{thm:supple-clusters-crit},  we only need to show that~\(\clusters{\clF}\) is uniformly tunable.
    If~\(C\in\clusters{\clF}\), then either~\(C\in\clusters{\alpha[\clF]}\) or  
    \(C\cong \rect{1}{S}\) for a set $S$. 
    Trivially, the class of rectangle frames of the form \(\rect{1}{S}\)  is uniformly tunable. 
    Hence, $\clusters{\clF}$ is the union of two uniformly tunable classes, and hence is uniformly tunable as well.\footnote{This is a particular case of the following general fact. 
   If the varieties generated by classes $\clC_1$ and $\clC_2$ are locally finite, then the 
   variety generated by  the class  $\clC_1 \cup \clC_2$ is locally finite as well:  $\clC_1 \cup \clC_2$ is uniformly locally finite by Malcev criterion 
   given in Theorem \ref{Malcev73}.}
  \end{proof}

\ISLater{Generalize for commutator}
\begin{lemma}\label{lem:rpp_transfer}
    If the product rpp criterion holds for~\(\LCom{\LS{4}[h]}{\LS5}\), then it also holds for~\(\LS{4.1}[h+1] \times  \LS5,\) for any \(h < \omega.\)
\end{lemma}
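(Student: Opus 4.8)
The statement to prove is Lemma~\ref{lem:rpp_transfer}: assuming the product rpp criterion holds for $\LCom{\LS{4}[h]}{\LS5}$, it holds for $\LS{4.1}[h+1]\times\LS5$. Fix a normal extension $L'$ of $\LS{4.1}[h+1]\times\LS5$. Since the product rpp criterion is the biconditional ``$L'$ contains a product rpp formula $\iff$ $L'$ is locally tabular,'' and since local tabularity always yields the rpp formula by Proposition~\ref{prop:lt_implies_rpp} (or Proposition~\ref{prop:LTimpliesrpp}), the only direction requiring work is: if $L'$ contains some $\RP_m(1,1)$, then $L'$ is locally tabular. So assume $L'\vdash\RP_m(1,1)$ for some $m$.

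The plan is to reduce to the class $\clF_{h+1}$ and then apply the $\alpha$-machinery already developed. First, I would dispose of the trivial case: if $L'$ contains $\LS5^2$, then $L'$ is a proper-or-equal extension of $\LS5^2$, which is locally tabular by Theorem~\ref{thm:S5Nick} (or simply: $\LS5^2$-frames are rectangles of bounded — in fact all — size... no, that is false; use Theorem~\ref{thm:S5Nick}). Actually $L'\supseteq\LS5^2$ together with $L'\vdash\RP_m(1,1)$ forces local tabularity directly via Corollary~\ref{cor:rpp-lt} or Theorem~\ref{thm:S5Nick}. Otherwise, $L'$ is Kripke complete? — not necessarily, so instead work with the class $\clF' = \{F\in\clF_{h+1} \mid F\models L'\}$ of $L'$-frames that do not validate $\LS5^2$; since $L'\not\supseteq\LS5^2$, $\clF'$ is nonempty, and $L'\subseteq\Log\clF'$ (even if not equal). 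The key point: I want to show $\Log\clF'$ is locally tabular, and then use that $L'$ is locally tabular provided $L'=\Log\clF'$ — which needs Kripke completeness of $L'$. The resolution here is surely that one does not need $L'=\Log\clF'$: it suffices to show $\Log\clF$ is locally tabular for $\clF$ = \emph{all} $\LS{4.1}[h+1]\times\LS5$-frames validating $L'$ and not $\LS5^2$ — but local tabularity is inherited downward by extensions (Proposition~\ref{prop:LT_implies_fmp+extensions}), so if we establish local tabularity of the logic of the class of \emph{all} $L'$-frames in $\clF_{h+1}$ plus separately handle $\LS5^2$-frames, we can conclude. Let me restructure: since $L'\vdash\RP_m(1,1)$, every $L'$-frame $F$ satisfies, by Proposition~\ref{prop:rpp-to-modal-poly}, the rpp with parameter depending on $m$; in particular the class $\clF$ of all $L'$-frames has the rpp, hence by Proposition~\ref{prop:rpp-implies-bcp} one of the factor-cluster-classes has the bounded cluster property — but for $\LS{4.1}[h+1]\times\LS5$ the $\LS5$-side has arbitrary clusters, so it must be the $\LS{4.1}$-side.

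Now the core argument. Apply $\alpha$ to $\clF\cap\clF_{h+1}$ (the $L'$-frames not validating $\LS5^2$). By the earlier proposition, each $\alpha(F)$ validates $\LCom{\LS4[h]}{\LS5}$. By Proposition~\ref{prop:alpha_transfers_rpp}, since $F\models\RP_m(1,1)$, we get $\alpha(F)\models\RP_{m'}(1,1)$ for $m'$ depending on $m$ (the `only if' direction there, which holds because $\FORP_n$ is universal and passes to the subframe $\alpha(F)$). Hence $\Log\alpha[\clF\cap\clF_{h+1}]$ is an extension of $\LCom{\LS4[h]}{\LS5}$ containing a product rpp formula; by the hypothesis that the product rpp criterion holds for $\LCom{\LS4[h]}{\LS5}$, this logic is locally tabular. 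Then Proposition~\ref{prop:alpha_transfers_lt} gives that $\Log(\clF\cap\clF_{h+1})$ is locally tabular. Finally, combine with the $\LS5^2$-validating frames: an $L'$-frame either lies in $\clF_{h+1}$ (i.e., does not validate $\LS5^2$) or validates $\LS5^2$; frames validating $\LS5^2$ form a class whose logic, together with $\RP_m(1,1)$, is locally tabular by Corollary~\ref{cor:rpp-lt}/Theorem~\ref{thm:S5Nick}. So $\Log\clF$ is the logic of the union of two classes with locally tabular logics, hence (by the Mal'cev-criterion remark in the footnote to Proposition~\ref{prop:alpha_transfers_lt}) is itself locally tabular. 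Since $L'\subseteq\Log\clF$ and — here I do need it — $L'$ is Kripke complete so $L'=\Log\clF$: this follows because $\RP_m(1,1)$ formulas are canonical (Proposition~\ref{prop:rpp-canon}) and $\LS{4.1}[h+1]\times\LS5$ is canonical/Sahlqvist-axiomatized, or more carefully one argues via the canonical frame of $L'$ which lands in $\clF$. Therefore $L'$ is locally tabular, completing the only nontrivial direction.

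**Main obstacle.** The delicate point is the passage from ``$\Log\clF$ is locally tabular'' back to ``$L'$ is locally tabular,'' i.e., ensuring $L'$ is Kripke complete with all its frames in the class we control. The clean way is to invoke canonicity: $\LS{4.1}$, the height axioms $B_h$, and $\RP_m(1,1)$ are all Sahlqvist/canonical (Proposition~\ref{prop:rpp-canon}), and products $L_1\times L_2$ of canonical-frame-preserving factors are Kripke complete here; so $L'$ — being $\LS{4.1}[h+1]\times\LS5$ plus canonical axioms plus whatever else — is complete with respect to a class of frames each validating $L'$, $\RP_m(1,1)$, and hence lying in $\clF$. Splitting $\clF$ along ``validates $\LS5^2$ or not'' and handling the two pieces separately (the non-$\LS5^2$ piece by $\alpha$ plus the hypothesis, the $\LS5^2$ piece by Theorem~\ref{thm:S5Nick}) then closes the argument.
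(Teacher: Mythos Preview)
Your overall strategy—strip the terminal layer via $\alpha$, land in $\LCom{\LS4[h]}{\LS5}$, invoke the hypothesis, then transfer local tabularity back—matches the paper's. The gap is in how you close the argument. You work with the class $\clF$ of \emph{all} $L'$-frames and then need $L'=\Log\clF$, i.e.\ Kripke completeness of $L'$. But $L'$ is an \emph{arbitrary} normal extension of $\LS{4.1}[h{+}1]\times\LS5$; the phrase ``plus whatever else'' in your canonicity claim is exactly the problem—those extra axioms need not be canonical, so $L'$ need not be Kripke complete. Your parenthetical ``or more carefully one argues via the canonical frame of $L'$'' is the actual argument, not an alternative polish.

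The paper executes this directly and avoids your detour entirely. It works with the single canonical frame $F_L$ of $L$ (writing $L$ for your $L'$). The case split is on whether $L\supseteq\LS5^2$. If so, then since $\LS5^2$ contains no product rpp formula, $L$ is a \emph{proper} extension of $\LS5^2$, hence locally tabular by Theorem~\ref{thm:S5Nick}. If not, then because $\LS5^2$ is canonical, $F_L\not\models\LS5^2$; and because $\RP_m(1,1)$ is canonical (Proposition~\ref{prop:rpp-canon}), $F_L\models\RP_m(1,1)$. Thus $F_L\in\clF_{h+1}$, Proposition~\ref{prop:alpha_transfers_rpp} gives $\alpha(F_L)\models\RP_m(1,1)$, the hypothesis makes $\Log\alpha(F_L)$ locally tabular, and Proposition~\ref{prop:alpha_transfers_lt} makes $\Log F_L$ locally tabular. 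Since $\Log F_L\subseteq L$ always, $L$ is locally tabular. No splitting of a class of frames, no Mal'cev recombination, and—crucially—no appeal to Kripke completeness of $L$.
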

\begin{proof}
    Let~\(h < \omega\). Assuming the product rpp criterion for \(\LS{4}[h]\times  \LS5\), we will show it for~\(\LS{4.1}[h+1]\times \LS5.\)

    If~\(L\) is a locally tabular normal extension of~\(\LS{4.1}[h+1]\times \LS5,\) then it contains a product rpp formula by Proposition~\ref{prop:lt_implies_rpp}. 
    
    Conversely, let~\(L\) be a normal extension of~\(\LS{4.1}[h+1]\times \LS5\) containing a product rpp formula.
    If   $L$ is an extension of  $\LS5^2$,  then it is a proper extension of $\LS5^2$, and so is locally tabular by Theorem~\ref{thm:S5Nick}.
    
    Otherwise the canonical frame $F_L$ of $L$ does not validate $\LS5^2$, since $\LS5^2$ is canonical \cite{Segerberg2DimML}. 
      By Proposition~\ref{prop:rpp-canon}, \(F_L\) validates a product rpp formula.
      By Proposition~\ref{prop:alpha_transfers_rpp},~\(\alpha(F_L)\) also validates a product rpp formula.
      Then by the assumption~\(\Log \alpha(F_L)\) is locally tabular.
      By Proposition~\ref{prop:alpha_transfers_lt}, so is~\(\Log F_L \subseteq L.\)
      Then~\(L\) is locally tabular.
\end{proof}\ISLater{DC REF to Seg}

The product rpp criterion holds for \(\LS{5} \times \LS5\). 
Indeed, $\LS{5}\times \LS{5}$ has no product rpp, and is not locally tabular. 
All extensions of $\LS{5}\times \LS{5}$ are locally tabular, and hence they have the product rpp by Proposition~\ref{prop:lt_implies_rpp}. 
It is well known that $\LS{5} \times \LS5=\LCom{\LS5}{\LS5}$  \cite{Segerberg2DimML}. 
Hence, we have 

\begin{theorem}\label{thm:rppCriterionS41}
    The product rpp criterion holds for \(\LS{4.1}[2]\times \LS5.\)
\end{theorem}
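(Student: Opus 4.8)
The plan is to deduce Theorem~\ref{thm:rppCriterionS41} directly from Lemma~\ref{lem:rpp_transfer} together with the already-established fact that the product rpp criterion holds for $\LS5\times\LS5$. Concretely, I would first record that $\LS5\times\LS5 = \LCom{\LS5}{\LS5}$ (cited from \cite{Segerberg2DimML}), so the product rpp criterion holds for $\LCom{\LS5}{\LS5}$. Then I would observe that $\LS{4}[1] = \LS5$: a transitive reflexive frame of height $1$ is a single cluster, hence a frame where the relation is an equivalence, so $\LS{4}[1]$ and $\LS5$ axiomatize the same class of frames and (both being Kripke complete) coincide as logics. Therefore $\LCom{\LS{4}[1]}{\LS5} = \LCom{\LS5}{\LS5} = \LS5\times\LS5$, and the product rpp criterion holds for $\LCom{\LS{4}[1]}{\LS5}$.

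With that in hand, the theorem is exactly the instance $h=1$ of Lemma~\ref{lem:rpp_transfer}: the lemma says that if the product rpp criterion holds for $\LCom{\LS{4}[h]}{\LS5}$, then it holds for $\LS{4.1}[h+1]\times\LS5$. Taking $h=1$ gives that the product rpp criterion holds for $\LS{4.1}[2]\times\LS5$, which is the statement. So the proof is essentially a one-line application, once the identification $\LS{4}[1] = \LS5$ and the commutator presentation of $\LS5^2$ are made explicit.

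I do not anticipate a genuine obstacle here, since all the heavy lifting — Lemma~\ref{lem:opposite_arrows}, Lemma~\ref{lem:S4.1[h]xS5_terminal}, the $\alpha$-reduction propositions, and Lemma~\ref{lem:rpp_transfer} itself — has already been carried out earlier in the section. The only point requiring a modicum of care is making sure the base case really is $\LCom{\LS{4}[1]}{\LS5}$ and that this commutator is literally $\LS5^2$: one must check that $\LS{4}[1]$ is consistent and equals $\LS5$ (the height-$1$ axiom $B_1 = (p \imp \Box(\Di p \vee \bot)) = (p \imp \Box\Di p)$ added to $\LS4$ is exactly $\LS5$), and then invoke $\LS5\times\LS5 = \LCom{\LS5}{\LS5}$. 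After that, the paragraph preceding the theorem already verifies the product rpp criterion for $\LS5\times\LS5$ (no product rpp, not locally tabular; all proper extensions locally tabular by Theorem~\ref{thm:S5Nick}, hence have the product rpp by Proposition~\ref{prop:lt_implies_rpp}), so the hypothesis of Lemma~\ref{lem:rpp_transfer} with $h=1$ is met and the conclusion follows immediately.

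In writing this up I would keep it to two or three sentences: state that $\LS5 = \LS{4}[1]$ and $\LS5\times\LS5 = \LCom{\LS5}{\LS5}$, conclude the product rpp criterion holds for $\LCom{\LS{4}[1]}{\LS5}$, and apply Lemma~\ref{lem:rpp_transfer} with $h=1$.
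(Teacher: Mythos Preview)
Your proposal is correct and follows exactly the paper's own argument: the paragraph immediately preceding the theorem verifies the product rpp criterion for $\LS5\times\LS5=\LCom{\LS5}{\LS5}$, and the theorem is then obtained as the instance $h=1$ of Lemma~\ref{lem:rpp_transfer}, using (implicitly) the identification $\LS4[1]=\LS5$. Your write-up merely makes that identification explicit, which is a harmless elaboration.
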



\subsection{Two dimensional tack}\label{subsec:Tack2}
\ISLater{

\IS{Sort it out: 
...

Let \(\TF_2(\kappa,\mu) = (\rect{\kappa}{\mu}) \oplus_1 \circ\).
\IS{Define}

Let \(\TF_2 = \TF_2(\omega,\omega)\) and \(\TL_2 = \Log(\TF_2).\)
\begin{claim}
    \(\TL_2\) is prelocally tabular.
\end{claim}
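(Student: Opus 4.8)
The plan is to place $\TL_2$ above $\LS{4.1}[2]\times\LS5$ and then reduce prelocal tabularity to a boundedness statement about the $\LS5^2$-clusters occurring in its extensions, which Theorem~\ref{thm:S5Nick} resolves. First I would verify that $\TF\times\bomega\toto\TF_2$, via the map which is the identity on $\omega\times\omega$ and collapses the top row $\{\omega\}\times\omega$ of $\TF\times\bomega$ to the point $\circ$; the homomorphism and back conditions are a routine check, the relevant feature being that the top point of $\TF$ is $R$-terminal, so the first-coordinate edges into the top row survive the collapse while the second-coordinate ones do not, matching the definition of $\oplus_1$. Since $\TL=\Log(\TF)$ extends $\LS{4.1}[2]$ (the one-dimensional tack validates $\LS{4.1}$ and $B_2$) and $\bomega\models\LS5$, this yields $\LS{4.1}[2]\times\LS5\subseteq\TL\times\LS5\subseteq\Log(\TF\times\bomega)\subseteq\Log(\TF_2)=\TL_2$, so by Theorem~\ref{thm:rppCriterionS41} the product rpp criterion holds for $\TL_2$ and for every normal extension of it. Moreover $\TL_2$ is not locally tabular: the only nontrivial cluster of $\TF_2$ is $\rect{\omega}{\omega}$, whose logic is $\LS5^2$ by Theorem~\ref{thm:Segerberg2DimML}, so $\TL_2$ is not locally tabular by Theorem~\ref{thm:supple-clusters-crit} (equivalently, the zigzag $(0,0)\,R\,(1,0)\,R\,(1,1)\,R\,(2,1)\,R\,\dots$ in $\TF_2$ refutes $\FORP_m$ for all $m$, so $\TL_2$ carries no product rpp formula). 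Thus the task reduces to: for every $\psi\notin\TL_2$, the logic $\TL_2+\psi$ is locally tabular --- for then each proper extension $L$ of $\TL_2$, containing some such $\psi$, is an extension of $\TL_2+\psi$ and hence locally tabular.

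Next I would follow the argument in the proof of Lemma~\ref{lem:rpp_transfer}, applied to $F$, the canonical frame of $\TL_2+\psi$. As the commutator axioms together with $\LS{4.1}$, $B_2$ and $\LS5$ are canonical and belong to $\TL_2+\psi$, the frame $F$ validates $\LCom{\LS{4.1}[2]}{\LS5}$; and since $F$ is canonical, $\Log F\subseteq\TL_2+\psi$, so it suffices to show that $\Log F$ is locally tabular (the $k$-generated Lindenbaum algebra of $\TL_2+\psi$ embeds into the complex algebra of $F$, which lies in the locally finite variety of $\Log F$-algebras). If $F\models\LS5^2$, then $\Log F$ extends $\LCom{\LS{4.1}[2]}{\LS5}+\LS5^2$, which properly extends $\LS5^2$ --- it contains the McKinsey-type formula $\Box_1\Di_1 p\to\Di_1\Box_1 p$, valid in $\TF_2$ but not in $\rect{\omega}{\omega}$ --- and hence is locally tabular by Theorem~\ref{thm:S5Nick}, so $\Log F$ is locally tabular. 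If $F\not\models\LS5^2$, then $F$ falls within the scope of the $\alpha$-reduction, and by Propositions~\ref{prop:alpha_transfers_rpp} and~\ref{prop:alpha_transfers_lt} everything reduces to the $\LS5^2$-frame $\alpha(F)$ obtained by deleting the $R_1$-terminal points of $F$: it is enough to show that the rectangles occurring as clusters of $\alpha(F)$ have uniformly bounded smaller dimension, for then $\Log\alpha(F)$ properly extends $\LS5^2$ (it validates a bounded-cluster formula), is locally tabular by Theorem~\ref{thm:S5Nick}, and $\Log F$ is locally tabular by Proposition~\ref{prop:alpha_transfers_lt}.

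The heart of the proof --- and the step I expect to be the main obstacle --- is this last bound: a failure of the non-theorem $\psi$ in $\TF_2$ should prevent any $(\TL_2+\psi)$-frame from carrying an arbitrarily large $\LS5^2$-cluster below its terminal layer. The intuition is that $\TF_2$ has exactly one source of ``$\LS5^2$-badness'', the infinite cluster $\rect{\omega}{\omega}$ with a single terminal point above it, so a refutation of $\psi$ --- after passing to the generated subframe of a bottom (hence root) point, and, in the degenerate case where $\psi$ fails only at $\circ$, replacing the valuation by the constant one of the Boolean type of $\circ$ (which then refutes $\psi$ already at a bottom point) --- must certify that an $\LS5^2$-cluster of bounded diameter suffices to falsify the rpp- or bounded-cluster consequence of $\psi$ that is needed. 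Making this precise, while being careful that $\LS{4.1}[2]\times\LS5$ need not be canonical, so that the passage to $\alpha(F)$ has to be justified from $\LCom{\LS{4.1}[2]}{\LS5}$ alone exactly as in Lemma~\ref{lem:rpp_transfer}, is the technical crux; granting it, together with the non-local-tabularity of $\TL_2$ this shows $\TL_2$ is prelocally tabular.
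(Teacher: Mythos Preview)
Your setup is sound: the p-morphism $\TF\times\bomega\toto\TF_2$ is correct, so $\LS{4.1}[2]\times\LS5\subseteq\TL_2$, and the non-local-tabularity of $\TL_2$ follows as you say. The case split on whether the canonical frame $F$ validates $\LS5^2$ is also fine, and the ``$F\models\LS5^2$'' branch works.

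The genuine gap is exactly the step you flag as the crux: from $\psi\notin\TL_2$ you must deduce that $\Log\alpha(F)$ is a \emph{proper} extension of $\LS5^2$. You offer only an intuition (``a refutation of $\psi$ must certify that a cluster of bounded diameter suffices''), and this is not a proof. Nothing in your outline connects the single non-theorem $\psi$ to a bound on the $\LS5^2$-clusters of the canonical frame of $\TL_2+\psi$; indeed, this connection is the whole content of prelocal tabularity, and neither Theorem~\ref{thm:rppCriterionS41} nor the $\alpha$-transfer propositions supply it. Also note a slip: ``uniformly bounded smaller dimension'' of the rectangles does not, by itself, yield a single bounded-cluster formula valid in $\alpha(F)$, since the bounded side may alternate between the two coordinates.

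The paper closes this gap by a different route that does not pass through Theorem~\ref{thm:rppCriterionS41} at all. Working in the contrapositive, it takes a non-locally-tabular extension $L\supseteq\TL_2$, uses canonicity of $\TL_2$ (Theorem~\ref{thm:TL2-canon}) so that the $k$-generated canonical frame $F_L$ is a $\TL_2$-frame, and shows $F_L$ must contain an \emph{infinite} $\LS5^2$-cluster $C$ (otherwise all point-generated subframes are finite and Lemma~\ref{lem:ext-TL-complete} would make $L$ locally tabular). Then, inside the point-generated subframe $C\oplus\circ$, it invokes \cite[Claim~4.7]{NickS5} to embed the complex algebra of every finite rectangle $\rect{m}{m}$ into a finitely generated subalgebra of definable sets, and lifts these to definable p-morphisms onto $\TF(m,m)$. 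By Corollary~\ref{cor:TL-fmp} this forces $L\subseteq\TL_2$. The key algebraic input you are missing is precisely this embedding result from \cite{NickS5}; without it (or an equivalent), the bound on clusters that your approach requires does not follow from the data you have assembled.
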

Idea: \(\LS4[h] \times \LS5 \subseteq \LS5^2\) or \(\TL_2\) \VS{what does it mean?}

\begin{proposition}[No proof known. Yet!]
Let $C$ be an  $\LS{4}\times \LS5$ cluster such that has no product rpp (not uniformly tunable?). 
Then  $C\toto \rect{m}{n}$ for all finite $m,n$.
\end{proposition}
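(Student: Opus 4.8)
The plan is to show that a cluster $C=(Z,R_1,R_2)$ as in the statement is, up to a collapse, an infinite rectangle, and then to read off the required p-morphisms. First I would record the structure forced by validity of $\LS4\times\LS5$. Since this logic is a normal extension of the commutator $\LCom{\LS4}{\LS5}$, $R_1$ is a preorder, $R_2$ an equivalence, and $R_1\circ R_2=R_2\circ R_1$; since $C$ is a single cluster, Proposition~\ref{prop:corner} gives $(R_1\cup R_2)^*=R_1^*\circ R_2^*=R_1\circ R_2$, so $R_1\circ R_2=Z\times Z$. The key structural step is that $R_1$ is in fact symmetric. For this I would invoke Lemma~\ref{lem:opposite_arrows} — which needs finite height of the first factor, so one really works here with $C\models\LS4[h]\times\LS5$ for some finite $h$, which is the situation in which the proposition is applied (clusters of canonical frames of extensions of $\LS4.1[2]\times\LS5$). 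The lemma yields $R_1\cap R_2^{-1}\subseteq R_1^{-1}$, hence $R_1\cap R_2\subseteq R_1^{-1}$ because $R_2$ is symmetric. Now if $a R_1 b$, pick $c$ with $b R_1 c$ and $c R_2 a$ (possible as $R_1\circ R_2=Z\times Z$); then $a R_1 c$ by transitivity and $a R_2 c$ by symmetry of $R_2$, so $(a,c)\in R_1\cap R_2$, hence $c R_1 a$, and finally $b R_1 c$ together with $c R_1 a$ gives $b R_1 a$. So $R_1$ is an equivalence.

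Next I would use the absence of a product rpp. Since $R_1$ and $R_2$ are transitive, $R_i^{\le k}=R_i$, so by Proposition~\ref{prop:rpp-to-modal-poly} the hypothesis means exactly that $(Z,R_1\cup R_2)$ fails $\FORP_t$ for every $t$: there are arbitrarily long paths $x_0(R_1\cup R_2)x_1\cdots(R_1\cup R_2)x_{t+1}$ with all $x_i$ distinct and no shortcut. In such a shortcut-free path no edge can lie in $R_1\cap R_2$ and no two consecutive edges can use the same relation — in either case transitivity of that relation produces a shortcut — so the edges alternate between $R_1$ and $R_2$. Consequently a maximal run of consecutive vertices lying in one $R_1$-class has length at most $2$, and the $R_1$-classes of distinct runs are pairwise distinct (again by a shortcut argument), and symmetrically for $R_2$. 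Hence $t$ is bounded in terms of $\min(|Z/R_1|,|Z/R_2|)$; so if either quotient were finite, some fixed $m$ would make $\FORP_m$ hold in $(Z,R_1\cup R_2)$, i.e.\ $C\models\RP_m(1,1)$, contradicting the hypothesis. Therefore both $Z/R_1$ and $Z/R_2$ are infinite.

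Finally, given finite $m,n$, choose surjections $g\colon Z/R_2\toto\{0,\dots,m-1\}$ and $h\colon Z/R_1\toto\{0,\dots,n-1\}$, which exist because both quotients are infinite, and set $f(z)=(g([z]_{R_2}),h([z]_{R_1}))$. This is a homomorphism from $C$ to $\boldsymbol{m}\times\boldsymbol{n}$ since $R_1$ preserves $R_1$-classes and $R_2$ preserves $R_2$-classes. Moreover, from $R_1\circ R_2=Z\times Z$ and the fact that $R_1$ is now an equivalence, every $R_1$-class meets every $R_2$-class; this immediately yields surjectivity of $f$ and both back conditions (for a required value $(i',j)$ related to $f(z)$ horizontally, take the point in $[z]_{R_1}$ lying in an $R_2$-class sent by $g$ to $i'$, and symmetrically for the vertical relation). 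Hence $C\toto\boldsymbol{m}\times\boldsymbol{n}$.

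The main obstacle is the symmetry of $R_1$ in the first step: the clean argument above needs finite height to apply Lemma~\ref{lem:opposite_arrows}, so if one insists on the statement for plain $\LS4\times\LS5$ one must first show that a cluster of $\LS4\times\LS5$ with no product rpp automatically has $R_1$ of finite height (equivalently, symmetric) — this is the delicate point, and it is where I expect the real work to lie. By contrast the path bookkeeping in the middle paragraph and the verification of the back conditions in the last paragraph are routine.
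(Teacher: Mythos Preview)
The paper has no proof of this proposition: it is explicitly labeled ``No proof known. Yet!'' and sits in a block of commented-out draft material (the surrounding \texttt{\textbackslash ISLater\{\ldots\}} is rendered empty). The only hint the authors leave is the remark ``If $(\LS4\times\LS5)[1]$ is complete w.r.t.\ $C$, we are done: in this case $C\toto F$ for any finite $\LS5\times\LS5$-frame,'' i.e.\ an approach via identifying $\Log C$ and then invoking Jankov--Fine (Proposition~\ref{prop:Jankov-Fine}) --- quite different from your direct construction.

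Your argument is essentially correct for the version the paper actually needs, namely for clusters of $\LS4[h]\times\LS5$-frames (the proposition is nested inside material about $[\LS4[h];\LS5]$-frames). In that setting Lemma~\ref{lem:opposite_arrows} applies to the ambient frame, giving $R_1\cap R_2\subseteq R_1^{-1}$ there and hence on the cluster; the rest of your symmetry argument, the alternation/pigeonhole bound forcing both quotients $Z/R_1$ and $Z/R_2$ to be infinite, and the back-condition check via ``every $R_1$-class meets every $R_2$-class'' all go through. In effect your construction amounts to showing that, once $R_1$ is an equivalence, $C$ is a point-generated $\LS5^2$-frame and $\quotfr{C}\cong\rect{\kappa}{\mu}$ with $\kappa,\mu$ infinite --- exactly Lemma~\ref{lem:s5-square-quotient} in the Appendix --- after which $C\toto\quotfr{C}\toto\rect{m}{n}$ is immediate.

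You correctly isolate the genuine gap: for plain $\LS4\times\LS5$ without a height bound, Lemma~\ref{lem:opposite_arrows} is unavailable, and the paper itself remarks elsewhere that ``in clusters of $\LS4\times\LS5$-frames, the symmetry of the first relation is not guaranteed.'' So the obstacle you flag is real and acknowledged by the authors; your proposal resolves the finite-height case but does not close the general one --- which is precisely the state the paper leaves it in.
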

\IS{Idea: 
If $(\LS{4}\times \LS5)[1]$  is complete w.r.t. $C$, we are done: 
in this case $C\toto F$ for any finite $\LS{5}\times \LS{5}$-frame. 
}

\begin{proposition}
  Let \(F \models [\LS4[h];\LS5]\) 
  \IS{Let \(F \models \LS{4}\times \LS5\) ?}
  be a rooted frame, \(h(F)> 1\), such that the root cluster \(C\) has no product rpp. Then:
  \begin{enumerate}
      \item \(F\toto C \oplus_1 \circ\);\IS{True}
      \item \(F\toto Tack_2(n,m)\) for all \(n,\,m < \omega.\)
      \IS{Would imply  Theorem \ref{thm:finalDream}  for Kripke complete logics.}
  \end{enumerate}
\end{proposition}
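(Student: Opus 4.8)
The plan is to handle the two reductions separately: for (1) collapse everything strictly above the root cluster to a single point, and for (2) reduce to the statement \(C\toto\rect n m\) for all finite \(n,m\), then compose. Write \(F=(X,R,\approx)\), where \(R\) is the (reflexive, transitive) \(\LS4\)-relation and \(\approx\) the \(\LS5\)-equivalence. Since \(F\) validates the commutator, \(R\) and \(\approx\) commute and \(R_F^{*}=R\circ\approx=\approx\circ R\), exactly as in Proposition~\ref{prop:corner}. For (1), define \(f\colon X\to\dom C\cup\{\circ\}\) by \(f(x)=x\) for \(x\in C\) and \(f(x)=\circ\) otherwise, and verify \(f\colon F\toto C\oplus_1\circ\) directly. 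The forth-conditions hold because \(C\) is the bottom of \(\Sk F\): if \(x\notin C\) and \(xRy\) or \(x\approx y\), then \(y\in C\) would give both \(yR_F^{*}x\) and \(xR_F^{*}y\), forcing \(x\sim_F y\) and \(x\in C\). The one nontrivial back-condition, for \(R\) at \(\circ\), uses commutativity: any witness \(cR_F^{*}z\) with \(c\in C\) and \(z\notin C\) rewrites as \(cRw\) with \(w\notin C\) via \(R_F^{*}=R\circ\approx\); and \(h(F)>1\) guarantees such a \(z\) exists, which also gives surjectivity onto \(\circ\).

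For (2) it suffices to prove \(C\toto\rect n m\) for all finite \(n,m\): extending such a reduction by \(\circ\mapsto\circ\) gives \(C\oplus_1\circ\toto\rect n m\oplus_1\circ=\TF_2(n,m)\), and composing with \(f\) yields \(F\toto\TF_2(n,m)\). To reduce \(C\) onto a rectangle I would use two partitions of \(C\): the partition \(\clV_1\) into \(\approx\)-classes, and the partition \(\clV_2\) into the connected components of the graph \((\dom C,R)\). Commutativity makes \(\clV_1\) tuned with respect to \(R\), so by Definition~\ref{def:tune} the \(\approx\)-quotient map is a p-morphism; and since \(R_F^{*}=R\circ\approx\) is universal on the single cluster \(C\), every \(\approx\)-class meets every \(R\)-component. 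These two facts are precisely what is needed to check that \(x\mapsto([x]_{\clV_1},[x]_{\clV_2})\) is a p-morphism of \(C\) onto \(\rect{K_1}{K_2}\), with \(K_1=|\clV_1|\), \(K_2=|\clV_2|\): here \(R\) collapses to the horizontal ``same-component'' relation and \(\approx\) to the vertical ``same-class'' relation. If both \(K_1\) and \(K_2\) are infinite, collapsing \(\boldsymbol{K_1}\toto\boldsymbol n\) and \(\boldsymbol{K_2}\toto\boldsymbol m\) finishes the reduction.

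It then remains to deduce from the absence of a product rpp that \(K_1\) and \(K_2\) are both infinite. Since \(C\) has no product rpp, for every \(M\) there is an irreducible \(R\cup\approx\)-path \(z_0,\dots,z_{M+1}\) in \(C\) (cf. the zigzag viewpoint of Proposition~\ref{prop:rpp-implies-bcp}); transitivity of each relation forces the path to alternate strictly between \(R\)- and \(\approx\)-steps, and irreducibility forbids any forward chord \(z_i(R\cup\approx)z_{j+1}\) with \(i<j\). For \(\clV_1\) this is clean: if \(z_i\approx z_k\) with \(k\ge i+2\), then \(z_i\approx z_k\) is a forbidden chord, so equal \(\approx\)-classes occur only at adjacent indices; hence the path meets more than \(M/2\) distinct classes, and \(K_1=\infty\).

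The hard part is the analogous statement for \(\clV_2\). Because \(R\) is not symmetric, two points in the same \(R\)-component need not be joined by a single \(R\)-edge, so equality of \(\clV_2\)-value produces no chord and the pigeonhole argument used for \(K_1\) does not transfer. Here I expect to need the finite-height hypothesis \(h(L_1)<\omega\) together with the Church--Rosser property: the plan is to show that inside the single cluster \(C\) the interaction of bounded \(R\)-height with \(R_F^{*}=R\circ\approx\) prevents a long irreducible zigzag from remaining inside finitely many \(R\)-components — equivalently, that finitely many components would convert a long alternating path into a bound witnessing the product rpp. Making this precise, and thereby ruling out the configurations with \(K_1\) infinite but \(K_2\) finite, is the crux on which the whole proposition rests; the rest of the argument is routine verification of p-morphism conditions and composition of reductions.
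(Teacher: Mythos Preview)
This proposition is not proved in the paper: it sits inside a suppressed authors' note, next to a companion proposition explicitly annotated ``No proof known.\ Yet!'', and item~(2) is flagged as something that \emph{would imply} an open theorem. There is no paper proof to compare against; you are attempting a claim the authors left open.

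Your argument for (1) is correct. The complement of the root cluster $C$ is closed under both $R$ and $\approx$ (the latter uses symmetry of $\approx$ together with commutativity), and every $c\in C$ sees via $R$ some point outside $C$ (write $c\,R_F^{*}\,z$ with $z\notin C$ as $c\,R\,w\approx z$; then $w\notin C$ since $\approx$-classes do not cross the boundary of $C$). These are exactly the back conditions needed for~$f$.

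For (2), your reduction framework is sound. The map $x\mapsto([x]_{\clV_1},[x]_{\clV_2})$ is a surjective p-morphism $C\toto\rect{K_1}{K_2}$: every $\approx$-class meets every $R$-component because $R\circ{\approx}={\approx}\circ R$ is universal on the cluster $C$, and all four forth/back conditions follow from commutativity and transitivity (for back on~$R$, given $c$ and an arbitrary $\approx$-class $a$, pick $p\in a$ and write $c\,R\,q\approx p$; then $q\in a$ and $q$ lies in $c$'s $R$-component). Your argument that $K_1=\infty$ is also correct: in an irreducible $(R\cup{\approx})$-path, $z_i\approx z_k$ with $k\ge i+2$ is a forbidden chord, so the path visits at least $M/2$ distinct $\approx$-classes.

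The gap is exactly where you place it, and it is genuine. You need $K_2=\infty$, and the asymmetry of $R$ blocks the parallel pigeonhole: two points of $C$ in the same $R$-connected component need not be joined by a single $R$-edge, so ``same component at distance $\ge 2$'' does not manufacture a chord. Your hope that finite height together with Church--Rosser forces long irreducible zigzags to visit many $R$-components is precisely the missing content the authors could not supply; nothing in the paper---including the productivization lemmas of the Appendix, which only apply once both relations are already equivalences---closes this. As it stands, your proposal proves (1) and reduces (2) to the statement that $C$ has infinitely many $R$-components, which remains open.
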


}  

}


\hide{
\IS{ Steps: 
\begin{enumerate}
    \item FMP
    \item Not locally tabular
    \item Semantics
    \item Kripke complete case 
    \item Canonical (Similarly to \cite{...}, we will identify a rectangle partition in the rooted cluster ....) 
\end{enumerate}
}

}

A prelocally tabular logic $\LS{5}\times \LS{5}$ has height~\(1\).
In this subsection, we give another example of a
 prelocally tabular logic above $\LS{4}\times \LS{5}$, which has height~\(2\).

\hide{
We define the \emph{semi-ordered sum} \(F_1 \oplus_1 F_2\) of disjoint bimodal frames \(F_1\) and \(F_2\) as \(\sum_I F_i,\) where \(I = (\{1,2\},\le,Id\})\).
}
We define the \emph{semi-ordered sum} \(F \oplus G\) of disjoint bimodal frames \(F = (X,R_1,R_2)\) and \(G = (Y,S_1,S_2)\) as \((X\cup Y, R_1 \cup S_1 \cup (X \times Y), R_2 \cup S_2).\)
Let~\(\circ\) denote the reflexive, with respect to both relations, singleton \((\{0\},R,S)\) with \(R = S = \{(0,0)\}.\)

Let  $\TF(X,Y)=\Tsum{(\rect{X}{Y})}$. 
The \emph{two dimensional tack} is the  frame 
$\TF(\omega,\omega)$.
Let \(\TL_2 = \Log{\TF(\omega,\omega)}.\)  
We show that  $\TL_2$ is canonical, has the finite model property, and is prelocally tabular.

\smallskip

Firstly, we identify some axioms of $\TL_2$, study its abstract 
frames, and then give a complete axiomatization.

Notice that the composition of relations $R=R_1\circ R_2$ in $\TF(\omega,\omega)$
is a preorder. 
Let $\Di$ denote the compound modality $\Di_1\Di_2$.

Observe that the   skeleton of $\TF(\omega,\omega)$ is a two-element chain. 
Hence, $\TL_2$ contains the axiom of height two for the compound modality $\Di$. 
Moreover, the height of the first relation in $\TF(\omega,\omega)$ 
is 2, so the fragment of $\TL_2$ in the first modality contains $\LS{4}[2]$.
Clearly, its fragment in the second modality is $\LS{5}$.
\hide{
contains $\LS{4}[2]\times \LS{5}$.  \VS{Why not \([\LS{4}[2],\LS5] \subseteq \TL_2 \subsetneq \LS{4}[2]\times \LS5\)? Maybe we can say that \(\TF(\omega,\omega) \cong\) a quotient of a product frame? } 
\hide{
\begin{proposition}
$\TL$ extends $(\LS{4}\times \LS{5})[2]$. 
\end{proposition}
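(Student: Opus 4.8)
The plan is to verify $(\LS4\times\LS5)[2]\subseteq\TL_2$ semantically. The height-two axiom $\tranA{B_2}$ of the compound preorder already lies in $\TL_2$ by the observation above that the skeleton of $\TF(\omega,\omega)$ is a two-element chain, so it remains to show $\LS4\times\LS5\subseteq\TL_2=\Log(\TF(\omega,\omega))$. For this, by the definition of the product logic and the fact that $F\toto G$ implies $\Log F\subseteq\Log G$, it suffices to exhibit a p-morphism onto $\TF(\omega,\omega)$ from a product $F\times G$ with $F\models\LS4$ and $G\models\LS5$. I claim $F\times G=\TF\times\bomega$ works, where $\TF$ is the one-dimensional tack frame --- which validates $\LS4$, its points $\{c_i\mid i<\omega\}$ forming an $\omega$-cluster below the reflexive top point $t$ --- and $\bomega=(\omega,\omega\times\omega)$, which validates $\LS5$.

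The p-morphism $f\colon\TF\times\bomega\to\TF(\omega,\omega)$ I have in mind sends $(c_i,b)$ to the grid point $(i,b)$ of $\rect{\omega}{\omega}$ and sends every point $(t,b)$ of the top row to the apex $\circ$ of the semi-ordered sum $\TF(\omega,\omega)=\Tsum{(\rect{\omega}{\omega})}$. This $f$ is onto, and checking that it is a p-morphism is routine. Homomorphism: a horizontal step of $\TF\times\bomega$ out of $(c_i,b)$ goes into $\{(c_j,b)\mid j<\omega\}\cup\{(t,b)\}$, whose $f$-images all lie $R_1$-above $(i,b)$ in $\TF(\omega,\omega)$; a vertical step out of $(c_i,b)$ goes into $\{(c_i,b')\mid b'<\omega\}$, whose $f$-images form the $R_2$-class of $(i,b)$; and the horizontal, resp.\ vertical, successors of $(t,b)$ are $\{(t,b)\}$, resp.\ $\{(t,b')\mid b'<\omega\}$, all of which $f$ sends to $\circ$, which is reflexive for both relations. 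Back condition: an $R_1$-successor of $(i,b)$ in $\TF(\omega,\omega)$ is either some $(i',b)$, witnessed by the horizontal step $(c_i,b)\,R^h\,(c_{i'},b)$, or $\circ$, witnessed by $(c_i,b)\,R^h\,(t,b)$; an $R_2$-successor of $(i,b)$ is some $(i,b')$, witnessed by $(c_i,b)\,R^v\,(c_i,b')$; and the only successor of $\circ$ under either relation is $\circ$ itself, witnessed by the loop at $(t,b)$. (Equivalently, the equivalence on $\TF\times\bomega$ gluing together the points $\{(t,b)\mid b<\omega\}$ is tuned, so the associated quotient map is the p-morphism sought.) Hence $\TF\times\bomega\toto\TF(\omega,\omega)$, so $\LS4\times\LS5\subseteq\Log(\TF\times\bomega)\subseteq\TL_2$, and with $\tranA{B_2}\in\TL_2$ this yields $(\LS4\times\LS5)[2]\subseteq\TL_2$.

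The one mildly delicate point is the back condition at the collapsed top row: the infinitely many distinct points $(t,b)$ all map to the single apex $\circ$, so one must confirm that they jointly witness every successor demand at $\circ$; since $\circ$ has no successor other than itself under either $R_1$ or $R_2$, this comes down to the reflexive loop at each $(t,b)$ and causes no trouble. The other ingredients --- $\TF\models\LS4$, the shape of the skeleton of $\TF(\omega,\omega)$ (already used above), and the fact that the pretransitivity index of $\LS4\times\LS5$ is $2$ (Proposition~\ref{prop:product-preserve-pretrans}), which pins down $\tranA{B_2}$ --- are straightforward.
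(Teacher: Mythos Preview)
Your argument is correct. In the paper's source this proposition sits inside a \texttt{\textbackslash hide\{\}} block and carries no proof; the visible text around it only records that the $\Di_1$-fragment of $\TL_2$ contains $\LS4[2]$ and the $\Di_2$-fragment is $\LS5$, and the final axiomatization (Theorem~\ref{thm:TL2-canon}) is stated over the weaker base $[\LS4[2],\LS5]$, so the inclusion $\LS4\times\LS5\subseteq\TL_2$ is never explicitly established. Your route---exhibiting $\TF(\omega,\omega)$ as a p-morphic image of the product frame $\TF\times\bomega$---is exactly the fix contemplated in an adjacent (hidden) editorial comment, and the very same quotient (collapsing the top row $\{(\omega,n)\mid n<\omega\}$ of $\TF\times\bomega$) appears in the paper's proof that $\TL\times\LS5$ is not prelocally tabular, though there it is used for a different purpose. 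So while the paper provides no proof to compare against, your construction is the natural one and coincides with what the authors had in mind.
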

}
}

Moreover, our observation about the skeleton being 
the two-element chain implies the following modally definable properties.
Clearly, the composition $R_1\circ R_2$ in $\TF(\omega,\omega)$
satisfies the Church-Rosser property, and hence, the formula $\Di \Box p\imp\Box \Di p$, 
which is an abbreviation for 
\begin{equation}\label{eq:TL-CR}
\Di_1\Di_2 \Box_1\Box_2 p\imp\Box_1\Box_2 \Di_1\Di_2 p,
\end{equation}
is a valid principle of $\TL_2$. 
In the combination with height 2, it follows that in every rooted 
$\TL_2$-frame, there is a top cluster.  
Next, observe that $R$ satisfies the condition 
\eqref{eq:s4.1-condition}. 
It follows that the formula $\Box\Di p\imp\Di\Box p$, 
which is an abbreviation for 
\begin{equation}\label{eq:TackMC}
\Box_1\Box_2\Di_1\Di_2 p\imp\Di_1\Di_2\Box_1\Box_2 p,      
\end{equation}
belongs 
to the logic $\TL_2$. 
In particular, it follows that in every rooted $\TL_2$-frame, 
the top cluster is a singleton. 

Our next step aims to express the symmetry property in the bottom cluster of an arbitrary rooted $\TL_2$-frame (notice that in clusters of $\LS{4}\times \LS{5}$-frames, the symmetry of the first relation is not guaranteed)\IS{DC}.
Consider the following relativized version 
of the symmetry axiom: 
\ISLater{This may be known. We need to find the reference.}
\ISLater{
\IS{Perhaps, this is known. We need to check it.}
\VS{Follows from the finite height by Lemma~\ref{lem:opposite_arrows}.}}
\begin{equation}\label{eq:relativized-sym}
\Di_1\Box_1 q \con \neg q\con p\imp \Box_1 (\neg q\imp \Di_1 p) 
\end{equation}  
Readily, this formula is valid in $\TF(\omega,\omega)$.
Let $F$ be a point-generated $\TL_2$-frame of height 2. We claim that its rooted cluster $C$, considered as a frame-restriction, validates $\LS{5}\times \LS{5}$. \ISLater{Give axiomatization of $\LS{5}^2$}
Indeed, assume that $x,y\in C$ are connected by the first relation $R_1$. 
Then consider a valuation $\theta$ in $F$ with $\theta(q)$ being  the top cluster, and $\theta(p)=\{x\}$. The validity of \eqref{eq:relativized-sym}  implies that $yR_1 x$, which proves the claim.  

\smallskip

Let $L$ be the extension of the commutator $[\LS{4}[2], \LS{5}]$ 
    with the axioms   \eqref{eq:TL-CR}, \eqref{eq:TackMC}, and \eqref{eq:relativized-sym}.
Putting our observations together, we obtain

   \begin{lemma}\label{lem:rootelTLframes}
   Let $F$ be a rooted   $L$-frame. Then $F$ is isomorphic to $C\oplus \circ$ for an $\LS{5}^2$-frame $C$, or to $\circ$. 
   \end{lemma}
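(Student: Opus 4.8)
The plan is to analyze an arbitrary rooted $L$-frame $F=(X,R_1,R_2)$ by exploiting the compound modality $\Di=\Di_1\Di_2$ together with the five facts collected in the preceding discussion. First I would observe that $R_1$ is a preorder (from $\LS4$) and $R_2$ is an equivalence (from $\LS5$), and, since $F$ validates the commutator axioms, $R_1\circ R_2=R_2\circ R_1$; hence $R:=R_1\circ R_2$ is a preorder and $R^*=R$. The height-2 axiom $[B_2]$ for $\Di$ tells us that the skeleton $\Sk(X,R)$ has at most two levels. If $F=\circ$ we are done, so assume $F$ is not a single reflexive point; then, because $F$ is $R$-rooted, either $\Sk(X,R)$ is a single cluster (height $1$) or it is a two-element chain (height $2$).

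Next I would handle the shape. Using the Church--Rosser axiom \eqref{eq:TL-CR} and height $2$, as already noted, every rooted $L$-frame has a topmost $R$-cluster $T$; using the McKinsey-type axiom \eqref{eq:TackMC} (i.e. the frame condition \eqref{eq:s4.1-condition} for $R$), that top cluster is a singleton, say $T=\{t\}$ with $R_1(t)=R_2(t)=\{t\}$. If $F$ itself has height $1$, then $X$ is a single $R$-cluster containing the $R$-terminal point $t$; I would argue that in that case $X=\{t\}$ and so $F\cong\circ$. Otherwise $F$ has height $2$: set $C=X\setminus\{t\}$, which is the bottom $R$-cluster and is non-empty. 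I must check that $F\cong C\oplus\circ$, i.e. that $R_1\restr C$ and $R_2\restr C$ are exactly the relations of the claimed $\LS5^2$-frame on $C$, that $R_2$ puts no arrow from $C$ to $t$ or back, and that every point of $C$ sees $t$ via $R_1$ followed by $R_2$ in the right way. Since $t$ is $R_1$- and $R_2$-reflexive and $R$-terminal, and $C$ is an $R$-cluster, one gets $c\mathrel{R_1}c'$ or the appropriate composite for all $c,c'\in C$; the only delicate point is that $R_2$ does not connect $C$ with $t$ — this follows because $t$ is $R_2$-terminal and, if some $c\in C$ had $c\mathrel{R_2}t$, commutativity and $R$-terminality of $t$ would collapse $c$ into the top cluster, contradicting $c\in C$. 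Symmetrically there is no $R_1$-arrow from $C$ into $t$ other than through the composite, so the bottom and top pieces are glued exactly as in the semi-ordered sum $C\oplus\circ$.

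Then I would verify that $C$, viewed as a frame-restriction, validates $\LS5^2$. By construction $R_2\restr C$ is an equivalence and $R_1\restr C$ is a preorder, and commutativity of $R_1,R_2$ restricts to $C$ because $C$ is an $R$-generated subframe closed under both relations (the earlier argument that the preimages stay in $C$); so it remains to show $R_1\restr C$ is symmetric. This is exactly where the relativized symmetry axiom \eqref{eq:relativized-sym} is used: given $x,y\in C$ with $x\mathrel{R_1}y$, evaluate \eqref{eq:relativized-sym} under the valuation with $\theta(q)=\{t\}$ (the top cluster, which is a proper $R_1$-terminal cluster seen by everything in $C$) and $\theta(p)=\{x\}$, exactly as in the discussion preceding the lemma; validity of the axiom forces $y\mathrel{R_1}x$. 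Hence $R_1\restr C$, $R_2\restr C$ are commuting equivalence relations, i.e. $C\models\LS5^2$, and $F\cong C\oplus\circ$ as desired.

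The main obstacle I anticipate is the bookkeeping in the middle step: carefully extracting from the modal axioms the precise first-order shape of an arbitrary (possibly infinite, not necessarily cluster-decomposable a priori) rooted $L$-frame — in particular ruling out stray $R_1$- or $R_2$-edges between the bottom and the top point and showing the top level is genuinely a single $R$-terminal point rather than just a terminal cluster. All the needed ingredients are already isolated in the paragraphs before the lemma (height $2$ for $\Di$, \eqref{eq:TL-CR}, \eqref{eq:TackMC}, \eqref{eq:relativized-sym}, plus commutativity from the commutator), so the proof is essentially an assembly of these facts; the care is entirely in making the frame-correspondence arguments rigorous and in treating the degenerate case $F\cong\circ$ cleanly.
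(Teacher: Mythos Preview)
Your proposal is correct and follows exactly the paper's approach: the lemma is stated there without a separate proof, as the assembly of the observations in the preceding paragraphs (height~$2$ for $\Di=\Di_1\Di_2$, Church--Rosser \eqref{eq:TL-CR} giving a unique top $R$-cluster, the McKinsey-type axiom \eqref{eq:TackMC} making it a singleton $\{t\}$, and the relativized symmetry axiom \eqref{eq:relativized-sym} forcing $R_1$-symmetry on the bottom cluster). One small slip to clean up: in $C\oplus\circ$ there \emph{are} $R_1$-arrows from every $c\in C$ to $t$ (this is precisely the gluing in the semi-ordered sum), so your sentence ``there is no $R_1$-arrow from $C$ into $t$'' is backwards---you should instead verify $c\,R_1\,t$ directly (from $c\,R\,t$ write $c\,R_1\,a\,R_2\,t$ and use $R_2(t)=\{t\}$ to get $a=t$), while what must be excluded is any $R_2$-edge between $C$ and $t$ and any $R_1$-edge from $t$ back into $C$.
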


Our next goal is to prove completeness. 

\ISLater{
For a $\Al$-frame $F=(X,(R_a)_\AlA)$, let $\Logu(F)$ denote the logic of $F$ in the language, 
extended with the universal modality, namely, the logic of 
the frame $(X,(R_a)_\AlA,X\times X)$. As usual,  for a class of frames, 
$\Logu(\clF)$ denotes the intersection of $\Logu(F)$ with $F\in\clF$.
\begin{lemma}
(interchangeable classes)    
\end{lemma}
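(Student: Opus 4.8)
The plan is to read off the shape of a rooted $L$-frame $F=(X,R_1,R_2)$ from the frame conditions that $L$ forces on the compound preorder $R=R_1\circ R_2$. First I would record what the base commutator $[\LS{4}[2],\LS{5}]$ buys: $R_1$ is a preorder of $R_1$-height at most $2$, $R_2$ is an equivalence relation, $R_1$ and $R_2$ commute, and the Church--Rosser property holds between them. Since $R_1,R_2$ are reflexive, transitive and commute, $R=R_1\circ R_2=R_2\circ R_1$ is a preorder, and the usual argument (as in Proposition~\ref{prop:corner}) gives $R=(R_1\cup R_2)^*$; so rootedness means exactly $X=R(r)$, and $[r]_{\sim_F}$ is the least element of $\Sk F$. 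Then I would translate the three extra axioms: since $\Di_1\Di_2$ and $\Box_1\Box_2$ are semantically the modalities of $R$, validity of \eqref{eq:TL-CR} in $F$ says that $R$ is directed, validity of \eqref{eq:TackMC} says that $R$ satisfies \eqref{eq:s4.1-condition} (every point $R$-sees an $R$-terminal point), and \eqref{eq:relativized-sym} I would keep in reserve, to be applied directly as a formula.

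Next I would split into cases. If $X$ is a single $R$-cluster, an $R$-terminal point $t$ (which exists by \eqref{eq:s4.1-condition}) has $R(t)=\{t\}=X$, so $F$ is a one-point frame and $R_1=R_2=\{(t,t)\}$, whence $F\cong\circ$. Otherwise I would first prove that $R$ itself has height at most $2$: a strict $R$-chain through three clusters would, using that $R_2$ is symmetric and commutes with $R_1$, unfold into an $R_1$-chain meeting three distinct $R_1$-clusters, contradicting $R_1$-height $\le 2$ — this is the one genuinely computational step, a few lines of the type ``$aR_2bR_1c$ yields $a(R_1\circ R_2)c$''. Given height exactly $2$, directedness plus converse well-foundedness gives a unique maximal $R$-cluster $M$; any $R$-terminal $t$ makes $\{t\}$ a maximal cluster, so $M=\{t\}$, and then $t$ is both $R_1$- and $R_2$-terminal. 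Since $[r]_{\sim_F}$ is least and $R$ has height $2$, this forces $X=C\sqcup\{t\}$ with $C=[r]_{\sim_F}$ the unique least $R$-cluster and $C<\{t\}$ in $\Sk F$.

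Finally I would pin down the relations. For $x\in C$ we have $xRt$, say $x R_1 a R_2 t$; by $R_2$-terminality and symmetry of $R_2$, $a=t$, so $x R_1 t$. As $C$ is an $R$-cluster and $R_2\subseteq R$ is symmetric, $C$ is $R_2$-closed. Hence $R_1$ and $R_2$ on $X$ equal $R_1{\restr}C\cup\{(t,t)\}\cup(C\times\{t\})$ and $R_2{\restr}C\cup\{(t,t)\}$ respectively, i.e. $F\cong (F{\restr}C)\oplus\circ$. It remains to check that $F{\restr}C$ validates $\LS{5}^2$, i.e. (for a single cluster) that $R_1{\restr}C$ and $R_2{\restr}C$ are commuting equivalence relations: $R_2{\restr}C$ is an equivalence relation as a restriction to an $R_2$-closed set; $R_1{\restr}C$ and $R_2{\restr}C$ commute because any point witnessing $x(R_1\circ R_2)y$ or $x(R_2\circ R_1)y$ with $x,y\in C$ is $R_2$-linked to a point of $C$, hence in $C$, so global commutativity descends; and $R_1{\restr}C$ is symmetric by \eqref{eq:relativized-sym}, exactly via the valuation already used for $\TL_2$ in the text — for $x,y\in C$ with $x R_1 y$, evaluate \eqref{eq:relativized-sym} at $x$ with $\theta(q)=\{t\}$, $\theta(p)=\{x\}$; the antecedent holds since $x R_1 t$ and $R_1(t)=\{t\}$, so $\Box_1(\neg q\imp\Di_1 p)$ holds at $x$, and applying it at $y\in C$ (where $\neg q$ is true) yields $y R_1 x$. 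Putting the two cases together proves the lemma.

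As for the main obstacle, there is no deep one: as the surrounding text signals, this lemma is largely ``putting our observations together''. The two places that need care are (i) the bookkeeping that converts the compound-modality axioms \eqref{eq:TL-CR} and \eqref{eq:TackMC} into the right first-order conditions on $R=R_1\circ R_2$, and in particular the verification that $R$ (not merely $R_1$) has height $\le 2$; and (ii) checking that the bottom cluster, viewed as a subframe, genuinely validates $\LS{5}^2$ — that is, that $R_1$ restricted there is symmetric — which is precisely what the relativized symmetry axiom \eqref{eq:relativized-sym} is engineered to deliver, since without it the bottom layer could be an arbitrary $\LS{4}$-cluster.
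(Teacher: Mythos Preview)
Your write-up is a proof of the wrong statement. The lemma quoted here is the placeholder ``(interchangeable classes)'' lemma, whose content (never spelled out in the body) is that replacing a class of summands by an \emph{interchangeable} class --- one with the same logic in the language with the universal modality --- does not change the logic of the resulting class of sums; the paper's proof is a one-liner citing \cite[Lemma~4.6]{AiML2018-sums}, and this is what later gets packaged as Lemma~\ref{lem:LT-invariance} (via Lemma~\ref{lem:interch} in the Appendix). Nothing in your proposal addresses interchangeability, sums, or the universal modality.

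What you have actually written is a proof of Lemma~\ref{lem:rootelTLframes} (the structure of rooted $L$-frames for $L=[\LS{4}[2],\LS{5}]+\eqref{eq:TL-CR}+\eqref{eq:TackMC}+\eqref{eq:relativized-sym}$). Judged against \emph{that} target, your argument is correct and is exactly the paper's route: the paper derives, in the paragraphs preceding Lemma~\ref{lem:rootelTLframes}, that the compound preorder $R=R_1\circ R_2$ has height~$\le 2$, that rooted frames have a unique top singleton cluster (from \eqref{eq:TL-CR} and \eqref{eq:TackMC}), and that $R_1$ restricted to the bottom cluster is symmetric via the valuation $\theta(q)=\text{top}$, $\theta(p)=\{x\}$ applied to \eqref{eq:relativized-sym}; it then states the lemma as ``putting our observations together''. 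Your detailed case split and the closure/commutativity checks for the bottom cluster fill in precisely these observations. So the content is fine --- it is simply attached to the wrong lemma.
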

\begin{proof}
 Follows from \cite[Lemma 4.6]{AiML2018-sums}. 
\end{proof}
}

\begin{lemma}\label{lem:LT-invariance}
Assume that $\Log(\clC)=\Log(\clD)=\LS{5}^2$ for classes
$\clC$, $\clD$  of frames. Then 
$\Log\{\Tsum{C}\mid C\in \clC \} =\Log\{\Tsum{D}\mid D\in \clD \}$. 
\end{lemma}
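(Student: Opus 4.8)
The plan is to reduce Lemma~\ref{lem:LT-invariance}, in two preliminary steps, to an invariance property of sums of the kind established in \cite{AiML2018-sums} (see also \cite{LTViaSums2022}).

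\textbf{Step 1: reduce to single clusters.} Since $\LS{5}^2$ has height $1$, every $\LS{5}^2$-frame is the disjoint union of its clusters, and each such cluster, being a generated subframe, is again an $\LS{5}^2$-frame and is a single cluster. Fix $C\in\clC$. The point-generated subframes of $\Tsum C$ are exactly $\circ$ (generated by the top point) and the frames $\Tsum K$ for $K$ a cluster of $C$ (generated by any point of $K$). As $\circ$ is a generated subframe of each $\Tsum K$, we get $\Log\Tsum C=\bigcap\{\Log\Tsum K\mid K\in\clusters{C}\}$, hence
\[
\Log\{\Tsum C\mid C\in\clC\}=\Log\{\Tsum K\mid K\in\clusters{\clC}\},
\]
and likewise for $\clD$. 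Moreover $\Log\clusters{\clC}=\Log\clC=\LS{5}^2$ and $\Log\clusters{\clD}=\Log\clD=\LS{5}^2$, since the logic of a disjoint union is the intersection of the logics of its parts. So we may assume from now on that $\clC$ and $\clD$ consist of single $\LS{5}^2$-clusters.

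\textbf{Step 2: the universal modality is redundant on such frames.} On a single $\LS{5}^2$-cluster $K=(X,R_1,R_2)$ a standard computation with commuting equivalence relations gives $R_1\circ R_2=X\times X$, so $\Di_1\Di_2$ defines the universal modality on $K$. Writing $\widehat\clC$ for the class obtained from $\clC$ by adjoining the universal relation to each frame as a third accessibility relation, a formula of the extended language is valid in $\widehat\clC$ iff its $\Di_1\Di_2$-translation is valid in $\clC$; in particular $\Log\widehat\clC$ is determined by $\Log\clC$. Since $\Log\clC=\Log\clD=\LS{5}^2$, the logics of $\widehat\clC$ and $\widehat\clD$ coincide.

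\textbf{Step 3: invariance of $(-)\oplus\circ$.} The frame $\Tsum K=K\oplus\circ$ is obtained from $K$ by a fixed sum-type construction: adjoining the terminal reflexive point $\circ$, reachable from all of $K$ through the first modality only. The crux is that the logic of a class $\{F\oplus\circ\mid F\in\clX\}$ depends on $\clX$ only through $\Log\widehat\clX$; indeed, truth of a formula at a point of $F$ inside $F\oplus\circ$ is computable from the structure of $F$ with the universal modality together with the fixed Boolean ``boundary values'' read off at $\circ$ (one $\Di_1\Di_2$-translation per assignment of truth values at the terminal point), while truth at the terminal point is determined by $\circ$ alone. This is exactly the situation covered by the sum-invariance lemma of \cite{AiML2018-sums}; the only additional care is its polymodal reformulation, for which we refer to \cite{LTViaSums2022} and to the appendix. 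Applying it to the single-cluster classes of Step~1 and invoking Step~2 yields $\Log\{\Tsum K\mid K\in\clC\}=\Log\{\Tsum K\mid K\in\clD\}$, which together with Step~1 completes the proof. I expect Step~3 to be the main obstacle: everything is routine on these height-$2$ frames, but one must fix conventions for sums and state the invariance lemma precisely in the semi-ordered polymodal setting.
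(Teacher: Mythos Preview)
Your proposal is correct and follows essentially the same route as the paper: recognize $\Tsum{F}$ as a sum over the two-element index frame $(2,\le,\emp)$, observe that $\Di_1\Di_2$ expresses the universal modality on $\LS{5}^2$-clusters so that the classes are interchangeable, and then invoke the sum-invariance lemma of \cite{AiML2018-sums}. Your Step~1 reduction to single clusters is a worthwhile addition that the paper leaves implicit, since $\Di_1\Di_2$ is the universal modality only on point-generated $\LS{5}^2$-frames, not on arbitrary ones.
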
 

This lemma follows from \cite{AiML2018-sums}.
The semi-ordered sum $\Tsum{F}$ is the sum of frames 
$F$ and 
the singleton  $\circ$
over the indexing frame $(2,\leq,\emp)$ in the sense of \cite[Definition 1]{AiML2018-sums}.  
Two classes are {\em interchangeable}, if they have the same logic in the language enriched with the universal modality.
It was shown  that in a class of sums, 
replacing interchangeable classes of summands does not change the logic of the sums.    
Classes $\clC$ and $\clD$ are interchangeable, since 
the universal modality is expressible as $\Di_1\Di_2$ in $\LS{5}^2$-frames. 
Hence $\Log\{\Tsum{C}\mid C\in \clC \} =\Log\{\Tsum{D}\mid D\in \clD \}$. 
See  Appendix, Section \ref{sub:sums}, for details. 

\hide{

Classes $\clC$ and $\clD$ are interchangeable, since 
the universal modality is expressible as $\Di_1\Di_2$ in $\LS{5}^2$-frames.  
It follows from 
\cite[Lemma 4.6]{AiML2018-sums} that a formula is satisfiable in 
$\Tsum{C}$ for some $C\in \clC$  iff 
it is satisfiable in $\Tsum{D}$ for some $D\in \clD$. We provide the details in Appendix, Section \ref{sub:sums}  

}

\hide{
\begin{proof}
Any frame $\Tsum{F}$ is the sum of frames 
$F$ and 
the singleton  $\circ$
over the indexing frame $(2,\leq,\emp)$ in the sense of \cite[Definition 1]{AiML2018-sums}.  
We  apply a general observation about sums given in \cite[Section 4]{AiML2018-sums}. Two classes are {\em interchangeable}, if they have the same logic in the language enriched with the universal modality. 
Classes $\clC$ and $\clD$ are interchangeable, since 
the universal modality is expressible as $\Di_1\Di_2$ in $\LS{5}^2$-frames.  
Now it follows from 
\cite[Lemma 4.6]{AiML2018-sums} that a formula is satisfiable in 
$\Tsum{C}$ for some $C\in \clC$  iff 
it is satisfiable in $\Tsum{D}$ for some $D\in \clD$. 
\ISLater{Improve}
\end{proof}
}

It follows from Lemma \ref{lem:LT-invariance}   that $\TL_2$ has an analog of the product fmp: 
\begin{corollary}\label{cor:TL-fmp}
$\TL_2$ is the logic of $\{\TF(m,m) \mid   m < \omega\}$.
\end{corollary}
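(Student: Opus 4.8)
The plan is to derive the corollary directly from Lemma~\ref{lem:LT-invariance}. Recall that $\TF(X,Y)=\Tsum{(\rect{X}{Y})}$, so $\TL_2=\Log(\TF(\omega,\omega))=\Log\{\Tsum{C}\mid C\in\clC\}$ for $\clC=\{\rect{\omega}{\omega}\}$, while $\Log\{\TF(m,m)\mid 1\le m<\omega\}=\Log\{\Tsum{D}\mid D\in\clD\}$ for $\clD=\{\rect{m}{m}\mid 1\le m<\omega\}$ (note that $\TF(0,0)$ is undefined since frames are non-empty, so ``$m<\omega$'' in the statement is read as $1\le m<\omega$). Thus it suffices to verify the hypothesis of Lemma~\ref{lem:LT-invariance}, namely $\Log\clC=\Log\clD=\LS5^2$; the lemma then gives $\Log\{\Tsum{C}\mid C\in\clC\}=\Log\{\Tsum{D}\mid D\in\clD\}$, which is exactly the claim.

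For $\clC$ this is immediate from Theorem~\ref{thm:Segerberg2DimML}, applied with $\clX=\clY=\{\omega\}$: the suprema of the cardinalities involved are $\aleph_0$, so $\Log\{\rect{\omega}{\omega}\}=\LS5^2$. For $\clD$, one inclusion is easy: every $\rect{m}{m}$ is the product of two $\LS5$-frames, hence validates $\LS5\times\LS5=\LS5^2$, so $\LS5^2\subseteq\Log\clD$. For the converse inclusion I would use the standard observation that whenever $1\le m,n\le k<\omega$ there is a surjective p-morphism $\rect{k}{k}\toto\rect{m}{n}$ --- any surjection between two full (hence $\LS5$) frames is a p-morphism, and a product of surjective p-morphisms is again one. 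Consequently $\Log(\rect{k}{k})\subseteq\Log(\rect{m}{n})$, and taking $k=\max(m,n)$ yields $\Log\clD=\bigcap_{j\ge 1}\Log(\rect{j}{j})\subseteq\bigcap_{m,n\ge 1}\Log(\rect{m}{n})$. The right-hand side is the logic of the class of all finite rectangles, which equals $\LS5^2$ by Theorem~\ref{thm:Segerberg2DimML} (applied with $\clX=\clY$ the family of all finite non-empty sets). Hence $\Log\clD\subseteq\LS5^2$, and therefore $\Log\clD=\LS5^2$, completing the argument.

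There is no essential obstacle here; the only point requiring a moment's care is the inclusion $\Log\clD\subseteq\LS5^2$, because Theorem~\ref{thm:Segerberg2DimML} is stated for classes of rectangles formed from \emph{all} pairs drawn from $\clX$ and $\clY$, whereas $\clD$ contains only the ``diagonal'' rectangles $\rect{m}{m}$. The p-morphism remark is precisely what bridges this gap, reducing the diagonal class to the full class of finite rectangles; everything else is a direct invocation of Lemma~\ref{lem:LT-invariance} and Theorem~\ref{thm:Segerberg2DimML}.
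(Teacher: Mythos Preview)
Your proof is correct and follows the same route as the paper: both derive the corollary by applying Lemma~\ref{lem:LT-invariance} with $\clC=\{\rect{\omega}{\omega}\}$ and $\clD=\{\rect{m}{m}\mid 1\le m<\omega\}$, after checking that each class has logic $\LS5^2$. The paper leaves the verification of $\Log\clD=\LS5^2$ implicit, whereas you spell out the p-morphism reduction from the diagonal rectangles to all finite rectangles; this extra care is appropriate, since Theorem~\ref{thm:Segerberg2DimML} as stated indeed applies only to full product classes.
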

\hide{
\begin{proof}   In the sense of Definition \ref{def:sum-over-frame}, 
the frame $\TF(\kappa,\mu)=(\rect{\kappa}{\mu})\,\oplus\,\circ$ is the sum of frames 
$\rect{\kappa}{\mu}$ and a reflexive (with respect to both relations) singleton  
over the indexing frame $(2,\leq,\emp)$.  
We have 
$\Log(\rect{\omega}{\omega})=\Log\{\rect{m}{m}\mid  m < \omega\}$. This equality also holds if the modal language is enriched with the universal modality, since it is expressible as $\Di_1\Di_2$ in our case.  
Now it follows from 
\cite[Lemma 4.6]{AiML2018-sums}, that a formula is satisfiable in 
 $(\rect{\omega}{\omega}){\,\oplus\,}\circ$ iff it is satisfiable 
 in $(\rect{m}{m}){\,\oplus\,}\circ$ for some $m$.  
\ISLater{Improve}
\end{proof}
\IS{Re-define this staff; move sums somewhere; make  a lemma for sum of two frames?}

}

\begin{theorem}\label{thm:TL2-canon}~
\begin{enumerate}
    \item $\TL_2$ is the extension of the commutator $[\LS{4}[2],\LS{5}]$
    with the axioms \eqref{eq:TL-CR},  \eqref{eq:TackMC}, and \eqref{eq:relativized-sym}.
    \item $\TL_2$ is canonical.
\end{enumerate}
\end{theorem}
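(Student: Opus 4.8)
# Proof Proposal for Theorem \ref{thm:TL2-canon}

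The plan is as follows. Write $L$ for the logic introduced just before Lemma~\ref{lem:rootelTLframes}, i.e.\ the extension of the commutator $[\LS{4}[2],\LS{5}]$ by the axioms \eqref{eq:TL-CR}, \eqref{eq:TackMC} and \eqref{eq:relativized-sym}. The inclusion $L\subseteq\TL_2$ is exactly the chain of validity checks carried out above the theorem: each of \eqref{eq:TL-CR}, \eqref{eq:TackMC}, \eqref{eq:relativized-sym} was shown to hold in $\TF(\omega,\omega)$, the two fragments of $\TL_2$ contain $\LS{4}[2]$ and $\LS{5}$, and the compound modality $\Di_1\Di_2$ is an $\mathbf{S4}$-modality, which together with the commutativity and Church--Rosser properties of $R_1,R_2$ in $\TF(\omega,\omega)$ yields the commutator axioms. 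So both parts of the theorem reduce to: (a)~$L$ is canonical, and then (b)~$\TL_2\subseteq L$. I would prove (a) first and deduce (b) from it.

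For (b), assume (a). Then $L$ is the logic of its canonical frame $F_L$, hence of the point-generated subframes of $F_L$; each of these is a rooted $L$-frame, so by Lemma~\ref{lem:rootelTLframes} it is isomorphic to $\circ$ or to $\Tsum{C}$ for an $\LS5^2$-frame $C$. Since $\circ$ is (isomorphic to) a generated subframe of every $\Tsum{C}$, this gives $L\supseteq\Log\{\Tsum{C}\mid C\models\LS5^2\}$. Now apply Lemma~\ref{lem:LT-invariance} with $\clC$ the class of all $\LS5^2$-frames and $\clD=\{\rect{\omega}{\omega}\}$: both have logic $\LS5^2$ (the second by Theorem~\ref{thm:Segerberg2DimML}), so $\Log\{\Tsum{C}\mid C\models\LS5^2\}=\Log\{\Tsum{(\rect{\omega}{\omega})}\}=\Log(\TF(\omega,\omega))=\TL_2$. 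Hence $\TL_2\subseteq L$, and combined with $L\subseteq\TL_2$ we obtain $L=\TL_2$, which is statement~(1).

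For (a), I would show that every axiom of $L$ except \eqref{eq:TackMC} is Sahlqvist, and treat \eqref{eq:TackMC} by hand. The fusion $\fusion{\LS{4}}{\LS{5}}$ is canonical; over $\LK{4}$ the height axiom $B_2$ is equivalent to the Sahlqvist formula $\mathbf{bd}_2=\Di(\Box p_2\wedge\Di\Box p_1\wedge\lnot p_1)\to p_2$; and the commutator axioms $\com(a,b)=\Di_a\Di_b p\to\Di_b\Di_a p$, $\chr(a,b)=\Di_a\Box_b p\to\Box_b\Di_a p$ are Sahlqvist, so $[\LS{4}[2],\LS{5}]$ is canonical. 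The axiom \eqref{eq:TL-CR}, i.e.\ $\Di_1\Di_2\Box_1\Box_2 p\to\Box_1\Box_2\Di_1\Di_2 p$, and \eqref{eq:relativized-sym}, rewritten as $\Di_1\Box_1 q\wedge\lnot q\wedge p\to\Box_1(q\vee\Di_1 p)$, are Sahlqvist implications, hence canonical. This leaves \eqref{eq:TackMC}$=\Box_1\Box_2\Di_1\Di_2 p\to\Di_1\Di_2\Box_1\Box_2 p$, a McKinsey-type axiom for the compound $\mathbf{S4}$-modality $\Di=\Di_1\Di_2$: by the Sahlqvist axioms already handled, in $F_L$ the relation $R=R_1\circ R_2$ is reflexive, transitive, and of height $\le 2$, and one checks that \eqref{eq:TackMC} holds in $F_L$ — equivalently, that \eqref{eq:s4.1-condition} holds for $R$ in $F_L$ — by the classical argument establishing canonicity of $\LS{4.1}$ (and of its finite-height companions $\LS{4.1}[h]$), which, using maximality of consistent sets and the validity of \eqref{eq:TackMC} at each point, produces above every $x$ an $R$-terminal point. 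Since $F_L$ then validates all axioms of $L$, $L$ is canonical, proving statement~(2).

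The main obstacle is precisely that last step: \eqref{eq:TackMC} has the shape of the (non-canonical in isolation) McKinsey axiom, so canonicity cannot be read off mechanically and must be argued on the canonical frame, now inside the bimodal commutator — one must check that the $R$-terminal point supplied by the McKinsey argument is consistent with the relativized-symmetry condition \eqref{eq:relativized-sym} and with commutativity/Church--Rosser, so that Lemma~\ref{lem:rootelTLframes} genuinely applies to the point-generated subframes of $F_L$. A fallback that avoids this delicate interaction would be to prove Kripke completeness of $L$ directly by filtration onto finite frames of height $\le 2$ (using Corollary~\ref{cor:TL-fmp}) and to obtain canonicity separately; but the Sahlqvist-plus-McKinsey route above is the most economical if the interaction can be controlled.
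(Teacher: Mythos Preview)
Your proposal follows essentially the same route as the paper: show canonicity of $L$ axiom by axiom (Sahlqvist for everything except \eqref{eq:TackMC}, classical McKinsey-over-$\LK4$ argument for \eqref{eq:TackMC} applied to the compound modality $\Di_1\Di_2$), then use Lemma~\ref{lem:rootelTLframes} and Lemma~\ref{lem:LT-invariance} to identify the logic of the rooted $L$-frames with $\TL_2$. The worry you flag as the ``main obstacle'' is not one: canonicity of the McKinsey axiom over a transitive base is a known result that the paper simply cites, and it applies to the canonical frame of \emph{any} normal logic containing the relevant axioms, so there is no interaction with \eqref{eq:relativized-sym} or the commutator axioms to control---each Sahlqvist condition and the McKinsey condition hold on $F_L$ independently. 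One small correction: your rewrite of \eqref{eq:relativized-sym} still has $\lnot q$ in the antecedent, which is not Sahlqvist; move it to the consequent to obtain $\Di_1\Box_1 q\wedge p\to q\vee\Box_1(q\vee\Di_1 p)$, as the paper does.
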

\begin{proof} 
Let $L$ be the mentioned axiomatic extension of $[\LS{4[2]}, \LS{5}]$. 

Firstly, we show that $L$ is canonical. 
The axioms of $[\LS{4}[2],  \LS{5}]$ are known to be canonical, in particular, canonicity of the finite height formula is given in \cite{Seg_Essay}.\ISLater{theorem}
 It is immediate that  \eqref{eq:relativized-sym} is equivalent 
to the Sahlqvist formula 
$$
\Di_1\Box_1 q \con p\imp  \Box_1( q \vee \Di_1 p)\vee p, 
$$
and so is canonical. 
The formula $\Di \Box p \imp  \Box \Di p$ is also Sahlqvist.

The formula $\Box\Di p \imp \Di \Box p$ 
is not a Sahlqvist formula. However, it is well-known that it is canonical in the transitive case \cite[Theorem 6.4]{SegerbS41}, which, in our setting, pertains to 
the composition $R_1\circ R_2$.  Hence, we have a singleton cluster above every point in the canonical frame of $L$. 

It follows that $L$ is canonical.

\smallskip 

Let us show that $L=\TL_2$. 
Since $L$ is canonical, it is Kripke complete, and so, in view of  Lemma \ref{lem:rootelTLframes}, 
$L$ is the logic of the class $\{\Tsum{C}\mid C\mo \LS{5}^2\}$.
From Lemma \ref{lem:LT-invariance}, 
the logic of this class is $\TL_2$.
\end{proof}

\begin{lemma}
$\TL_2$ is not locally tabular.     
\end{lemma}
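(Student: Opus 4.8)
The plan is to reduce non-local tabularity of \(\TL_2\) to that of \(\LS{5}^2\), which is classical \cite{CylindricalAlgebras1}. The key observation is that the rectangle \(\rect{\omega}{\omega}\) occurs inside the two-dimensional tack as a subframe: take \(Y = \omega\times\omega \subseteq \dom\TF(\omega,\omega)\) and restrict. By the definition of the semi-ordered sum, the only first-relation arrows leaving \(Y\) are those into the top point \(\circ\); these are discarded by the restriction, while all horizontal and vertical arrows inside the rectangle part are kept. Hence \(\TF(\omega,\omega)\restr Y = \rect{\omega}{\omega}\), so \(\rect{\omega}{\omega} \in \SubFrs\{\TF(\omega,\omega)\}\), and therefore \(\Log(\SubFrs\{\TF(\omega,\omega)\}) \subseteq \Log(\rect{\omega}{\omega})\).

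Next I would identify \(\Log(\rect{\omega}{\omega})\) with \(\LS{5}^2\) by Theorem \ref{thm:Segerberg2DimML}, applied with \(\clX = \clY = \{\omega\}\) (both families have infinite supremum of cardinalities). Since \(\LS{5}^2\) is not \(1\)-finite \cite[Theorem~2.1.11(i)]{CylindricalAlgebras1}, and passing to a smaller logic can only increase the number of pairwise non-equivalent formulas in one variable, the logic \(\Log(\SubFrs\{\TF(\omega,\omega)\})\) is not \(1\)-finite either. Then I would apply Proposition \ref{prop:LF-for-subframess} in contrapositive form with \(k = 2\): if \(\TL_2 = \Log\{\TF(\omega,\omega)\}\) were \(2\)-finite, then \(\Log(\SubFrs\{\TF(\omega,\omega)\})\) would be \(1\)-finite, a contradiction. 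Hence \(\TL_2\) is not \(2\)-finite, and so it is not locally tabular.

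I would also note an alternative route through Theorem \ref{thm:supple-clusters-crit}: the two-dimensional tack has uniformly finite height (\(h = 2\)), so \(\TL_2\) is locally tabular iff \(\Log\clusters{\{\TF(\omega,\omega)\}}\) is; but the bottom cluster of \(\TF(\omega,\omega)\) is exactly \(\rect{\omega}{\omega}\), whence \(\Log\clusters{\{\TF(\omega,\omega)\}} \subseteq \LS{5}^2\) is again not locally tabular, and the conclusion follows.

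I do not expect a genuine obstacle. The only step needing a line of care is the verification that \(\TF(\omega,\omega)\restr Y = \rect{\omega}{\omega}\) — that no vertical or horizontal arrow is lost and that the arrows removed are precisely those aimed at \(\circ\) — which is immediate from the definition of \(F \oplus G\).
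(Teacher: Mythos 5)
Your proof is correct and follows essentially the same route as the paper: the paper's own argument is exactly that $\rect{\omega}{\omega}$ is a subframe of $\TF(\omega,\omega)$ and then an appeal to Proposition \ref{prop:LF-for-subframess}, with your write-up merely filling in the details (the identification $\Log(\rect{\omega}{\omega})=\LS{5}^2$ via Theorem \ref{thm:Segerberg2DimML}, the failure of $1$-finiteness of $\LS{5}^2$, and the contrapositive use of the subframe proposition) that the paper leaves implicit. Your alternative via Theorem \ref{thm:supple-clusters-crit} is also sound, but it is not needed.
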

\begin{proof}
The frame $\TF(\omega,\omega)$ contains $\rect{\omega}{\omega}$ as a subframe. 
Now the statement follows from Proposition \ref{prop:LF-for-subframess}.
\end{proof}

\ISLater{Improve: make two lemmas: Glivenko, Nick}

\begin{lemma}\label{lem:ext-TL-complete}
Let $\clF$ be a class of bimodal frames such that $\Log\clF$ is a proper 
extension of $\TL_2$. Then $\Log\clF$
is locally tabular.
\end{lemma}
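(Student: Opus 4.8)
The plan is to reduce the statement to a claim about the clusters of the frames of $\clF$ via Theorem~\ref{thm:supple-clusters-crit}, and then to apply the prelocal tabularity of $\LS5^2$, that is, Theorem~\ref{thm:S5Nick}.

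First I would replace $\clF$ by the class $\clF'$ of all point-generated subframes of its members; this does not change the logic, $\Log\clF=\Log\clF'$, and every $G\in\clF'$ validates $\TL_2$ and is rooted, so by Lemma~\ref{lem:rootelTLframes} it is isomorphic either to $C_G\oplus\circ$ for some $\LS5^2$-frame $C_G$, or to $\circ$. If every member of $\clF'$ is isomorphic to $\circ$, then $\Log\clF=\Log\{\circ\}$ is tabular and there is nothing to prove; so assume otherwise and set $\clD=\{C_G\mid G\in\clF',\ G\not\cong\circ\}$. A direct inspection of $C\oplus\circ$ shows that its clusters, taken as subframes, are a copy of $C$ and a copy of $\circ$; hence $\clusters{\clF'}=\{\circ\}\cup\clD$ up to isomorphism, while $h(C\oplus\circ)=2$ and $h(\circ)=1$, so $\clF'$ is of uniformly finite height.

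The main step is to show that the inclusion $\LS5^2\subseteq\Log\clD$ is proper. That it holds at all is clear, since every $C\in\clD$ is an $\LS5^2$-frame. Suppose, towards a contradiction, that $\Log\clD=\LS5^2$. Since $\Log(\rect\omega\omega)=\LS5^2$ as well (Theorem~\ref{thm:Segerberg2DimML}), Lemma~\ref{lem:LT-invariance} gives
\[
  \Log\{\,\Tsum C\mid C\in\clD\,\}=\Log\{\Tsum(\rect\omega\omega)\}=\Log\TF(\omega,\omega)=\TL_2 .
\]
But $\{\,\Tsum C\mid C\in\clD\,\}$ coincides, up to isomorphism, with the subclass $\{G\in\clF'\mid G\not\cong\circ\}$ of $\clF'$, so its logic contains $\Log\clF'=\Log\clF$, which by hypothesis properly extends $\TL_2$; this contradiction shows $\LS5^2\subsetneq\Log\clD$.

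Then Theorem~\ref{thm:S5Nick} gives that $\Log\clD$ is locally tabular. Since $\Log\{\circ\}$ is tabular and $\Log\clD$ is locally tabular, their intersection $\Log(\{\circ\}\cup\clD)=\Log\clusters{\clF'}$ is locally tabular as well (the variety generated by the algebras of the frames in $\{\circ\}\cup\clD$ is generated by the union of two classes each generating a locally finite variety, so it is locally finite by Theorem~\ref{Malcev73}). As $\clF'$ is of uniformly finite height, Theorem~\ref{thm:supple-clusters-crit} yields that $\Log\clF=\Log\clF'$ is locally tabular, as required. The steps requiring care are the classification of the point-generated subframes and the identification of their clusters; the only genuinely new ingredient is the use of Lemma~\ref{lem:LT-invariance} to force the strict inclusion $\LS5^2\subsetneq\Log\clD$, after which everything follows from the quoted results.
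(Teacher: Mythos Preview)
Your proof is correct and follows essentially the same route as the paper: pass to point-generated subframes, classify them via Lemma~\ref{lem:rootelTLframes}, use Lemma~\ref{lem:LT-invariance} to force the logic of the bottom clusters to be a proper extension of $\LS5^2$, apply Theorem~\ref{thm:S5Nick}, and conclude with Theorem~\ref{thm:supple-clusters-crit}. You are in fact a bit more careful than the paper in handling the degenerate case where every rooted subframe is $\circ$ and in explicitly combining the two cluster classes via Theorem~\ref{Malcev73}.
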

\begin{proof}
Let \(\clG\) be the class of point-generated subframes of frames in $\clF$.
By Lemma \ref{lem:rootelTLframes}, every frame in $\clG$ is isomorphic to  $C\oplus \circ$ for an $\LS{5}^2$-cluster $C$. 
Let $\clC$ be the class of all such frames. 
Since $\Log\clF=\Log\clG$ is a proper 
extension of $\TL_2$,  then, in view of Lemma \ref{lem:LT-invariance}, $\Log(\clC)$  is a proper extension of 
$\LS{5}^2$. By Theorem \ref{thm:S5Nick}, 
the logic $L$ of this class is locally finite. So $\clC$ is a uniformly tunable class. Clearly, the top clusters in $\TL$-frames also form a uniformly tunable class, since they are singletons. Now the statement follows from Theorem 
\ref{thm:supple-clusters-crit}.
\end{proof} 

\begin{remark}
For a Kripke complete proper extension $L$ of $\LS{5}^2$, local tabularity can be obtained directly,  as a  corollary of  
Theorem \ref{thm:criterion-frames-general}.
See Appendix, Section \ref{subs:Prod}. 
\end{remark}

\begin{theorem}
$\TL_2$ is prelocally tabular. 
\end{theorem}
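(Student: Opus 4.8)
The plan is to combine the two preceding lemmas — that $\TL_2$ is not locally tabular, and Lemma~\ref{lem:ext-TL-complete}, which establishes local tabularity for every proper extension of $\TL_2$ of the form $\Log\clF$. Since $\TL_2$ itself is not locally tabular, it suffices to prove that an arbitrary proper normal extension $L$ of $\TL_2$ is locally tabular; we may assume $L$ is consistent, the inconsistent logic being trivially locally tabular.

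The key step is to show that $L$ is Kripke complete, for then $L=\Log\clF$ where $\clF$ is the class of all $L$-frames, $\Log\clF=L$ properly extends $\TL_2$, and Lemma~\ref{lem:ext-TL-complete} applies directly. To get Kripke completeness of $L$, I would argue as follows. By Theorem~\ref{thm:TL2-canon}, $\TL_2$ is canonical, and the canonical frame of $L$ is a generated subframe of that of $\TL_2$; in particular it validates $\TL_2$, and by Lemma~\ref{lem:rootelTLframes} its point-generated subframes are of the form $\Tsum{C}$ with $C\mo\LS{5}^2$, or the single reflexive point $\circ$. Appealing to the sum-decomposition machinery used in the proof of Lemma~\ref{lem:LT-invariance} (from \cite{AiML2018-sums}, with the universal modality realized as $\Di_1\Di_2$ on $\LS{5}^2$-clusters), the logic $L$ is determined by the logic $L'\supseteq\LS{5}^2$ that its bottom clusters induce on $\LS{5}^2$-clusters: $L=\Log\{\Tsum{C}\mid C\mo L'\}$. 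Since $L$ properly extends $\TL_2$, the logic $L'$ properly extends $\LS{5}^2$, hence is locally tabular by Theorem~\ref{thm:S5Nick}, and therefore has the finite model property by Proposition~\ref{prop:LT_implies_fmp+extensions}; consequently $L=\Log\{\Tsum{C}\mid C\mo L'\}$ is the logic of a class of Kripke frames, i.e., $L$ is Kripke complete.

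With Kripke completeness of $L$ in hand, Lemma~\ref{lem:ext-TL-complete} gives that $L$ is locally tabular, which proves that $\TL_2$ is prelocally tabular. One could also bypass Lemma~\ref{lem:ext-TL-complete} and argue directly via Theorem~\ref{thm:supple-clusters-crit}: $L$-frames have height at most $2$ by Lemma~\ref{lem:rootelTLframes}, their top clusters are singletons, and their bottom clusters form the uniformly tunable class associated with the locally tabular logic $L'$.

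The step I expect to be the main obstacle is the middle one: showing that $L'\mapsto\Log\{\Tsum{C}\mid C\mo L'\}$ is an order-isomorphism between the normal extensions of $\LS{5}^2$ and those of $\TL_2$ — equivalently, that every normal extension of $\TL_2$ is Kripke complete and arises from its cluster part in this way. This is exactly where the results of \cite{AiML2018-sums} on logics of sums, and possibly a Glivenko-style transfer of consistency as in \cite{Glivenko2021}, are needed; everything else is routine given the machinery already set up in the paper.
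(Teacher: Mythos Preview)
Your proposal has a genuine gap at exactly the point you flag as the obstacle, and the tools you cite do not close it. Lemma~\ref{lem:LT-invariance} (and the machinery from \cite{AiML2018-sums} behind it) transfers \emph{semantic} information: given two classes of $\LS5^2$-clusters with the same logic, the corresponding classes of tacks have the same logic. It says nothing about an arbitrary \emph{syntactic} extension $L\supseteq\TL_2$; in particular, it does not give you a well-defined $L'\supseteq\LS5^2$ with $L=\Log\{\Tsum{C}\mid C\models L'\}$, nor does it establish Kripke completeness of $L$. If $\Log F_L=\TL_2$ while $L\supsetneq\TL_2$ (exactly the Kripke-incomplete scenario you need to rule out), your argument stalls: you cannot infer that the ``induced cluster logic'' properly extends $\LS5^2$, and Lemma~\ref{lem:ext-TL-complete} is unavailable. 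The order-isomorphism you want is in fact a \emph{consequence} of the theorem, not a lemma toward it.

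The paper takes a different route that sidesteps Kripke completeness entirely. It assumes $L$ is \emph{not} locally tabular, passes to a $k$-generated canonical frame $F_L$ (so an infinite $L$-algebra of \emph{definable} subsets is available), and argues that $F_L$ must contain an infinite $\LS5^2$-cluster $C$. The subalgebra of the powerset of $C$ generated by the canonical valuation is then an infinite finitely generated $\LS5^2$-algebra, and \cite[Claim~4.7]{NickS5} embeds every finite rectangle algebra $A_m$ into it. Because the top of the tack is definable (via \cite{Glivenko2021}), each such embedding extends to an embedding of the algebra of $\TF(m,m)$ into an $L$-algebra; by Corollary~\ref{cor:TL-fmp} this forces $L\subseteq\TL_2$, hence $L=\TL_2$. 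The crucial difference is that the paper works with the algebra of definable sets in the canonical model rather than with validity in the canonical frame, which is precisely what lets it avoid the completeness question you could not resolve.
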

\begin{proof}
Let $L$ be a non-locally tabular extension of $\TL_2$. 
We show that $L=\TL_2$.  

For some finite $k$, the $k$-generated canonical Kripke frame $F_L$ of $L$
is infinite.\ISLater{define}

Let us list some properties of $F_L$.  Since $\TL_2$ is canonical (Theorem \ref{thm:TL2-canon}), and $L$ contains $\TL_2$, $F_L$ is a $\TL_2$-frame.\ISLater{Earlier argument was: We have $\Log(F_L)\subseteq L$, so $\Log(F_L)$ is not locally tabular. I am not sure if $\Log(F_L)\subseteq L$.}  
The top clusters in $F_L$ are singletons. Let $\clC$ be the family of non-top clusters in $F_L$. 
In view of Lemma \ref{lem:rootelTLframes}, $\clC$ are $\LS{5}^2$-frames.  
We also have:
\begin{equation}\label{eq:tack-clusterD}
\text{For  $C\in \clC$, there is a unique cluster $D$ above $C$, and $D$ is a singleton.     }
\end{equation} 

\hide{
The powerset algebra of $F_L$ contains an infinite $k$-generated subalgebra, and so 
$\Log(F_L)$ is not locally tabular.
Since $F_L$ is a frame of  finite height, by Theorem \ref{thm:supple-clusters-crit},
the logic $L'$ of  $\clC$ is not locally tabular. Hence,  $L'$ is $\LS{5}^2$. 
}
Now we claim that $\clC$ contains an infinite cluster. 
For the sake of contradiction, suppose that all clusters in $\clC$ are finite. 
In this case, all point-generated subframes if $F_L$ are finite, and it is straightforward 
that $\Log(F_L)=L$. But in this case, $L$ is locally tabular by Lemma \ref{lem:ext-TL-complete}. This contradiction proves that $\clC$ contains an infinite cluster.

\ISLater{Perhaps, We can use the RPP-criterion} 

\ISLater{In the submitted version: ``$F_L$ contains an infinite cluster $C$.'' But in fact, I do not see why.  (Is it true at all?) It contains clusters of arbitrary size, this is true. A possible fix: build p-morphism from each of them. However, in this case  
we cannot quote Nick's lemma directly, we need to restate it. Again:) 
Perhaps, a simpler fix is to use an rpp-criterion. But how do we know that $\TL_2$ conatins 
the product? }

Let $C$ be an infinite cluster in $\clC$, and let $D$ be the cluster above $C$. Put $X=C\cup D$. Clearly, \ISLater{Details} 
the restriction of $F=F_L\restr X$ is a generated subframe of $F_L$.

Consider the extension $\TL_2[1]$   of $\TL_2$ with the axiom of height 1, which makes 
the composite relation $R_1\circ R_2$ equivalence. Clearly, $\TL_2[1]$ is locally tabular (even tabular, since this is the logic of the singleton $\circ$). 
It follows that the set $T$ of points in top clusters in $F_L$
is definable \cite[Section 4]{Glivenko2021}: for a formula $\psi_T$, 
\begin{equation}
\psi_T\in x \text{ iff } x\in T    
\end{equation}

Hence, for every $x$ in $X$:
\begin{equation}\label{eq:tack:defTop}
  x\in C \text{ iff } \psi_T\notin x
\end{equation}

Let $\Psi$ be the set of all bimodal formulas in $k$ variables.  
For a formula $\psi\in \Psi$, let $\theta(\psi)=\{x  \in X \mid \psi\in x\}$, 
$\eta(\psi)=\{x  \in C \mid \psi\in x\}$.

Let $A$ denote the powerset modal algebra of the cluster-frame $F\restr C$, and let $B$ 
be its subalgebra generated by the sets $\eta(p_i)$, $i<k$.

By a straightforward induction on the structure of formulas, 
we have: 
\begin{equation}\label{eq:tack:defSets}
\text{ For every $b\in B$, there is $\psi\in\Psi$ such that $b=\theta(\psi)\cap C$.}
\end{equation}

Since $C$ is infinite, $B$ is infinite as well: 
any two distinct points in $C$ are separable by some formula.\ISLater{Details} 
Since $C$ is an $\LS{5}^2$-frame,   $B$ is an $\LS{5}^2$-algebra. \ISLater{Define in prel}
Let $A_m$ denote the powerset modal algebra of the rectangle $\rect{m}{m}$.
It follows from \cite[Claim 4.7]{NickS5} that all finite $A_m$ are embeddable in $B$. Let $A_m\cong B_m\subset A$, and let $f_m$ be the corresponding p-morphism  $F\restr C\toto \rect{m}{m}$.

\ISLater{\VS{
We proceed by showing that the formulas from~\(\Psi\) distinguish the elements of~\(B_m\).
Indeed, let~\(b \in B_m\).
By~\eqref{eq:tack:defSets} there exists a formula~\(\psi\in \Psi\) such that \(b = \theta(\psi)\cap C.\)
By~\eqref{eq:tack:defTop}, the formula \(\psi\wedge \neg \psi_T\) characterizes~\(b\) in~\(F.\)
}}
Let $b_{ij}$, $i,j<m$ be the atoms of $B_m$. By \eqref{eq:tack:defSets}, 
there are formulas $\psi_{ij}$ such that $b_{ij}=\theta(\psi_{ij})\cap C$. 
By \eqref{eq:tack:defTop}, $b_{ij}$ is definable in $F$ by the formula 
$\psi_{ij}\wedge \neg \psi_T$. 
Let $g_m:F\to \TF{(m,m)}$ 
be the extension of $f_m$ that maps the top to the top. 
Clearly, $g_m$  is a p-morphism. Since all preimages $g_m^{-1}(i,j)$
are definable in $F$ by $\psi_{ij}\wedge \neg \psi_T$,\ISLater{formally, indexing is not defined} 
the algebra of $\TF(m,m)$ 
is embeddable in the algebra of sets $\theta(\psi)$, \(\psi\in\Psi\). The latter algebra is an $L$-algebra, since $F$ is a generated subframe of $F_L$. 

So the algebras of all frames $\TF(m,m)$  are $L$-algebras, and due to Corollary \ref{cor:TL-fmp}, 
$L\subseteq \TL_2$. Consequently, $L=\TL_2$. 
\end{proof}


\hide{

 \bigskip 
 ....

Let $\clV=\{\eta(\psi) \mid \psi\in\Psi\}$. Since $C$ is infinite,  $\clV$ is infinite as well.\IS{details?} 

We claim that $B$ is finitely generated.

({\em a component}, in terms of \cite{NickS5}.)

and let $\clV$ be all these evaluations of formulas in $k$ variables. Then $\clV$ is infinite.\ISLater{details}

Let $F_C$ be the restriction of $F$ to $C$, and let 
$A(F_C)$ be its powerset algebra. It is straightforward that 
$A(F_C)$ contains a finitely generated infinite subalgebra $B$ of .....\IS{Details}
It follows from \cite[Claim 4.7]{NickS5} that 
in this case, there is a subalgebra of definable .... 
which is isomoriphic to the powerset algebra of $\rect{m}{m}$...
}

 \hide{

\begin{lemma}
Let $L$ be an extension of $\TL_2$. If 
 \(\Log{F_L} = \TL_2\), then $L=\TL_2$. 
\end{lemma}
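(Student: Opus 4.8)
The plan is to prove the inclusion $L\subseteq\TL_2$; together with the hypothesis that $L$ extends $\TL_2$ this gives $L=\TL_2$. I would argue by contradiction: suppose $L$ is a \emph{proper} extension of $\TL_2$. Since $\TL_2$ is prelocally tabular (the theorem preceding this lemma), $L$ is then locally tabular; in particular $L$ is $k$-finite, where $k$ is the index for which $F_L$ is the $k$-generated canonical frame of $L$.

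Next I would invoke the criterion already used in the proof of prelocal tabularity: the $k$-generated canonical frame of a logic is finite precisely when that logic is $k$-finite. Hence $F_L$ is finite, so $\Log F_L$ is the logic of a single finite frame, i.e.\ tabular, hence locally tabular. On the other hand $\TL_2$ is \emph{not} locally tabular: $\TF(\omega,\omega)$ contains $\rect{\omega}{\omega}$ as a subframe, $\Log\{\rect{\omega}{\omega}\}=\LS5^2$ is not $1$-finite by \cite[Theorem~2.1.11(i)]{CylindricalAlgebras1}, and therefore $\TL_2$ is not $2$-finite by Proposition~\ref{prop:LF-for-subframess}. So $\Log F_L\neq\TL_2$, contradicting the hypothesis; hence $L=\TL_2$.

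Thus, modulo the routine fact that $k$-finiteness of a logic is equivalent to finiteness of its $k$-generated canonical frame, the lemma is a corollary of the prelocal tabularity of $\TL_2$ together with the non-local-tabularity of $\TL_2$ itself, so on this route there is no serious obstacle. If one wants a proof that does \emph{not} presuppose the prelocal tabularity theorem, one can instead replay its argument: the hypothesis forces $F_L$ to be an infinite $\TL_2$-frame (it is a $\TL_2$-frame since $\TL_2$ is canonical, Theorem~\ref{thm:TL2-canon}), so by Lemma~\ref{lem:rootelTLframes} its point-generated subframes are $\circ$ or $C\oplus\circ$ with $C\models\LS5^2$; if all such $C$ were finite then all point-generated subframes of $F_L$ would be finite, forcing $\Log F_L=L$ and hence, by Lemma~\ref{lem:ext-TL-complete}, making $L$ locally tabular, a contradiction with $\Log F_L=\TL_2$; so $F_L$ has an infinite $\LS5^2$-cluster, and, using the rectangle embedding of \cite[Claim~4.7]{NickS5} and definability of the top points (\cite{Glivenko2021}), one builds for each $m<\omega$ a p-morphism from a generated subframe of $F_L$ onto $\TF(m,m)$ with all fibres definable in $F_L$, so that the complex algebra of $\TF(m,m)$ embeds into an $L$-algebra and $\TF(m,m)\models L$, whence $L\subseteq\TL_2$ by Corollary~\ref{cor:TL-fmp}. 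In this second route the step I expect to be the main obstacle is precisely the bookkeeping guaranteeing that each fibre of these p-morphisms is cut out by a formula, i.e.\ that the algebra of $F_L$-definable sets really contains a copy of every finite two-dimensional tack algebra.
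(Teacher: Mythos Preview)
The lemma sits in a hidden block in the paper with only an unfinished sketch; its content was ultimately absorbed into the prelocal-tabularity theorem that follows it.

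Your Route~1 is circular in the paper's ordering: the lemma \emph{precedes} the prelocal-tabularity theorem and was meant to feed into it, not the other way around. There is also a gap in the specific mechanism you chose: your finiteness argument presumes $F_L$ is a $k$-generated canonical frame, but the paper's sketch treats $F_L$ as the full canonical frame (it invokes canonicity of formulas), and for the full frame a locally tabular $L$ still has infinite $F_L$. The repair is easy once one grants prelocal tabularity: a locally tabular $L$ contains a product rpp formula (Proposition~\ref{prop:lt_implies_rpp}), which is canonical (Proposition~\ref{prop:rpp-canon}), hence valid in $F_L$, hence in $\Log F_L=\TL_2$; but $\TL_2$ contains no such formula since $\TF(\omega,\omega)$ refutes every $\RP_m(1,1)$. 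This repaired Route~1 is in fact precisely the opening move of the paper's sketch, which begins by deducing that no product rpp formula lies in $L$.

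Your Route~2 matches what the paper actually carried out in the prelocal-tabularity proof. The paper's abandoned sketch for the lemma took a slightly different path to the same destination: rather than first locating an infinite cluster, it tried to extract, from the failure of every $\RP_n$ in $L$, a family of $n$ definable subsets of $F_L$ pairwise unrelated by both $R_1$ and $R_2$, and then invoke \cite[Claim~4.7]{NickS5}. That step was never completed (the sketch trails off with author notes), and the infinite-cluster route you outline is the cleaner argument the paper ultimately adopted. Your identification of the definable-fibre bookkeeping as the main obstacle is accurate.
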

\begin{proof}  
$F_L=(X,R_1,R_2)$. 
Since $\Log{F_L}=\TL_2$, no product rpp formula is valid in $F_L$.\IS{Explain better why. Use what?} 
These formulas are canonical, and so none of them belong to $L$. 
It follows that for every $n$, there are 
formulas $\psi_1,\ldots, \psi_n$ and definable subsets 
$V_i=\{x \text{ in }X \mid \psi_n \in x   \}$  of $F_L$
such that for distinct $V_i,V_j$, for all $x\in V_i$, $y\in V_j$, 
$(x,y)\notin R_1$ and $(x,y)\notin R_2$. 
\ISLater{Details?} 
It follows from \cite[Claim 4.7]{NickS5} that 
in this case, there is a subalgebra of   
\IS{....}
\end{proof}
 
 }
\section{Conclusion}

\subsection{Summary of the results}

\begin{enumerate}
    \item The main result of this paper is the  criteria obtained in Section \ref{sec:criteria}. 
    \item In Section \ref{sec:examples}, the criteria are applied  to describe new families of locally tabular products. 
    In particular, we 
generalized  results from \cite{Shehtman2012,Shehtman2018}.
\item In Section \ref{sec:pfmp}, we discussed the product finite model property. In particular, we showed 
  that the local tabularity of a product logic does not imply the product fmp, even in the case of height 3. 
\item We showed in Theorem \ref{thm:rppCriterionS41} that the product rpp formula gives an  axiomatic criterion of local tabularity  for all extensions of 
$\LS{4}.1 [ 2 ]\times \LS{5}$. 
\item Finally, we described a prelocally tabular extension of $\LS{4}\times\LS{5}$ of height 2.  
\end{enumerate}

\subsection{Open problems}

\subsubsection{Criterion for extensions of products}
According to Theorem \ref{thm:criterion-frames-general}, 
local tabularity of the factors and a product rpp formula are equivalent to the local tabularity of the product. In particular, in the case of  transitive logics of finite height, 
local tabularity of the product is equivalent to the product rpp due to Corollary \ref{cor:criterion-SegMaks}. 
These facts do not give a criterion 
for normal extensions of a product.
Unlike these facts, Theorem 
\ref{thm:rppCriterionS41}
gives an axiomatic criterion for all extensions of $\LS{4}.1 [ 2 ]\times \LS{5}$.
We conjecture that Theorem \ref{thm:rppCriterionS41} can be generalized for a wider class of logics.\footnote{
In a very recent preprint  \cite{Meadors_MS4_Arxiv}, 
it was announced that 
Theorem \ref{thm:rppCriterionS41} 
extends to logics above  \(\LS4[2]\times \LS5\). 
\hide{
    By Theorem~\ref{thm:rpp_transfer}, this result implies that the criterion also holds for~\(\LS{4.1}[3]\times \LS5.\) 
    
    After the first version of our manuscript had appeared online, 
    the result that the product rpp criterion  holds for~\(\LS4[2]\times \LS5\) was announced 
    in a recent preprint  \cite{Meadors_MS4_Arxiv}.

    }
}    

\smallskip 
{\bf Problem.} 
Let $S4[h]$ be the extension of $\LS{4}$ with the axiom of finite height $B_h$.
Does the  product rpp criterion hold for every extension $L$ of $\LS{4}[h]\times \LS{5}$?  

 If not, what is the largest such $h$?     

 \smallskip

\hide{
following equivalence hold for every extension $L$ of $\LS{4}[h]\times \LS{5}$? 
\begin{center}
$L$ contains a product rpp formula iff  $L$ is locally tabular. 
\end{center} 

}

\hide{
\IS{Move to remark}
\begin{remark}
    After the first version of our manuscript had appeared online, 
    the result that the product rpp criterion  holds for~\(\LS4[2]\times \LS5\) was announced 
    in a recent preprint  \cite{Meadors_MS4_Arxiv}.
    By Theorem~\ref{thm:rpp_transfer}, this result implies that the criterion also holds for~\(\LS{4.1}[3]\times \LS5.\)
\end{remark}

\IS{Or:
 A generalization of Corollary \ref{cor:rppCriterionS41}...
}
}

\improve{
\ISH{ 
Since the converse implication is given by Corollary \ref{cor:criterion-logics-general}, 
the affirmative answer would give an axiomatic criterion of local tabularity above $\LS{4}\times \LS{5}$. -- does Corollary \ref{cor:criterion-logics-general} really applies? 
}
}
\extended{ 
\ISH{Seems that the same trick works for the logic of the flipped tack}
} 
    
\subsubsection{Prelocal tabularity in products}

It is a well-known open problem whether every non-locally tabular 
unimodal logic is contained in a prelocally tabular \cite[Problem 12.1]{CZ}. 
For the case of logics  above $\LS{4}\times \LS{4}$, 
this question is also open. We conjecture that this is true for logics above $\LS{4}\times \LS{5}$. 

\smallskip

\subsubsection{Local tabularity in the symmetric case}

One of the central tools for our results was the criterion given in Theorem \ref{thm:supple-clusters-crit} \cite{LocalTab16AiML}, which, informally, says, that 
under the necessary conditions of pretransitivity and finite height, only local tabularity on clusters matters. While this criterion has  many applications,  there is an obvious limitation for this approach. 
Namely, even in the unimodal case, if the relation is symmetric, then every point-generated frame consists of one cluster, and no additional structure is given by the skeleton construction. 
Hence, it is of definite interest to study local finiteness of symmetric relations. 
Under the necessary conditions of local finiteness, this means that we are interested in graphs of finite diameters. No axiomatic criterion 
is known for this case,  and it is  not guaranteed that 
an explicit axiomatic characterization of local finiteness is possible in this case.
\ISLater{
Another characterization of this case is given in \cite{KrachtKovalsky}: ..... (semisimple staff). 
In a recent paper \cite{GuramMS4}, it was shown that symmetric logics can be interpreted in 
products of height 3 (DC details). So the question about locally finite symmetric logics is not simpler than the one on products above (details). And of course, there is no garrantee that 
an explicit characterization of local finiteness is possible by providing a recursive set of modal formulas. 
}

\ISLater{
(Some old staff:

By a straightforward induction on $n$, Corollary  \ref{cor:criterion-logics-general} generalizes to the following fact. 
\begin{corollary} Let $L_1,\ldots, L_n$  be Kripke complete consistent logics.
\ISH{Disjoint languages?}
TFAE:
\begin{enumerate}[(i)]
\item $L_1\times (L_2 \times (L_3 \times \ldots \times L_n)\ldots)$  is locally tabular;
\item $L_1,\ldots,L_n$ are locally tabular and at most one of them has
no bounded cluster property.
\end{enumerate}    
\end{corollary}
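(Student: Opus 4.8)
The plan is to prove the corollary by induction on $n$, with Corollary~\ref{cor:criterion-logics-general} doing all the real work. For $n=1$ both sides just say ``$L_1$ is locally tabular'' (the ``at most one'' clause being vacuous), and for $n=2$ the statement is literally Corollary~\ref{cor:criterion-logics-general}, since for two logics ``at most one has no bounded cluster formula'' is the same as ``at least one contains a bounded cluster formula''. So the content is entirely in the inductive step.

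For the step, fix $n\ge 3$, assume the statement for $n-1$ logics, and write $M=L_2\times(L_3\times\ldots\times L_n)$, so the logic under consideration is $L_1\times M$ (here, as always for products, I assume the alphabets of $L_1,\ldots,L_n$ are pairwise disjoint, renaming modalities in the unimodal case). First I would verify that $M$ is again a Kripke complete consistent logic, so that Corollary~\ref{cor:criterion-logics-general} may be re-applied to the pair $L_1,M$: $M$ is by definition the logic of a class of product frames, which gives Kripke completeness, and that class is non-empty because a finite product of non-empty frames is non-empty, which gives consistency. Corollary~\ref{cor:criterion-logics-general} then tells us that $L_1\times M$ is locally tabular iff $L_1$ and $M$ are locally tabular and at least one of $L_1,M$ contains a bounded cluster formula, while the induction hypothesis tells us that $M$ is locally tabular iff $L_2,\ldots,L_n$ are all locally tabular and at most one of them fails to contain a bounded cluster formula.

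The remaining ingredient is the behaviour of the bounded cluster condition under products. By Proposition~\ref{prop:corner} (exactly as used in the proof of Theorem~\ref{thm:criterion-bcp}: $\clusters{\clF\times\clG}=\clusters{\clF}\times\clusters{\clG}$), a cluster of a product frame is a product of clusters of the factors, and since every frame has clusters of size at least $1$, the cluster sizes in a product are bounded iff the cluster sizes in each factor are bounded; translating through the correspondence between bounded cluster formulas and bounded clusters of Kripke complete pretransitive frames, this says that $M$ contains a bounded cluster formula iff each of $L_2,\ldots,L_n$ does. Substituting this and the induction hypothesis into the equivalence supplied by Corollary~\ref{cor:criterion-logics-general}, a short case distinction on whether $L_1$ contains a bounded cluster formula turns the combined condition ``$L_1$ or $M$ contains a bounded cluster formula, and $L_1,\ldots,L_n$ are locally tabular with at most one of $L_2,\ldots,L_n$ lacking one'' into ``$L_1,\ldots,L_n$ are locally tabular and at most one of them contains no bounded cluster formula'', in both directions; this finishes the induction.

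I do not expect a genuine obstacle — the paper already bills this as ``a straightforward induction on $n$''. The only places to be careful are (i) checking that the auxiliary logic $M$ satisfies the hypotheses of Corollary~\ref{cor:criterion-logics-general} (Kripke completeness and consistency), so that the induction engine can actually be restarted, and (ii) the combinatorics of propagating the ``at most one'' clause through the case split, which hinges on the multiplicativity of cluster sizes from Proposition~\ref{prop:corner}. If one prefers, the whole argument can instead be run with the purely semantic formulation (bounded cluster property of the class of frames of each $L_i$) and converted to the axiomatic formulation at the end using the definability of bounded cluster formulas.
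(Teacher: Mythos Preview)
Your proposal is correct and follows exactly the approach the paper indicates: the paper itself gives no detailed proof of this corollary, merely stating ``By a straightforward induction on $n$, Corollary~\ref{cor:criterion-logics-general} generalizes to the following fact,'' and your argument fills in precisely that induction, including the two points the paper leaves implicit (that the inner product $M$ is again Kripke complete and consistent, and that $M$ contains a bounded cluster formula iff every $L_2,\ldots,L_n$ does, via the multiplicativity of cluster sizes from Proposition~\ref{prop:corner}).
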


\ISH{Associativity and stuff}

\ISH{Product fmp and tensor product}  

\ISH{Generalization of \ref{thm:some-axiomatic-crit}}

}

\section{Acknowledgements}
We are grateful to Guram 
Bezhanishvili 
and Valentin Shehtman for valuable discussions. 

\bibliographystyle{amsalpha}

\bibliography{refs}

\providecommand{\bysame}{\leavevmode\hbox to3em{\hrulefill}\thinspace}
\providecommand{\MR}{\relax\ifhmode\unskip\space\fi MR }
\providecommand{\MRhref}[2]{%
  \href{http://www.ams.org/mathscinet-getitem?mr=#1}{#2}
}
\providecommand{\href}[2]{#2}
\begin{thebibliography}{GKWZ05}

\bibitem[BdRV01]{BDV}
Patrick Blackburn, Maarten de~Rijke, and Yde Venema, \emph{Modal logic},
  Cambridge University Press, 2001.

\bibitem[Bez98]{Guram98MIPCI}
Guram Bezhanishvili, \emph{Varieties of monadic {H}eyting algebras - part {I}},
  Studia Logica \textbf{61} (1998), no.~3, 367--402.

\bibitem[Bez01]{Bezhanishvili2001}
Guram Bezhanishvili, \emph{Locally finite varieties}, Algebra Universalis
  \textbf{46} (2001), no.~4, 531--548.

\bibitem[Bez02]{NickS5}
Nick Bezhanishvili, \emph{Varieties of two-dimensional cylindric algebras.
  {P}art {I}: {D}iagonal-free case}, Algebra Universalis \textbf{48} (2002),
  no.~1, 11--42.

\bibitem[BG98]{Guram-Revaz}
G.~Bezhanishvili and R.~Grigolia, \emph{Locally tabular extensions of {MIPC}},
  Advances in Modal Logic, 1998, pp.~101--120.

\bibitem[Blo80]{Blok1980}
W.~J. Blok, \emph{Pretabular varieties of modal algebras}, Studia Logica
  \textbf{39} (1980), no.~2, 101--124.

\bibitem[BM23]{Bezhanishvili2023local}
Guram Bezhanishvili and Chase Meadors, \emph{Local finiteness in varieties of
  {MS4}-algebras}, 2023, \url{https://arxiv.org/abs/2312.16754}.

\bibitem[BS12]{BurrisSankappanavar2012}
Stanley Burris and H.~P. Sankappanavar, \emph{A course in universal algebra.
  the millennium edition.}, Available at
  \url{https://www.math.uwaterloo.ca/~snburris/htdocs/UALG/univ-algebra2012.pdf},
  2012.

\bibitem[CZ97]{CZ}
Alexander Chagrov and Michael Zakharyaschev, \emph{Modal logic}, Oxford Logic
  Guides, vol.~35, Oxford University Press, 1997.

\bibitem[EM74]{EsakiaMeskhi1977}
Leo Esakia and Vyacheslav Meskhi, \emph{Five critical modal systems}, Theory of
  logical derivation. Abstracts of reports of USSR symposium, I, 1974,
  pp.~76--79.

\bibitem[GKWZ03]{ManyDim}
D.~Gabbay, A.~Kurucz, F.~Wolter, and M.~Zakharyaschev, \emph{Many-dimensional
  modal logics: Theory and applications}, Studies in logic and the foundations
  of mathematics, North Holland Publishing Company, 2003.

\bibitem[GKWZ05]{GabelaiaAtAlUndec2005}
David Gabelaia, Agi Kurucz, Frank Wolter, and Michael Zakharyaschev,
  \emph{Products of `transitive' modal logics}, J. Symbolic Logic \textbf{70}
  (2005), no.~3, 993--1021.

\bibitem[GP92]{GorankoPassy1992}
Valentin Goranko and Solomon Passy, \emph{Using universal modality: {G}ains and
  questions}, Journal of Logic and Computation \textbf{2} (1992), no.~1, 5--30.

\bibitem[GS98a]{GabbShehtProdI}
D.~Gabbay and V.~Shehtman, \emph{{Products of modal logics, part 1}}, Logic
  Journal of the IGPL \textbf{6} (1998), no.~1, 73--146.

\bibitem[GS98b]{GabbayShehtman-ProductsPartI}
Dov Gabbay and Valentin Shehtman, \emph{Products of modal logics, part 1.},
  Logic Journal of the IGPL \textbf{6} (1998), 73--146.

\bibitem[GSS14]{MLTensor}
Dov Gabbay, Ilya Shapirovsky, and Valentin Shehtman, \emph{Products of modal
  logics and tensor products of modal algebras}, Journal of Applied Logic
  \textbf{12} (2014), no.~4, 570--583.

\bibitem[Hem96]{SpaanUniv}
Edith Hemaspaandra, \emph{The price of universality}, Notre Dame J. Formal
  Logic \textbf{37} (1996), no.~2, 174--203.

\bibitem[HMT71]{CylindricalAlgebras1}
L.~Henkin, J.D. Monk, and A.~Tarski, \emph{Cylindric algebras}, Cylindric
  Algebras, no.~1, Elsevier Science, 1971.

\bibitem[Kra99]{KrachtBook}
Marcus Kracht, \emph{Tools and techniques in modal logic}, Elsevier, 1999.

\bibitem[Mak75a]{Maks1975LT}
L.~Maksimova, \emph{Modal logics of finite slices}, Algebra and Logic
  \textbf{14} (1975), no.~3, 304--319.

\bibitem[Mak75b]{MaksimovaPretab75}
L.~L. Maksimova, \emph{Pretabular extensions of {L}ewis's logic {$S4$}},
  Algebra i Logika \textbf{14} (1975), no.~1, 28--55, 117.

\bibitem[Mak89]{Maksimova89}
\bysame, \emph{Interpolation in modal, infinite-slice logics which contain the
  logic {K4}}, Tr. {Inst}. {Mat}. (Novosibirsk) \textbf{12} (1989), 72--91
  (Russian).

\bibitem[Mal73]{Malcev73}
A.~I. Mal’cev, \emph{Algebraic systems}, Die Grundlehren der mathematischen
  Wissenschaften 192, Springer-Verlag Berlin Heidelberg, 1973.

\bibitem[Mea24]{Meadors_MS4_Arxiv}
Chase Meadors, \emph{Local tabularity in {MS4} with {C}asari's axiom}, 12 2024,
  \url{https://arxiv.org/abs/2412.01026}.

\bibitem[RZ01]{ReynoldsZakh_Linear2001}
Mark Reynolds and Michael Zakharyaschev, \emph{{On the Products of Linear Modal
  Logics}}, Journal of Logic and Computation \textbf{11} (2001), no.~6,
  909--931.

\bibitem[Seg68]{SegerbS41}
Krister Segerberg, \emph{Decidability of {S4.1}}, Theoria \textbf{34} (1968),
  no.~1, 7--20.

\bibitem[Seg71]{Seg_Essay}
\bysame, \emph{An essay in classical modal logic}, Filosofska Studier, vol.13,
  Uppsala Universitet, 1971.

\bibitem[Seg73]{Segerberg2DimML}
\bysame, \emph{Two-dimensional modal logic}, Journal of Philosophical Logic
  \textbf{2} (1973), no.~1, 77--96.

\bibitem[Sha17]{Tacl2017}
Ilya Shapirovsky, \emph{Locally tabular polymodal logics}, 8th International
  Conference on Topology, Algebra, and Categories in Logic (TACL 2017), 2017.

\bibitem[Sha18]{AiML2018-sums}
Ilya~B. Shapirovsky, \emph{Truth-preserving operations on sums of {K}ripke
  frames}, Advances in Modal Logic, vol.~12, College Publications, 2018, ISBN
  978-1848902558, pp.~541--558.

\bibitem[Sha21]{Glivenko2021}
\bysame, \emph{Glivenko's theorem, finite height, and local tabularity},
  Journal of Applied Logics -- IfCoLog Journal \textbf{8} (2021), no.~8,
  2333--2347.

\bibitem[Sha23]{LTViaSums2022}
\bysame, \emph{Sufficient conditions for local tabularity of a polymodal
  logic}, Accepted to Journal of Symbolic Logic.
  \url{https://arxiv.org/pdf/2212.07213.pdf}, 2023.

\bibitem[She78]{Sheht-TwoDim78}
V.~B. Shehtman, \emph{Two-dimensional modal logic}, Mathematical notes of the
  Academy of Sciences of the USSR \textbf{23} (1978), no.~5, 417--424.

\bibitem[She12]{Shehtman2012}
V.~B. Shehtman, \emph{Squares of modal logics with additional connectives},
  Uspekhi Mat. Nauk \textbf{67} (2012), 129--186.

\bibitem[She18]{Shehtman2018}
Valentin Shehtman, \emph{Segerberg squares of modal logics and theories of
  relation algebras}, pp.~245--296, Springer International Publishing, Cham,
  2018.

\bibitem[SS16]{LocalTab16AiML}
Ilya~B. Shapirovsky and Valentin~B. Shehtman, \emph{Local tabularity without
  transitivity}, Advances in Modal Logic, vol.~11, College Publications, 2016,
  ISBN 978-1-84890-201-5, pp.~520--534.

\end{thebibliography}
\newpage

\section{Appendix}
\subsection{Proof of Theorem \ref{thm:supple-clusters-crit}}

\begin{lemma}\label{lem:generated-subframe-tunable}
  Let $\AlA$ be finite, \(F\)  an \(\AlA\)-frame such that:
  \begin{enumerate}[(a)]
    \item \(\dom F = X_1\cup X_2\) for disjoint $X_1$ and $X_2$, and
    \item \(F\restr X_1\) is a generated subframe of \(F,\) and   
    \item \(F\restr X_i\) is \(f_i\)-tunable for some monotone \(f_i:\:\omega\to \omega,\) \(i = 1, 2.\)
  \end{enumerate} 
  Then \(F\) is \(g\)-tunable for 
  \[
    g(n) = f_1(n) + f_2\left(n\cdot2^{f_1(n)\cdot |\AlA|} \right).
  \]
\end{lemma}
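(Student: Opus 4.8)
The plan is to construct, for an arbitrary finite partition $\clV$ of $F$ with $n=|\clV|$ blocks, a tuned refinement $\clU$ by treating $X_1$ and $X_2$ separately and then gluing the two pieces. The generated-subframe hypothesis (b) is what makes the gluing work, since it guarantees that no $R_\Di$-arrow leaves $X_1$, so the only genuinely new interaction is that of arrows running from $X_2$ into $X_1$.

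First I would restrict $\clV$ to $X_1$, obtaining a partition of $X_1$ with at most $n$ blocks. By $f_1$-tunability of $F\restr X_1$ and monotonicity of $f_1$, there is a refinement $\clU_1$ of $\clV\restr X_1$ that is tuned in $F\restr X_1$ and satisfies $|\clU_1|\le f_1(n)$. Because $F\restr X_1$ is a generated subframe, $R_\Di(a)=(R_\Di\restr X_1)(a)\subseteq X_1$ for every $a\in X_1$ and every $\Di\in\AlA$, so $\clU_1$ remains $R_\Di$-tuned when read inside $F$. Next, on $X_2$ I would pre-split $\clV\restr X_2$ by the \emph{profile towards $\clU_1$}: assign to $a\in X_2$ the family of those pairs $(\Di,U)\in\AlA\times\clU_1$ with $R_\Di(a)\cap U\neq\emp$. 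This colouring takes at most $2^{|\AlA|\cdot|\clU_1|}\le 2^{|\AlA|\cdot f_1(n)}$ values, so the common refinement of $\clV\restr X_2$ with it has at most $n\cdot 2^{|\AlA|\cdot f_1(n)}$ blocks; applying $f_2$-tunability of $F\restr X_2$ (again using monotonicity, now of $f_2$) yields a refinement $\clU_2$ of this partition, tuned in $F\restr X_2$, with $|\clU_2|\le f_2\bigl(n\cdot 2^{|\AlA|\cdot f_1(n)}\bigr)$.

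I would then set $\clU=\clU_1\cup\clU_2$. Since $X_1$ and $X_2$ are disjoint this is a partition of $\dom F$, every one of its blocks lies entirely in $X_1$ or entirely in $X_2$, it refines $\clV$ (because $\clU\restr X_i$ refines $\clV\restr X_i$ for $i=1,2$), and $|\clU|\le f_1(n)+f_2\bigl(n\cdot 2^{|\AlA|\cdot f_1(n)}\bigr)=g(n)$. The verification that $\clU$ is tuned in $F$ splits into cases according to where two blocks $U,U'$ with a witnessing arrow $aR_\Di b$, $a\in U$, $b\in U'$, sit: if $a\in X_1$ then necessarily $b\in X_1$, both blocks lie in $\clU_1$, and tunedness is inherited from $F\restr X_1$ via the identity $R_\Di(a)=(R_\Di\restr X_1)(a)$; if $a,b\in X_2$ then the arrow lives in $F\restr X_2$ and tunedness is inherited from $\clU_2$; if $a\in X_2$ and $b\in X_1$ then $U\in\clU_2$, $U'\in\clU_1$, and since all points of $U$ share $a$'s profile towards $\clU_1$, each of them has an $R_\Di$-successor in $U'$; and $a\in X_1$, $b\in X_2$ cannot occur.

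The main obstacle is precisely the mixed case $a\in X_2$, $b\in X_1$: tuning the arrows running from $X_2$ into $X_1$ cannot be carried out inside either restriction, and this forces one to refine $X_2$ by the $\clU_1$-profiles, which is exactly where the exponential factor $2^{|\AlA|\cdot f_1(n)}$ in $g$ comes from. Everything else is routine bookkeeping; the one point to be careful about is invoking monotonicity of $f_1,f_2$ when replacing the exact block counts of the intermediate partitions by the stated bounds.
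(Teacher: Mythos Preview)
Your proposal is correct and follows essentially the same approach as the paper's own proof: restrict $\clV$ to $X_1$ and tune there, then refine $\clV\restr X_2$ by the $\clU_1$-profile (the paper writes this as the intersection of the equivalences induced by the maps $\alpha_\Di(x)=\{U\in\clU_1\mid x\in R_\Di^{-1}[U]\}$), tune the result in $F\restr X_2$, and take the union. The case split in your tunedness verification, including the observation that $a\in X_1,\,b\in X_2$ is impossible, matches the paper's argument line by line; your explicit invocation of the monotonicity of $f_1,f_2$ is exactly what the paper uses implicitly when bounding $f_1(|\clV_1|)\le f_1(n)$ and $f_2(|\clS|)$.
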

\begin{proof}
Let \(\clV\) be a partition of \(\dom F,\,|\clV| = n < \omega.\) Then \(\clV\) induces partitions \(\clV_1,\,\clV_2\) of \(X_1\) and \(X_2,\) respectively. There exists a refinement \(\clU_1\) of   \(\clV_1\) such that $\clU_1$ is tuned in $F_1$  and 
\begin{equation}\label{eq:clusters-1}
|\clU_1| \le f_1(|\clV_1|) \le f_1(n). 
\end{equation}

For $\Di\in\AlA$, define \(\alpha_\Di:\:X_2  \to 2^{\clU_1}\) by letting 
  \[
    \alpha_\Di(x) = \{U\in \clU_1 \mid x\in R_\Di\inv[U]\}.
  \]
Let $\sim_\Di$ be the equivalence induced by $\alpha_\Di$; it has at most 
$2^{|\clU_1|}$ classes.
Hence, the equivalence $\sim\; = \;\bigcap\{ \sim_\Di\mid \Di\in \Al\}$ has at most 
$2^{f_1(n)\cdot |\AlA|}$ classes.    
Let $\equiv$ be the equivalence on $X_2$ whose quotient is 
$\clV_2$. 
Put $\clS=\fact{X_2}{(\sim\cap \equiv)}$. 
We have 
\begin{equation}\label{eq:clusters-2}
|\clS| \le |\clV_2| \cdot  2^{|\clU_1|\cdot   |\Al|}  \le n \cdot 2^{f_1(n)\cdot |\Al|},  
\end{equation}
where the latter inequality holds by \eqref{eq:clusters-1}.
Hence, there exists a refinement \(\clT\) of \(\clS\) of size at most 
   \(f_2\left(n\cdot 2^{f_1(n)\cdot |\AlA|}\right)\), which is tuned in $F_2$. 

Consider the partition $\clU=\clU_1 \cup \clT$ of $F$. Clearly, it refines 
$\clV$, and it has at most $g(n)$ elements in view of 
\eqref{eq:clusters-1} and \eqref{eq:clusters-2}.
So it remains to check that $\clU$ is tuned in $F$.

   Let \(xR_{\Di} y\) for some \(x,\,y\in X\) and \(\Di\in \AlA\), and let \(x'\in [x]_\clU.\) 
   We will show that \(x' R_\Di y'\) for some \(y'\in [y]_\clU\).  

   Assume that \(x\in X_1\). Then \(y\in X_1\), since \(F\restr X_1\) is a generated subframe of $F$. In this case, \(x\) and \(x'\) belong to the same element of \(\clU_1.\) Since \(\clU_1\) is tuned in $F_1$, \(x' R_{\Di} y'\) for some \(y'\in [y]_{\clU_1}=[y]_\clU\), as desired.

   Now assume \(x\in X_2\). Then \(x'\in [x]_\clT\), so \(\alpha_\Di(x) = \alpha_\Di(x').\) If 
   \(y\in X_1\), then \([y]_{\clU_1}\in \alpha_\Di(x) = \alpha_\Di(x')\), so  $x' R_\Di y'$
   for some $y'\in [y]_{\clU_1}=[y]_\clT$,
   as desired. If \(y\in X_2\), then \(x' R_{\Di} y'\) for some \(y'\in [y]_\clT=[y]_\clU\), since \(\clT\) is tuned in $F_2$.  
\end{proof}


\noindent
{\bf Proof of Theorem \ref{thm:supple-clusters-crit}.}
`Only if'. Finite height follows from Theorem \ref{thm:1-finite-to-m-h}.  
The logic of subframes of frames in $\clF$ is contained in the logic of the clusters $\clusters{\clF}$
and is locally tabular by Proposition \ref{prop:LF-for-subframess}.  Hence, the logic of $\clusters{\clF}$ is locally tabular. 

`If'.
Let $\clF_h=\{F\in \clF\mid h(F)\leq h\}$, $1\leq h<\omega$.  
By induction on $h$ we show that  $\clF_h$ is $g_h$-tunable for some monotone $g_h:\omega\to\omega$. 
Let $h=1$. The class $\clF_1$ consists of disjoint sums of frames in $\clusters{\clF}$, so $\Log\clF_1=\Log\clusters{\clF}$. Since $\Log\clF_1$ is locally tabular,  the class $\clF_1$ is $f$-tunable for some $f$ by Theorem~\ref{thm:LFviaTuned}. Put $g_1(n)=\max\{f(i)\mid i\leq n\}$; clearly, $\clF_1$ is $g_1$-tunable. 
Let $h>1$. 
By induction hypothesis, 
$\clF_{h-1}$ is $g_{h-1}$-tunable for some monotone $g_{h-1}$. 
Put $g_h(n) = g_1(n) + g_{h-1}\left(n\cdot2^{g_1(n)\cdot |\AlA|} \right).$ 
Consider $F\in \clF_{h}$. 
Let $F_1$ be the restriction of $F$ on its maximal clusters, 
$X_1=\dom F_1$, $X_2=(\dom F){\setminus}X_1$. If $X_2 =\emp$,\improve{\ISH{Here we pay a price for not considering empty frames}} then $F\in\clF_1$, and so is $g_1$-tunable; hence, $F$ is $g_h$-tunable. If $X_2 \neq \emp$, $F$ is $g_h$-tunable by Lemma   \ref{lem:generated-subframe-tunable}.

Since $\clF=\clF_h$ for some $h$,  $\clF$ is uniformly tunable, and hence its logic is locally tabular by  Theorem~\ref{thm:LFviaTuned}.
\qed

 \smallskip

\subsection{Sums}\label{sub:sums}  

\begin{definition}\label{def:sum-over-frame}
Consider an $\Al$-frame $I=(Y,(S_\Di)_{\Di\in \Al})$
and a family $(F_i)_{i\in Y}$ of $\Al$-frames
$F_i=(X_i,(R_{i,\Di})_{\Di\in \Al})$.
The {\em sum
 $\LSum{I}{F_i}$ of the family $(F_i)_{i\in Y}$ over~$I$}
 is the $\Al$-frame $(\bigsqcup_{i \in Y} X_i, (R^\Sigma_\Di)_{\Di\in \Al})$,
 where
 $\bigsqcup_{i\in Y}{X_i}=\bigcup_{i\in Y}(\{i\}\times X_i)$,\extended{is the  disjoint union of sets $X_i$} and
$$(i,a)R^\Sigma_\Di (j,b) \quad \tiff \quad (i = j \text{ and }a R_{i,\Di} b) \text{ or } (i\neq j \text{ and } iS_\Di j).$$
For  classes $\clI$, $\clF$  of $\Al$-frames,
let $\LSum{\clI}{\clF}$  be the class of all sums
$\LSum{I}{F_i}$ such that $\frI \in \clI$ and  $F_i\in \clF$ for every $i$ in $\frI$.
\end{definition}

\begin{theorem}\cite{LTViaSums2022}\label{thm:sum}
Let $\clF$ and $\clI$ be classes of $\Al$-frames.
If the   logics $\Log{\clF}$ and
$\Log{\clI}$ are locally tabular, then  $\Log{\LSum{\clI}{\clF}}$  is locally tabular as well.
\end{theorem}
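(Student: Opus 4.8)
The plan is to prove $\Log(\LSum{\clI}{\clF})$ locally tabular by verifying, via Theorem~\ref{thm:LFviaTuned}, that the class $\LSum{\clI}{\clF}$ is uniformly tunable. From local tabularity of $\Log\clF$ and $\Log\clI$ we get (Theorem~\ref{thm:LFviaTuned}, passing to monotone majorants) functions $f,e$ with $\clF$ being $f$-tunable and $\clI$ being $e$-tunable, and, by Theorem~\ref{thm:1-finite-to-m-h}, uniformly bounded heights $h_\clF,h_\clI$. A short computation of $R^*$ in a sum (in the spirit of Proposition~\ref{prop:corner}) shows that the skeleton of $\LSum{I}{F_i}$ is obtained from $\Sk I$ by blowing up each singleton cluster $\{i\}$ into $\Sk F_i$ and leaving the nonsingleton clusters as single points, so $\LSum{\clI}{\clF}$ has height $\le h_\clI h_\clF$; the same computation shows that every cluster of $\LSum{I}{F_i}$ is either (isomorphic to) a cluster of one of the summands $F_i$, or (isomorphic to) a sum $\LSum{D}{(F_j)_{j\in D}}$ over a nonsingleton cluster $D$ of $I$, and that such a sum is again a cluster-frame. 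Hence by Theorem~\ref{thm:supple-clusters-crit} it suffices to show that $\Log\clusters{\LSum{\clI}{\clF}}$ is locally tabular. Since $\Log\clusters{\clF}$ is locally tabular (Theorem~\ref{thm:supple-clusters-crit} applied to $\clF$) and a finite union of classes with locally tabular logic again has locally tabular logic (Malcev's criterion, Theorem~\ref{Malcev73}), everything reduces to the single assertion: \emph{the class of all cluster-frames $\LSum{D}{(F_i)_{i\in D}}$, with $D$ a cluster from the uniformly tunable class $\clusters{\clI}$ and each $F_i$ from the uniformly tunable class $\clF$, is uniformly tunable.}

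For the reduced problem, given a partition $\clV$ with $n$ blocks of such a sum $S=\LSum{D}{F_i}$, the construction is: restrict $\clV$ to each fibre $X_i$ to get $\clV_i$ (at most $n$ blocks), $f$-tune inside $F_i$ to get $\clU_i$ (at most $f(n)$ blocks), and record the isomorphism type $\tau(i)$ of the $\clV_i$-labelled quotient $F_i/\clU_i$; up to isomorphism there are only $N(n)$ such labelled $\Al$-frames on $\le f(n)$ points, with $N(n)$ bounded in terms of $n$. Partition the index set of $D$ by $\tau$ and then $e$-tune inside $D$, obtaining a partition $\clW$ of $D$ that refines the $\tau$-partition, is tuned in $D$, and has at most $e(N(n))$ blocks. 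Over each $\clW$-block fix coherent identifications of the quotients $F_i/\clU_i$ ($i$ in that block) with one common labelled quotient $Q$, and declare $(i,a)\sim(j,b)$ in $S$ when $i,j$ lie in the same $\clW$-block and the $\clU_i$-class of $a$ and the $\clU_j$-class of $b$ are identified with the same point of $Q$. The induced partition refines $\clV$ and has at most $e(N(n))\cdot f(n)$ blocks. Checking it is tuned in $S$ splits into the ``within a fibre'' case — handled because each quotient map $F_i\toto F_i/\clU_i$ is a p-morphism, so the back condition transfers — and the ``across fibres'' case, where from $(i,a)\,R^{\Sigma}_\Di\,(j,b)$ with $i\ne j$ (so $i\,S_\Di\,j$), tunedness of $\clW$ in $D$ supplies an index $l$ in the $\clW$-block of $j$ with $i'\,S_\Di\,l$ for any prescribed $i'$ in the $\clW$-block of $i$, and surjectivity of $F_l\toto Q$ supplies the missing fibre-coordinate — \emph{except} in the degenerate case where the only such $l$ is $i'$ itself, i.e.\ where a self-loop of $S_\Di$ is the only $S_\Di$-successor of $i'$ inside its $\clW$-block while $i$ has a genuinely distinct one there.

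That degenerate case is the step I expect to be the main obstacle: the partition $\clW$ produced above is tuned in $D$ but does not ``see'' which index points have a distinct $S_\Di$-successor inside a block, whereas in $\LSum{D}{F_i}$ the effective $\Di$-neighbourhood of $(i,a)$ jumps to \emph{all} of each foreign fibre $X_j$ with $i\,S_\Di\,j$, $j\ne i$. The fix should be to strengthen the invariant $\tau$ to a fixpoint: iterate the refinement ``split each block according to, for every modality $\Di$, the self-loop status of $S_\Di$ together with the set of current blocks reachable by a \emph{distinct} $S_\Di$-edge'', re-tuning in $D$ after each round, until the partition of $D$ stabilizes; then run the previous paragraph with $\clW$ replaced by this stabilized partition. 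The crux is to prove that the stabilized partition of $D$ stays bounded in $n$ \emph{uniformly in $D$} — this is precisely where the uniform tunability of $\clI$ (equivalently of $\clusters{\clI}$) is genuinely used, together with the ``reach one, reach all'' behaviour of horizontal edges in a sum. Granting this, one obtains a bound $g(n)$ on the size of the tuned refinement, hence uniform tunability of $\LSum{\clI}{\clF}$ and, by Theorem~\ref{thm:LFviaTuned}, local tabularity of $\Log(\LSum{\clI}{\clF})$. (Alternatively the whole argument can be phrased algebraically through Theorem~\ref{Malcev73}, by bounding the $k$-generated subalgebras of the ``sum algebra'' of $\LSum{I}{F_i}$; the combinatorial heart is the same.)
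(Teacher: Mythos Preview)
The paper does not give its own proof of Theorem~\ref{thm:sum}; the result is quoted from \cite{LTViaSums2022} without argument, so there is nothing in the present paper to compare your attempt against.

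That said, your overall architecture is the natural one: reduce via Theorem~\ref{thm:supple-clusters-crit} to the class of cluster-restrictions of sums, observe that each such cluster is either (isomorphic to) a cluster of a summand $F_i$ or (isomorphic to) a sum $\LSum{D}{(F_i)_{i\in D}}$ over a non-singleton cluster $D\in\clusters{\clI}$, and then tune the latter by first tuning fibrewise and then tuning the index. Your computation of the skeleton and height bound is correct, as is your diagnosis of the obstruction: when $i\,S_\Di\,j$ with $i\ne j$ but $[i]_\clW=[j]_\clW$, the witness $j'$ supplied by tunedness of $\clW$ in $D$ may be $i'$ itself, and since the sum ignores self-loops of the index this produces no edge in $\LSum{D}{F_i}$.

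Where your argument is genuinely incomplete is the fix. The iterated refinement you propose --- split each $\clW$-block by the predicate ``has a distinct $S_\Di$-successor inside the current block'', re-tune in $D$, repeat --- certainly stabilizes for each fixed $D$ (sizes are non-decreasing and bounded by $|D|$), but you give no reason why the stable size is bounded \emph{uniformly over all} $D\in\clusters{\clI}$. Each round only bounds $|\clW_{k+1}|$ as a function of $|\clW_k|$ via the tuning function $e$; without a uniform bound on the number of rounds you obtain no uniform bound on the outcome. What you are tacitly using is that the class $\{(D,(S_\Di\setminus\Delta)_{\Di\in\Al}):D\in\clusters{\clI}\}$ is uniformly tunable --- equivalently, that $\LSum{D}{F_i}=\LSum{D^\circ}{F_i}$ where $D^\circ$ has the irreflexivized relations, and that $\{D^\circ\}$ is uniformly tunable --- and this does not follow from anything available in the paper. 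One round of refinement is not enough either: splitting once by the ``distinct successor in $[i]_{\clW}$'' predicate and re-tuning yields a partition $\clW'$ refining $\clW$, but the back-witness $j''$ you then produce lies only in the correct $\clW$-block, not necessarily in the correct finer $\clW'$-block, so the same problem recurs. Until this step is justified --- either by proving directly that passing to the irreflexive relations preserves uniform tunability for clusters, or by reorganizing the construction so that the index partition is tuned simultaneously for $S_\Di$ and $S_\Di\setminus\Delta$ from the start --- the proof is incomplete.
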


\begin{definition}
Two classes of frames  are said to be 
{\em interchangeable}, if they have the same logic in the language enriched with the universal modality. 
\end{definition}

The following fact follows from \cite[Lemma 4.6 and Lemma 4.8]{AiML2018-sums}.
\begin{lemma}\label{lem:interch} 
Let $I$ be an $\Al$-frame, and for each $i\in I$, let $\clF_i$ and $\clG_i$ be two interchangeable families of $\Al$-frames.   \ISLater{What is our meta-theory? Is it fine with ZF?}
Let $\clK_1$ be the class of sums $\LSum{I}{F_i}$ with $F_i\in \clF_i$ for each $i\in I$, and let 
 $\clK_2$ be the class of sums $\LSum{I}{G_i}$  with  $G_i\in \clG_i$ for each $i\in I$. 
Then $\Log \clK_1 =\Log \clK_2$. 
\end{lemma}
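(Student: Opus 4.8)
The plan is to derive this lemma directly from the satisfiability analysis for sums carried out in \cite{AiML2018-sums}. Since $\Log\clK_1=\Log\clK_2$ is equivalent to the assertion that a modal formula is refuted somewhere in $\clK_1$ precisely when it is refuted somewhere in $\clK_2$, and refutation is dual to satisfiability, it suffices to prove that for every $\vf\in\ML(\Al)$, the formula $\vf$ is satisfiable in some sum $\LSum{I}{F_i}$ with each $F_i\in\clF_i$ if and only if it is satisfiable in some sum $\LSum{I}{G_i}$ with each $G_i\in\clG_i$. By symmetry it is enough to establish one direction.

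First I would recall the structural fact underlying \cite[Lemma 4.6]{AiML2018-sums}: in a model $(\LSum{I}{F_i},\theta)$ the truth value of any fixed formula at a point $(i,a)$ is determined by the index frame $I$ together with, for each $j\in I$, the theory of the restricted model $(F_j,\theta_j)$ in the language enriched with the universal modality, where $\theta_j$ is the restriction of $\theta$ to $F_j$. The intuition, matching Definition \ref{def:sum-over-frame}, is that modal steps taken inside a summand never leave it, so they are governed by the universal-modality theory of that summand, while steps crossing between summands are governed entirely by the relations $S_\Di$ of $I$.

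The main step is the transfer. Suppose $(\LSum{I}{F_i},\theta),(i_0,a_0)\models\vf$ with every $F_i\in\clF_i$. By \cite[Lemma 4.6]{AiML2018-sums}, whether $\vf$ holds at $(i_0,a_0)$ is determined by $I$, by the family of enriched-language theories $T_j$ of the restricted models $(F_j,\theta_j)$, and by the enriched-language type realized at $a_0$ in $(F_{i_0},\theta_{i_0})$. Since $\clF_j$ and $\clG_j$ are interchangeable, that is, have the same logic in the language enriched with the universal modality, each theory $T_j$, as well as the distinguished type at $a_0$, is realizable by a suitable model over some $G_j\in\clG_j$. Reassembling these models on $\LSum{I}{G_i}$ and appealing to \cite[Lemma 4.8]{AiML2018-sums}, the resulting model satisfies $\vf$ at the point corresponding to $(i_0,a_0)$. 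Hence $\vf$ is satisfiable in $\clK_2$, and the converse is symmetric.

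The real content lies entirely in the two cited lemmas of \cite{AiML2018-sums}, which isolate exactly which invariant of a summand-with-valuation controls satisfiability in the sum and show that it is captured precisely by the theory in the enriched language; granting those, the present argument is pure quantifier bookkeeping: realize the invariant over the new summands, swap, and reassemble. The one point that must be stated carefully is that \emph{interchangeable} here refers to the language with the universal modality rather than to the plain modal language, and this is exactly the right strength, because inside any single summand the global reachability relation of the sum coincides with the universal relation on that summand.
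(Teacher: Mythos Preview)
Your proposal is correct and takes essentially the same approach as the paper: both derive the lemma directly from \cite[Lemma 4.6 and Lemma 4.8]{AiML2018-sums}, and the paper in fact gives no further argument beyond citing those two lemmas. You simply supply more of the connecting tissue (reducing to satisfiability, isolating the enriched-language invariant of each summand, and swapping summands) than the paper does.
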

\hide{
\begin{proof}
This lemma follows from  
\cite{AiML2018-sums}.    \IS{To do} 
\end{proof}
}

\begin{proof}[Proof of Lemma \ref{lem:LT-invariance}]
Any frame $\Tsum{F}$ is the sum of  
$F$ and 
the singleton  $\circ$
over the indexing frame $I=(2,\leq,\emp)$ in the sense of 
Definition \ref{def:sum-over-frame}.
Classes $\clC$ and $\clD$ are interchangeable, since 
the universal modality is expressible as $\Di_1\Di_2$ in $\LS{5}^2$-frames.  
Now the statement  follows from Lemma \ref{lem:interch}.
\end{proof}

\smallskip

\subsection{Productivization}\label{subs:Prod}  
 \begin{definition}\label{def:tilde}
  Let \(F = (X,\,(R_\Di)_{\Di\in A})\) be an \(\AlA\)-frame, where all $R_\Di$ are preorders. Define the equivalence relation
  $
    {\sim}=\bigcap_{\Di\in A} (R_\Di\cap R_\Di\inv).
  $
  The quotient frame $\fact{F}{\sim}$ is denoted  \(\widetilde{F}\).   
  Given a class \(\clF\) of \(\AlA\)-frames, let 
  \(\quotfr{\clF} = \{\quotfr{F} \mid F\in \clF\}.\)
 \end{definition}
\ISLater{Where to move? -  Note that if \(F=(X,R_1,R_2)\) is an \(\LS{5}^2\)-frame, then \({\sim}\) is $R_1\cap R_2$.}
   \hide{
  \(= (\fact{X}{\sim},\,(\quotfr{R_\Di})_{\Di\in\Al})\) the minimal filtration\ISH{do we need this term?} of \(F\) with respect to \(\sim:\)
  \[
    [a] \quotfr{R_{\Di}} [b] \iff \exists a'\in[a]\ \exists b'\in [b]\ \left(a' R_{\Di} b' \right),\quad \Di\in \AlA.
  \]
  }
  
\begin{lemma}\label{lem:p-morph-sim}
  If the relations of \(F\) are
  preorders,
  then \(F\toto \quotfr{F}.\)
\end{lemma}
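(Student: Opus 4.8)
The plan is to verify directly that the canonical surjection $\pi\colon X\to\fact{X}{\sim}$ given by $\pi(a)=[a]_\sim$ is a p-morphism $F\toto\quotfr F$. First one should note that $\sim$ really is an equivalence relation, so that $\quotfr F=\fact{F}{\sim}$ is defined: each $R_\Di\cap R_\Di\inv$ is reflexive (since $R_\Di$ is a preorder, hence reflexive), symmetric by construction, and transitive (since $R_\Di$ is transitive), so it is an equivalence; and $\sim$, being an intersection of equivalences, is again one. Surjectivity of $\pi$ is immediate.

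For the forth condition, suppose $aR_\Di b$. Then $\pi(a)\,R^\sim_\Di\,\pi(b)$ holds by the very definition of the quotient relation, witnessed by $a'=a$ and $b'=b$.

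For the back condition, suppose $\pi(a)\,R^\sim_\Di\,u$ and write $u=[c]_\sim$. By definition of $R^\sim_\Di$ there are $a'\in[a]_\sim$ and $c'\in[c]_\sim$ with $a'R_\Di c'$. Since $\sim\subseteq R_\Di\cap R_\Di\inv$, we have $aR_\Di a'$ and $c'R_\Di c$; composing these with $a'R_\Di c'$ and using transitivity of $R_\Di$ twice yields $aR_\Di c$. As $\pi(c)=u$, the point $c$ is the required preimage. This finishes the argument, and $\Log F\subseteq\Log\quotfr F$ then follows from the standard fact about p-morphisms.

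The verification is routine; the only point requiring care is to apply the inclusion $\sim\subseteq R_\Di\cap R_\Di\inv$ in the correct direction when transporting the witnesses $a',c'$ back to $a$ and $c$, and to invoke transitivity of $R_\Di$ to glue $a\,R_\Di\,a'\,R_\Di\,c'\,R_\Di\,c$ into $aR_\Di c$. Symmetry and reflexivity of the $R_\Di$ enter only through the observation that $\sim$ is an equivalence, which is where the preorder hypothesis is actually used.
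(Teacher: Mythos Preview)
Your proof is correct and follows essentially the same route as the paper's: obtain witnesses $a'\sim a$, $c'\sim c$ with $a'R_\Di c'$, use $\sim\subseteq R_\Di\cap R_\Di\inv$ to get $aR_\Di a'$ and $c'R_\Di c$, and conclude $aR_\Di c$ by transitivity. One small slip in your closing commentary: you write that ``symmetry and reflexivity of the $R_\Di$'' are where the preorder hypothesis enters, but the $R_\Di$ are not assumed symmetric, and transitivity of $R_\Di$ is used not only to make $\sim$ an equivalence but also, crucially, in the back-condition argument itself.
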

\begin{proof}
For any frame, the quotient map is a surjective homomorphism. To check the back condition, assume that \([a]\quotfr{R_\Di}[b]\) for some \(a,\,b\in X\) and \(\Di\in \AlA.\) Then \(a' R_\Di b'\) for some \(a'\sim a\) and \(b'\sim b.\) By the definition, \(a  R_\Di a'\) and \(b' R_\Di b,\)  so \(a  R_\Di a' R_\Di b' R_\Di b ,\) hence \(a R_\Di b\) by transitivity.
\end{proof}

\begin{lemma}\label{lem:quotient-uniformly-tunable}
  Let \(\clF\) be a class of frames such that in every $F\in\clF$, all relations are 
  preorders.
  Then \(\Log\quotfr{\clF}\) is locally tabular iff \(\Log\clF\) is locally tabular.
\end{lemma}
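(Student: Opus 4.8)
The plan is to handle the two implications separately. The implication ``$\Log\clF$ locally tabular $\Rightarrow$ $\Log\quotfr\clF$ locally tabular'' is immediate: by Lemma~\ref{lem:p-morph-sim} there is a surjective p-morphism $F\toto\quotfr F$ for every $F\in\clF$, hence $\Log\clF\subseteq\Log\quotfr F$ for each such $F$, so $\Log\clF\subseteq\Log\quotfr\clF$; being an extension in the same alphabet of a locally tabular logic, $\Log\quotfr\clF$ is locally tabular by Proposition~\ref{prop:LT_implies_fmp+extensions}. The content is in the converse, which I would prove via uniform tunability and Theorem~\ref{thm:LFviaTuned}: assuming $\quotfr\clF$ is $g$-tunable for some monotone $g$, I want a function $g'$ witnessing that $\clF$ is $g'$-tunable.

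Fix $F=(X,(R_\Di)_{\Di\in\Al})\in\clF$ with all $R_\Di$ preorders, let $\sim$ be as in Definition~\ref{def:tilde}, and let $\pi\colon F\toto\quotfr F$ be the quotient map, so the points of $\quotfr F$ are the $\sim$-classes of $F$. Two elementary observations drive the argument: (i) $aR_\Di b$ iff $\pi(a)\,\quotfr{R_\Di}\,\pi(b)$, where one direction is the definition of the quotient frame and the other uses $\sim\subseteq R_\Di\cap R_\Di\inv$ together with transitivity; and (ii) inside a single $\sim$-class every $R_\Di$ is the full relation, so any partition of $X$ restricts to a tuned partition of each $\sim$-class. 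From (i) one gets that for a $\sim$-saturated partition $\clU_1$ of $X$ (one whose blocks are unions of $\sim$-classes) the partition $\{\pi[U]\mid U\in\clU_1\}$ of $\quotfr F$ is tuned in $\quotfr F$ iff $\clU_1$ is tuned in $F$; this transfer principle is valid only for saturated partitions.

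Given a finite partition $\clV$ of $F$ with $|\clV|=n$, the construction is: assign to each $\sim$-class $C$ its \emph{trace} $\tau(C)=\{B\in\clV\mid B\cap C\neq\emp\}$; let $\clW$ be the partition of $\quotfr F$ identifying $\sim$-classes with equal traces, so $|\clW|\le 2^n$; using $g$-tunability pick a tuned refinement $\clW'$ of $\clW$ in $\quotfr F$ with $|\clW'|\le g(2^n)$; pull it back to the $\sim$-saturated partition $\clU_1=\{\pi^{-1}(P)\mid P\in\clW'\}$ of $F$, which is tuned in $F$ by the transfer principle and, since $\clW'$ refines $\clW$, has the feature that all $\sim$-classes lying in one block of $\clU_1$ share the same trace; finally put $\clU=\clU_1\wedge\clV$. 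Then $\clU$ refines $\clV$ and $|\clU|\le n\cdot g(2^n)$, so $g'(n)=n\cdot g(2^n)$ works once $\clU$ is shown to be tuned in $F$, which by Theorem~\ref{thm:LFviaTuned} finishes the converse.

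Checking that $\clU$ is tuned is the step I expect to be the crux. Take blocks $U=U_1\cap B$ and $U'=U_1'\cap B'$ of $\clU$ (with $U_1,U_1'\in\clU_1$, $B,B'\in\clV$), a relation $R_\Di$, and witnesses $a\in U$, $a'\in U'$ with $aR_\Di a'$; I must find, for an arbitrary $\tilde a\in U$, some $d\in U'$ with $\tilde a R_\Di d$. Since $\pi(\tilde a)$ and $\pi(a)$ lie in the same block $P$ of $\clW'$, since $\pi(a')$ lies in the block $P'$ with $U_1'=\pi^{-1}(P')$, since $\clW'$ is tuned, and since $\pi(a)\,\quotfr{R_\Di}\,\pi(a')$ by (i), there is a $\sim$-class $D\in P'$ with $\pi(\tilde a)\,\quotfr{R_\Di}\,D$; then $D\subseteq U_1'$ and, by (i) again, $\tilde a R_\Di d$ for every $d\in D$. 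As the $\sim$-class of $a'$ is contained in $U_1'$ and meets $B'$, and all $\sim$-classes in $U_1'$ have the same trace, $D$ meets $B'$ too; any $d\in D\cap B'\subseteq U_1'\cap B'=U'$ is then as required. Since $\Di$ and the blocks were arbitrary, $\clU$ is tuned. The obstacle the trace bookkeeping is meant to overcome is exactly that a partition tuned in $F$ need not be $\sim$-saturated --- it may split $\sim$-classes --- so $\clV$ cannot simply be pushed down to $\quotfr F$ and tuned there; recording which blocks of $\clV$ each $\sim$-class meets, and keeping that data constant on blocks of $\clU_1$, is what makes the pull-back survive the additional refinement by $\clV$.
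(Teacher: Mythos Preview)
Your proof is correct. The ``if'' direction matches the paper's, and your ``only if'' argument via uniform tunability is sound: observation~(i) is exactly the content of Lemma~\ref{lem:p-morph-sim} (the quotient map is a p-morphism, so $R_\Di$ and $\quotfr{R_\Di}$ correspond bijectively via $\pi$), the trace construction correctly pushes $\clV$ down to a partition of $\quotfr F$ of size at most $2^n$, and the verification that $\clU=\clU_1\wedge\clV$ is tuned is carried out cleanly --- the key point being that all $\sim$-classes in a block of $\clU_1$ share the same trace, so the class $D$ you find in $U_1'$ necessarily meets $B'$.

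The paper takes a different route for the ``only if'' direction. Rather than constructing tuned partitions by hand, it observes that each $F\in\clF$ is (isomorphic to) the sum $\LSum{\quotfr F}(F\restr U)$ of its $\sim$-classes over the indexing frame $\quotfr F$, in the sense of Definition~\ref{def:sum-over-frame}. The summands $F\restr U$ are frames in which every relation is universal (by your observation~(ii)), so their logic is trivially locally tabular; since $\Log\quotfr\clF$ is locally tabular by hypothesis, Theorem~\ref{thm:sum} gives local tabularity of $\Log\clF$. The paper's argument is shorter once the sum machinery is in place and makes the structural reason transparent; your argument is more self-contained, avoids the sum apparatus entirely, and yields an explicit bound $g'(n)=n\cdot g(2^n)$ on the tunability function.
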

\begin{proof}
The `if' direction is trivial, since $\Log\clF\subseteq \Log\quotfr{\clF}$ by Lemma \ref{lem:p-morph-sim}.

`Only if'.   Consider \(F=(X,\,(R_\Di)_{\Di\in \AlA})\in \clF.\) Let \(\sim\) be defined as in Definition~\ref{def:tilde}, and denote \(Y = X/{\sim}.\) It is straightforward that  $F$ is isomorphic to the  sum $\LSum{\quotfr{F}}({F\restr U})$, where \(U\subseteq X\) ranges over the 
equivalence classes in \(Y\). It follows that $\Log\clF$ is included 
in $\Log{\LSum{\quotfr{\clF}}{\clG}}$, where \(\clG\) is the class of all frames~\(F\restr U\) where \(F\in \clF\) and \(U \in \dom F/{\sim}.\) In frames in $\clG$, every relation is universal, hence every partition is tuned, and so its logic is locally tabular by 
Theorem~\ref{thm:LFviaTuned}. Since \(\Log \quotfr{\clF}\) is also locally tabular, $\Log\clF$ is locally tabular by Theorem~\ref{thm:sum}.
\end{proof}
\hide{
\begin{proof}[Old Proof via sums]
  The `if' direction is trivial, since $\Log\clF\subseteq \Log\quotfr{\clF}$ by Lemma \ref{lem:p-morph-sim}.

  `Only if'.
  Consider \(F=(X,\,(R_\Di)_{\Di\in \AlA})\in \clF.\) Let \(\sim\) be defined as in Definition~\ref{def:tilde}, and denote \(Y = X/{\sim}.\)
  
  We claim that \(F \cong \LSum{\quotfr{F}}({F\restr U})\), where \(U\subseteq X\) ranges over the equivalence classes in \(Y\). Define \(f:\:X \to \bigsqcup_{U \in Y} U\) by \(f(a) = ([a]_{\sim},\,a).\) It is trivial that \(f\) is injective. If \((U,\,a)\in \bigsqcup_{U \in Y} U\) then \(a\in U\subseteq X\) by the definition, so \(U = [a]_{\sim},\) hence \((U,\,a) = f(a).\) Then \(f\) is surjective.

  Suppose \(a,b\in X,\,a R_\Di b.\) If \(a \sim b\) then \(f(b) = ([a]_{\sim},\,b),\) so \(f(a)R^\Sigma_\Di f(b).\) Otherwise~\([a]_{\sim} {\ne} [b]_{\sim};\) observe that \([a]_{\sim} \quotfr{R_\Di} [b]_{\sim}\) by the definition of \(\quotfr{R_\Di},\) so~\(f(a) R^\Sigma_\Di f(b).\) 

  Conversely, let \(f(a) R^\Sigma_\Di f(b).\) If \([a]_{\sim} {=} [b]_{\sim}\) then \(a R_\Di b;\) if \([a]_{\sim}{\ne} [b]_\sim\) then \([a]_\sim \quotfr{R_\Di} [b]_\sim.\) By the definition of~\(\quotfr{R_\Di},\) there exist \(a'\sim a\) and \(b'\sim b\) such that \(a' R_\Di b'.\) By the definition of \(\sim,\) \(a R_\Di a'\) and \(b' R_\Di b,\) so by transitivity \(a R_\Di b.\) 

  The isomorphism implies that \(\Log\clF = \Log{\LSum{\quotfr{\clF}}{\clG}}\)\IS{I am not sure about one of the inclusions} where \(\clG\) is the class of all frames~\(F\restr U\) where \(F\in \clF\) and \(U \in \dom F/{\sim}.\)
  In any frame of \(\clG,\) the relations are transitive and symmetric by the definition of \(\sim\). Then \(\clG\) is uniformly tunable since on the frames of \(\clG\) any partition is tuned, so \(\Log \clG\) is locally tabular by Theorem~\ref{thm:LFviaTuned}. By hypothesis, \(\Log \quotfr{\clF}\) is also locally tabular. Then \(\Log\clF\) is locally tabular by Theorem~\ref{thm:sum}.
\end{proof}

\begin{proof}[Direct proof]
  The `if' direction is trivial, since $\Log\clF\subseteq \Log\quotfr{\clF}$ by Lemma \ref{lem:p-morph-sim}.

  `Only if'. 
   Assume that all frames in $\quotfr{\clF}$ are $f$-tunable.  
  Let \(\clV\) be a finite partition of $F$, $n=|\clV|$. 
Let \(\sim\) be defined as in Definition~\ref{def:tilde}. 
For $\sim$-classes $[x],[y]$, put $[x]\approx [y]$ iff 
$$
\AA V\in \clV \, ([x]\cap V\neq \emp   \tiff [y]\cap V\neq \emp )
$$
Readily, $\approx$ has at most $ 2^n$ equivalence classes. Let $\clU$ be the corresponding partition of $\quotfr{\clF}$. 
There is a tuned refinement $\clW$ of $\clU$ of size at most $f(2^n)$. 
Now put $x\equiv y$ iff ($x\sim_\clV y$ and $[x]\sim_\clW[y]$). 

It is straightforward that the quotient of $F$ by this equivalence 
is tuned, and its size does not exceed  $nf(2^n)$. 
\IS{Check the notation; check the details.} 
\end{proof}
}

\begin{lemma}\label{lem:s5-square-quotient}
  Let $F=(X,R_1,R_2)$ be a point-generated $\LS{5}^2$-frame,
  $|\fact{X}{R_1}|=\kappa$, $|\fact{X}{R_2}|=\mu$. 
  Then $\quotfr{F}$ is isomorphic to $\rect{\kappa}{\mu}$.
\end{lemma}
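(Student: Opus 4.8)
The plan is to make the isomorphism explicit. Since $F = (X, R_1, R_2)$ is a point-generated $\LS{5}^2$-frame, $R_1$ and $R_2$ are equivalence relations on $X$ that commute, i.e. $R_1 \circ R_2 = R_2 \circ R_1$; and because $F$ is point-generated, $R_1 \circ R_2 = X \times X$. Recall from Definition \ref{def:tilde} that for an $\LS{5}^2$-frame ${\sim}$ equals $R_1 \cap R_2$, and $\quotfr{F} = \fact{F}{\sim}$. First I would set $A = \fact{X}{R_1}$ and $B = \fact{X}{R_2}$, so $|A| = \kappa$ and $|B| = \mu$, and define $g : X \to A \times B$ by $g(x) = (R_1(x), R_2(x))$, that is, $g$ sends $x$ to the pair consisting of its $R_1$-class and its $R_2$-class. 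Passing to the quotient, I claim $g$ factors through $\fact{X}{\sim}$ and induces a well-defined bijection $\bar g : \dom \quotfr{F} \to A \times B = \dom (\rect{A}{B})$.

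The three things to check are: (1) $\bar g$ is well-defined and injective; (2) $\bar g$ is surjective; (3) $\bar g$ is an isomorphism of frames, i.e. it and its inverse respect the two relations. For (1): $g(x) = g(y)$ means $x \mathrel{R_1} y$ and $x \mathrel{R_2} y$, i.e. $x \sim y$; conversely $x \sim y$ gives $g(x) = g(y)$. So $g(x) = g(y) \iff [x]_{\sim} = [y]_{\sim}$, which is exactly well-definedness plus injectivity of $\bar g$. For (2), surjectivity, is the only place commutativity and point-generation are used: given $(R_1(a), R_2(b)) \in A \times B$, I need $x \in X$ with $x \mathrel{R_1} a$ and $x \mathrel{R_2} b$. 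Since $R_1 \circ R_2 = X \times X$ we have $a \mathrel{(R_1 \circ R_2)} b$, hence — using $R_1 \circ R_2 = R_2 \circ R_1$ and symmetry/transitivity — there is $x$ with $a \mathrel{R_2} x \mathrel{R_1} b$; wait, I want $x \mathrel{R_1} a$, so more carefully: from $b \mathrel{(R_1\circ R_2)} a$ and commutativity $b \mathrel{(R_2 \circ R_1)} a$, so there is $x$ with $b \mathrel{R_2} x$ and $x \mathrel{R_1} a$; by symmetry $x \mathrel{R_2} b$ and $x \mathrel{R_1} a$, so $g(x) = (R_1(a), R_2(b))$.

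For (3): recall $[x]_{\sim} \quotfr{R_i} [y]_{\sim}$ iff $x' \mathrel{R_i} y'$ for some $x' \sim x$, $y' \sim y$; since $R_i$ is an equivalence containing $\sim$, this is equivalent to $x \mathrel{R_i} y$. On the product side, in $\rect{A}{B}$ the first (horizontal) relation relates $(\alpha,\beta)$ to $(\alpha',\beta')$ iff $\beta = \beta'$, and the second (vertical) relation iff $\alpha = \alpha'$. So I must match these up: $[x]_{\sim}\quotfr{R_1}[y]_{\sim} \iff x\mathrel{R_1} y \iff R_1(x)$ and $R_1(y)$ are unconstrained while $R_2(x) = R_2(y)$ — but that is the vertical relation, not the horizontal one. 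This tells me the correct statement is that $\bar g$ sends $\quotfr{R_1}$ to the vertical relation of the product and $\quotfr{R_2}$ to the horizontal one; equivalently, up to the harmless relabeling of the two factors, $\quotfr{F} \cong \rect{\kappa}{\mu}$ as the lemma states. (Since $\rect{\kappa}{\mu}$ and $\rect{\mu}{\kappa}$ with relations swapped are the same bimodal frame data, this is just a bookkeeping remark; the cleanest fix is to define $g(x) = (R_2(x), R_1(x))$ from the start.) The only genuine obstacle is step (2), and it is a short diagram chase using commutativity of $R_1, R_2$ together with $R_1 \circ R_2 = X \times X$ from point-generation; everything else is routine unwinding of the definition of $\quotfr{R_i}$ and of the product-frame relations.
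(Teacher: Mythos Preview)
Your approach is correct and essentially the same as the paper's: the paper defines the inverse map $f:\kappa\times\mu\to X/{\sim}$ by $f(\alpha,\beta)=H_\alpha\cap V_\beta$ (where $H_\alpha$ and $V_\beta$ are the $R_1$- and $R_2$-classes), checks nonemptiness of each $H_\alpha\cap V_\beta$ via point-generation exactly as in your surjectivity step, and then verifies bijectivity and relation-preservation. Your observation about the horizontal/vertical swap is apt---the paper's own proof writes ``if $(\alpha,\beta)R_H(\alpha',\beta')$ then $\alpha=\alpha'$'', which is the same bookkeeping hiccup you flagged; one minor slip in your write-up is the clause ``$R_1(x)$ and $R_1(y)$ are unconstrained while $R_2(x)=R_2(y)$'', which should read the other way around ($xR_1y$ gives $R_1(x)=R_1(y)$), but your conclusion and proposed fix are right.
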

\ISLater{An important variation: 
Can we get prpp-criterion for the clusters in $\LS{4}\times \LS{4}$-frames? 

\hide{
Let $C$ be a cluster in an $\LS{4}\times \LS{4}$-frame, (AND SOMETHING ELSE)
$|\fact{C}{R_1}|=\kappa$, $|\fact{C}{R_2}|=\mu$. 
Then $\quotfr{C}$ is isomorphic to $\rect{\kappa}{\mu}$.

(We need to carefully check if there is an analog in earlier works.)
}
}

\begin{proof}
  Let \(\{H_\alpha\}_{\alpha < \kappa}\) and \(\{V_\beta\}_{\beta < \mu }\) be the elements of \(\fact{X}{R_1}\) and \(\fact{X}{R_2},\) respectively. We claim that \(H_\alpha\cap V_\beta\ne \varnothing\) for any \(\alpha< \kappa,\,\beta < \mu.\) Let \(a \in H_\alpha\) and \(b\in V_\beta.\) Since \(F\) is point-generated, \(b\in R_2[R_1(a)],\) so there exists \(c\in X\) such that \(a R_1 c\) and \(c R_2 b.\) Then \(c \in R_1(a) = H_\alpha\) and \(c\in R_2(b) = V_\beta,\) so \(c\in H_\alpha\cap V_\beta.\) 

  Define \(f:\:\kappa\times \mu \to \fact{X}{\sim}\) by \(f(\alpha,\,\beta) = H_\alpha \cap V_\beta.\) 
  We claim that \(f\) is an isomorphism between \(\rect{\kappa}{\mu}\) and \(\quotfr{F}.\) 
  For any \([a]\in \fact{X}{\sim}\) there exist unique \(H_\alpha,\,V_\beta\) such that \(a\in H_\alpha \cap V_\beta,\) so \([a] = H_\alpha\cap V_\beta,\) thus \(f\) is a bijection. 
  Let $R_H$ and $R_V$ denote the horizontal and vertical relations in \(\rect{\kappa}{\mu}\). If \((\alpha,\beta) R_H (\alpha',\beta')\) then \(\alpha = \alpha'.\) Consider \(a\in H_\alpha\cap V_\beta\) and \(b\in H_\alpha\cap V_\beta'.\) Observe that \(H_\alpha = R_1(a),\) so \(a R_1 b,\) thus \(f(\alpha,\beta) = [a] \quotfr{R_1}[b] = f(\alpha',\beta').\) Conversely, if \(f(\alpha,\beta) \quotfr{R_1} f(\alpha',\beta'),\) there exist \(a\in H_\alpha\cap V_\beta\) and \(b\in H_\alpha'\cap V_\beta'\) such that \(a R_1 b.\) Then \(H_\alpha = R_1(a) = R_1(b) = H_\alpha',\) so \(\alpha = \alpha'\) and \((\alpha,\beta) R_H (\alpha,\beta') = (\alpha',\beta').\) An analogous argument applies for \(R_V.\) 
\end{proof}

\extended{
\begin{proposition}\label{prop:rpp-factor}
Assume that in every frame in a class  $\clF$  of $\Al$-frames all relations are transitive. 
Then 
$\clF$ has rpp iff 
$\fact{\clF}{\sim}$ has the rpp. 
\end{proposition}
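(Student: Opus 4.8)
The plan is to establish the two implications separately. First I would dispose of a cosmetic point: one may assume that every relation $R_\Di$ of every $F\in\clF$ is a preorder. Replacing each $R_\Di$ by its reflexive closure changes neither whether $(X,R_F)$ satisfies a given $\FORP_n$ (a ``stuttering'' step $x_i=x_{i+1}$ in a path is itself a reduction, so a path in the reflexive closure either stutters or is an honest $R_F$-path), nor, up to isomorphism, the quotient $\quotfr F$ of Definition~\ref{def:tilde}; with preorders at hand, Lemma~\ref{lem:p-morph-sim} and the construction $\quotfr{(\cdot)}$ apply. If the statement is read as already concerning preorders, this step is empty.

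For the forward direction, suppose $\clF$ has the rpp with parameter $m$. By Proposition~\ref{prop:rpp-to-modal-uni}, with $\Di$ interpreted as $\DiAl$, every $F\in\clF$ validates $\RP_m(\Di_\Al)$. Since $F\toto\quotfr F$ by Lemma~\ref{lem:p-morph-sim}, validity is inherited, so $\quotfr F\mo\RP_m(\Di_\Al)$, hence $(\dom{\quotfr F},R_{\quotfr F})$ satisfies $\FORP_m$; thus $\fact{\clF}{\sim}$ has the rpp, witnessed by the same $m$.

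For the backward direction I would prove that if every $\quotfr F$ satisfies $\FORP_m$, then every $F\in\clF$ satisfies $\FORP_{m'}$ with $m'$ depending only on $m$ (e.g.\ $m'=2m+2$). Two observations about $\sim$ drive the argument: (1) $a\sim b$ implies $aR_\Di b$ for all $\Di\in\Al$, so inside a single $\sim$-class $R_F$ links any two distinct points; (2) $[a]\,\widetilde{R_\Di}\,[b]$ implies $aR_\Di b$, since witnesses $a'\sim a$, $b'\sim b$ with $a'R_\Di b'$ give a chain $a\,R_\Di\,a'\,R_\Di\,b'\,R_\Di\,b$ that transitivity of $R_\Di$ collapses. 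Given an $R_F$-path $a_0R_F\cdots R_F a_{m'+1}$ with pairwise distinct vertices (otherwise we are already done), I would partition the index interval into maximal consecutive runs on which the $a_i$ share a $\sim$-class. If some run has $\ge 3$ indices, fact (1) yields a reduction outright. Otherwise the runs are numerous; taking the first vertex $c_t$ of each run $B_t$ and applying fact (2) at the junctions shows that $[c_1]R_{\quotfr F}\cdots R_{\quotfr F}[c_{m+2}]$ is a path in $\quotfr F$ with distinct consecutive terms. A reduction for it supplied by $\FORP_m$ is either $[c_i]=[c_j]$ or $[c_i]R_{\quotfr F}[c_{j+1}]$; in each case facts (1)--(2) give $a_pR_F a_q$ for the corresponding original indices, while ``distinct consecutive terms'' forces the selected runs to be at least two apart, so $q\ge p+2$, and the bound on run lengths keeps $q\le 2m+2=m'$. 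Then $a_pR_F a_q$ is a genuine witness for the ``$x_iRx_{j+1}$'' disjunct of $\FORP_{m'}$, so $F\models\FORP_{m'}$.

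The hard part is the bookkeeping in this last argument: getting the representatives to form an honest path in $\quotfr F$ (which is exactly where maximality of the runs and fact (2) are used), making sure the reduction obtained downstairs pulls back to the \emph{specified} vertices rather than to arbitrary $\sim$-conjugates, and checking that the surviving indices $p<q$ both satisfy $q\ge p+2$ (so that $a_pR_Fa_q$ is a legal ``shortcut'') and stay below $m'$. Everything else --- the reduction to preorders and the forward direction --- is immediate from results already in the paper.
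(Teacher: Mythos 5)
Your proof is correct, and there is in fact nothing in the paper to compare it with: this proposition occurs only in suppressed ``extended'' material, and its proof environment there is an author placeholder (``seems to be trivial''), so your write-up supplies an argument the paper never gives. The left-to-right half is indeed routine: validity of $\RP_m(\Di_\Al)$ transfers from $F$ to $\quotfr{F}$ along the p-morphism of Lemma~\ref{lem:p-morph-sim}, and Proposition~\ref{prop:rpp-to-modal-uni}, read for the compound diamond $\Di_\Al$ (i.e.\ for the relation $R_F$), translates validity back into $\FORP_m$; equivalently, your fact (2) already shows that any choice of representatives of a path in $\quotfr{F}$ is a path in $F$, so this direction can be done without mentioning modal formulas at all. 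The substance is the converse direction, and your run decomposition checks out: a run of length $\ge 3$ gives a shortcut by fact (1); otherwise a path on $2m+4$ vertices produces at least $m+2$ runs, the initial vertices of the runs project to a genuine $\quotfr{F}$-path with distinct consecutive terms (maximality of runs), and either conclusion of $\FORP_m$ downstairs pulls back to the chosen representatives --- the key point being that facts (1) and (2) hold for \emph{all} representatives of the classes involved, by transitivity through the witnesses --- yielding $a_p R_F a_q$ with $p+2\le q\le m'+1$, a legal disjunct of $\FORP_{m'}$ for $m'=2m+2$. Two minor quibbles that do not affect correctness: at the junctions between runs you only need the homomorphism half of the quotient construction (your fact (2) is the converse, needed later for pulling reductions back); and the preliminary passage to reflexive closures is only a matter of reconciling the proposition's ``transitive'' with Definition~\ref{def:tilde}, which is stated for preorders --- your observation that $\FORP_n$ is insensitive to adding the diagonal (a stutter, or a shortcut landing on an equal point, is itself an equality disjunct) is right.
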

\begin{proof}
    \ISH{Seems to be trivial. DC, write down}
    \ISH{Seems to be obsolete; move to Store?}
\end{proof}
}

\smallskip 

By Theorem \ref{thm:S5Nick} \cite{NickS5}, all proper extensions of $\LS{5}^2$ are locally tabular. 
Assuming  Kripke completeness of a proper extension $L$ of $\LS{5}^2$, local tabularity is a simple corollary of  
Theorem \ref{thm:criterion-frames-general} and our previous observations.
Indeed, let $\clF$ 
be the class of point-generated frames of $L$.
By Lemma~\ref{lem:p-morph-sim}, $L \subseteq \Log{\quotfr{\clF}}$.  
So $\Log{\quotfr{\clF}}$ is a proper extension of $\LS{5}^2$ as well, and hence \(n = \sup\{m  \mid \exists F\in\clF\  \,(\quotfr{F}\toto \rect{m}{m} )\}\) is finite. 
For \(i = 1,\,2\), let $\clG_i$  
be the class of frames of form $\rect{X_1}{X_2}$ with $|X_i|\leq n$. 
By Lemma~\ref{lem:s5-square-quotient}, 
each frame in  $\quotfr{\clF}$ is isomorphic to a frame in $\clG_1 \cup \clG_2$. 
   Both classes $\clG_i$ are uniformly tunable by Theorem~\ref{thm:criterion-frames-general},
   and so is the class $\quotfr{\clF}$.\footnote{This is a particular case of the following general fact. 
   If the varieties generated by classes $\clC_1$ and $\clC_2$ are locally finite, then the 
   variety generated by  the class  $\clC_1 \cup \clC_2$ is locally finite as well:  $\clC_1 \cup \clC_2$ is uniformly locally finite by Malcev criterion 
   given in Theorem \ref{Malcev73}.}   
   Then $L$ is locally tabular by Lemma~\ref{lem:quotient-uniformly-tunable}.

This does not give a complete proof of Theorem \ref{thm:S5Nick}: we need to exclude the case of incomplete logics. 
For a logic $L$, let $F_L$ be its canonical frame \cite[Section 4.2]{BDV}. 
The following reasoning is a variant of the proof given in 
\cite[Claim 4.7]{NickS5};\footnote{We are grateful for an anonymous reviewer for providing the reference to this claim.} we provide it for the self-containment of the text. 
Together with the previous reasoning, it completes the proof of Theorem \ref{thm:S5Nick}.  
\begin{lemma}\cite{NickS5}\label{lem:ext-s5sq-canon}
If  $L$ is a proper extension of $\LS{5}^2$, then so is $\Log{F_L}$.
\end{lemma}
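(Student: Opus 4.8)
I would first observe that $\LS5^2\subseteq\Log F_L$. Since $\LS5^2=\LCom{\LS5}{\LS5}$, this logic is axiomatized by the axioms of $\LS5$ for each modality together with the commutator axioms $\com$ and $\chr$, all of which are Sahlqvist; Sahlqvist formulas are valid in the canonical frame of every logic that contains them, so $F_L\mo\LS5^2$. Thus $F_L$ is itself an $\LS5^2$-frame, and it remains only to show that the inclusion is proper.

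For properness I would argue by contradiction: assume $\Log F_L=\LS5^2$. Since $L$ properly extends $\LS5^2$, fix $\psi\in L\setminus\LS5^2$, say in variables $p_0,\dots,p_{k-1}$. Applying Theorem~\ref{thm:Segerberg2DimML} to the family of all finite non-empty sets gives $\LS5^2=\Log\{\rect m m\mid m<\omega\}$ (using $\rect M M\toto\rect m n$ whenever $M\ge\max(m,n)$), so $\psi$ is refuted in some model $(\rect N N,V)$ with finite $N$, and after reindexing the refuting point is $(0,0)$.

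The core of the proof — the variant of \cite[Claim~4.7]{NickS5} — will be to push this refutation back into the canonical model of $L$ using only \emph{admissible} valuations. Concretely, the target is a point-generated subframe $H$ of $F_L$ together with a p-morphism $g\colon H\toto\rect N N$ whose fibers $g^{-1}(i,j)$ are all admissible subsets of $F_L$. That $F_L$ has point-generated subframes admitting \emph{some} p-morphism onto $\rect N N$ follows from $\Log F_L=\LS5^2$: by the Segerberg completeness theorem together with Lemma~\ref{lem:s5-square-quotient}, such subframes must contain arbitrarily many $R_1$- and $R_2$-classes (a single subframe can be forced to have many of both, since otherwise $F_L$ would validate a bounded-width-or-height formula outside $\LS5^2$), and in a point-generated $\LS5^2$-frame every $R_1$-class meets every $R_2$-class, so one can collapse onto $\rect N N$. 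The delicate point, which I expect to be the main obstacle, is to perform this collapse \emph{definably}: one must locate, inside a sufficiently large component, enough $\Box_1$-closed and $\Box_2$-closed admissible sets to separate an $N\times N$ grid, and this is exactly where the argument of \cite{NickS5} invokes the grid representation of two-dimensional (diagonal-free) cylindric-type algebras.

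Granting such $H$ and $g$, I would pull $V$ back along $g$: set $W(p_i):=g^{-1}(V(p_i))$ on $H$ and extend each to an admissible set of $F_L$ off $H$ — this is legitimate because $\rect N N$ is finite, each $g^{-1}(i,j)$ is admissible, and admissible sets are closed under finite unions. Since $H$ is a generated subframe and $g$ is a p-morphism of the models $(H,W{\restr}H)\toto(\rect N N,V)$, we get $(F_L,W),w\mo\vf$ iff $(\rect N N,V),g(w)\mo\vf$ for all $\vf$ and all $w\in H$; in particular $(F_L,W),w_0\not\mo\psi$ for $w_0\in g^{-1}(0,0)$. Writing $W(p_i)=\{w\mid \vf_i\in w\}$ for suitable formulas $\vf_i$, the substitution lemma together with the Truth Lemma turns this into $\psi(\vf_0,\dots,\vf_{k-1})\notin w_0$, hence $\psi(\vf_0,\dots,\vf_{k-1})\notin L$. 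But $\psi\in L$ and $L$ is substitution-closed, so $\psi(\vf_0,\dots,\vf_{k-1})\in L$ — a contradiction. Therefore $\Log F_L\neq\LS5^2$, i.e.\ $\Log F_L$ is a proper extension of $\LS5^2$, as required.
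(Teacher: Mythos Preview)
Your overall strategy is correct and coincides with the paper's: assume $\Log F_L=\LS5^2$, obtain a p-morphism from a point-generated subframe of $F_L$ onto a finite rectangle whose fibers are definable in the canonical model, and use substitution-closure of $L$ to derive a contradiction. The surrounding reductions (canonicity of $\LS5^2$, pulling back the refuting valuation along the p-morphism, the substitution/Truth-Lemma step) are all handled correctly in your outline.

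The gap is exactly where you flag it: you write ``granting such $H$ and $g$'' and defer the construction of the \emph{definable} p-morphism to \cite{NickS5}. But this construction is precisely the content of the paper's proof. The paper does not appeal to an external grid-representation result; instead it builds the definable collapse by hand. From an arbitrary (not a priori definable) p-morphism $f\colon F\toto\rect m m$ obtained via Jankov--Fine, it picks diagonal representatives $d_i\in f^{-1}(i,i)$, separates each pair $d_i,d_j$ by formulas $\delta_{ij}$ witnessing $(d_j,d_i)\notin R_1\cup R_2$, and from these assembles formulas $\delta_i$ so that the ``horizontal'' sets $H_i=\{a:\Di_1\delta_i\in a\}$ and ``vertical'' sets $V_i=\{a:\Di_2\delta_i\in a\}$ form two $m$-element definable partitions of $\dom F$; their intersections $C_{ij}=V_i\cap H_j$ are then the fibers of the desired definable p-morphism onto $\rect m m$. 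So your sketch is the right frame, but the paper's proof is the missing picture inside it.
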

\begin{proof}
For the sake of contradiction, assume that \(\Log{F_L} = \LS{5}^2\). We show that in this case $L=\LS{5}^2$.

Let $m<\omega$. By Proposition \ref{prop:Jankov-Fine}, 
$f:F\toto \rect{m}{m}$ for a point-generated subframe $F=(X,R_1,R_2)$ of $F_L$ and a p-morphism $f$.

The logic $\LS{5}^2$ is canonical \cite{Segerberg2DimML}, \cite{Sheht-TwoDim78}.
It follows that $R_1$ and $R_2$ are equivalences, and for all $a,b\in X$, we have 
\begin{equation}\label{eq:cone}
a (R_1\circ R_2) b \text{ and } a (R_2\circ R_1) b.  
\end{equation}

For $i<m$, choose $d_i\in f^{-1}(i,i)$. 
For distinct $i,j<m$, we have
  $(d_j,d_i)\notin R_1\cup R_2;$
hence, there is a formula $\delta_{ij}\in d_i$ such that 
$\Di_1\delta_{ij}\vee \Di_2\delta_{ij}\notin  d_j$. 
For $i>0$, put $\delta_i=\bigwedge_{i\neq  j<m} \delta_{ij}$; 
put $\delta_0=\bigwedge_{0<k<m} \neg (\Di_1\delta_{k}\vee \Di_2\delta_{k})$. 
We claim that
\begin{equation}\label{eq:dij}
\delta_i\in d_i.  
\end{equation}
For $i>0$, this is trivial. To show that $\delta_0\in d_0$, assume that 
$\delta_{k}\in a$ for some $a\in X$ and $k>0$.  Then 
$\delta_{k0}\in a$ and $\neg (\Di_1\delta_{k0}\vee \Di_2\delta_{k0})\in d_0$. 
Hence, $(d_0,a)\notin R_1\cup R_2$. Hence, $\neg (\Di_1\delta_{k}\vee \Di_2\delta_{k})\in d_0$.

Define {\em horizontal} and {\em vertical}  sets: $H_i=\{a\in X\mid \Di_1 \delta_i\in a\}$,
$V_i=\{a\in X\mid \Di_2 \delta_i\in a\}$. 
We have
\begin{equation}\label{eq:lift}
R_1(d_i) \subseteq H_i \text{ and }R_2(d_i) \subseteq V_i. 
\end{equation}
Since $R_1$ and $R_2$ are equivalence relations, we also have 
\begin{equation}\label{eq:HVmon}
\text{$R_1[H_i]=H_i$ and $R_2[V_i]=V_i$.}
\end{equation}
All sets $H_i$ and $V_i$ are non-empty, since $d_i\in H_i\cap V_i$. 

Let us check that 
$\bigcup_{i<m} H_i=\bigcup_{i<m} V_i=X$.
For this, assume that $a\in X$, and $a\notin \bigcup_{0<i<m} V_i$. 
By \eqref{eq:cone}, $d_0R_1bR_2a$ for some $b$. Let $0<k<m$. 
Hence: $\neg \Di_1\delta_{k}\in b$, since $d_0 R_1b$; $\neg \Di_2\delta_{k}\in b$, 
since $bR_2a$; hence, $\delta_0\in b$, and so $a\in H_0$. Likewise, $\bigcup_{i<m} H_i=X$.

Let us check that  
$H_i \cap H_j =\emp =V_i \cap V_j$ for distinct $i,j$. 
 Indeed, if $a\in H_i \cap H_j$, then $\Di_1 \delta_i\con \Di_1 \delta_j\in a$, and so $\delta_i\con \Di_1 \delta_j\in b$ for some $b\in R_1(a)$, which implies $i=j$.  Similarly for vertical sets.
It follows that $\{H_i\}_{i<m}$ and $\{V_i\}_{i<m}$ are $m$-element partitions of $X$.

Put $C_{ij}=V_i \cap H_j$.  
By \eqref{eq:cone}, we have $d_i R_2 a R_1 d_j$ for some $a$, and so each $C_{ij}$ is non-empty
according to \eqref{eq:lift}. It follows that $\{C_{ij}\}_{i,j<m}$ is an $m^2$-element partition of $X$.
For $a\in C_{ij}$, put $g(a)=(i,j)$. 
Hence, $g$ maps $X$ onto $m\times m$. 

We claim that $g$ is a p-morphism. 
By \eqref{eq:HVmon}, $g$ is a homomorphism. 
It remains to check the back property. 
Let $a\in C_{ij}$, and assume that in $\rect{m}{m}$, $\tau$ is related to $(i,j)$
by the horizontal  relation. Then $\tau=(k,j)$ for some
$k<m$.
We have $a R_1 b R_2 d_{k}$ for some $b$ due to \eqref{eq:lift}. Hence, $b\in C_{kj}$ and so $g(b)=(k,j)$.  
The proof for $R_2$ is completely  analogous. 

Assume that a formula $\vf$ is satisfiable in a model $(\rect{m}{m},\theta)$. 
Define the following valuation $\eta$ on $F$: put
$\eta(p)=\bigcup\{C_{ij}\mid (i,j)\in \theta(p)\}$. 
Then $g$ is a p-morphism of $(F,\eta)$ onto $(\rect{m}{m},\theta)$. 
So $\vf$ is satisfiable in $(F,\eta)$. 

We have $(F,\theta_L)\mo L$, where $\theta_L$ is the restriction of the canonical valuation on $F$.  
Let $\psi_p$ be the formula $\bigvee \{\Di_1 \delta_i\con \Di_2 \delta_j \mid (i,j)\in \theta(p) \}$. 
For each $a\in X$, we have $a\in \eta(p) \text{ iff } \psi_p \in a,$
so $(F,\eta)\mo L$. It follows that $L=\LS{5}^2$. 
\end{proof}

\hide{
\begin{proof}
  By contraposition, assume that \(\Log{F_L} = \LS{5}^2.\)



  Fix \(m > 1.\) Let \(\chi_m\) be the Jankov-Fine formula of \(\rect{m}{m}, \) which is the conjunction of the following formulas:
  \begin{enumerate}
     \item \( p_{0,0};\)
     \item \(\bigwedge_{i,\,j<m} \Di_1 \Di_2 p_{i,j};\)
     \item \(\Box_1 \Box_2 \bigvee_{i,\,j < m}p_{i,j};\)
     \item \(\Box_1 \Box_2 \bigwedge_{i,j,j' < m}\left(p_{i,j} \to \Di_1 p_{i,j'}\right);\)
     \item \(\Box_1 \Box_2 \bigwedge_{i,j,j' < m}\bigwedge_{i'\ne i} \left(p_{i,j} \to \lnot\Di_1 p_{i',j'}\right);\)
     \item \(\Box_1 \Box_2 \bigwedge_{i,i',j < m}\left(p_{i,j} \to \Di_2 p_{i',j}\right);\)
     \item \(\Box_1 \Box_2 \bigwedge_{i,i',j < m}\bigwedge_{j'\ne j} \left(p_{i,j} \to \lnot \Di_2 p_{i',j'}\right).\)
   \end{enumerate}
   Since \(\Log{F_L} = \LS{5}^2,\) \(\chi_m\) is satisfiable in \(F_L:\) there exist a valuation \(V_m\) of \(C\) and a point \(c\in \dom F_L\) such that \(F_L,\,V_m,\,c\models V_m.\) Let \(C = F_L \uparrow x\) be the subframe generated by \(c.\)

   Pick representatives \(x_{i,j}\in V(p_{i,j})\) for \(i,\,j< m.\) Then the restriction of \(R_1,\,R_2\) on \(\{x_{i,j}\}_{i,\,j<m}\) forms a frame isomorphic to \(\rect{m}{m}:\)
   \begin{itemize}
     \item Since \(c\models \Box_1 \Box_2 \bigwedge_{i,j,j' < m}\left(p_{i,j} \to \Di_1 p_{i,j'}\right),\,\) and \(x_{i,j}\models p_{i,j},\) it is true \ISH{I do not see why} that \(x_{i,j} R_1 x_{i,j'}\) for any \(i,\,j,\,j'.\) Therefore \(R(x_{i,j}) \supseteq \{x_{i,j'}\}_{j'<m}.\)
     \item Since \(c\models\Box_1 \Box_2 \bigwedge_{i,j,j' < m}\bigwedge_{i'\ne i} \left(p_{i,j} \to \lnot\Di_1 p_{i',j'}\right),\) the negation of \(\Di_1 p_{i',j'}\) is true in \(x_{i,j}\) for any \(i'\ne i\) and any \(j'.\) Then \(R_1(x_{i,j}) \subseteq \{x_{i,j'}\}_{j'<m}.\) Therefore \(R_1(x_{i,j}) = \{x_{i,j'}\}_{j'<m}.\)
     \item Analogously, \(R_2(x_{i,j}) = \{x_{i',j}\}_{i' < m}\) for any \(i,\,j<m.\)
   \end{itemize}

   Find formulas \(\alpha_{i,j}\) that distinguish \(x_{i,\,j}\) as maximal \(L\)-consistent sets:
   \[
     V_L\uparrow C(\alpha_{i',j'})\cap \{x_{i,j}\}_{i,\,j<m} = \{x_{i',j'}\},\quad i',\,j'< m.
   \]
   Note that \(V_L\uparrow C\) is the restriction of the canonical valuation to \(C\), which is different from the valuation \(V_m\uparrow C\) which satisfies \(\chi_m.\)

   Since \(x_{i',j'}\not\in  R_1 (x_{i,j})\) for \(i'\ne i,\) there exists a formula \(\gamma_{i,j,i',j'}\in x_{i',j'}\) such that \(\Di_1 \gamma_{i,j,i',j'}\not \in x_{ij}.\) Analogously, for any \(i,\,j,\,i',\,j'\) with \(j\ne j',\) there is a formula \(\delta_{i,j,i',j'}\in x_{i',j'}\) such that \(\Di_2 \delta_{i,j,i',j'}\not \in x_{i,j}.\)

   For any \(i,\,j < m\) let
   \[
     \Phi_{i,j} = \alpha_{i,j} \land \bigwedge_{(i',j')\ne(i,j)} (\gamma_{i,j,i',j'} \land \delta_{i,j,i',j'}).
   \]
   Then \(V_L(\Phi_{i,j})\cap \{x_{i,j}\}_{i,j < m} = \{x_{i,j}\}.\)

   For any \(i<m-1\)  define
   \begin{gather*}
     \Psi_{i,j} = \Phi_{i,j}\land \bigwedge_{i'\ne i}\bigwedge_{j' < m} \lnot \Di_1 \Phi_{i',j'} \land \bigwedge_{i'<m} \bigwedge_{j'\ne j} \lnot \Di_2 \Phi_{i',j'} ,\quad j < m-1;\\
     \Psi_{i,m-1}= \Di_1 \Psi_{i,0}\land \bigwedge_{j<m-1} \lnot \Psi_{i,j}.
   \end{gather*}

   For \(i = m-1\) the formulas \(\Psi_{i,j}\) are defined differently:
   \begin{gather*}
    \Psi_{m-1,j} = \Di_2 \Psi_{0,j}\land \bigwedge_{i<m-1}\lnot \Psi_{i,j},\quad j < m-1;\\
    \Psi_{m-1,m-1} = \bigwedge_{i,j < m-1} \lnot \Psi_{i,j}\land \bigwedge_{i < m-1}\lnot \Di_1 \Psi_{i,0}\land \bigwedge_{j < m-1} \lnot \Di_2\Psi_{0,j}.
   \end{gather*}

   Let \(f:\:C \to m\times m\) be the map that sends every element of \(V_L\uparrow C(\Psi_{i,j})\) to \((i,j).\) To show that \(f\) is well-defined, we check that every \(x\in \dom C\) belongs to exactly one of \(V_L(\Psi_{i,j})\).

   Let \(x\in \dom C\) and assume \(x\not\in V_L (\Psi_{i,j})\) for any \(i,\,j<m-1.\) If \(\Psi_{i,0}\) is true at \(x\) for some \(i < m-1\) then \(x\in V_L(\Psi_{i,m-1});\) if \(\Psi_{0,j}\) is true at \(x\) for some \(j < m-1\) then \(x\in V_L(\Psi_{m-1,j});\) otherwise, \(x\in V_L(\Psi_{m-1,m-1}).\)

   Assume that \(C,\,V_L,\,x\models \Psi_{i,j}\land \Psi_{i',j'}\) for some \((i,\,j)\ne(i',j').\)
   \begin{itemize}
     \item If \(i = j = m-1\) then \(\Psi_{i',j'}\) is false in \(x\)
     \item If \(j = j' = m-1\) and  \(i,i' < m-1\) then
     \[
       C,\,V_L,\,x\models \Di_1 \Psi_{i,0} \land \Di_1 \Psi_{i',0};
     \]
     note that \(\Psi_{i,0}\) implies \(\lnot \Di_1 \Phi_{i',0}\) and \(\Psi_{i',0}\) implies \(\Di_1 \Phi_{i',0}.\) Then
     \begin{gather*}
        C,\,V_L,\,x\models \Di_1 \lnot \Di_1 \Phi_{i',0} \land \Di_1 \Di_1 \Psi_{i',0}\\
        C,\,V_L,\,x\models \Di_1 \Box_1 \lnot \Phi_{i',0} \land \Di_1 \Psi_{i',0},\\
      \end{gather*}
      so \(R_1(x)\) contains a point \(y\in V_L(\Psi_{i',0})\) and a point \(z\) such that \(R_1(z) \cap V_L(\Psi_{i',0}) = \varnothing.\) But \(R_1\) is an equivalence relation, so \(R_1(x) = R_1(z),\) a contradiction.
      \item Analogously, the case \(i = i' = m-1\) and \(j,\,j' < m-1\) leads to a contradiction since \(R_2\) is an equivalence relation.
     \item Finally, if \(i,\,j,\,i',\,j' < m-1\) then \(\Psi_{i,j}\) implies \(\Phi_{i,j}\) and hence \(\Di_1 \Phi_{i,j}\) by the reflexivity of \(R_1;\) \(\Psi_{i',j'}\) implies \(\lnot \Di_1 \Phi_{i,j},\) a contradiction.
   \end{itemize}

   Then \(f\) is well-defined.

   Let \(V\) be any valuation on \(\rect{m}{m}.\) We construct a new valuation \(V^*\) on \(C\) by setting
   \[
     V^*(p) = \bigcup \{V_L\uparrow C(\Psi_{i,j}) \mid (i,\,j)\in V(p)\}.
   \]
   We claim that \(f\) is a p-morphism of models from \((C,\,V^*)\) to \((\rect{m}{m},\,V).\)

   By the construction,
   \begin{align*}
     f(x)\in V(p) &\iff \exists (i,j):\:f(x) = (i,j)\text{ and }(i,j)\in V(p)\\
     &\iff \exists (i,j):\:x\in V_L\uparrow C(\Psi_{i,j}) \text{ and }(i,j)\in V(p)\\
     &\iff x\in \bigcup \{V_L\uparrow C(\Psi_{i,j}) \mid (i,j)\in V(p)\}\iff x\in V^* (p).
   \end{align*}

   Let \(x,\,y\in \dom C\) and \(x R_1 y.\) Find \(i,\,j\) such that \(x\in V_L(\Psi_{i,j})\) and \(i',\,j'\) such that \(y\in V_L(\Psi_{i',j'}).\) If \((i,\,j)=(i',\,j')\) then \(f(x) = f(y),\) hence \(f(x) R_1 f(y)\)   by reflexivity of \(\rect{m}{m}.\) Then assume \(j\ne j'\) and show that \(i = i'.\) Since \(xR_1 y,\,\) \(x\in V_L( \Psi_{i,j}\land \Di_1 \Psi_{i',j'}).\) If \(i,\,j,\,i',\,j' < m-1\) then \(x\models \lnot \Di_1 \Phi_{i'',j'}\)  for any \(i''\ne i.\) But \(\Di_1 \Psi_{i',j'}\) entails \(\Di_1 \Phi_{i,j},\) so it's only possible that \(i = i'.\) If \(i = m-1\ldots?\)


   Let \(f(x)=(i,j) R_1 (i',j').\) Then \(x\in V_L(\Psi_{i,j}).\)
\end{proof}
}

Now Theorem \ref{thm:S5Nick} follows: 
if $L$ is a proper extension of $\LS{5}^2$, then 
so is the logic 
$\Log F_{L}$ of the canonical frame of $L$; the latter logic is Kripke complete and so is locally tabular;
since $\Log F_{L}\subseteq L$, $L$ is locally tabular.   

\end{document}